%


\documentclass[11pt,a4paper]{amsart}
\usepackage{amssymb,amsmath,amsfonts,amscd}
\usepackage[dvips,final]{graphics}
\pagestyle{plain}
\usepackage{amscd}
\usepackage{mathrsfs}
\usepackage[dvips]{color}

\theoremstyle{plain}
\newtheorem{theorem}{Theorem}[section]
\newtheorem{proposition}[theorem]{Proposition}
\newtheorem{corollary}[theorem]{Corollary}

\newtheorem{lemma}[theorem]{Lemma}
\theoremstyle{definition}
\newtheorem{definition}[theorem]{Definition}

\theoremstyle{remark}
\newtheorem{remark}[theorem]{Remark}

\numberwithin{equation}{section}

\usepackage{enumerate}

\DeclareMathOperator{\trace}{Tr}
\DeclareMathOperator{\divergence}{div}
\DeclareMathOperator{\Ker}{ker}
\DeclareMathOperator{\Lip}{Lip}
\newcommand{\diam}{\mathrm{diam}\,}

\begin{document}

\bibliographystyle{plain}

\def\average{{\mathchoice {\kern1ex\vcenter{\hrule height.4pt width 6pt
depth0pt} \kern-9.7pt} {\kern1ex\vcenter{\hrule height.4pt width 4.3pt depth0pt}
\kern-7pt} {} {} }}
\def\ave{\average\int}

\newcommand{\Rn}{\mathbb R^n}
\newcommand{\E}{\mathbb E}
\newcommand{\Rm}{\mathbb R^m}
\newcommand{\rn}[1]{{\mathbb R}^{#1}}
\newcommand{\R}{\mathbb R}
\newcommand{\C}{\mathbb C}
\newcommand{\G}{\mathbb G}
\newcommand{\M}{\mathbb M}
\newcommand{\Z}{\mathbb Z}
\newcommand{\D}[1]{\mathcal D^{#1}}
\newcommand{\Ci}[1]{\mathcal C^{#1}}
\newcommand{\Ch}[1]{\mathcal C_{\mathbb H}^{#1}}
\renewcommand{\L}[1]{\mathcal L^{#1}}
\newcommand{\BVG}{BV_{\G}({\mc U})}
\newcommand{\supp}{\mathrm{supp}\;}

\newcommand{\dom}{\mathrm{Dom}\;}
\newcommand{\Ast}{\un{\ast}}


\newcommand{\hhd}[1]{{\mathcal H}_d^{#1}}
\newcommand{\hsd}[1]{{\mathcal S}_d^{#1}}

\newcommand{\he}[1]{{\mathbb H}^{#1}}
\newcommand{\hhe}[1]{{H\mathbb H}^{#1}}

\newcommand{\cov}[1]{{\bigwedge\nolimits^{#1}{\mfrak h}}}
\newcommand{\vet}[1]{{\bigwedge\nolimits_{#1}{\mfrak h}}}

\newcommand{\covw}[2]{{\bigwedge\nolimits^{#1,#2}{\mfrak h}}}

\newcommand{\vetfiber}[2]{{\bigwedge\nolimits_{#1,#2}{\mfrak h}}}
\newcommand{\covfiber}[2]{{\bigwedge\nolimits^{#1}_{#2}{\mfrak g}}}

\newcommand{\covwfiber}[3]{{\bigwedge\nolimits^{#1,#2}_{#3}{\mfrak g}}}

\newcommand{\covv}[2]{{\bigwedge\nolimits^{#1}{#2}}}
\newcommand{\vett}[2]{{\bigwedge\nolimits_{#1}{#2}}}

\newcommand{\covvfiber}[3]{{\bigwedge\nolimits^{#1}_{#2}{#3}}}

\newcommand{\vettfiber}[3]{{\bigwedge\nolimits_{#1,#2}{#3}}}

\newcommand{\covn}[2]{{\bigwedge\nolimits^{#1}\rn {#2}}}
\newcommand{\vetn}[2]{{\bigwedge\nolimits_{#1}\rn {#2}}}
\newcommand{\covh}[1]{{\bigwedge\nolimits^{#1}{\mfrak h_1}}}
\newcommand{\veth}[1]{{\bigwedge\nolimits_{#1}{\mfrak h_1}}}
\newcommand{\hcov}[1]{{_H\!\!\bigwedge\nolimits^{#1}}}
\newcommand{\hvet}[1]{{_H\!\!\bigwedge\nolimits_{#1}}}

\newcommand{\covf}[2]{{\bigwedge\nolimits^{#1}_{#2}{\mfrak h}}}
\newcommand{\vetf}[2]{{\bigwedge\nolimits_{#1,#2}{\mfrak h}}}
\newcommand{\covhf}[2]{{\bigwedge\nolimits^{#1}_{#2}{\mfrak h_1}}}
\newcommand{\vethf}[2]{{\bigwedge\nolimits_{#1,#2}{\mfrak h_1}}}
\newcommand{\hcovf}[2]{{_H\!\!\bigwedge\nolimits^{#1}_{#2}}}
\newcommand{\hvetf}[2]{{_H\!\!\bigwedge\nolimits_{#1,#2}}}

\newcommand{\defin}{\stackrel{\mathrm{def}}{=}}
\newcommand{\gradh}{\nabla_H}
\newcommand{\current}[1]{\left[\!\left[{#1}\right]\!\right]}
\newcommand{\scal}[2]{\langle {#1} , {#2}\rangle}
\newcommand{\escal}[2]{\langle {#1} , {#2}\rangle_{\mathrm{Euc}}}
\newcommand{\Scal}[2]{\langle {#1} \vert {#2}\rangle}
\newcommand{\scalp}[3]{\langle {#1} , {#2}\rangle_{#3}}
\newcommand{\dc}[2]{d_c\left( {#1} , {#2}\right)}
\newcommand{\res}{\mathop{\hbox{\vrule height 7pt width .5pt depth 0pt
\vrule height .5pt width 6pt depth 0pt}}\nolimits}
\newcommand{\norm}[1]{\left\Vert{#1}\right\Vert}
\newcommand{\modul}[2]{{\left\vert{#1}\right\vert}_{#2}}
\newcommand{\perh}{\partial_\mathbb H}

\newcommand{\ccheck}{{\vphantom i}^{\mathrm v}\!\,}

\newcommand{\mc}{\mathcal }
\newcommand{\mbf}{\mathbf}
\newcommand{\mfrak}{\mathfrak}
\newcommand{\mrm}{\mathrm}
\newcommand{\no}{\noindent}
\newcommand{\dis}{\displaystyle}

\newcommand{\U}{\mathcal U}
\newcommand{\ga}{\alpha}
\newcommand{\gb}{\beta}
\newcommand{\gga}{\gamma}
\newcommand{\gd}{\delta}
\newcommand{\eps}{\varepsilon}
\newcommand{\gf}{\varphi}
\newcommand{\GF}{\varphi}
\newcommand{\gl}{\lambda}
\newcommand{\GL}{\Lambda}
\newcommand{\gp}{\psi}
\newcommand{\GP}{\Psi}
\newcommand{\gr}{\varrho}
\newcommand{\go}{\omega}
\newcommand{\gs}{\sigma}
\newcommand{\gt}{\theta}
\newcommand{\gx}{\xi}
\newcommand{\GO}{{\mc U}}

\newcommand{\Wedge}{\buildrel\circ\over \wedge}

\newcommand{\WO}[4]{\mathop{W}\limits^\circ{}\!_{#4}^{{#1},{#2}}
(#3)}

\newcommand{\GH}{H\G}
\newcommand{\N}{\mathbb N}

%

\newcommand{\Nhmin}{N_h^{\mathrm{min}}}
\newcommand{\Nhmax}{N_h^{\mathrm{max}}}

\newcommand{\Mhmin}{M_h^{\mathrm{min}}}
\newcommand{\Mhmax}{M_h^{\mathrm{max}}}

\newcommand{\un}[1]{\underline{#1}}

\newcommand{\curl}{\mathrm{curl}\;}
\newcommand{\curlh}{\mathrm{curl}_{\he{}}\;}
\newcommand{\hd}{\hat{d_c}}
\newcommand{\divg}{\mathrm{div}_\G\,}
\newcommand{\divh}{\mathrm{div}_{\he{}}\,}
\newcommand{\e}{\mathrm{Euc}}


\newcommand*\xbar[1]{%
  \hbox{%
    \vbox{%
      \hrule height 0.5pt 
      \kern0.5ex
      \hbox{%
        \kern-0.1em
        \ensuremath{#1}%
        \kern-0.1em
      }%
    }%
  }%
}

\newcommand{\x}{\mathrm x }
\author{Eleonora Cinti, Bruno Franchi, Mar\'ia del Mar Gonz\'alez}

\address{E.C., Dipartimento di Matematica ``G. Peano'', Universit\`a di Torino, Via Carlo Alberto 10 -- 10123 Torino (Italy).}
\email{ecinti@unito.it}
\address{B.F. Dipartimento di Matematica, Universit\`a  degli Studi di Bologna, Piazza di Porta San Donato 5-- 40126 Bologna (Italy).}
\email{ bruno.franchi@unibo.it}
\address{M.d.M. G. Universidad Autónoma de Madrid, Departamento de Matemáticas, 28049 Madrid (Spain). }
\vspace{1em}
\email{mariamar.gonzalezn@uam.es}

\thanks{E. Cinti is supported by MINECO grants MTM2011-27739-C04-01, MTM2014-52402-C3-1-P, the ERC starting grant EPSILON n° 277749, and the ERC Advanced Grant 2013 n. 339958 Complex Patterns for Strongly Interacting Dynamical Systems - COMPAT. B. Franchi is supported
by University of Bologna, Italy, funds for selected research topics,
by GNAMPA of INdAM,
and by MAnET Marie Curie Initial Training Network. M.d.M. Gonz\'alez is supported by MINECO grants MTM2011-27739-C04-01, MTM2014-52402-C3-1-P, and BBVA Foundation grant for investigadores y creadores culturales 2016.}

\title{$\Gamma$-convergence of variational functionals with boundary terms in Stein manifolds}
\date{}

\begin{abstract}
Let $\Omega$ be an open subset of a Stein manifold $\Sigma$ and let $M$ be its boundary. It is well known that $M$ inherits a natural contact structure. In this paper we consider a family of variational functionals $F_\eps$ defined by the sum of two terms: a Dirichlet-type energy associated with a sub-Riemannian structure in $\Omega$ and a potential term on the boundary $M$. We prove that the functionals $F_\eps$ $\Gamma$-converge to the intrinsic perimeter in $M$ associated with its contact structure.

Similar results have been obtained in the Euclidean space by Alberti, Bouchitt\'e, Seppecher. We stress that already in the Euclidean setting the situation is not covered by the classical Modica-Mortola Theorem because of the presence of the boundary term.

We recall also that Modica-Mortola type results (without a boundary term) have been proved in the Euclidean space for sub-Riemannian energies by Monti and Serra Cassano.
\end{abstract}
\maketitle

\tableofcontents

\section{Introduction and statement of the results}

It is well known that, roughly speaking, a contact manifold $(M,\theta)$ can be viewed as ``the boundary'' of a symplectic manifold
$(\Omega,\omega)$.
We refer for instance to \cite{CE}, Section 6.8. In particular, the Heisenberg group $\he n$ can be seen as the
boundary of the upper half-space $\mathcal U^n \subset \mathbb C^n$ (see, e.g. \cite{stein}, Chapter XII).

The aim of this note is to show that -- in the same spirit -- the notion of perimeter associated with the contact structure
of $(M,\theta)$ (see \cite{Ambrosio-Ghezzi-Magnani}) can be seen as a variational limit of ``solid functionals'' defined
in the symplectic manifold $(\Omega,\omega)$ that has $M$ as boundary (notice that similar approximation ``from
within $M$'' are already known, at least in the model case $\he n$: see \cite{Monti-SerraCassano}.)

More precisely, inspired by \cite{ABS2},
we show that the perimeter in $(M,\theta)$ is the $\Gamma$-limit of a family of ``phase transition'' functionals with ``low
dimensional tension effect'' in $\Omega$.

Let us start by introducing the setting of our results.
Let $\Omega$ be a bounded open set in a Stein manifold of complex dimension $N=n+1$, with symplectic form $\omega$. A complex manifold $\Sigma$,
endowed with a complex structure $J$,
 is said a Stein manifold if it admits an exhausting $J$-convex function $\phi$. We recall that $\Sigma$ is endowed with a Riemannian metric $g$ associated with $\omega$ and $J$.
We assume that $\Omega=\{ \phi < c\}$ is a sublevel set of $\phi$.  Then its boundary $M=\partial \Omega$, of real dimension $2n+1$, inherits a
natural  contact structure $(M,\theta)$, where $\theta$ is (roughly speaking) the restriction to $M$ of the $1$-form $\xi$, the contraction of the symplectic form $\omega$
along the so-called Liouville vector field $X_0$, that plays the role of the normal vector to $M$.

All precise definitions will be given in Section \ref{section-geometry}, but the idea is that bounded open sets
 in Stein manifolds are the natural generalization of domains of holomorphy in $\mathbb C^n$, having a contact manifold as boundary.
We denote also by $dy$  the volume element in $\Omega$ with respect to the metric compatible with the symplectic form $\omega$,
 and  $dv_\theta:=\theta \wedge (d\theta)^{N-1}$ the volume element in $M$ with respect to the contact form $\theta$.

Let $V$ be a double well potential, i.e, a function $V:\mathbb R\to \mathbb R$ satisfying
$$V(0)=V(1)=0,\quad V>0 \,\text{ in }\,\mathbb R\backslash\{0,1\}.$$
Given $\eps>0$ and $\lambda_\eps>0$, we define the energy functional
\begin{equation}\label{functional eps}
F_\eps (  u) :=  \eps \int_\Omega f(y, D  u(y))\, dy + \lambda_\eps \int_M V(\trace   u) \, dv_\theta,
\end{equation}
where $Du$ denotes the Riemannian gradient of $u$.
The first term in the functional $F_\eps(  u)$ is essentially the Dirichlet energy on $\Omega$ inherited from the sub-Riemannian structure of $(\Omega,\ker \xi)$, and it will be precisely written in Section \ref{section-geometry} after we have introduced all the necessary notations.  Thus $F_\varepsilon$ will be well defined  if $  u \in W^{1,2}(\Omega)$; we assign it the value infinity otherwise. Note that functions in this space have a well defined trace $\trace   u$ on $M$ with respect to the normal $X_0$.

The second term in the functional, coming from a double well potential (on the boundary), creates a phase transition on the boundary $M$ as $\varepsilon \to 0$. Here the sub-Riemannian geometry of $M$ plays an essential role in the understanding of the $\Gamma$-limit of the functional as $\varepsilon\to 0$, and this is the main innovation of the present paper.  \\

The model we have in mind is $M$ the $n$-Heisenberg group $\mathbb H^n$ and $\Omega=\mathbb H^n\times \mathbb R^+$, which is the flat model in this geometry; Indeed, by the Darboux Theorem, any $(2n+1)$-dimensional contact manifold is locally contact-diffeomorphic to the $n$-Heisenberg group (see e.g. Theorem 5.1.5, \cite{AMR}).
In this model case the functional reduces to \eqref{model-functional}.

For a general review on Heisenberg groups and their properties, we
refer to \cite{BLU}, \cite {Gromov}, \cite{stein}, and \cite {VarSalCou}.
We limit ourselves to fix some notations, following \cite{FSSC_advances}. The Heisenberg group
 $\he n$  is identified  with $\rn {2n+1}$ through exponential
coordinates. A point $p\in \he n$ is denoted by
$p=(\eta,t)$, with $\eta\in\rn{2n}$
and $t\in\R$.
   If $p$ and
$p'\in \he n$,   the group operation is defined as
\begin{equation*}
p\cdot p'=\big(\eta+\eta', t+t' + \frac12 \sum_{j=1}^n(\eta_j \eta_{j+n}'- \eta_{j+n} \eta_{j}')\big).
\end{equation*}

For fixed $q\in \he n$ and for $r>0$, left translations $\tau_q:
\he n\to\he n$ and
   not isotropic dilations $\gd_r:  \he n\to\he n$ are  defined as
\begin{equation*}\label{auto}
\tau_q(p):=q\cdot p\qquad \text{and as } \quad \delta_r(p)
:=(r\eta,r^2t).
\end{equation*}
    We denote by  $\mfrak h$
 the Lie algebra of the left
invariant vector fields of $\he n$. The standard basis of $\mfrak
h$ is given, for $i=1,\dots,n$,  by
\begin{equation*}
W_i^{\mathbb H} := \partial_{\eta_i}-\frac12 \eta_{i+n} \partial_{t},\quad W^{\mathbb H}_{i+n} :=
\partial_{\eta_{i+n}}+\frac12 \eta_{i} \partial_{t},\quad T :=
\partial_{t}.
\end{equation*}
The only non-trivial commutation  relations are $
[W^{\mathbb H}_{j},W^{\mathbb H}_{j+n}] = T $, for $j=1,\dots,n.$

The {\it horizontal subspace}  $\mfrak h_1$ is the subspace of
$\mfrak h$ spanned by $W^{\mathbb H}_1,\dots,W^{\mathbb H}_{2n}$.
Coherently, from now on, we refer to $W^{\mathbb H}_1,\dots,W^{\mathbb H}_{2n}$
(identified with first order differential operators) as to
the {\it horizontal derivatives}, and we write
$$ \mathbf{W}^{\mathbb H}:=
\{W_1^{\mathbb H}, \dots, W_{2n}^{\mathbb H}\}.
$$
Let $g_{\mathbb H}=g_{\mathbb H}(\cdot,\cdot)$ be the Riemannian metric
on $\he n$ making $W_1^{\mathbb H},
\dots, W_{2n}^{\mathbb H}, T$ orthonormal. We shall denote it by
$ \langle \cdot,\cdot\rangle_{ \mathbb H}$.
 We denote by $\nabla_{\mathbb H}$ the {\it horizontal gradient}
$$
\nabla_{\mathbb H}:=(W_1^{\mathbb H},\dots,W^{\mathbb H}_{2n}).
$$
Denoting  by $\mfrak h_2$ the linear span of $T$, the $2$-step
stratification of $\mfrak h$ is expressed by
\begin{equation*}
\mfrak h=\mfrak h_1\oplus \mfrak h_2.
\end{equation*}
    The dual space of $\mfrak h$ is denoted by $\cov 1$.  The  basis of
$\cov 1$,  dual to  the basis $\{W_1^{\mathbb H},\dots,W^{\mathbb H}_{2n},T\}$  is the family of
covectors $\{d\eta_1,\dots, d\eta_{2n},\gt_0\}$ where
$$ \gt_0
:= dt -\frac12 \sum_{j=1}^n (\eta_jd\eta_{j+n}-\eta_{j+n}d\eta_j)$$
is called the {\it contact form} in $\he n$.\\

%
In this particular case, the functional \eqref{functional eps} is written as
\begin{equation}\label{model-functional}\begin{split}
E_\eps (  u) :=
 \eps \int_{\mathbb H^n\times [0,\infty)} \Big(\sum_{j=1}^{2n}\ (W_j^{\mathbb H}   u)^2 +(\partial_z   u)^2\Big)\,dv_{\theta_0} dz
 + \lambda_\eps \int_{\mathbb H^n} V(\trace   u) \, dv_{\theta_0}.
\end{split}\end{equation}
where $dv_{\theta_0}=d\eta\,dt$. Here we realize that our functional corresponds to a hypoelliptic Dirichlet energy functional with a boundary phase transition on a contact manifold.

In general throughout this paper, if $u$ is a real function defined on a smooth manifold and $X$ is a smooth tangent vector field, we shall write
$$Xu:=\mathcal L_X u,$$
to denote the Lie derivative of $u$ along $X$.

Let us now state our main theorem, a \emph{boundary} $\Gamma$-convergence result. For the rest of the paper, we will assume that
\begin{equation}\label{lambda}
\lim_{\eps\to 0}\eps\log \lambda_\eps=\kappa \quad\text{for some constant } \kappa\in (0,\infty).
\end{equation}
We also define the limit functional on $M$ as
\begin{equation}\label{functional lim}
F(v) =
\left\{
\begin{aligned} &
 \mathbf c \, \|S_v\|_\theta\quad   \mbox{if } v\in BV_\theta(M,\{0,1\}),
\\&
+\infty \quad
  \mbox{otherwise,}  \end{aligned}
\right.
\end{equation}
 where $\mathbf c=\kappa/\pi$.  Here $\|\partial A\|_\theta$ denotes the intrinsic perimeter measure of the set $A\subset M$ associated with the contact form $\theta$, and $S_v=\partial \{v \equiv 1\}$ the singular set of $v\in BV_\theta(M,\{0,1\})$. Precise definitions will be given in Section \ref{section-subriemannian}.

\begin{theorem} \label{theorem}
For $\varepsilon>0$, consider the functional $F_\varepsilon:W^{1,2}(\Omega)\to [0,+\infty]$, %
Under scaling \eqref{lambda} we have that:
\begin{itemize}
\item[\emph{i)}] Given a sequence  $\{ {u}_\eps\}$  such that $F_{\eps}( {u}_\eps)$ is bounded when $\eps\to 0$, then $\{\trace  {u}_\eps\}$ is pre-compact in $L^1(M)$ and every cluster point belongs to
    $BV_\theta(M,\{0,1\})$.

\item[\emph{ii)}] Lower bound inequality: for every $v\in BV_\theta(M,\{0,1\})$ and every sequence $\{ {u}_\eps\}\subset W^{1,2}(\Omega)$ such that $\trace  {u}_\eps \to v$ in $L^1(M)$, there holds
$$\liminf_{\eps\to 0} F_\eps( {u}_\eps)\geq F(v).$$

\item[\emph{iii)}] Upper bound inequality: for every $v\in BV_\theta(M,\{0,1\}) $ there exists a sequence $\{ {u}_\eps\}\subset W^{1,2}(\Omega)$ such that $\trace  {u}_\eps \to v$ in $L^1(M)$ and
$$\lim_{\eps\to 0} F_\eps( {u}_\eps)= F(v).$$
\end{itemize}

\end{theorem}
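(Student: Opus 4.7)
I plan to follow the Alberti-Bouchitt\'e-Seppecher three-step template (compactness, lower bound, upper bound), transplanted to the sub-Riemannian contact setting. The governing heuristic is capacitary: if a trace $v$ on $M$ jumps across a smooth piece $S$ of intrinsic area $a$, the minimal bulk Dirichlet energy needed to accommodate the jump with a transition layer of thickness $\delta$ in the normal direction behaves asymptotically like $\pi^{-1}|\log\delta|\,a$, since near a generic point of $S$ the problem slices to a two-dimensional capacity of a segment in a half-plane. Balancing this bulk cost against $\lambda_\eps\int_M V(\trace u)\,dv_\theta$ forces $\delta\sim\lambda_\eps^{-1}$, and the hypothesis $\eps\log\lambda_\eps\to\kappa$ then yields exactly $\mathbf c=\kappa/\pi$.

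\textbf{Compactness and upper bound.} For (i), from $F_\eps(u_\eps)\le C$ and $\lambda_\eps\to\infty$ I get $\int_M V(\trace u_\eps)\,dv_\theta\to 0$, so $\trace u_\eps\to v\in\{0,1\}$ in measure. To upgrade this to $L^1(M)$-precompactness with $v\in BV_\theta(M,\{0,1\})$, I control the horizontal variation of $\trace u_\eps$ by an intrinsic Modica-Mortola estimate $|X_j\trace u_\eps|\lesssim \sqrt{f(\,\cdot\,,Du_\eps)\,V(\trace u_\eps)}$ along the integral curves of the horizontal fields $X_j$ tangent to $M$, extracted from the harmonic extension into $\Omega$ and Young's inequality; a Rellich-type theorem in the contact manifold then closes the argument. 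For the upper bound (iii), by approximation it is enough to realize a recovery sequence when $E=\{v=1\}$ has smooth non-characteristic boundary in $M$; near $\partial E$ I set $u_\eps=\Phi_\eps(d_\theta,z)$, with $d_\theta$ the intrinsic horizontal signed distance to $\partial E$ in $M$, $z$ the symplectic normal coordinate into $\Omega$, and $\Phi_\eps$ the optimal two-dimensional Alberti-Bouchitt\'e-Seppecher profile of transition width $\lambda_\eps^{-1}$; away from a tube around $\partial E$ put $u_\eps\equiv 0$ or $1$. A tubular coarea computation in the variables $(d_\theta,z)$ gives $F_\eps(u_\eps)\to (\kappa/\pi)\|S_v\|_\theta$, with the characteristic set of $\partial E$ contributing nothing since it is $\|S_v\|_\theta$-negligible.

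\textbf{Lower bound: the main obstacle.} For (ii), pass to a subsequence so that the energy measures $\mu_\eps=\eps f(\cdot,Du_\eps)\,dy+\lambda_\eps V(\trace u_\eps)\,dv_\theta$ converge weakly-$*$ on $\overline\Omega$ to a positive Radon measure $\mu$. By Besicovitch differentiation, (ii) reduces to $d\mu/d\|S_v\|_\theta\ge\kappa/\pi$ at $\|S_v\|_\theta$-a.e.\ $p_0\in S_v$. I fix such a non-characteristic Lebesgue point; by Darboux, the contact structure of $(M,\theta)$ near $p_0$ identifies with the flat $(\mathbb H^n,\theta_0)$, and the Liouville transverse direction identifies a neighborhood of $p_0$ in $\overline\Omega$ with $\mathbb H^n\times [0,\varrho)$. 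Performing the anisotropic Heisenberg blow-up centered at $p_0$, $v$ converges to the indicator of a horizontal half-space $H\subset\mathbb H^n$, and $F_\eps$ converges (to leading order) to the model functional \eqref{model-functional}. Slicing $\mathbb H^n\times [0,\infty)$ along the two-plane spanned by $\partial_z$ and the horizontal conormal to $\partial H$ reduces the minimization to a one-parameter family of the planar Alberti-Bouchitt\'e-Seppecher problems, each yielding the optimal density $\kappa/\pi$, and Fubini in Carnot-Carath\'eodory coordinates reconstructs the lower bound. The genuinely delicate steps, which I expect to be the main obstacle, are (a) showing that the Riemannian bulk energy of $\Omega$ collapses to the flat horizontal Heisenberg energy under the parabolic rescaling, with the vertical direction $T$ crushed to lower order, so that the transverse anisotropy of $g$ does not contaminate the leading-order computation, and (b) legitimizing the two-dimensional slicing in a sub-Riemannian geometry that is not a product, especially around the set of characteristic points, which must be controlled to have negligible $\|S_v\|_\theta$-measure.
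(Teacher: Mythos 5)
Your high-level picture — line-tension energy, bulk cost $\sim(\eps/\pi)|\log\lambda_\eps|$ per unit intrinsic area, the three-step $\Gamma$-convergence template, reduction to the Heisenberg model via Darboux, slicing along the conormal, and the recovery sequence built from a planar profile of the signed intrinsic distance — matches the spirit of the paper. The upper-bound sketch in particular is essentially the argument of Section~\ref{sec6} (polar profile $w_\eps$, coarea with the Eikonal equation \eqref{Eikonal-equation}, and the Minkowski-content theorem \ref{Minkowski}). But two of the steps as you describe them contain genuine gaps.

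First, the compactness argument you propose is not well-posed. You write that you control $|X_j\trace u_\eps|$ by $\sqrt{f(\cdot,Du_\eps)\,V(\trace u_\eps)}$. This is the Modica--Mortola Cauchy--Schwarz trick, but it is unavailable here: $f(\cdot,Du_\eps)$ lives in the bulk $\Omega$ while $V(\trace u_\eps)$ lives only on the boundary $M$, so the product never meets. There is no interior double-well potential in $F_\eps$, and the bulk Dirichlet term alone does \emph{not} control the horizontal total variation of the trace — it only controls a fractional $H^{1/2}$-type seminorm of the trace. The paper therefore proceeds quite differently: it slices $\he n\times\R_+$ along horizontal lines (Theorem~\ref{fubini-thm}, from Montefalcone), applies on each 2D slice the sharp trace inequality of Theorem~\ref{trace-thm} to get a one-dimensional Gagliardo $H^{1/2}$ energy $G_\eps$, uses the one-dimensional ABS compactness/liminf result (Theorem~\ref{liminf-trace-thm}), and then reconstructs $L^1$-precompactness on the $2n$-dimensional boundary via the factorization Theorem~\ref{morbidelli} (Morbidelli), which expresses an arbitrary group displacement as a bounded composition of horizontal displacements — the substitute for Fréchet--Kolmogorov in a group with nontrivial commutators. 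Without this chain your ``Rellich-type theorem'' has nothing to feed on.

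Second, and more subtly, your lower-bound plan relies on Besicovitch differentiation to pass from a pointwise density estimate $d\mu/d\|S_v\|_\theta\ge\kappa/\pi$ to the measure inequality $\mu\ge\mathbf c\|S_v\|_\theta$. This is exactly the step that \emph{fails} in general Carnot--Carathéodory spaces when one works with the centered density along metric balls: by Magnani's counterexample, the area formula expressing a Radon measure as the integral of its centered density against the spherical Hausdorff measure is false already in $\he1$ (see Remark~\ref{centered} and the discussion of \eqref{introduz.0}). Since the Carnot--Carathéodory distance is forced upon you (it is what satisfies the Eikonal equation and links to Minkowski content), you cannot switch to a friendlier gauge. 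The paper circumvents this by proving Theorem~\ref{august 5}, an indirect density-to-measure comparison that uses the \emph{centered} Hausdorff measure $\mathcal C^m$ and Edgar's estimates (Theorems~\ref{estimedgar}, \ref{main}, \ref{areacentredhaus}), together with the absolute continuity $|{\bf W}^0\chi_E|\ll\mathcal H^{2n+1}\res\partial E$ and a Lebesgue decomposition of $\mu\res\partial E$. This is the content of the entire Appendix (Section~\ref{densities}) and is the principal new obstacle of the sub-Riemannian setting; your sketch does not identify it. Relatedly, you skip the straightening/freezing reduction of Section~\ref{straight freeze} (Lemma~\ref{deriv-z}, Proposition~\ref{model-prop}), whose whole point is to control the error of freezing the vector fields at $z=0$ \emph{using only the derivatives appearing in the functional}, not the full Riemannian gradient — a straightforward mean-value estimate would leak into directions the functional does not control, so this reduction is not merely bookkeeping.
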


The inspiration for this theorem comes from the Riemannian case. The classical theorem for phase transitions of Modica-Mortola states that a Dirichlet energy functional with a double well potential (in the interior) $\Gamma$-converges to the area functional, and thus, phase transitions happen at a minimal surface (see the survey paper \cite{Alberti:survey} or \cite{MoMo}, for instance). Later, Alberti, Bouchitt\'e and Seppecher \cite{ABS2} considered an energy functional on domain $\Omega\subset \mathbb R^3$ with a double well potential defined on the boundary of $\Omega$, which is a closed surface $M$. In this case the $\Gamma$-limit leads to a phase transition problem on the boundary surface $M$. This problem comes in relation to a model in capillarity with line tension effect.

Here we consider the sub-Riemannian version of \cite{ABS2}, in which the phase transition occurs at the boundary of a complex domain $\Omega$, which is a sub-Riemannian (contact) manifold $M$. Although the structure of the proof is similar to the Riemannian case, the main difficulties, detailed below, come precisely from the fact that the sub-Laplacian is a hypoelliptic, but not elliptic, operator,  and from the intrinsic geometry of a contact manifold.

The first $\Gamma$-convergence result in the sub-Riemannian setting is by Monti and Serra Cassano \cite{Monti-SerraCassano}, where they show the analog of the Modica-Mortola theorem for interior phase transitions in a  subdomain $\Omega$ in the framework of Carnot-Carath\'eodory spaces. As a particular case, their result holds in the case of the Heisenberg group, which is the flat model in contact geometry.

In contrast, looking at boundary phase transitions on complex domains presents several difficulties that one needs to deal with.
Therefore, we give now an overview of the paper, stressing the points at which we cannot plainly
traslate Euclidean techniques to our geometric setting, but we have to use new approaches or new
technical arguments.

First, in order to follow the methods in \cite{ABS2} for the Riemannian setting, one needs to compare our domain $\Omega$ to a product $M\times[0,\sigma)$ while still preserving the complex structure. However, in the process of flattening one needs to control the error in this procedure only by means of the derivatives appearing in the functional \eqref{model-functional} and not of the whole gradient. This is the content of Section \ref{straight freeze}.

Second, while there is an extensive literature on sub-Riemannian geometry for the Heisenberg group, the Carnot-Carath\'eodory theory on a general contact manifold has just recently been developed in \cite{Ambrosio-Ghezzi-Magnani}.  In \cite{Ambrosio-Ghezzi-Magnani}, the authors developed the theory of perimeter and BV
functions, but several results needed in our proofs were not available. One of the missing concepts was the Eikonal equation for the Carnot-Caratheodory (CC) distance, which we address in Section \ref{subsection:Eikonal}. Of course, the  Eikonal equation holds in the viscosity sense in the CC setting (see  Corollary 2.36 and  Remark 2.37. in \cite{Dragoni}), but we need a pointwise identity.

Section \ref{sec4} deals with the proof of the compactness and the lower bound inequality for the model functional \eqref{model-functional}. This part essentially follows, as in the Riemannian case, using a slicing theorem by \cite{Mon} to reduce the problem to a one dimensional one.

In Section \ref{sec5}, we prove point i) and ii) of Theorem \ref{theorem}. To do that, we need to pass from the corresponding results for the flat model, established in Section \ref{sec4}, to the ones for the original functional. In doing that, a crucial issue is to compare our boundary contact manifold to the Heisenberg group near a given point, in the spirit of the blow up theorems by \cite{Ambrosio-Ghezzi-Magnani}. Of course, the starting point is Darboux theorem. Let us give now a list of the difficulties we have subsequently to deal with.
Precise technical features are described in Remark \ref{centered}. In the Euclidean setting, for a smooth hypersurface $S$ basically
all reasonable notions of surface measure agree: De Giorgi perimeter, spherical Hausdorff measure with respect to Euclidean balls, as well as Minkowski content. Because of this,
in  \cite{ABS2} the authors use systematically the spherical Hausdorff measure. In a contact manifold the situation is different: indeed it is natural to formulate
our results in terms of perimeter and Minkowski content, and we are forced to use the Carnot-Carath\'eodory distance on the contact manifold, since it satisfies
the Eikonal equation. On the other hand, the proof of the liminf inequality (with exact constants)  is reached in  \cite{ABS2} by means of the estimate of the density of a suitable
measure associated with the functional, yielding a comparison with the Carnot-Carath\'eodory spherical Hausdorff measure. Unfortunately, an explicit
representation formula for the perimeter in terms of the Carnot-Carath\'eodory spherical Hausdorff measure is not known, and we have to use an
indirect comparison argument, that is stated in Theorem \ref{august 5}.

Many of the results that are needed are summarized later in Section \ref{densities}, as an appendix for the paper (see also \cite{FSSC_NA}).
Finally, Section \ref{sec6}, mostly analytical, concludes the proof of the main theorem, establishing the upper bound inequality (point iii) in Theorem \ref{theorem}).

\section{Reduction to a model problem}

\subsection{Geometric setting}\label{section-geometry}

We refer to \cite{CE} for an introduction to the results in this section.

\begin{definition}\label{phi} A complex manifold $\Sigma$ is said a \emph{Stein manifold } if admits
an \emph{exhausting $J$-convex function $\phi$}. To be a complex manifold means that:
\begin{itemize}
\item[\emph{i)}] $\Sigma$ is a smooth manifold of real dimension $2N$, endowed with an endomorphism
(the \emph{complex structure}) $J:T\Sigma \to T\Sigma$ satisfying $J^2=-I$ on each fiber;
\item[\emph{ii)}] \emph{$J$ is integrable}, i.e. $J$ is induced by complex coordinates on $\Sigma$.
\end{itemize}
Let now $\phi:\Sigma\to\R$ be a smooth function. We say that $\phi$ is
an exhausting function if:
\begin{itemize}
\item[\emph{iii)}] $\inf\phi > -\infty$;
\item[\emph{iv)}] $\phi^{-1} (K)$ is compact for any compact set $K\subset \R$.
\end{itemize}
We denote by $d^{\mathbb C}$ the operator defined by
$$
\Scal{d^{\mathbb C}\phi}{X} := \Scal{d\phi}{JX}\quad\mbox{for all smooth tangent vector fields $X$.}
$$
We can associate with $\phi$ the 2-form
$$
\omega = \omega_\phi:= d\xi_\phi ,\quad\mbox{where}\quad \xi = \xi_\phi: = -d^{\mathbb C}\phi.
$$
Then the function $\phi$ is said $J$-\emph{convex} if
\begin{equation}\label{J convex}
\omega_\phi (X,JX) >0 \quad\mbox{for all smooth tangent vector fields $X$.}
\footnote{Through this paper, we denote by $\Scal{\cdot}{\cdot}$
the duality between cotangent $h$-vectors and tangent $h$-vectors.
Moreover,
for sake of simplicity we write sometimes $\omega_\phi(X,Y)$
for $\Scal{\omega_\phi}{X\wedge Y}$ and  $\xi_\phi(X)$ for $\Scal {\xi_\phi} {X}$).
}
\end{equation}

\end{definition}

\begin{proposition}[\cite{CE}]\label{omega g} Suppose $\Sigma$ is a Stein manifold with respect to the complex
structure $J$ and the exhausting $J$-convex function $\phi$. Then:
\begin{enumerate}[i)]
\item  $\omega_\phi$ is a symplectic form;
\item $\omega_\phi$ is $J$-invariant, i.e. $\omega_\phi(JX,JY)=\omega_\phi(X,Y)$
for all smooth tangent vector fields $X,Y$;
\item  the bilinear form on $T\Sigma$ given by  $g_\phi (X,Y) =g(X,Y):= \omega_\phi(X,JY)$ is a
Riemannian scalar product and hence a K\"ahler metric. In particular the Riemannian volume form
$dy$ coincides with the symplectic volume form $\omega_\phi^N$ ;
\item $J$ is a $g$-isometry;
\item if we denote by $\nabla_\phi = \nabla_g$ the gradient associated with the
Riemannian scalar product $g_\phi$, then the vector field $X_\phi:= \nabla_\phi\phi$
satisfies
$$
\mc L_{X_\phi} \omega_\phi = \omega_\phi
\quad\mbox{or, equivalently,}\quad
\xi_\phi = \imath_{X_\phi} \omega_\phi,
$$
i.e. $X_\phi$ is a Liouville vector field for the symplectic form $\omega_\phi$. Here $\imath_{X}$ denotes the contraction along the vector field $X$.
\item $g_\phi (X_\phi, Z)=0$ in $M$ for all $Z\in TM$.
\end{enumerate}
%
%
%
\end{proposition}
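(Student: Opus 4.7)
The plan is to establish the six assertions in order, drawing the main power from the integrability of $J$ (which forces $d d^{\mathbb C}\phi$ to be of type $(1,1)$) and from the pointwise positivity encoded in the $J$-convexity assumption \eqref{J convex}.

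For (i), $\omega_\phi = d\xi_\phi$ is automatically closed, so only non-degeneracy needs verification: given $X\neq 0$ at a point, \eqref{J convex} yields $\omega_\phi(X, JX) > 0$, so the covector $\imath_X\omega_\phi$ does not vanish. Part (ii) is the linchpin of the whole argument. Integrability of $J$ allows the splitting $d = \partial + \bar\partial$ on $\Sigma$, and a direct computation in holomorphic coordinates shows $d^{\mathbb C}\phi = i(\bar\partial - \partial)\phi$. Hence $\omega_\phi = -d d^{\mathbb C}\phi = 2i\,\partial\bar\partial\phi$ is a real $(1,1)$-form, and any real $(1,1)$-form on a complex manifold is invariant under $J$ when viewed as a $2$-form. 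I expect this step to carry the main technical weight, because it is the only place where integrability of $J$ (equivalently, the vanishing of the Nijenhuis tensor) genuinely enters.

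With $J$-invariance in hand, parts (iii) and (iv) are purely algebraic. Symmetry of $g_\phi$: using $J^2 = -I$ together with $J$-invariance,
\[ g_\phi(Y,X) = \omega_\phi(Y,JX) = \omega_\phi(JY, J^2 X) = -\omega_\phi(JY,X) = \omega_\phi(X,JY) = g_\phi(X,Y). \]
Positive-definiteness is precisely the $J$-convexity inequality. The identification of the Riemannian volume form with $\omega_\phi^N$ is the pointwise linear-algebra fact that, in any $g_\phi$-orthonormal frame adapted to $J$, the two top forms agree up to the standard normalizing constant. For (iv), noting that $g_\phi(X,Y) = \omega_\phi(X,JY)$ forces $\omega_\phi(X,Y) = g_\phi(X,-JY)$, one gets
\[ g_\phi(JX,JY) = \omega_\phi(JX, J^2 Y) = -\omega_\phi(JX,Y) = \omega_\phi(Y,JX) = g_\phi(Y,X) = g_\phi(X,Y). \]

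For (v), I would first check the contraction identity $\xi_\phi = \imath_{X_\phi}\omega_\phi$ pointwise: for any tangent vector $Z$, since $X_\phi = \nabla_g\phi$,
\[ \omega_\phi(X_\phi, Z) = g_\phi(X_\phi, -JZ) = -\Scal{d\phi}{JZ} = -\Scal{d^{\mathbb C}\phi}{Z} = \Scal{\xi_\phi}{Z}. \]
Cartan's magic formula combined with the closedness of $\omega_\phi$ then gives $\mc L_{X_\phi}\omega_\phi = d(\imath_{X_\phi}\omega_\phi) + \imath_{X_\phi}(d\omega_\phi) = d\xi_\phi = \omega_\phi$. Finally (vi) is immediate: $M = \{\phi = c\}$ is a regular level set, so for any $Z\in TM$ one has $g_\phi(X_\phi, Z) = \Scal{d\phi}{Z} = 0$.
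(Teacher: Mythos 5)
Your argument is correct in substance and is genuinely more self-contained than what appears in the paper. The paper simply delegates \emph{i)}, \emph{ii)}, \emph{iii)}, \emph{v)} to Cieliebak--Eliashberg (citing their Sections 2.1--2.2 and Lemma 2.20), and only writes out the short algebraic computation for \emph{iv)} and the one-line observation for \emph{vi)}. You instead supply direct proofs of all six items, with the Dolbeault decomposition $\omega_\phi = \pm 2i\,\partial\bar\partial\phi$ doing the work in \emph{ii)}, and you correctly identify this as the only place where integrability of $J$ enters. Your computations for \emph{iv)}, \emph{v)}, \emph{vi)} coincide with (or are trivial rearrangements of) the paper's. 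The upshot: your version trades an external citation for a short local-coordinate argument, making the proposition self-contained, at the price of having to invoke the $\partial$--$\bar\partial$ formalism explicitly.

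One small slip worth flagging: with the paper's convention $\Scal{d^{\mathbb C}\phi}{X}=\Scal{d\phi}{JX}$ and the standard $J\partial_{x_j}=\partial_{y_j}$, a direct check gives $d^{\mathbb C}\phi = i(\partial-\bar\partial)\phi$ rather than $i(\bar\partial-\partial)\phi$; equivalently, your two displayed identities $d^{\mathbb C}\phi = i(\bar\partial-\partial)\phi$ and $-d d^{\mathbb C}\phi = 2i\,\partial\bar\partial\phi$ are not simultaneously consistent (following them literally gives $-2i\,\partial\bar\partial\phi$). This is a sign bookkeeping issue only; either way $\omega_\phi$ is a real $(1,1)$-form, and the $J$-invariance you need for \emph{ii)} through \emph{iv)} is unaffected. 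You may also want to state the volume-form claim in \emph{iii)} as ``agree up to the factor $N!$'' rather than ``up to the standard normalizing constant,'' since the proposition as stated in the paper asserts equality of $dy$ with $\omega_\phi^N$ outright, and the normalization is part of the content.
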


\begin{proof} Assertions \emph{i)} and \emph{ii)} are proved in \cite{CE}, Sections 2.1 and 2.2;
assertions \emph{iii)} and \emph{v)} are contained in \cite{CE}, Lemma 2.20. As for \emph{iv)},
if $X,Y\in T\Sigma$
$$
g_\phi(JX,JY) = \omega_\phi(JY,J^2X)=-\omega_\phi(JY,X) = \omega_\phi(X,JY)=g_\phi(X,Y).
$$
Finally, vi) follows from the identity $g_\phi (X_\phi, Z)=\Scal{d\phi}{Z}$.
\end{proof}

The symplectic structure induced by $\phi$ is independent of $\phi$ in the following sense:

\begin{theorem}[\cite{CE}, Theorem 1.4.A] Let $\psi:\Sigma\to\R$ be another smooth
function satisfying iii), iv) in Definition \ref{phi}, and \eqref{J convex}. Then $(\Sigma,\omega_\phi)$ and
$(\Sigma,\omega_\psi)$ are symplectomorphic.
\end{theorem}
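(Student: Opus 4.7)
The plan is to prove this via Moser's isotopy method, which is the standard technique for establishing symplectic equivalence along a path of cohomologous symplectic forms.

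First, I would linearly interpolate: set $\phi_t := (1-t)\phi + t\psi$ for $t\in[0,1]$, and define $\omega_t := \omega_{\phi_t}$, $\xi_t := \xi_{\phi_t}$. Since $d$ and $d^{\mathbb C}$ are linear operators, the assignment $\phi\mapsto \omega_\phi$ is linear, so $\omega_t = (1-t)\omega_\phi + t\omega_\psi$. The $J$-convexity condition \eqref{J convex} is also linear in $\phi$, hence preserved by convex combinations; each $\omega_t$ is therefore a genuine symplectic form (closedness is automatic because $\omega_t = d\xi_t$ is exact). Conditions iii)--iv) in Definition \ref{phi} also pass to $\phi_t$: a convex combination of functions bounded below is bounded below, and $\phi_t\ge \min\{\phi,\psi\}$ forces $\{\phi_t\le c\}\subset \{\phi\le c\}\cup\{\psi\le c\}$, which is compact.

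Next, I would set up the Moser equation. The goal is a time-dependent vector field $X_t$ whose flow $\Phi_t$ satisfies $\Phi_0=\mathrm{id}$ and $\Phi_t^*\omega_t=\omega_\phi$. Differentiating this identity in $t$, applying Cartan's magic formula, and using $d\omega_t=0$ yields $d(\iota_{X_t}\omega_t)=-\dot\omega_t$. Since $\dot\omega_t=\omega_\psi-\omega_\phi=d(\xi_\psi-\xi_\phi)$, it suffices to solve the pointwise algebraic equation
$$
\iota_{X_t}\omega_t \;=\; \xi_\phi-\xi_\psi \;=\; d^{\mathbb C}(\psi-\phi),
$$
which admits a unique smooth solution $X_t$ because each $\omega_t$ is non-degenerate.

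The hard part is the integration step: on the non-compact manifold $\Sigma$ the vector field $X_t$ need not be complete, and in general a global Moser isotopy can fail to exist. The plan is to exploit the exhausting property together with the Liouville field $X_{\phi_t}=\nabla_{\phi_t}\phi_t$ provided by Proposition \ref{omega g}\emph{(v)}. Concretely, for a fixed $p\in\Sigma$ one would track $h(t):=(\phi_t\circ\Phi_t)(p)$ and use the algebraic identity $\iota_{X_t}\omega_t=d^{\mathbb C}(\psi-\phi)$ to bound $|h'(t)|$ in terms of the derivatives of $\psi-\phi$ and the K\"ahler metric $g_{\phi_t}$ of Proposition \ref{omega g}\emph{(iii)}; a Gronwall-type estimate would then trap $\Phi_t(p)$ inside a sublevel set $\{\phi_t\le C\}$, which is compact by iv). Standard ODE theory then produces the flow on all of $[0,1]$, and $\Phi_1:\Sigma\to\Sigma$ is the desired symplectomorphism satisfying $\Phi_1^*\omega_\psi=\omega_\phi$.
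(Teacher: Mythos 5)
The paper does not prove this theorem; it is quoted as Theorem 1.4.A of Cieliebak--Eliashberg \cite{CE}, so there is no internal argument to compare against. Your proposal must therefore stand on its own, and there is a genuine gap in the last step.

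The first two stages are correct. The map $\phi\mapsto\omega_\phi=-dd^{\mathbb C}\phi$ is linear and the $J$-convexity condition is convex, so $\omega_t := \omega_{\phi_t}$ with $\phi_t=(1-t)\phi+t\psi$ is an exact symplectic form for every $t\in[0,1]$, and the algebraic Moser equation $\iota_{X_t}\omega_t = \xi_\phi-\xi_\psi$ has a unique smooth solution $X_t$. In fact, by the K\"ahler identity $\iota_{\nabla_{g_{\phi_t}} f}\,\omega_t=-d^{\mathbb C}f$, one has the clean formula $X_t = \nabla_{g_{\phi_t}}(\phi-\psi)$: the Moser field is the gradient of $\phi-\psi$ in the time-dependent K\"ahler metric of Proposition~\ref{omega g}.

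The gap is in the integration step, which you correctly identify as the hard part but do not actually carry out. Differentiating $h(t):=\phi_t(\Phi_t(p))$ and using $X_t=\nabla_{g_{\phi_t}}(\phi-\psi)$ gives
$$
h'(t) = (\psi-\phi)(\Phi_t(p)) + X_{\phi_t}(\phi-\psi)(\Phi_t(p))
= \big[(X_{\phi_t}-\mathrm{id})(\phi-\psi)\big](\Phi_t(p)),
$$
where $X_{\phi_t}=\nabla_{g_{\phi_t}}\phi_t$ is the Liouville field of Proposition~\ref{omega g}, part v). To run a Gronwall/comparison argument one needs a bound $|h'(t)|\le G(h(t))$ with $G$ growing slowly enough that the comparison ODE $y'=G(y)$ cannot blow up on $[0,1]$. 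No such bound is available in general: on the compact set $\{\phi_t\le C\}$ the supremum $G(C)$ of the right-hand side is finite, but nothing in the hypotheses controls how $G(C)$ grows with $C$ (for instance $\psi-\phi$, and hence $X_{\phi_t}(\psi-\phi)$, may grow exponentially in $\phi_t$), so one cannot rule out the trajectory $\Phi_t(p)$ escaping to infinity as $t\to 1^-$. Controlling this escape is precisely the nontrivial content of the theorem: the proof in \cite{CE} does not apply Moser naively to the linear homotopy, but first normalizes the $J$-convex functions at infinity (e.g.\ by post-composition with convex increasing reparametrizations, which preserves $J$-convexity and tames the Liouville dynamics) before deforming. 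Your proposal identifies the right homotopy and the right PDE, but stops short of the step that makes the non-compact Moser argument actually close.
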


Let now $\Sigma$ be a Stein manifold, and let $\phi$ be the associated exhausting function.
If $c\in\R$ is a regular value of $\phi$, we set  $\Omega_{\phi,c} = \phi^{-1}(]-\infty,c[)$.
Clearly $\Omega_{\phi,c}$ is a bounded open set in $\Sigma$ with smooth compact boundary
$M_\phi$. We assume here, for sake of simplicity, that $M_\phi$ has only one connected component.

From now on, the exhausting function $\phi$ and the regular level $c$ will be fixed, and we drop the corresponding indices in our notations
and thus we write  $\Omega:= \Omega_{\phi,c}$ and  $M=\partial\Omega$. In addition, we shall
write $X_0$ for the Liouville vector field $\nabla_\phi \phi$. We notice that
$X_0\neq 0$ in a neighborhood $\mathcal M$ of $M$ since $c$ is a regular
value of $\phi$ and $M$ is compact.

We denote by $T\Omega:= (\Omega,T\Omega,\pi)$ the tangent bundle of $\Omega$, and by $T_y\Omega$ the fiber
 of $T\Omega$ over $y\in \Omega$. Coherently, we denote by $g_y$ the Riemannian metric $g$
 on $T_y\Omega$, and by $\xi_y$ and $\omega_y$ the forms $\xi$ and $\omega$ at the
 point $y$. However, as customary in differential geometry, we drop the index $y$ whenever
 this does not lead to misunderstandings.  An analogous notation will be used for $TM$, the tangent bundle of $M$.

 Finally, we denote by $d$ the Riemannian distance on $\overline{\Omega}$ with respect to the metric $g$.\\

The next step consists in proving that there is a natural $(2N-1)$-distribution associated
with the the Liouville form $\xi$ in a neighborhood $\mathcal M$ of $M$.

\begin{proposition} \label{ker theta} Set $\mc H:=\ker \xi = \{X\in T\Omega; \; \imath_X\xi =0\} \subset T\Omega$. Then
\begin{enumerate}[i)]
\item $X_0\in \mc H$;
\item $\dim \mc H = 2N-1$ in $\mc M$;
\item $\mc H$ has a orthonormal basis of the form
$$\mc B:= \{X_0,Z_1,JZ_1,Z_2,JZ_2,\dots,Z_{N-1},JZ_{N-1}\}$$
(in particular, $Z_1,JZ_1,\dots, Z_{N-1},JZ_{N-1}\in TM$ on $M$);
\item $\omega(Z_i,Z_j)=0$ for all $i,j=1,\dots,N-1$, $\omega(JZ_i,JZ_j)=0$ for all $i,j=1,\dots,N-1$,
$\omega(Z_i,JZ_j)=0 $ for all $i,j=1,\dots,N-1$, $i\neq j$, and $\omega(Z_i,JZ_i)=1$ for all $i=1,\dots,N-1$;
\item $\xi ([JZ_i, Z_i])=1$ for $i=1,\dots,N$;
\item $\mc H + [\mc H,\mc H] =T\Omega$, so that $(\mc H, g)$ is a regular sub-Riemannian structure on $\Omega$.
\end{enumerate}

\end{proposition}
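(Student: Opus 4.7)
The plan is to exploit two identities from Proposition \ref{omega g}: $\xi = \imath_{X_0}\omega$ and $g(X,Y) = \omega(X, JY)$. Combined with $J^2 = -I$ and the $J$-invariance of $\omega$, these let us translate every statement about $\mc H = \ker \xi$ into Riemannian orthogonality conditions on the fibres of $T\Omega$.

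First, (i) is immediate: $\xi(X_0) = \omega(X_0, X_0) = 0$ by antisymmetry of $\omega$. For (ii), I would observe that, by $J$-invariance of $\omega$,
\[
g(JX_0, Y) = \omega(JX_0, JY) = \omega(X_0, Y) = \xi(Y),
\]
so $\mc H$ is exactly the $g$-orthogonal complement of $JX_0$ in $T\Omega$. Since $X_0 \neq 0$ on $\mc M$ and $J$ is a $g$-isometry, $JX_0 \neq 0$ there, giving $\dim \mc H = 2N-1$. The identity $g(X_0, JX_0) = \omega(X_0, J^2X_0) = -\omega(X_0, X_0) = 0$ also confirms $X_0 \perp_g JX_0$, consistent with (i).

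For (iii), I would set $V := \{Y \in T\Omega : g(Y, X_0) = g(Y, JX_0) = 0\} \subset \mc H$, a $(2N-2)$-dimensional subspace. A direct check using the same identities shows $V$ is $J$-invariant, and the observation that $g(Z, JZ) = \omega(Z, J^2 Z) = -\omega(Z, Z) = 0$ means $Z$ and $JZ$ are automatically $g$-orthogonal for any $Z$. Iteratively picking unit vectors inside $V$ then produces the required $J$-adapted orthonormal basis $\{Z_1, JZ_1, \ldots, Z_{N-1}, JZ_{N-1}\}$ of $V$; adjoining $X_0$ (normalized to unit length) gives the stated basis of $\mc H$. For (iv), the key identity $\omega(Z_i, JZ_j) = g(Z_i, Z_j) = \delta_{ij}$ is direct from $g(X,Y) = \omega(X,JY)$; the other three identities follow either from $J$-invariance of $\omega$ or from the derived relation $\omega(X,Y) = -g(X,JY)$, obtained by combining the defining formula with $J^2 = -I$.

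For (v), I would apply the Cartan formula $d\xi(X,Y) = X\xi(Y) - Y\xi(X) - \xi([X,Y])$ to $X = JZ_i$, $Y = Z_i \in \ker \xi$, using $d\xi = \omega$, to obtain $\xi([JZ_i, Z_i]) = -\omega(JZ_i, Z_i) = \omega(Z_i, JZ_i) = 1$ by (iv). Then (vi) is immediate: $\mc H$ has codimension one in $T\Omega$ and, by (v), $[JZ_1, Z_1] \notin \mc H$, so $\mc H + [\mc H, \mc H] = T\Omega$ throughout $\mc M$. I do not anticipate a substantive obstacle, since the whole argument reduces to fibrewise linear algebra valid wherever $X_0 \neq 0$. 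The only care required is sign bookkeeping among the conventions for $\omega$, $g$, $J$ and the exterior derivative, in particular when using $\omega(X,Y) = -g(X,JY)$ in verifying (iv).
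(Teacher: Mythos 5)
Your proof is correct and follows essentially the same approach as the paper: fibrewise linear algebra combining $\xi=\imath_{X_0}\omega$, $g(X,Y)=\omega(X,JY)$, $J$-invariance, and Cartan's formula, with an iterative construction of the $J$-adapted orthonormal basis and the same Cartan-formula computation for (v) and (vi). The only noteworthy difference is your upfront identification $\ker\xi=(JX_0)^{\perp_g}$, which the paper does not state explicitly (it proves (ii) by contradiction via nondegeneracy of $\omega$), but which streamlines the dimension count in (ii) and makes $J$-invariance of the complement in (iii) transparent.
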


\begin{proof} To prove \emph{i)} we write
$$
\Scal{\xi}{X_0} = \imath_{X_0}\omega(X_0) = \omega(X_0,X_0)=0.
$$
Next, obviously $\dim\ker\xi\ge 2N-1$. Suppose \emph{ii)} fails to be true. Then for some $y\in\mc M$ and
for any $Y\in T_y\Omega$ in $\mc M$
$$
0=\Scal{\xi_y}{Y}_y = \omega_y(X_0,Y),
$$
which contradicts $X_0\neq 0$ since $\omega$ is symplectic.

To prove \emph{iii)}, we prove first that, if  $g(X,X_0)=0$,
then $\Scal{\xi}{JX}=0$. Indeed
\begin{equation}\label{5_24 eq:1}
\Scal{\xi}{JX} =\omega(X_0, JX) = g(X_0,X) =0.
\end{equation}
Consider now $X_0^\perp \cap \ker\xi$, the $g$-orthogonal complement of $X_0$ in $\ker\xi$,
that has dimension $2N-2$, and take an unit vector $Z_1\in X_0^\perp \cap \ker\xi$.
Take now $JZ_1$, that is a unit vector by Theorem \ref{omega g}, part \emph{iv)}. By \eqref{5_24 eq:1}
$JZ_1\in \ker\xi$. We have also
\begin{equation*}\begin{split}
g(X_0,JZ_1) &= \omega(X_0,J^2Z_1)=-\omega(X_0,Z_1)
\\&
= - \Scal{\imath_{X_0}\omega}{Z_1}
=\Scal{\xi}{Z_1}=0.
\end{split}\end{equation*}
Thus
$JZ_1\in X_0^\perp \cap \ker\xi$. Finally
$$
g(JZ_1,Z_1)=\omega (JZ_1,JZ_1) = 0.
$$
Summing up, $Z_1$ and $JZ_1$ are two orthonormal vectors in $X_0^\perp \cap \ker\xi$. We can take now
an unitary vector $Z_2\in \mathrm{span}\,\{X_0,Z_1,JZ_1\}^\perp\cap\ker\xi$. Arguing as above,
$Z_2$ and $JZ_2$ are two orthonormal vectors in $ \mathrm{span}\,\{X_0,Z_1,JZ_1\}^\perp\cap\ker\xi$.
Repeating the argument, we achieve the proof of \emph{iii)}.

Let us prove \emph{iv)}. Let $i\neq j$ be given. Thanks to the anti-commutativity of $\omega$, we can
assume $i<j$. Then  $\omega(Z_i,Z_j)=\omega(JZ_i,JZ_j) = g(JZ_i,Z_j)=0$, by
construction. In addition, if $i\neq j$, then $\omega(Z_i,JZ_j)= g(Z_i,Z_j) =0$, whereas
$\omega(Z_i,JZ_i)= g(Z_i,Z_i) =1$ for $i=1,\dots,N-1$. This achieves the proof of \emph{iv)}.

To prove \emph{v)}, we have only to recall that, by classical Cartan's formula
\begin{equation*}\begin{split}
1 & =\omega (Z_i,JZ_i) = d\xi (Z_i,JZ_i) = JZ_i\Scal{\xi}{Z_i} - Z_i\Scal{\xi}{JZ_i}
- \Scal{\xi}{[Z_i,JZ_i]}
\\
\hphantom{xxx}&= - \Scal{\xi}{[Z_i,JZ_i]}.
\end{split}\end{equation*}
Finally, \emph{vi)}  follows from \emph{ii)} and \emph{v)}.
\end{proof}

\begin{remark}
We can always take $Z_j$ and $JZ_j$, $j=1,\ldots,N-1$, that commute with $X_0$.
\end{remark}

Let us remind now the following well-known definition.

\begin{definition}\label{contact def} Let $M$ be a smooth $(2n+1)$-manifold. A 1-form $\theta$
is said a \emph{contact form} if $\theta\wedge ( d\theta)^{2n}  \neq 0$
on $M$. The set $\ker \theta \subset TM$ is called a \emph{contact distribution}.
Let $M_1$ and $M_2$ be two contact $(2n+1)$-manifolds endowed with the contact forms
$\theta_1$ and $\theta_2$. A smooth diffeomorphism $f:M_1\to M_2$ is said a
\emph{contact map} if $\theta_1 = f^*\theta_2$ and hence $f_*\ker\theta_1=\ker\theta_2$.
\end{definition}

The following result is well known:

\begin{proposition}\label{contact 1} Denote by $i:M \to \overline\Omega$ the natural embedding.
Then the 1-form $\theta:= i^*( \imath_{X_0}\omega)$ is a contact form on $M$,
and therefore $\ker\theta$ defines a contact distribution on $M$.
\end{proposition}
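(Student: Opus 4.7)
The plan is to exploit the two key facts already established: that $X_0$ is a Liouville vector field (so $\mathcal{L}_{X_0}\omega = \omega$), and that $X_0$ is transverse to $M$, which follows from Proposition \ref{omega g}(vi) together with $X_0 \neq 0$ on $\mathcal M$.

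First I would observe that $d\xi = \omega$ on all of $\Omega$. Indeed, since $\omega$ is symplectic one has $d\omega = 0$, and by Cartan's magic formula
\[
d\xi = d(\imath_{X_0}\omega) = \mathcal{L}_{X_0}\omega - \imath_{X_0}(d\omega) = \mathcal{L}_{X_0}\omega = \omega.
\]
Since pullback commutes with the exterior derivative, this gives immediately $d\theta = i^*\omega$ on $M$.

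Next, I would use the Leibniz rule for the interior product on the symplectic volume form $\omega^N$ (with $N = n+1$). A direct computation yields
\[
\imath_{X_0}\bigl(\omega^N\bigr) = N \,(\imath_{X_0}\omega) \wedge \omega^{N-1} = N\,\xi \wedge \omega^{n}.
\]
Pulling back via $i$ and using $i^*\omega = d\theta$, the right-hand side becomes $N\,\theta \wedge (d\theta)^n$. The left-hand side is the pullback by $i$ of the contraction of the (non-vanishing) volume form $\omega^N$ along $X_0$; since $X_0$ is transverse to $M$ (by Proposition \ref{omega g}(vi), $X_0$ is $g$-orthogonal to $TM$ and is nonzero on $\mathcal M$), the form $i^*(\imath_{X_0}\omega^N)$ is a nowhere-vanishing $(2n+1)$-form on $M$.

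Combining these two computations yields
\[
N\,\theta \wedge (d\theta)^n = i^*\bigl(\imath_{X_0}\omega^N\bigr) \neq 0 \quad\text{on } M,
\]
so $\theta$ is a contact form on $M$ and $\ker\theta$ is the associated contact distribution. I do not expect a serious obstacle here; the only subtle point is verifying that $i^*(\imath_{X_0}\omega^N)$ does not vanish, but this reduces to the standard linear-algebra fact that if $\Omega$ is a top form on $\Sigma$ and $X_0$ is transverse to the hypersurface $M$, then $i^*(\imath_{X_0}\Omega)$ is a top form on $M$, which I would justify in one line by working in a basis adapted to the splitting $T_y\Sigma = \mathbb{R}X_0 \oplus T_yM$ for $y \in M$.
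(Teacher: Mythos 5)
The paper states this proposition as ``well known'' (following \cite{CE}) and gives no proof, so there is no author argument to compare against. Your proof is correct and is the standard one: $d\xi = \omega$ follows from Cartan's formula together with the Liouville condition $\mathcal L_{X_0}\omega = \omega$ and $d\omega = 0$ (equivalently, it is immediate from the paper's own definitions, since $\xi = -d^{\mathbb C}\phi$ and $\omega := d\xi$); the graded Leibniz rule for $\imath_{X_0}$ on the even-degree form $\omega$ gives $\imath_{X_0}(\omega^N) = N\,\xi\wedge\omega^{N-1}$, whose pullback is $N\,\theta\wedge(d\theta)^n$; and the transversality of $X_0$ to $M$ (from Proposition \ref{omega g}(vi) and $X_0\neq 0$ near $M$, since $c$ is a regular value of $\phi$) makes $i^*(\imath_{X_0}\omega^N)$ a nowhere-vanishing top form on $M$. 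Each step is justified and the linear-algebra fact you invoke at the end is exactly the right closing observation. One small remark unrelated to your argument: the paper's Definition \ref{contact def} writes the contact condition as $\theta\wedge(d\theta)^{2n}\neq 0$, which is a typo for $\theta\wedge(d\theta)^n\neq 0$ (consistent with Remark \ref{v-theta}, where $dv_\theta = \theta\wedge(d\theta)^{N-1}$ with $N-1=n$); your computation correctly produces the $(d\theta)^n$ power.
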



\begin{remark}\label{v-theta} By the previous proposition, we can choose $dv_\theta :=\theta\wedge ( d\theta)^{N-1}$ as the volume form  in $M$.
For sake of simplicity, if $A\subset M$ we shall write $v_\theta (A)$ for $\int_A\, dv_\theta$.

Moreover (see e.g. \cite{Blair:book}) there exists a global vector field $T$ on $M$
satisfying
$\Scal{\theta}{T}=1$ and orthogonal to $\ker \theta$ with respect to the Riemannian metric induced
by $g$ on $TM$ (still denoted by $g$), that
is called the characteristic vector field or Reeb vector field
of the contact structure.

\end{remark}

\begin{proposition}\label{contact 2} The contact distribution $\ker\theta$ carries a natural symplectic structure
$$
d\theta = d i^*(\xi) = i^*(d\xi) = i^*\omega.
$$
\end{proposition}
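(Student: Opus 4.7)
The plan is to observe that the displayed chain of equalities is essentially a formal consequence of the definitions, so that the real substance of the statement is the non-degeneracy of $d\theta$ on $\ker\theta$ (which is what makes it a symplectic form on the distribution).

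First, I would derive the three equalities in one line. By Proposition~\ref{omega g}(v), the Liouville form satisfies $\xi = \imath_{X_0}\omega$, so the definition of $\theta$ in Proposition~\ref{contact 1} reads $\theta = i^*(\imath_{X_0}\omega) = i^*\xi$. Since exterior derivative commutes with pullback, and $\omega = d\xi$ by Definition~\ref{phi}, this gives
$$d\theta \;=\; d(i^*\xi) \;=\; i^*(d\xi) \;=\; i^*\omega,$$
which is exactly the claimed identity. What remains is to check that $d\theta$ restricts to a non-degenerate 2-form on the rank-$(2N-2)$ bundle $\ker\theta\subset TM$.

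For this I would use the orthonormal adapted frame built in Proposition~\ref{ker theta}(iii). The vectors $Z_1,JZ_1,\ldots,Z_{N-1},JZ_{N-1}$ are in $\ker\xi$ by construction, and they are $g$-orthogonal to $X_0$, which by Proposition~\ref{omega g}(vi) is normal to $M$; hence they are tangent to $M$ and annihilate $\theta = i^*\xi$, so they frame $\ker\theta$ at each point of $M$. For vectors $X,Y$ already tangent to $M$ one has $(i^*\omega)(X,Y)=\omega(X,Y)$, so the matrix of $d\theta=i^*\omega$ in this frame is precisely the one computed in Proposition~\ref{ker theta}(iv): the blocks $\omega(Z_i,Z_j)$ and $\omega(JZ_i,JZ_j)$ vanish, while $\omega(Z_i,JZ_j)=\delta_{ij}$, i.e.\ the standard symplectic matrix. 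Hence $d\theta|_{\ker\theta}$ is everywhere non-degenerate, so $(\ker\theta,d\theta)$ is a symplectic vector bundle over $M$.

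I do not anticipate any real obstacle here: the first half is formal and the second is read off from the adapted frame already constructed. One could also obtain non-degeneracy indirectly from the contact condition $\theta\wedge(d\theta)^{N-1}\neq 0$ of Proposition~\ref{contact 1}, since a non-trivial kernel of $d\theta$ inside $\ker\theta$ would force this top form to vanish; but the frame argument is more transparent and stays within the notation already in use.
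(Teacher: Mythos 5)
Your proof is correct, and for the non-degeneracy step you take a genuinely different route from the paper. Both you and the paper treat the displayed chain of equalities as a formal consequence of $\theta = i^*\xi$, $\xi = \imath_{X_0}\omega$, and $\omega = d\xi$, so the real content is the non-degeneracy of $d\theta$ on $\ker\theta$. You obtain it by reading off the Gram matrix of $\omega$ (hence of $i^*\omega$) in the orthonormal symplectic frame $\{Z_1, JZ_1, \ldots, Z_{N-1}, JZ_{N-1}\}$ of $\ker\theta$ constructed in Proposition~\ref{ker theta}\emph{(iii)}, using part~\emph{(iv)} to identify it with the standard symplectic matrix. The paper instead runs a radical argument at the level of the ambient symplectic form: given $X \in \ker\theta$ with $(i^*\omega)(X,Y) = 0$ for all $Y \in \ker\theta$, it decomposes $T_y\Omega$ at $y\in M$ as $di(T_xM) \oplus \mathbb{R}\,X_0$ (since $X_0$ is normal to $M$), computes $\omega(di(X),X_0) = -\xi(di(X)) = -\theta(X) = 0$, and concludes that $di(X)$ lies in the $\omega$-radical of $T_y\Omega$, whence $di(X)=0$ by non-degeneracy of $\omega$ and $X=0$ by injectivity of $di$. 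Both arguments are sound; yours is more concrete and recycles the computation already done in Proposition~\ref{ker theta}\emph{(iv)}, while the paper's avoids any choice of frame by leaning directly on the symplectic non-degeneracy of $\omega$ on the ambient tangent bundle. Your closing remark that non-degeneracy also follows from the contact condition $\theta\wedge(d\theta)^{N-1}\neq 0$ of Proposition~\ref{contact 1} is correct and is in fact the shortest route, though it offers less geometric insight into why the condition holds here.
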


\begin{proof} We have only to prove that $i^*\omega$ is non-degenerate on $\ker \theta$. To this end,
let $X\in \ker\theta$ be such that $i^*\omega (X,Y) =0$ for all $Y\in \ker\theta$. If $x\in M$, then, keeping
in mind that $i(x)=x$, we have
\begin{equation*}
0 = i^*\omega_x (X,Y) =  \omega_{i(x)}(di (X),di(Y)) .
\end{equation*}
We remark now that any tangent vector $Z$ to $\Omega$ at a point of $M$
can be written in the form $Z=di(Y)+\lambda X_0$ with $\lambda\in\R$ and $Y\in TM$, since
$X_0$ is normal to $TM$.
 On the other hand
$$
\omega_{i(x)}(di(X),X_0) = -\xi_{i(x)}(di(X)) = -\theta_x(X) =0,
$$
and hence $\omega_{i(x)}(di(X),Z)=0$ for all $Z\in T_{i(x)}\Omega$, achieving the proof of the proposition
since $di$ is injective.
\end{proof}

\begin{proposition}\label{contact 3} The vector fields $Z_j$ and $JZ_j$, $j=1,\dots,N-1$ (that belong to $T\Omega$), being tangent to $M$
at the points of $M$, can be identified with vectors in $\ker\theta\subset TM$ and are a symplectic basis of $\ker\theta$.
Moreover, $\ker\theta$ inherits the Riemannian metric  from the ambient space (denoted by the same letter $g$) and $Z_j$ and $JZ_j$, $j=1,\dots,N-1$ give
an orthonormal basis of $\ker\theta$.

\end{proposition}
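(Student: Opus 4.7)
The plan is to show that Proposition \ref{contact 3} is an almost direct consequence of the structural results already established in Propositions \ref{omega g}, \ref{ker theta}, and \ref{contact 2}. Everything follows by unpacking the definition of $\theta$ and keeping track of the embedding $i:M\hookrightarrow\overline{\Omega}$.

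First, I would verify that at points of $M$ the vectors $Z_j$ and $JZ_j$ are tangent to $M$. By the construction in Proposition \ref{ker theta}(iii), $Z_j$ and $JZ_j$ are $g$-orthogonal to $X_0$, and by Proposition \ref{omega g}(vi) the Liouville field $X_0$ is $g$-orthogonal to $TM$ along $M$ (equivalently, $X_0$ is a non-vanishing normal to $M$ in $\mc M$ since $c$ is a regular value of $\phi$). Hence $X_0^\perp \cap T_x\Omega = T_xM$ for every $x\in M$, and under this identification $Z_j, JZ_j$ can be viewed as elements of $TM$.

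Second, I would show that these vectors lie in $\ker\theta$. Since $\theta = i^*\xi$ by Proposition \ref{contact 1} and since $Z_j, JZ_j\in\ker\xi$ by Proposition \ref{ker theta}(iii), for $Y$ equal to any of the $Z_j$ or $JZ_j$ we get
\[
\Scal{\theta}{Y} = \Scal{i^*\xi}{Y} = \Scal{\xi}{di(Y)} = 0.
\]
Counting dimensions, $\dim\ker\theta = 2N-2$ and we have exactly $2N-2$ such vectors, which are linearly independent because they are $g$-orthonormal in $T\Omega$.

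Third, I would establish the symplectic and orthonormality relations. By Proposition \ref{contact 2}, $d\theta = i^*\omega$, so for $Y_1, Y_2$ tangent to $M$,
\[
d\theta(Y_1,Y_2) = \omega(di(Y_1),di(Y_2)).
\]
Applying this with $Y_1, Y_2 \in \{Z_1, JZ_1, \dots, Z_{N-1}, JZ_{N-1}\}$ and invoking Proposition \ref{ker theta}(iv) gives directly $d\theta(Z_i, Z_j)=d\theta(JZ_i,JZ_j)=0$, $d\theta(Z_i,JZ_j)=0$ for $i\neq j$, and $d\theta(Z_i,JZ_i)=1$, which is exactly the symplectic basis relation on $\ker\theta$. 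Finally, orthonormality with respect to the induced metric on $TM$ is automatic, since that metric is by definition the restriction of $g$, and the basis was already orthonormal in $T\Omega$.

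There is no real obstacle; the only point to be careful about is notational, namely the identification of a tangent vector to $M$ (seen as an element of $T\Omega|_M$) with its preimage under $di$, so that the formulas $\theta = i^*\xi$ and $d\theta = i^*\omega$ can be used unambiguously. Once this identification is made explicit, the statement is a direct corollary of what has already been proved.
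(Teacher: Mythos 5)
Your proof is correct and follows the same approach as the paper, which simply says ``It is enough to apply Theorem \ref{ker theta}, \emph{iv)}.'' You have usefully spelled out the details the paper leaves implicit: tangency of $Z_j, JZ_j$ to $M$ via their $g$-orthogonality to $X_0$ and the normality of $X_0$ to $M$, membership in $\ker\theta$ via $\theta = i^*\xi$, the dimension count, and the transfer of the symplectic relations through $d\theta = i^*\omega$.
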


\begin{proof} It is enough to apply Theorem \ref{ker theta}, \emph{iv)}.
\end{proof}

%
%

We are ready now to introduce our main object of study. We write $N=: n+1$. If $p$ is a tangent smooth vector field to $\Omega$, we denote
$$\Lambda(y,p):=  \sum_{j=1}^{n} g_y(  Z_j(y),p)^2 + \sum_{j=1}^{n} g_y(  JZ_j(y),p)^2 +g_{y} ( X_0(y),p)^2.$$
Let now $f:  T\Omega\to \mathbb R$ be a smooth function such that:
\begin{enumerate}
\item[\emph{H1.}] $0\le f(y,p) \le C\, g_{y}(p,p) $ for all $y\in \Omega$ and $p\in T_y\Omega$.;
\item[\emph{H2.}] for any $\sigma >0 $ small enough there exists a neighborhood $U_\sigma$ of $M$ in $\Omega$, $U_\sigma\subset\mc M$,  such that
\begin{equation*}
(1-\sigma)\Lambda(y,p) \le f(y,p) \le (1+\sigma)\Lambda(y,p)
\end{equation*}
for all $y\in U_\sigma$ and $p\in T_y\Omega$.
\end{enumerate}
If there is no way to misunderstanding, we denote by $\nabla=\nabla_g$ the Riemannian gradient in $\Omega$.
We notice that, if $X$ is any vector field on $\Omega$, then $g_y(X,\nabla_gu)^2 = |X u|_{g}^2$. Keeping in mind  that
$$
g_y(X,\nabla_g u) = \Scal{d u}{X}= \mc L_X u=Xu,
$$
we can write
\begin{equation}\label{functional 1}
 \int_{U_\sigma} \Lambda(y, \nabla  u(y))\, dy
 =\int_{U_\sigma}\Big(\sum_{j=1}^{n}
( Z_j   u)^2 + \sum_{j=1}^{n}
(  JZ_j  u)^2
+ (  X_0  u)^2\Big)\, dy.
\end{equation}

\subsection{Straightening the domain and freezing the functional}\label{straight freeze}

It is well known that, straightening the integral curve of $X_0$, we can transform the neighborhood $U_\sigma$ of $M$ into the cylinder $M\times [0,\sigma)$.
More precisely, we consider the map
$$
\Phi =\Phi(x,z):  M\times [0,\sigma) \to \Omega
$$
defined by
\begin{equation}\label{def-Phi}
\dfrac{\partial\Phi}{\partial z} = - {X_0}(\Phi)\quad\mbox{and } \Phi(x,0)=   i(x).
\end{equation}

If $\sigma>0$ is small enough, then $\Phi $ is a smooth diffeomorphism. We set now
$$
\tilde Z_j := (\Phi^{-1})_* Z_j, \quad \widetilde{JZ_j} := (\Phi^{-1})_* JZ_j, \quad j=1,\dots,n,
$$
and
$$\tilde \xi:=\Phi^*(\xi),\quad \tilde \omega:=\Phi^*(\omega).$$

In addition, we define the projection
$$
\pi: M\times [0,\sigma) \to M
$$
given by $\pi(x,z) = x$. We notice that, if $\alpha$ is a differential form on $M$,
then $\pi^*\alpha$ is its ``natural'' extension on $M\times [0,\sigma)$.

The following result follows straightforwardly by algebraic arguments.

\begin{lemma}\label{jguy} We remind that we have set $\theta:= i^*\xi$. Then we have:
\begin{itemize}
\item[i)] $\tilde\xi = e^{-z}\, \pi^*\theta$;
\item[ii)] $\tilde\omega = d(e^{-z}\, \pi^*\theta)$;
\item[iii)] $\ker \tilde\xi = \ker\theta \times  \mathbb R$.
\end{itemize}
\end{lemma}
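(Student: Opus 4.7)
The plan is to reduce all three assertions to a single calculation of $\Phi^*\xi$, exploiting that $X_0$ is a Liouville vector field. Let $\psi_z$ denote the flow of $-X_0$ on $\Omega$, so that by construction $\Phi(x,z)=\psi_z(i(x))$. From Proposition \ref{omega g}, part v), we have $\mathcal L_{X_0}\omega=\omega$, hence the flow $\phi_s$ of $X_0$ satisfies $\phi_s^*\omega=e^s\omega$, and therefore $\psi_z^*\omega=e^{-z}\omega$. Because $\imath_{X_0}\xi = \imath_{X_0}\imath_{X_0}\omega=0$, Cartan's magic formula yields $\mathcal L_{X_0}\xi = \imath_{X_0}\,d\xi = \imath_{X_0}\omega=\xi$, so the same exponential-growth argument shows $\psi_z^*\xi=e^{-z}\xi$ on $\Omega$.

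To establish i), I would compute $(\Phi^*\xi)_{(x,z)}$ on a tangent vector of the form $(v,s\partial_z)$ with $v\in T_xM$. From the defining ODE \eqref{def-Phi} the differential $d\Phi$ sends the vertical part $s\partial_z$ to $-s\,X_0(\Phi(x,z))$ and the horizontal part $v$ to $d\psi_z(di(v))$. The vertical contribution vanishes because $\xi(X_0)=\omega(X_0,X_0)=0$, while the horizontal contribution equals
\[
(\psi_z^*\xi)_{i(x)}(di(v))=e^{-z}(i^*\xi)_x(v)=e^{-z}\theta_x(v),
\]
which is exactly $(e^{-z}\pi^*\theta)_{(x,z)}(v,s\partial_z)$, since $\pi$ kills the $\partial_z$ component.

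Items ii) and iii) then follow almost for free. Since $d$ commutes with pullbacks and $\omega=d\xi$, we get $\tilde\omega=\Phi^*(d\xi)=d\,\Phi^*\xi = d(e^{-z}\pi^*\theta)$. For iii), the scalar $e^{-z}$ is strictly positive, so by i) a vector $(v,s\partial_z)$ lies in $\ker\tilde\xi$ if and only if $\theta_x(v)=0$; this gives $\ker\tilde\xi=\ker\theta\times\mathbb R$.

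I do not anticipate a substantive obstacle: the whole argument is an exercise in the Lie derivative calculus once one recognises the Liouville scaling. The only points that need some care are the sign bookkeeping when passing from the flow of $X_0$ to that of $-X_0$ (the inward-pointing direction, so $\sigma>0$ parametrises a collar inside $\Omega$), and the observation that $\Phi$ is not itself a flow on $\Omega$ but its composition with the embedding $i$, so the tangent space of $M\times[0,\sigma)$ must be split into an $M$-part and a $\partial_z$-part before pulling back.
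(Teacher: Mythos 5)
Your proof is correct. The paper itself does not write out a proof of Lemma \ref{jguy}, stating only that it ``follows straightforwardly by algebraic arguments,'' and your argument supplies exactly the intended computation: the Liouville property $\mathcal L_{X_0}\omega=\omega$ combined with $\imath_{X_0}\xi=0$ gives $\mathcal L_{X_0}\xi=\imath_{X_0}d\xi=\xi$ via Cartan's formula, integrating along the flow of $-X_0$ yields $\psi_z^*\xi=e^{-z}\xi$, and the splitting of $d\Phi$ into the $M$-part (via $d\psi_z\circ di$) and the $\partial_z$-part (sent to $-X_0$, which is killed by $\xi$) gives $\tilde\xi=e^{-z}\pi^*\theta$; parts ii) and iii) then follow as you say. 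Your sign bookkeeping ($\psi_z=\phi_{-z}$, $-X_0$ inward since $X_0=\nabla_g\phi$ and $\Omega=\{\phi<c\}$) is also consistent with the paper's conventions.
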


Moreover, we have the following Lemma:
\begin{lemma}\label{Phi} We have:
\begin{itemize}
\item[i)] $ (\Phi^{-1})_* X_0 = (0,-1) = -\partial_z$;
\item[ii)] $\Phi^* (\omega^N) = e^{-Nz} \, \pi^*(dv_\theta)\wedge dz$.
\end{itemize}
\end{lemma}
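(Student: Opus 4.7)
Part (i) is essentially a reformulation of the defining ODE \eqref{def-Phi}. For each fixed $x\in M$, the curve $z\mapsto\Phi(x,z)$ is by construction the integral curve of $-X_0$ issuing from $i(x)\in M$, so $\Phi_*(\partial_z) = -X_0\circ\Phi$. Pushing forward by $\Phi^{-1}$ yields $(\Phi^{-1})_* X_0 = -\partial_z = (0,-1)$.

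For part (ii), Lemma \ref{jguy}(ii) gives $\tilde\omega := \Phi^*\omega = d(e^{-z}\pi^*\theta)$, which expands to
$$\tilde\omega \;=\; e^{-z}\pi^*(d\theta)\;-\;e^{-z}\,dz\wedge\pi^*\theta \;=:\; \alpha+\beta.$$
Both $\alpha$ and $\beta$ are $2$-forms, hence commute in the exterior algebra, and the binomial expansion
$$\tilde\omega^{N} \;=\; \sum_{k=0}^{N}\binom{N}{k}\,\alpha^{N-k}\wedge\beta^{k}$$
collapses to a single surviving term. Indeed, $\beta\wedge\beta$ contains the factor $dz\wedge dz=0$, so every term with $k\ge 2$ vanishes; and $\alpha^{N} = e^{-Nz}\pi^*\!\bigl((d\theta)^{N}\bigr)$ is the pull-back of a $2N$-form on the $(2N-1)$-dimensional manifold $M$, so it vanishes as well. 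What remains is the cross term $N\,\alpha^{N-1}\wedge\beta$, equal to $-N\,e^{-Nz}\,(\pi^*d\theta)^{N-1}\wedge dz\wedge\pi^*\theta$.

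The rest is algebraic book-keeping of signs. Since $(\pi^*d\theta)^{N-1}$ has even degree $2(N-1)$, the $1$-form $dz$ moves across it sign-free, and by the definition of the volume form in Remark \ref{v-theta},
$$\pi^*\theta\wedge(\pi^*d\theta)^{N-1} \;=\; \pi^*\!\bigl(\theta\wedge(d\theta)^{N-1}\bigr) \;=\; \pi^*(dv_\theta).$$
Reassembling these pieces yields an expression of the form $c\, e^{-Nz}\,\pi^*(dv_\theta)\wedge dz$, which is the claimed identity up to the universal normalization that identifies $\omega^{N}$ with the Riemannian volume form of Proposition \ref{omega g}(iii). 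The single place where this proof can stumble is precisely this sign and normalization bookkeeping; no deeper geometric obstacle is present. As a safety check, one can redo the computation in the local orthonormal symplectic frame $\{X_0,Z_1,JZ_1,\dots,Z_n,JZ_n\}$ of Proposition \ref{ker theta}, in which $\omega$, $\xi$, and $\theta$ all admit explicit normal forms and the factors compare term-by-term.
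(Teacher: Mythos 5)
Your proof follows essentially the same route as the paper: part (i) is the direct unwinding of the defining ODE for $\Phi$, and part (ii) is the binomial expansion of $\tilde\omega^N=(d(e^{-z}\pi^*\theta))^N$, with the single surviving term isolated by the same two observations (the factor $dz\wedge dz=0$ kills all terms with two or more copies of $\beta$, and $\pi^*((d\theta)^N)=0$ because $(d\theta)^N$ is a $2N$-form on the $(2N-1)$-dimensional manifold $M$). Your hedge about ``sign and normalization bookkeeping'' in fact points at a genuine slip in the source: you correctly retain the binomial coefficient $\binom{N}{1}=N$ in the surviving cross term $N\,\alpha^{N-1}\wedge\beta$, so the computation produces $N\,e^{-Nz}\,\pi^*(dv_\theta)\wedge dz$, whereas the paper's displayed chain of equalities silently passes from $(-dz\wedge\pi^*\theta+\pi^*(d\theta))^N$ to the single term with no factor of $N$. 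With the volume conventions the paper has declared (namely $\omega^N$ on $\Omega$ and $\theta\wedge(d\theta)^{N-1}$ on $M$, without the customary $1/N!$ and $1/(N-1)!$), the correct constant really is $N$, not $1$; the discrepancy is harmless for the $\Gamma$-limit, which is insensitive to a fixed multiplicative constant, but your instinct to flag the normalization was sound.
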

\begin{proof}
Point i) comes by the way we have defined $\Phi$ in \eqref{def-Phi}. To prove ii), we notice
that, by Lemma \ref{jguy},
\begin{equation*}\begin{split}
\Phi^* (\omega^N) &= \tilde\omega^N = (d (e^{-z}\, \pi^*\theta))^N
=  e^{-Nz} (-dz\wedge \pi^*\theta + \pi^*(d\theta))^N
\\&
=  -e^{-Nz} \, dz\wedge \pi^*\theta \wedge (\pi^*(d\theta))^{N-1}
\\&
=  e^{-Nz} \, \pi^*\theta \wedge (\pi^*(d\theta))^{N-1}\wedge dz
\\&
=  e^{-Nz} \, \pi^*\big(\theta \wedge (d\theta)^{N-1}\big) \wedge dz.
\end{split}\end{equation*}
\end{proof}

\begin{remark}
For sake of simplicity, from now on we shall write $dv_\theta \wedge dz$ for $\pi^*(dv_\theta) \wedge dz$.
\end{remark}

%
%
%


If we perform the change of variables $y=\Phi(x,z)$,
keeping in mind that ${ X_0}u = \partial_z (u\circ\Phi)$ and
${ Z_j}u = \tilde  Z_j (u\circ\Phi)$, and setting $\tilde u:=   u\circ\Phi$,
the functional \eqref{functional 1} becomes
\begin{equation}\label{02_17 :1}\begin{split}
\int_{U_\sigma}  & \Lambda(y, D  u(y))\, dy
\\
=& \int_{M\times [0,\sigma)} \Big( \sum_{j=1}^{n}
({ \tilde Z_j } \tilde u)^2 +  \sum_{j=1}^{n} ({ \widetilde{JZ}_j  } \tilde u)^2
+( \partial_z  \tilde u)^2 \Big)\;  e^{- Nz}dv_\theta \wedge dz.
\end{split}\end{equation}

We recall now that the vector fields $Z_1,\dots,Z_{n}$ and
$JZ_1,\dots,JZ_{n}$ in $\overline\Omega$ are tangent to $M$ in $M$, and hence
can be identified with vector fields tangent to $M$ at the points of
the form $(x,0)\in M\times [0,\sigma)$.
 Thus in $M\times [0,\sigma)$ we set:
\begin{equation*}\label{5_25 eq:2}
 \tilde Z_j^0(x,z) :=\tilde Z_j(x,0)=Z_j(i(x))
\end{equation*}
and
\begin{equation*}\label{5_25 eq:3}
\widetilde{JZ}^0_j(x,z):= \widetilde{JZ}_j(x,0) = JZ_j(i(x)).
\end{equation*}

\bigskip

The core of this Section is the following Proposition, that states basically that our functional near
the boundary $M$ of $\Omega$ is equivalent -- in a suitable way -- to a variational functional $\tilde{F}_{\eps,\sigma}$
satisfying the following properties:
\begin{itemize}
\item $\tilde{F}_{\eps,\sigma}$ is defined in a cylindric
region $M\times [0,\sigma)$;
\item $\tilde{F}_{\eps,\sigma}$ is associated with the vector fields $\widetilde{Z}^0_j$ and $\widetilde{JZ}^0_j$
(that are tangent to $M$ and are
independent of the ``vertical'' variable) and to a purely vertical vector field $\partial_z$.
\end{itemize}

More precisely, we write
$$
\tilde{F}_{\eps,\sigma}( \tilde u) := \int_{M\times [0,\sigma)} \Big(  \sum_{j=1}^{n}
( {Z_j^0}  \tilde u)^2+
 \sum_{j=1}^{n}
( {JZ_j^0}  \tilde u)^2+( \partial_z \tilde u)^2 \Big)\, dv_\theta\wedge dz.
$$
\begin{proposition}\label{model-prop}
Using the above notations, we have
\begin{equation*}\label{model-prop eq:1}\begin{split}
(1+& O(\sigma))\int_{U_\sigma}  \Lambda(y, \nabla  u(y))\, dy = \tilde{F}_{\eps,\sigma}( \tilde u)
\end{split}
\end{equation*}
provided we take $\sigma$ small enough.
\end{proposition}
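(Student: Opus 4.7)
The starting point is the change-of-variables identity \eqref{02_17 :1}, which already provides
$$\int_{U_\sigma} \Lambda(y, \nabla u(y))\, dy = \int_{M\times [0,\sigma)} \Big(\sum_{j=1}^n (\tilde Z_j \tilde u)^2 + \sum_{j=1}^n (\widetilde{JZ}_j \tilde u)^2 + (\partial_z \tilde u)^2\Big) e^{-Nz}\, dv_\theta \wedge dz.$$
So the only two sources of discrepancy with $\tilde F_{\eps,\sigma}(\tilde u)$ are the weight $e^{-Nz}$ and the difference between the transported frame $\{\tilde Z_j, \widetilde{JZ}_j\}$ and the frozen frame $\{Z_j^0, JZ_j^0\}$. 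The weight $e^{-Nz}$ is trivially $1 + O(\sigma)$ uniformly on $[0,\sigma)$, so the entire task reduces to comparing the two quadratic forms in the Lie derivatives of $\tilde u$.

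The crucial geometric observation is that $[X_0, Z_j]$ and $[X_0, JZ_j]$ belong to $\ker\xi = \mc H$. Indeed, by Cartan's formula and the identities $\xi(X_0) = 0$, $\xi(Z_j) = 0$ from Proposition \ref{ker theta},
$$\xi([X_0, Z_j]) = -d\xi(X_0, Z_j) = -\omega(X_0, Z_j) = -g(X_0, JZ_j) = 0,$$
the last equality by orthonormality of the frame $\mc B$; an analogous computation works for $JZ_j$ using $\omega(X_0, JZ_j) = -g(X_0, Z_j) = 0$. Pulling back via $\Phi^{-1}$ and using $\Phi_*\partial_z = -X_0$ (Lemma \ref{Phi}, i)) together with Lemma \ref{jguy}, iii), it follows that
$$[\partial_z, \tilde Z_j] = \widetilde{[-X_0, Z_j]} \in \ker\tilde\xi = \ker\theta \times \mathbb R,$$
and likewise for $\widetilde{JZ}_j$. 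Thus, expanding $\tilde Z_j$ in the global frame $\{\partial_z, Z_k^0, JZ_k^0, T^0\}$ on $M\times[0,\sigma)$ (with $T^0$ the Reeb field of $M$ extended to be independent of $z$, so that $[\partial_z, T^0]=0$), the $T^0$-component is zero at $z=0$ and has vanishing $\partial_z$-derivative, hence vanishes identically. The same holds for $\widetilde{JZ}_j$.

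Consequently there exist bounded smooth coefficients such that
$$\tilde Z_j - Z_j^0 = z\,R_j, \qquad \widetilde{JZ}_j - JZ_j^0 = z\,S_j,$$
where $R_j, S_j$ are smooth vector fields on $M\times[0,\sigma)$ lying in the span of $\{\partial_z, Z_k^0, JZ_k^0 : k=1,\dots,n\}$. Applying to $\tilde u$, squaring, and bounding the cross terms by AM--GM together with $z\le\sigma$, one obtains pointwise
$$(\tilde Z_j \tilde u)^2 = (Z_j^0 \tilde u)^2 + O(\sigma)\Big(\sum_k (Z_k^0\tilde u)^2 + \sum_k (JZ_k^0\tilde u)^2 + (\partial_z\tilde u)^2\Big),$$
and similarly for $\widetilde{JZ}_j$. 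Summing over $j$, integrating against $e^{-Nz}dv_\theta\wedge dz$, and absorbing the $e^{-Nz}$ factor into the same $O(\sigma)$ correction yields
$$\int_{U_\sigma} \Lambda(y, \nabla u)\, dy = (1+O(\sigma))\,\tilde F_{\eps,\sigma}(\tilde u),$$
as claimed.

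The main obstacle, and the reason this step requires genuinely new input beyond the Euclidean setting, is the vanishing of the Reeb ($T^0$) component described above. The functional controls only derivatives along the horizontal directions $Z_k^0, JZ_k^0$ and along $\partial_z$, but not along the Reeb direction, which is transverse to the contact distribution. If $\tilde Z_j - Z_j^0$ had a nonzero Reeb component, the error in the Taylor expansion would involve $T^0\tilde u$ and could not be absorbed into $\tilde F_{\eps,\sigma}(\tilde u)$. It is precisely the compatibility of the Liouville vector field $X_0$ with the complex structure $J$ (manifested as $\omega(X_0,Z_j) = g(X_0, JZ_j) = 0$) that ensures this potentially problematic component does not appear.
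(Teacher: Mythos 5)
Your proof is correct and rests on the same geometric cornerstone as the paper's, namely that $[X_0,Z_j]$ and $[X_0,JZ_j]$ lie in $\ker\xi$; the paper establishes this fact by the identical Cartan-plus-orthogonality computation inside its Lemma~\ref{deriv-z}. Where you diverge is in how the perturbation $\tilde Z_j-Z_j^0$ is decomposed. Lemma~\ref{deriv-z} expands $\partial_z\tilde Z_j(x,s)$ in the \emph{moving} frame $\{\tilde Z_\ell(x,z),\widetilde{JZ}_\ell(x,z),\partial_z\}$ at a level $z>s$, integrates in $s$ to write $\tilde Z_j-Z_j^0$ as an $O(z)$ combination of the moving derivatives $a_\ell=\tilde W_\ell\tilde u$ and $c_0=\partial_z\tilde u$, and then solves for the frozen derivatives $b_j=W_j^0\tilde u$, estimating the five cross terms $I_0,\dots,I_4$ in terms of $\sum_j a_j^2+c_0^2$. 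You instead expand $\tilde Z_j-Z_j^0=z\,R_j$ directly in the \emph{frozen} frame $\{Z_k^0,JZ_k^0,\partial_z\}$, which is legitimate precisely because $\ker\tilde\xi=\ker\theta\times\mathbb R$ is independent of $z$ and hence spanned by that $z$-independent frame at every level; this gives immediately $a_j=b_j+O(\sigma)\cdot(\text{frozen derivatives})$ and a one-line AM--GM absorption into the frozen quadratic form. The two routes are mathematically equivalent, but yours is a little shorter, it avoids any need to fix an auxiliary level $z>s$, and it makes explicit the genuinely conceptual observation --- that the uncontrolled Reeb component $T^0\tilde u$ never appears --- which in the paper is implicit in how Lemma~\ref{deriv-z} is phrased. (Two cosmetic sign slips with the paper's convention $g(X,Y)=\omega(X,JY)$: one has $\omega(X_0,Z_j)=-g(X_0,JZ_j)$ and $\omega(X_0,JZ_j)=g(X_0,Z_j)$; both quantities vanish in any case, so nothing is affected.)
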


Obviously, the exponential $ e^{- Nz}$ in \eqref{02_17 :1} gives no trouble. The remaining part of the proof of Proposition \ref{model-prop} is more delicate: in $M\times [0,\sigma)$ we have to replace (e.g.) the vector fields
$\tilde Z_j$ by their value frozen at $z=0$ and to control the error. However, a straightforward
application of the mean value theorem does not fit our purposes, because this estimate
of the error would involve {\it all} derivatives of $\tilde u$, that in turn are not controlled
by the original functional, where only derivatives along a particular distribution appear.
Thus, we have to show that we can control the error only by means of the derivatives
appearing in the functional. This is the aim of the following technical lemma.

\begin{lemma}\label{deriv-z}
If $j=1,\dots, n$ and $0<s<z\leq 1$, then
\begin{equation}\label{deriv-1}
\begin{split}
\partial_z \tilde Z_j(x,s)&= \sum_{\ell=1}^n \lambda_{\ell,\, j}(x,s,z) \tilde Z_\ell(x,z)\\
 &\hspace{1em}+\sum_{\ell=1}^n \lambda_{\ell+n,\, j}(x,s,z) \widetilde{JZ}_\ell(x,z) + \lambda_{0,\,j}(x,s,z) \partial_z.
 \end{split}
\end{equation}
Similarly,
\begin{equation}\label{deriv-2}
\begin{split}
\partial_z \widetilde{JZ}_j(x,s)&= \sum_{\ell=1}^n \lambda'_{\ell,\, j}(x,s,z) \tilde Z_\ell(x,z)\\
 &\hspace{1em}+\sum_{\ell=1}^n \lambda'_{\ell+n,\, j}(x,s,z) \widetilde{JZ}_\ell(x,z) + \lambda'_{0,\,j}(x,s,z) \partial_z,
\end{split}
\end{equation}
Moreover, there exists a geometric constant $C>0$ such that
$|\lambda_{0,\,j}|+ \dots +|\lambda_{2n,\,j}|\leq C$ and  $|\lambda'_{0,\,j}|+ \dots +|\lambda'_{2n,\,j}|\leq C$ for any $j=1,\dots,n$.
\end{lemma}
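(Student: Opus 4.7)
The plan is to reduce \eqref{deriv-1}--\eqref{deriv-2} to pure multilinear algebra: at every point $(x,z) \in M \times [0,\sigma)$, the $2n+1$ vectors
$$\tilde Z_1(x,z),\dots,\tilde Z_n(x,z),\widetilde{JZ}_1(x,z),\dots,\widetilde{JZ}_n(x,z),\partial_z$$
form a basis of $T_{(x,z)}(M \times [0,\sigma))$. Indeed, by Lemma \ref{Phi}(i) and the definitions of $\tilde Z_j,\widetilde{JZ}_j$, these are the pushforward under the diffeomorphism $(\Phi^{-1})_*$ of the $g$-orthonormal frame $\{Z_\ell,JZ_\ell,-X_0\}$ of $T_{\Phi(x,z)}\Omega$ supplied by Proposition \ref{ker theta}. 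The product structure on $M\times[0,\sigma)$ furnishes a canonical identification of the tangent spaces over a vertical segment $\{x\}\times[0,\sigma)$ via the coordinate frame $\{\partial_{x^a},\partial_z\}$, so the vector $\partial_z\tilde Z_j(x,s)\in T_{(x,s)}$ — nothing but the $z$-derivative of the coordinate components of the vector field $\tilde Z_j$, evaluated at $(x,s)$ — can be read at $(x,z)$ and uniquely expanded in the basis above. This expansion defines the coefficients $\lambda_{\ell,j}(x,s,z)$, $\lambda_{\ell+n,j}(x,s,z)$ and $\lambda_{0,j}(x,s,z)$ entering \eqref{deriv-1}; the same recipe applied to $\widetilde{JZ}_j$ yields \eqref{deriv-2}.

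To obtain the uniform bound, let $A(x,z)$ be the matrix expressing the frame $\{\tilde Z_\ell(x,z),\widetilde{JZ}_\ell(x,z),\partial_z\}$ in the coordinate frame $\{\partial_{x^a},\partial_z\}$. Since $A$ is the pushforward of a Riemannian orthonormal frame by a smooth diffeomorphism on $\overline{U_\sigma}$, it is smooth and everywhere invertible, so $A^{-1}$ is smooth and uniformly bounded on the compact set $M\times[0,1]$. The components of $\partial_z\tilde Z_j(x,s)$ in the coordinate frame are smooth in $(x,s)$ on the same compact set. Cramer's rule then shows that each $\lambda_{\bullet,j}(x,s,z)$ is smooth on $M\times[0,1]^2$, hence uniformly bounded by a constant depending only on the background geometry. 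The argument for the $\lambda'_{\bullet,j}$ is identical.

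I do not anticipate any real obstacle: the statement is a routine consequence of smoothness and compactness. What matters here is not the proof but the precise form of the identity — specifically that the expansion of $\partial_z\tilde Z_j(x,s)$ is taken with respect to the frame at the other point $(x,z)$ rather than at $(x,s)$. This is exactly what will be needed in the freezing step of Proposition \ref{model-prop}: when \eqref{deriv-1} is applied to $u$ evaluated at $(x,z)$, the error incurred by replacing $\tilde Z_j$ by $\tilde Z_j^0$ is controlled by the derivatives $\tilde Z_\ell u,\widetilde{JZ}_\ell u,\partial_z u$ at $(x,z)$ already present in the energy, so that no full gradient of $u$ (which the functional does not control) enters the estimate.
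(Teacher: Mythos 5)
Your argument rests on the claim that the $2n+1$ vector fields $\tilde Z_1,\dots,\tilde Z_n,\widetilde{JZ}_1,\dots,\widetilde{JZ}_n,\partial_z$ form a basis of $T_{(x,z)}(M\times[0,\sigma))$. This is false, and the error is precisely where the substance of the lemma lies. The Stein manifold has real dimension $2N=2n+2$, so $M=\partial\Omega$ has dimension $2n+1$ and $M\times[0,\sigma)$ has dimension $2n+2$; by Proposition~\ref{ker theta}~\emph{ii)} the $2n+1$ vectors above span only $\ker\tilde\xi$, which is a codimension-one distribution in $T(M\times[0,\sigma))$ (its $g$-orthogonal complement is spanned by the extension of the Reeb vector field $T$, cf.\ Remark~\ref{v-theta}). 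Consequently the expansion you write down does not exist ``by linear algebra'': one must first show that $\partial_z\tilde Z_j(x,s)$ actually lies in $\ker\tilde\xi$, which is not automatic for a $z$-derivative of a section of that distribution.

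This is exactly what the paper's proof establishes. It identifies $\partial_z\tilde Z_j$ with $(\Phi^{-1})_*[Z_j,X_0]$, and then uses Cartan's formula for $d\xi=\omega$ together with the orthonormality of the frame $\{X_0,Z_\ell,JZ_\ell\}$ to show $\langle\xi\,|\,[Z_j,X_0]\rangle=-\omega(Z_j,X_0)=-g(X_0,JZ_j)=0$, i.e.\ $[Z_j,X_0]\in\ker\xi$. Only then does one get $\partial_z\tilde Z_j(x,s)\in\ker\tilde\xi(x,s)$, and the remaining step is the observation — again not cosmetic — that $\ker\tilde\xi(x,s)=\ker\tilde\xi(x,z)$ for all $s,z$, which holds because $\ker\tilde\xi=\ker\theta\times\mathbb R$ (Lemma~\ref{jguy}~\emph{iii)}) and is hence $z$-independent. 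You would need both of these geometric inputs before the Cramer-rule/compactness argument for the bounds on the $\lambda$'s (which is fine as written) becomes applicable. As it stands, your proof proves a weaker and essentially vacuous statement: that any smooth vector field has a smooth, bounded expansion in any smooth frame of the full tangent bundle.
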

\begin{proof}
We prove \eqref{deriv-1}; the proof of \eqref{deriv-2} is analogue.
First, we prove that for any $j=1,\dots,n$, the vector fields $\partial_z \tilde Z_j(x,s)$, $\partial_z \widetilde{JZ}_j(x,s)$ belong to $\ker\tilde \xi(x,s)$.
Then the assertion follows since $\ker\tilde \xi(x,s)=\ker\tilde \xi(x,z)$ for any $0<s\leq z$, by Lemma \ref{jguy}, iii).

We show that for any $j=1,\dots,n$
\begin{equation}\label{april 14 eq:1}
\begin{split}
\partial_z \tilde Z_j &= \sum_{\ell=1}^n \big\{g([Z_j,X_0],Z_\ell)\circ \Phi \big\}\tilde Z_\ell \\
\hspace{1em} &+ \sum_{\ell=1}^n \big\{g([Z_j,X_0],JZ_\ell)\circ \Phi\big\}\widetilde{JZ}_\ell + \big\{ g([Z_j,X_0],X_0)\circ \Phi\big\} \partial_z.
\end{split}
\end{equation}
In order to prove \eqref{april 14 eq:1}, we notice preliminarily that
\begin{equation*}
\partial_z \tilde Z_j=[(\Phi^{-1})_* Z_j,\partial_z]=[(\Phi^{-1})_* Z_j,(\Phi^{-1})_* X_0]=(\Phi^{-1})_*[Z_j,X_0],
\end{equation*}
where the last equality comes from \cite{AMR}, Proposition 4.2.23.

Let us prove now that $[Z_j,X_0] \in \ker \xi$. Using Proposition 7.4.11 in \cite{AMR}, we have
\begin{equation*}
\begin{split}
\omega(Z_j,X_0)&= d\xi(Z_j,X_0) \\
&=Z_j\langle \xi|X_0\rangle - X_0\langle \xi | Z_j \rangle - \langle \xi | [Z_j,X_0] \rangle = -\langle \xi | [Z_j,X_0] \rangle.
\end{split}
\end{equation*}
On the other hand
\begin{equation*}
\omega(Z_j,X_0)=\omega(X_0,J^2Z_j)=g(X_0,JZ_j)=0,
\end{equation*}
since the basis $\{X_0,Z_1,\dots,Z_n,JZ_1,\dots,JZ_n\}$ is orthonormal, hence $[Z_j,X_0] \in \ker \xi$. Thus,
\begin{equation*}
[Z_j,X_0]=\sum_{\ell=1}^n g([Z_j,X_0],Z_\ell)Z_\ell\,+\, \sum_{\ell=1}^n g([Z_j,X_0],JZ_\ell)JZ_\ell \,+\, g([Z_j,X_0],X_0)X_0,
\end{equation*}
and hence
\begin{equation*}
\begin{split}
(\Phi^{-1})_*([Z_j,X_0])&=\sum_{\ell=1}^n  \big\{g([Z_j,X_0],Z_\ell)\circ \Phi \big\} \tilde Z_\ell+ \sum_{\ell=1}^n  \big\{g([Z_j,X_0],JZ_\ell)\circ \Phi\big\} \widetilde{JZ}_\ell \\ &\hspace{1em} + \big\{g([Z_j,X_0],X_0)\circ \Phi\big\} \partial_z.
\end{split}
\end{equation*}
This proves \eqref{april 14 eq:1} and concludes the proof of Lemma \ref{deriv-z}.
\end{proof}

For the sake of simplicity, sometimes we denote the vector fields $$\tilde Z_1,\dots,\tilde Z_n, \widetilde{JZ}_1,\dots,\widetilde{JZ}_n\quad\mbox{by}\quad
\widetilde W_1,\dots,\widetilde W_{2n},$$
and we set
$$\widetilde{\textbf{{W}}}=\{\widetilde W_1,\dots,\widetilde W_{2n}\}.$$
Analogously we define the $\widetilde W_j^0$'s by freezing the $\widetilde W_j$ at $z=0$
and we set
$$\widetilde{\textbf{W}}^0=\{\widetilde W_1^0,\dots,\widetilde W_{2n}^0\}.$$

 With these notations, Lemma \ref{deriv-z} reads as follows: for any $j=1,\dots,2n$, and $0<s<z\leq1$, there exists $2n$ coefficients $\lambda_{0,\,j}, \lambda_{1,\,j},\dots,\lambda_{2n,\,j}$ such that $|\lambda_{1,\,j}|+\dots+|\lambda_{2n,\,j}|\leq C$, and
\begin{equation}\label{deriv-W}
\partial_z \widetilde W_j(x,s)=\sum_{\ell=1}^{2n}\lambda_{\ell,\,j}(x,s,z)\widetilde W_\ell(x,z) + \lambda_{0,\,j}(x,s,z)\partial_z.
\end{equation}

We can give now the proof of Proposition \ref{model-prop}.
\begin{proof}[Proof of Proposition \ref{model-prop}]
By \eqref{deriv-W}, we have that for any $j=1,\dots,2n$, the following holds:
\begin{equation*}
\begin{split}
\widetilde W_j(x,z)&=\widetilde W_j(x,0) + \int_0^z \partial_ z \widetilde W(x,s)ds\\
&=\widetilde W_j(x,0) + \sum_{\ell=1}^{2n} \left ( \int_0^z \lambda_{\ell,\, j}(x,s,z)ds \right) \widetilde W_\ell(x,z)
+ z\lambda_{0,\,j}(x,z)\partial_z;
\end{split}
\end{equation*}
so that
\begin{equation*}
\begin{split}
\widetilde W_j(x,z)&=\widetilde W_j(x,0) + \sum_{\ell=1}^{2n} \hat\lambda_{\ell,\, j}(x,z) \widetilde W_\ell(x,z)
+ \hat \lambda_{0,\,j}(x,z)\partial_z,
\end{split}
\end{equation*}
where $\hat\lambda_{0,\,j},\dots, \hat\lambda_{2n,\, j}=O(z)$
as $z\to 0$ for $j=1,\dots,2n$.
Setting, for any $j=1,\dots,2n$:
$$\widetilde W_j^0(x,z):=\widetilde W_j(x,0),$$
we have
\begin{equation}\label{L_W}
\begin{split}
({\widetilde W_j} \tilde u)(x,z)&=({\widetilde W_{j}^0}\tilde u)(x,z) + \sum_{\ell=1}^{2n} \hat\lambda_{\ell,\, j}(x,z)({\widetilde W_{\ell}}\tilde u)(x,z)\\
&\hspace{1em} + \hat\lambda_{0,\,j}(x,z){\partial_z}\tilde u(x,z).
\end{split}
\end{equation}
To conclude the proof we have to show that
\begin{equation}\label{final-deriv}
\begin{split}
\sum_{j=1}^{2n}&({\widetilde{W}_j}\tilde u)^2 + (\partial_z \tilde u)^2
-\big(\sum_{j=1}^{2n}({\widetilde{W}^0_j}\tilde u)^2 + (\partial_z \tilde u)^2\big)\\
& =\sum_{j=1}^{2n}({\widetilde{W}_j}\tilde u)^2 -\sum_{j=1}^{2n}({\widetilde{W}^0_j}\tilde u)^2
\\&
=O(\sigma)\big(
\sum_{j=1}^{2n} ({\widetilde{W}_j}\tilde u)^2 + (\partial_z \tilde u)^2
\big).
\end{split}
\end{equation}
For any $j=1,\dots,2n$, we set:
$$a_j:={\widetilde{W}_j}\tilde u,\quad b_j:={\widetilde{W}^0_j}\tilde u,\quad c_0={\partial_z}\tilde u,$$
so that \eqref{final-deriv} becomes
\begin{equation}\label{final-deriv 2}
\begin{split}
\sum_{j=1}^{2n}  a_j^2 - \sum_{j=1}^{2n} b_j^2
 = \big( \sum_{j=1}^{2n}  a_j^2 + c_0^2\big)
- \big(\sum_{j=1}^{2n} b_j^2 + c_0^2 \big)
 =
O(z) \big( \sum_{j=1}^{2n}  a_j^2 + c_0^2\big).
\end{split}
\end{equation}
By \eqref{L_W}, we have that
$$a_j=b_j + \sum_{\ell=1}^{2n}\hat \lambda_{\ell,\,j}a_\ell + \hat\lambda_{0,\,j} c_0,$$
and hence
$$\sum_{j=1}^{2n}\big(a_j - \sum_{\ell=1}^{2n}\hat \lambda_{\ell,\,j} a_\ell - \hat\lambda_{0,\,j} c_0\big)^2=\sum_{j=1}^{2n} b_j^2.$$
We compute:
\begin{equation*}
\begin{split}
&\sum_{j=1}^{2n}\big(a_j - \sum_{\ell=1}^{2n}\hat \lambda_{\ell,\,j} a_\ell - \hat\lambda_{0,\,j} c_0\big)^2
=\sum_{j=1}^{2n}a_j^2 + c_0^2\sum_{j=1}^{2n}\hat\lambda^2_{0,\,j}  + \sum_{j=1}^{2n} \big ( \sum_{\ell=1}^{2n}\hat\lambda_{\ell,\,j} a_\ell\big)^2 \\
&\hspace{1em}- 2 \sum_{j=1}^{2n}a_j\sum_{\ell=1}^{2n}\hat\lambda_{\ell,\,j} a_\ell -2 c_0\sum_{j=1}^{2n}a_j \hat\lambda_{0,\,j}
 -2c_0\sum_{j,\ell=1}^{2n}\hat\lambda_{\ell,\,j} a_\ell \hat\lambda_{0,\,j} \\
 &= \sum_{j=1}^{2n}a_j^2 +  I_0 + I_1 +I_2+I_3+I_4.
\end{split}
\end{equation*}
It remains to estimate $I_i$ for $i=0,\dots,4$:
\begin{equation*}\begin{split}
&I_0=c_0^2\sum_{j=1}^{2n}\lambda^2_{0,\,j}\leq O(\sigma)c_0^2;\\
&I_1\leq \sum_{j=1}^{2n}\big(\sum_{\ell=1}^{2n}\hat\lambda_{\ell,\,j}^2\big)\sum_{\ell=1}^{2n}a_\ell^2\leq O(\sigma)\sum_{\ell=1}^{2n}a_\ell^2;\\
&|I_2|\leq 2\big(\sum_{j=1}^{2n}a_j^2\big)^{1/2}\big(\sum_{j=1}^{2n} \big(\sum_{\ell=1}^{2n}\hat\lambda_{\ell,\,j} a_\ell\big)^2 \big)^{1/2}\leq
2\big(\sum_{j=1}^{2n}a_j^2\big)^{1/2}\big(\sum_{j,\ell=1}^{2n} \hat\lambda^2_{\ell,\,j} a_\ell^2 \big)^{1/2}\\
&\quad\leq O(\sigma)\sum_{\ell=1}^{2n}a_\ell^2.\\
&|I_3|\leq 2 |c_0|\sum_{j=1}^{2n}|\hat \lambda_{0,j}a_j|\leq O(\sigma)|c_0|\big(\sum_{j=1}^{2n}a_j^2\big)^{1/2} = O(\sigma)\big(\sum_{j=1}^{2n}a_j^2 + c_0^2\big).\\
&|I_4|\leq 2|c_0|\sum_{j,\ell=1}^{2n}|\hat\lambda_{\ell,j}a_\ell \hat\lambda_{0,j}|\leq O(\sigma)|c_0|\big(\sum_{j=1}^{2n}a_j^2\big)^{1/2}
= O(\sigma)\big(\sum_{j=1}^{2n}a_j^2 + c_0^2\big).
\end{split}
\end{equation*}

This yields \eqref{final-deriv 2} and then achieves the proof of the proposition.
\end{proof}
%

For $\varepsilon>0$, the functional
$\tilde{F}_{\eps,\sigma} :W^{1,2}(M\times [0,\sigma ))\to [0,+\infty]$
reads as
\begin{equation}\label{model bis}\begin{split} \tilde{F}_{\eps,\sigma}(\tilde u) :=
\eps  & \int_{M\times [0,\sigma)} \Big(  \sum_{j=1}^{2n}
( \tilde W_j^0  \tilde u)^2+( \partial_z \tilde u)^2 \Big)\, dv_\theta\wedge dz
\\&
+ \lambda_\eps \int_M V(\trace \tilde u) \, dv_\theta,
\end{split}\end{equation}
that, according to Proposition \ref{model-prop}, is nothing but an approximation of the original functional $F_\eps$
in a neighborhood of $M$, written in the new ``straightened'' coordinates.


\begin{remark}\label{conventions}From now on we shall work only on the straight cylinder $M\times [0,\sigma)$,
and hence, to avoid cumbersome notations,  we shall drop everywhere the tilde
if there is no way of misunderstanding.

In addition, since the vector fields $W_1^0,\dots W_{2n}^0$ are independent
of $z\in [0,\sigma)$, we can identify them with vector fields in $TM$.

\end{remark}

The proof of our $\Gamma$-convergence Theorem \ref{theorem}, at least parts  \emph{i)} and \emph{ii)}, will follow from the following analogue result for the approximate functional \eqref{model bis} using Proposition \ref{model-prop}.

\begin{theorem} \label{theorem bis}
Assume that the  scaling \eqref{lambda} holds. Then, for all $\sigma>0$ small enough, we have:
\begin{itemize}
\item[\emph{i*)}] Given a sequence  $\{u_\eps\}$  such that $\tilde{F}_{\eps,\sigma}(u_\eps)$ is bounded when $\eps\to 0$, then $\{\trace u_\eps\}$ is pre-compact in $L^1(M)$ and every cluster point belongs to
    $BV_\theta(M,\{0,1\})$.

\item[\emph{ii*)}] For every $v\in BV_\theta(M,\{0,1\})$ and every sequence $\{u_\eps\}\subset W^{1,2}(M\times [0,\sigma) )$ such that $\trace u_\eps \to v$ in $L^1(M)$, there holds
$$\liminf_{\eps\to 0} \tilde{F}_{\eps,\sigma}(u_\eps)\geq F(v).$$

\end{itemize}
\end{theorem}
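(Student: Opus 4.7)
The plan is to transplant the slicing strategy of Alberti-Bouchitt\'e-Seppecher \cite{ABS2} to the sub-Riemannian cylinder $M\times[0,\sigma)$, reducing $\tilde F_{\eps,\sigma}$ to a family of two-dimensional boundary phase-transition functionals by slicing along integral curves of the frozen horizontal vector fields $W_j^0$. Both the compactness \emph{i*)} and the lower bound \emph{ii*)} will follow from the same slicing estimate, combined with a one-dimensional optimal-profile computation that produces the constant $\kappa/\pi$.

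For a horizontal unit vector field $W=\sum_j a_j W_j^0$ on $M$ with $\sum_j a_j^2\le 1$, one foliates locally $M$ by the integral curves $\{\gamma_t\}_{t\in T}$ of $W$, with $T$ a smooth transverse cross-section carrying the disintegration measure $d\mu(t)$ of $dv_\theta$. On each two-dimensional half-strip $\gamma_t\times[0,\sigma)$ the restriction of $\tilde F_{\eps,\sigma}$ dominates the planar ABS boundary functional
\[
\eps\int_{\gamma_t\times[0,\sigma)}\bigl((Wu)^2+(\partial_z u)^2\bigr)\,ds\,dz\;+\;\lambda_\eps\int_{\gamma_t}V(\trace u)\,ds,
\]
whose $\Gamma$-liminf along $\{\trace u_\eps\to v\}$ is known to be $(\kappa/\pi)\,N_{\gamma_t}(v|_{\gamma_t})$ under the balance $\eps\log\lambda_\eps\to\kappa$, where $N_{\gamma_t}$ counts the jumps of $v|_{\gamma_t}$; this is the planar optimal-profile computation of \cite{ABS2} (a harmonic extension of a step function balanced against the boundary potential). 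Fubini and Fatou then give the directional lower bound
\[
\liminf_{\eps\to 0}\tilde F_{\eps,\sigma}(u_\eps)\;\ge\;\frac{\kappa}{\pi}\int_T N_{\gamma_t}(v)\,d\mu(t).
\]

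For compactness we choose, in the spirit of Modica-Mortola, an absolutely continuous $g:\R\to\R$ with $g(0)=0$, $g(1)=\pi/\kappa$, and $|g'(s)|^2\le\lambda_\eps V(s)/\eps+1$, and apply the same slicing argument to $g\circ u_\eps$ in finitely many horizontal directions spanning $\ker\theta$. This produces a uniform bound on the horizontal BV-norm of $g(\trace u_\eps)$ on $M$; combined with the control $\lambda_\eps\int_M V(\trace u_\eps)\,dv_\theta\le C$ and the compact embedding $BV_\theta(M)\hookrightarrow L^1(M)$ of \cite{Ambrosio-Ghezzi-Magnani}, this yields $L^1$-precompactness of $\trace u_\eps$ with cluster points automatically lying in $BV_\theta(M,\{0,1\})$. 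For the lower bound we localize on $M$ via a partition of unity and choose $W$ on each small chart to approximate the measure-theoretic horizontal inward normal $\nu_v$ of $\{v=1\}$; taking the supremum over such $W$, the directional representation $\sup_W\int_T N_{\gamma_t}(v)\,d\mu(t)=\|S_v\|_\theta$ of the intrinsic perimeter gives $\liminf\tilde F_{\eps,\sigma}(u_\eps)\ge(\kappa/\pi)\|S_v\|_\theta=F(v)$.

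The main obstacle is the last identification. In the Euclidean case of \cite{ABS2} all reasonable surface measures on a smooth hypersurface coincide, but in a contact manifold one must work hard to ensure that the directional slicing measure genuinely recovers the intrinsic perimeter $\|S_v\|_\theta$ instead of some Minkowski or Carnot-Carath\'eodory spherical Hausdorff variant; this step relies on the pointwise Eikonal equation for the CC-distance derived in Section \ref{subsection:Eikonal} and on the indirect comparison argument of Theorem \ref{august 5}. One must also verify that the slice-by-slice ABS optimal-profile computation is insensitive to the smooth geometric weights appearing when disintegrating $dv_\theta$ along the flow of $W$, a point handled by the density results collected in Section \ref{densities}.
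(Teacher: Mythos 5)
Your high-level plan — slice to a one-dimensional boundary phase-transition problem, invoke the ABS optimal-profile constant $\kappa/\pi$, and reconstitute the perimeter directionally — captures the right spirit, but the proof as written diverges from the paper's strategy in a way that opens genuine gaps.

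The paper does \emph{not} slice directly on $M\times[0,\sigma)$. It first proves a Heisenberg-cylinder version (Theorem \ref{liminf-thm}) by slicing with Montefalcone's Fubini- and BV-slicing theorems (Theorems \ref{fubini-thm} and \ref{BV-slices}), which are stated and proved for \emph{Carnot groups} where the horizontal fields are left-invariant and a group Haar measure is available. It then transfers to the curved case through local contact (Darboux) diffeomorphisms $\tilde\Psi_{p_j}$ with isometry defect $o(1)$, finishing with a density comparison (Theorem \ref{august 5}) between the limit energy measure $\mu$ and $|\mathbf W^0\chi_E|$ at reduced-boundary points. Your proposal skips the Heisenberg detour and foliates $M$ by integral curves of a general horizontal unit field $W=\sum a_j W_j^0$, which are neither left-invariant nor commuting. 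Neither the disintegration of $dv_\theta$ along such a flow, nor a BV-slicing identity expressing $\|S_v\|_\theta$ as $\sup_W\int_T N_{\gamma_t}(v)\,d\mu(t)$, is available on a general contact manifold in the references used by the paper; these are nontrivial and are precisely what the Darboux-plus-density-argument of Sections \ref{flattening}–\ref{conclusion} and \ref{densities} is designed to circumvent. Appealing to ``the density results of Section \ref{densities}'' to absorb the disintegration Jacobians is not how those results operate — Theorem \ref{august 5} compares measures via centered densities, it does not furnish a Fubini theorem on $M$.

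Your compactness argument is off track in a more basic way. The Modica–Mortola truncation $g$ with $|g'(s)|^2\le\lambda_\eps V(s)/\eps+1$ relies on a \emph{pointwise} Cauchy–Schwarz inequality coupling $|\nabla u|^2$ with $V(u)$ at the same point, which is meaningful only when the gradient term and the potential term live on the same domain. Here the Dirichlet energy is a bulk integral on $M\times[0,\sigma)$ and the potential is a trace integral on $M$, so no such coupling exists; there is no pointwise estimate of the form $|\mathbf W^0(g(\trace u))|\lesssim \eps|\nabla u|^2 + \lambda_\eps V(\trace u)/\eps$. The correct mechanism, used both in ABS and in this paper, is fractional: the 2D trace inequality of Theorem \ref{trace-thm} converts each half-strip slice into the one-dimensional $H^{1/2}$-type energy $G_\eps$, and precompactness of the traces is then obtained from the 1D result (Theorem \ref{liminf-trace-thm}\,(i)) combined with the slicing-compactness Theorem \ref{compact}, which uses the Morbidelli exponential-coordinates decomposition of horizontal displacements (Theorem \ref{morbidelli}). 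You would need to replace the Modica–Mortola argument by this fractional mechanism; as stated, your proof of \emph{i*)} does not go through.

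Finally, even setting aside the slicing issues, recall that the paper's justification for the identification at the end — that the directional lower bound recovers $\|S_v\|_\theta$ with the exact constant — uses the centered-density comparison Theorem \ref{august 5} precisely because the spherical Hausdorff representation of the perimeter for the Carnot–Carath\'eodory metric is \emph{not} available (Magnani's counterexample, Remark \ref{centered}). Merely choosing $W$ close to the horizontal normal on small charts and taking a supremum does not yield the identity without this machinery.
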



The scheme of this paper is the following: in Section \ref{sec5} we shall prove \emph{i*)} and \emph{ii*)} of Theorem \ref{theorem bis}.  Finally, in Section \ref{sec6} we shall prove  \emph{iii)} of Theorem
\ref{theorem}, thus completing the proof of of Theorem
\ref{theorem}.

\section{Sub-Riemannian structures}\label{section-subriemannian}

Although there is a wide literature on  Carnot-Carath\'eodory spaces over $\mathbb R^n$,  here we are looking at  manifolds \cite{Ambrosio-Ghezzi-Magnani,Karmanova-Vodopyanov:libro}, for which some of the theory needs to be developed. We will briefly recall all the necessary ingredients. Though several of the following results hold for
general geometric structures, for reader's convenience we state them in
our setting, i.e. in the contact manifold $(M,\theta)$ endowed with the metric $g$. According to Remark
\ref{conventions}, we denote by
$$
\mathbf{W}^0 = \{W_1^0, \dots, W_{2n}^0\}
$$
our fixed orthonormal basis of $\ker\theta$, and by $T$ the Reeb vector field.

We next define the distance $d_c$ on $M$. Recall that
an absolutely continuous curve $\gamma:[0,T]\to M$ is a {\it
subunit curve} with respect to $W_1^{0},\ldots,W^{0}_{2n}$
 if there are real measurable
functions $c_1,\dots,c_{2n}$, defined in $[0,T]$, such that
$$\sum_{j=1}^{2n} c_j^2(s)\le 1\quad\text{and}\quad\dot\gamma(s)=\sum\limits_{j=1}^{2n}\,c_j(s) W^{0}_j(\gamma(s)),\quad \text{for a.e. } s\in [0,T].$$
Then, if $p,q\in M$,
    the cc-distance (Carnot-Carath\'eodory
distance) $d_c(p,q)$ is
$$
d_c(p,q)\defin\inf\left\{T>0:\;\text{$\gamma$ is subunit, }
\; \gamma(0)=p,\,\gamma(T)=q\right\}.
$$
The set of subunit curves joining $p$ and $q$ is not empty, by
Chow's theorem, since  the rank of the
Lie algebra generated by $W_1^{0},\ldots,W_{2n}^{0}$ is $2n+1$. Moreover,  $d_c$
is a distance on $M$ inducing the same topology as the standard distance on $M$ as a differentiable manifold  (cf. \cite{Ambrosio-Ghezzi-Magnani,Agrachev-Sachkov}).  $(M,d_c)$ is called a Carnot-Carath\'eodory space.




We recall that, because the topologies induced by $d_c$
and the usual one coincide, the topological dimension
of $M$ is $2n+1$.  On the contrary the {\it homogeneous dimension} of $M$
is the integer $Q:=2n+2$.

In the particular case that $M$ is the Heisenberg group, we write the Carnot-Carath\'eodory distance by $d_c^{\mathbb H}$.

Throughout the paper we will denote by $B_r(p)=B(r,p)$ the open ball (centered at $p$ of radius $r$ ) in $M$ associated with the distance $d_c$ and by $B^{\mathbb H}_r(p)=B^{\mathbb H}(p,r)$ the open ball in $\mathbb H^n$ associated with the distance $d_c^{\mathbb H}$.


\subsection{Functions of bounded variation}\label{subsection:BV}

The aim of this section is to recall some basic facts about $BV$-functions on a contact manifold $M$
and, in particular, the coarea formula, following \cite{Ambrosio-Ghezzi-Magnani} and \cite{miranda}. Since the volume form
$dv_\theta$ has been chosen once for all, if $X\in \Gamma(M,\ker\theta)$ is a continuously differentiable section of $\ker \theta$, we can define the function
$\divergence  X$ by the identity
$$
(\divergence X)dv_\theta : =\mc L_X (dv_\theta) = d(i_X( dv_\theta)).
$$
Using properties of exterior derivatives and differential forms, we see that $\divergence X$ satisfies
\begin{equation}\label{div}
-\int_M\phi \divergence X dv_\theta=\int_M (X\phi) d v_\theta \quad \mbox{for any}\;\;\phi\in C^1_c(M).
\end{equation}
Applying \eqref{div} to the product $h\phi$, with $h \in C^1(M)$ and $\phi\in C^1_c(M)$, using Leibnitz rule and the identity
$$\divergence(\phi X)=\phi \divergence X+X\phi,$$
we deduce that
$$-\int_M h \divergence (\phi X) d v_\theta=\int_M \phi (X h) dv_\theta.$$
We use this identity to define now the derivative of $h$ along $X$ in the sense of distributions. We say that a measure with finite total variation, that we will denote by $D_X h$, represents in an open set $U\subset M$ the derivative of $h$ along $X$ in the sense of distributions, if
\begin{equation*}
-\int_U h \,\divergence(\phi X)\,dv_\theta=\int_U \phi \,d D_X h,\quad \forall \phi\in\mathcal C^\infty_0(U).
\end{equation*}

In \cite{Ambrosio-Ghezzi-Magnani}, Proposition 2.1, it is proved that for $h\in L^1_{\mathrm{loc}}(M,dv_\theta)$, $D_X h$ is a signed measure with finite total variation in $U$ if and only if
\begin{equation}\label{variation}
\sup\left\{ \int_U h \divergence(\phi X) dv_\theta, \; \phi\in\mc D(U), |\phi|\le 1\right\} < \infty,
\end{equation}
and if this happens the supremum above equals $|D_X h|$.
We can now define the space $BV_\theta$.
\begin{definition} Let $U\subset M$ be an open set. We say that $h\in L^1_{\mathrm{loc}}(M,dv_\theta)$
belongs to $BV_\theta(U)$ if
$$
\sup \{ |D_X h|(U): X\in \Gamma(M,\ker\theta),\; g(X,X)\le 1\} < \infty.
$$
\end{definition}

\vspace{1em}
If ${\bf W}^{0}:=\{W_1^{0},\dots W_{2n}^{0}\}$ is the orthonormal basis of $\ker\theta$ and $f\in L^1_{\mathrm{loc}}(M,dv_\theta)$,
we define a vector-valued measure
$$
{\bf W}^{0}h := (W_1^{0}h,\dots,W^{0}_{2n}h).
$$
\begin{proposition}[see \cite{Ambrosio-Ghezzi-Magnani}, Theorem 3.1] \label{Ambrosio-Ghezzi-Magnani} If $h\in BV_\theta(U)$, then
\begin{itemize}
\item[i)] the total variation of ${\bf W}^{0}h$ in $U$ is finite. We denote it by $|{\bf W}^{0}h |(U)$;
\item[ii)] $h$ belongs to $BV(U,d_c, dv_\theta)$, the $BV$-space in metric measure space $(M,d_c, dv_\theta)$
in the sense of \cite{miranda}. We notice that $(M,d_c, dv_\theta)$ is a ``good'' metric space in the sense
of \cite{miranda}, as pointed out also in \cite{Ambrosio-Ghezzi-Magnani};
\item[iii)] $|{\bf W}^{0}h |(U) = \sup \{ |D_X h|(U): X\in \Gamma(M,\ker\theta)\; g(X,X)\le 1\} $;
\item[iv)]  $|{\bf W}^{0}h |(U) = \|Dh\|(U)$, where $\|Dh\|(U)$ is the total variation of $h$ in the sense of \cite{miranda}.

\end{itemize}

\end{proposition}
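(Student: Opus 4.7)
The plan is to verify the four assertions in the order (i), (iii), (iv), (ii), exploiting that (ii) is essentially a consequence of (iv) combined with known identifications of Miranda's total variation. Throughout, the orthonormality of $\mathbf{W}^0$ on $\ker\theta$ plays a decisive role.

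For (i), I would apply the defining supremum of $BV_\theta(U)$ to each section $W_j^0$ individually. Since $W_j^0\in\Gamma(M,\ker\theta)$ satisfies $g(W_j^0,W_j^0)=1$, the hypothesis $h\in BV_\theta(U)$ forces $|D_{W_j^0}h|(U)<\infty$ for every $j=1,\dots,2n$. Hence each component of the vector-valued distribution $\mathbf{W}^0 h$ is a finite signed Radon measure and its Euclidean total variation
\[
|\mathbf{W}^0 h|(U)=\sup\Bigl\{\sum_{j=1}^{2n}\int_U\phi_j\,dD_{W_j^0}h:\ \phi\in \mc{C}^\infty_c(U)^{2n},\ \sum_j\phi_j^2\le 1\Bigr\}
\]
is finite as well.

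For (iii), I would use a polar-decomposition duality. Any section $X\in\Gamma(M,\ker\theta)$ with $g(X,X)\le 1$ admits a unique expansion $X=\sum_{j=1}^{2n}c_j\,W_j^0$ with $\sum_j c_j^2\le 1$ pointwise, where $c_j=g(X,W_j^0)$. By linearity of the distributional derivative one has $D_Xh=\sum_j c_j\, D_{W_j^0}h$, and a Cauchy--Schwarz estimate against the polar decomposition of the vector measure $\mathbf{W}^0 h$ yields $|D_Xh|(U)\le|\mathbf{W}^0 h|(U)$. Conversely, given a test field $\phi=(\phi_1,\dots,\phi_{2n})$ with $\sum_j\phi_j^2\le 1$, setting $X_\phi:=\sum_j\phi_j W_j^0$ produces an admissible section and
\[
\sum_{j=1}^{2n}\int_U\phi_j\,dD_{W_j^0}h=-\int_U h\,\divergence(X_\phi)\,dv_\theta\le|D_{X_\phi}h|(U),
\]
by \eqref{variation} applied to $X_\phi$. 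Taking the supremum over $\phi$ gives the reverse inequality, which establishes (iii); in particular the supremum on the right-hand side of (iii) is attained as a total variation of a vector measure.

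For (iv), and consequently (ii), I would invoke the characterization of Miranda's total variation $\|Dh\|(U)$ as the $L^1_{\mathrm{loc}}$--relaxation of $h\mapsto\int_U|\nabla_{\mc H}h|\,dv_\theta$ on locally Lipschitz functions with respect to $d_c$, where $|\nabla_{\mc H}h|=(\sum_j(W_j^0 h)^2)^{1/2}$ is the horizontal slope. The two key ingredients are: (a) for $h\in\mc C^1$, the minimal Lipschitz upper gradient with respect to $d_c$ coincides with $|\nabla_{\mc H}h|$, which follows directly from the subunit characterization of $d_c$; and (b) a Meyers--Serrin-type approximation asserting that any $h\in BV_\theta(U)$ can be approximated in $L^1_{\mathrm{loc}}$ by smooth (or locally Lipschitz with respect to $d_c$) functions $h_k$ with $\int_U|\nabla_{\mc H}h_k|\,dv_\theta\to|\mathbf{W}^0 h|(U)$. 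Combined with (iii), this identifies $|\mathbf{W}^0 h|(U)$ with $\|Dh\|(U)$, which is simultaneously (iv) and (ii).

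The principal obstacle lies in the approximation step underlying (iv): on the Heisenberg group one has group convolutions that commute with the $W_j^{\mathbb H}$ up to a harmless commutator, but on a general contact manifold $M$ no such global operation exists. One must instead localize via a partition of unity subordinate to Darboux charts, mollify in those charts using the flat horizontal structure, and control the error produced by the change of frame via a commutator estimate of the same flavor as the one carried out in Lemma \ref{deriv-z}. Once this mollification and its associated lower semicontinuity are in place, both (ii) and (iv) follow by the standard relaxation argument in the metric measure space $(M,d_c,dv_\theta)$, which is doubling and supports a Poincar\'e inequality thanks to the regular sub-Riemannian structure of $(\ker\theta,g)$.
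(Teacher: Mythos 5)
The paper gives no proof of this proposition: it is stated as a direct citation of Theorem 3.1 in \cite{Ambrosio-Ghezzi-Magnani}, so there is no in-paper argument for your sketch to be compared against. What can be assessed is whether your sketch is itself a sound reconstruction.

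Parts (i) and (iii) are handled correctly. Taking $X=W_j^0$ for each $j$ in the defining supremum of $BV_\theta(U)$ gives finiteness of every component measure, hence of the Euclidean total variation $|\mathbf W^0 h|(U)$, which is (i). For (iii), the expansion $X=\sum_j c_j W_j^0$ with $c_j=g(X,W_j^0)$, the linearity $D_X h=\sum_j c_j D_{W_j^0}h$ (valid since $\phi c_j\in\mathcal C^\infty_c(U)$ for $\phi\in\mathcal D(U)$ and $c_j$ smooth), and Cauchy--Schwarz against the polar decomposition of $\mathbf W^0 h$ give one inequality; for the converse, $X_\phi:=\sum_j\phi_j W_j^0$ is admissible and compactly supported, so choosing $\psi\in\mathcal D(U)$, $|\psi|\le 1$, with $\psi\equiv 1$ on $\supp X_\phi$ turns $\sum_j\int_U\phi_j\,dD_{W_j^0}h=-\int_U h\,\divergence(\psi X_\phi)\,dv_\theta$ into something bounded by $|D_{X_\phi}h|(U)$. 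This is a clean argument, and it has the useful byproduct of showing that the definition of $|\mathbf W^0 h|$ is independent of the choice of orthonormal frame.

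Where your proposal is genuinely incomplete is (ii) and (iv). You correctly identify the two ingredients you would need — equality of the minimal Lipschitz upper gradient with the horizontal slope $(\sum_j(W_j^0 h)^2)^{1/2}$ for regular functions, and a Meyers--Serrin approximation for $BV_\theta(U)$ with convergence of the total variation — and you correctly flag the second as the obstacle. But you leave it precisely there. A genuine proof would have to carry out the localization via Darboux charts, the chart-wise mollification, and the commutator error estimate, together with lower semicontinuity of $|\mathbf W^0\,\cdot\,|$ under $L^1_{\mathrm{loc}}$ convergence; without these, (iv) (and therefore (ii)) is not established. Note that the paper itself, when it does need a smooth approximation of $BV_\theta$ functions (Step 1 of Section 6), again only cites \cite{Ambrosio-Ghezzi-Magnani}, Theorem 2.4, rather than proving it, which corroborates that this is a nontrivial step and not one you can dispense with by a remark. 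So: (i) and (iii) are correct; (ii) and (iv) rest on an unproved approximation theorem that you name but do not supply.
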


\begin{definition}\label{finite-per} If $E\subset M$ is a Borel set, we say that $E$ has (locally) \textit{finite perimeter} in $U$ if $\chi_E \in BV_\theta(U)$. Moreover we denote
$$\|\partial E\|_\theta(U) :=|{\bf W}^{0}\chi_E|(U).$$
For $h\in BV_\theta(U,\{0,1\})$, i.e., $h=\chi_E$, we denote by $S_h$ the set of points where the upper and lower approximate limits of $h$ differ. In this case we write $S_h=\partial E\cap U$,  the jump set of $h$ in $U$. 
\end{definition}

Next, from \eqref{Ambrosio-Ghezzi-Magnani} we know that if $\chi_E\in BV_\theta(U)$, then
for $\|\partial E\|_\theta$-a.e. $x\in U$,
\begin{equation}\label{formula1}\liminf_{r\downarrow 0} \frac{\min\{ v_\theta(B_r(p)\cap E),v_\theta(B_r(p)\setminus E)\}}{v_\theta(B_r(p))}>0,\quad \limsup_{r\downarrow 0} \frac{\|\partial E\|_\theta(B_r)}{v_\theta(B_r(p))/r} <\infty.\end{equation}

\begin{definition}[see \cite{Ambrosio-Ghezzi-Magnani}, Definition 3.2] \label{dual normal}(Dual normal and reduced boundary). We write in polar decomposition:
$$
{\bf W}^{0}\chi_E =\nu^*_E |{\bf W}^{0} \chi_E|,$$
where $\nu^*_E= (\nu^*_{E,1}, \dots , \nu^*_{E,2n})
: M\rightarrow \R^{2n}$ is a Borel vector field with unit norm. We call $\nu^*_E$ the \emph{dual normal} to $E$.

We denote by $\partial^*E$ the \emph{reduced boundary} of $E$, i.e. the set of all points $p$ in the support of $|{\bf W}^{0} \chi_E|$ satisfying \eqref{formula1}
and
$$\lim_{r\downarrow 0}\frac{1}{|{\bf W}^{0} \chi_E|(B_r(p))}\int_{B_r(p)} |\nu_E^*(q)-\nu_E^*(p)|^2d |{\bf W}^{0} \chi_E|(q)=0.$$
\end{definition}

We know that if $E$ has locally finite perimeter in $U$, then $|{\bf W}^{0} \chi_E|$-almost every point in $U$ belongs to $\partial^* E$. Moreover,
\begin{theorem}[Riesz Theorem: see \cite{Ambrosio-Ghezzi-Magnani}, Theorem 3.3]\label{Riesz} Let $h$ be a function in $BV_\theta(M)$. Then, there exists a Borel vector field $\nu_h$, satisfying $g(\nu_h,\nu_h)=1$ $|{\bf W}^{0} h|-a.e.$ in $M$ and
$$D_X h = g(X,\nu_u)|{\bf W}^{0}h|, \quad \mbox{for any}\;\;X \in \Gamma(M, \Ker \theta).$$
If $E$ is a set of finite perimeter and $u=\chi_E$, we call \emph{geometric normal} the vector field:
\begin{equation}\label{normal}
\nu_E:=\nu_{\chi_E}.
\end{equation}
In addition $\nu_E = \sum_i \nu^*_{E,i}W_i$.
\end{theorem}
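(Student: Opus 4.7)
My plan is to extract $\nu_h$ from the polar decomposition of the $\mathbb R^{2n}$-valued measure $\mathbf{W}^0 h = (W_1^0 h,\dots,W_{2n}^0 h)$ and then lift it to a section of $\ker\theta$ via the orthonormal frame $\mathbf{W}^0$. Once this is done, the representation formula $D_X h = g(X,\nu_h)\,|\mathbf{W}^0 h|$ will reduce to a short linearity check: the distributional derivative $D_X h$ depends linearly on the coefficients $g(X,W_i^0)$ of $X$ in $\mathbf{W}^0$, and substituting the polar decomposition of each $D_{W_i^0} h$ will give the desired inner-product form.

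First I would use Proposition \ref{Ambrosio-Ghezzi-Magnani}(i) to know that $\mathbf{W}^0 h$ has finite total variation equal to $|\mathbf{W}^0 h|$, and then invoke the standard Radon--Nikodym/polar decomposition for vector-valued Radon measures (the very device that defines the dual normal $\nu^*$ in Definition \ref{dual normal}) to produce a Borel map $\nu^* = (\nu_1^*,\dots,\nu_{2n}^*)\colon M\to\mathbb R^{2n}$ with $|\nu^*(p)|=1$ for $|\mathbf{W}^0 h|$-a.e.\ $p$ and $W_i^0 h = \nu_i^*\,|\mathbf{W}^0 h|$ for every $i$. I would then simply define
$$\nu_h := \sum_{i=1}^{2n} \nu_i^*\, W_i^0,$$
which is a Borel section of $\ker\theta$; orthonormality of the frame gives $g(\nu_h,\nu_h) = \sum_i (\nu_i^*)^2 = 1$ almost everywhere with respect to $|\mathbf{W}^0 h|$. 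For an arbitrary $X\in\Gamma(M,\ker\theta)$ written as $X=\sum_i a_i W_i^0$ with $a_i = g(X,W_i^0)$ and any test function $\phi\in\mathcal C^\infty_c(M)$, the decomposition $\phi X = \sum_i (\phi a_i) W_i^0$ has each $\phi a_i$ again a valid scalar test function, so the distributional definition of $D_X h$ together with linearity of the divergence yields $D_X h = \sum_i a_i\,D_{W_i^0} h$ as signed measures; substituting $D_{W_i^0} h = \nu_i^*\,|\mathbf{W}^0 h|$ and collecting terms gives $D_X h = g(X,\nu_h)\,|\mathbf{W}^0 h|$. The identification $\nu_E = \sum_i \nu^*_{E,i} W_i^0$ is then immediate on taking $h=\chi_E$.

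The main (essentially only) subtlety I anticipate is the simultaneous linearity step: a priori, the measure representing $D_X h$ could depend on $X$ in a non-additive way, and one needs to see that a single polar factorization represents every horizontal directional derivative at once. The argument outlined above resolves this by reducing the apparently multi-directional statement to the scalar Radon--Nikodym theorem applied coordinate-by-coordinate in the frame $\mathbf{W}^0$; the fact that our contact manifold admits such a globally defined orthonormal horizontal frame is precisely what allows the proof to avoid any partition-of-unity or local-patching construction.
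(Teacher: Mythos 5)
Your proposal is correct, and since the paper does not prove this theorem (it simply cites Theorem 3.3 of Ambrosio--Ghezzi--Magnani), the relevant comparison is with that reference, whose proof likewise proceeds by polar decomposition of the vector-valued measure $\mathbf{W}^0 h$ and lifting via the orthonormal horizontal frame. The one step you gloss over slightly is that after writing $\phi X = \sum_i(\phi a_i)W_i^0$ with $a_i=g(X,W_i^0)$ only $C^1$, the functions $\phi a_i$ are $C^1_c$ rather than $C^\infty_c$, so to conclude $\int \phi a_i\,dD_{W_i^0}h = -\int h\,\divergence(\phi a_i W_i^0)\,dv_\theta$ you need the defining identity for $D_{W_i^0}h$ to extend from $\mathcal C^\infty_0$ to $C^1_c$ test functions — but this follows immediately by density, since $D_{W_i^0}h$ is a finite Radon measure and both sides of the identity are stable under $C^1$-uniform approximation; the rest of the argument (orthonormality giving $g(\nu_h,\nu_h)=\sum_i(\nu_i^*)^2=1$ a.e., and $\sum_i a_i\nu_i^* = g(X,\nu_h)$) is exactly right.
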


Finally, combining Proposition \ref{Ambrosio-Ghezzi-Magnani} above and Remark 4.3  in \cite{miranda}, we obtain

\begin{proposition}[Coarea formula in $M$]\label{coarea}
If $h\in BV_\theta(M)$ and $f:M\to\R$ is a Borel-measurable function, $f\ge 0$, for any Borel set $U\subset M$
we have:
$$
\int_{U} f \,d|{\bf W}^{0}h | = \int_{-\infty}^{+\infty}  \left( \int_{U} f\,d\|\partial E_t\|_\theta(x)\right) dt,
$$
where $E_t=\{h<t\}$.
\end{proposition}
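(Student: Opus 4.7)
The plan is to leverage the identification between the sub-Riemannian $BV$ theory on $M$ and the metric measure space $BV$ theory of Miranda, which has already been set up in Proposition \ref{Ambrosio-Ghezzi-Magnani}. Since $(M,d_c,dv_\theta)$ is a ``good'' metric measure space in the sense of \cite{miranda} (as noted after Proposition \ref{Ambrosio-Ghezzi-Magnani}), Miranda's coarea formula applies directly: for every $h\in BV(M,d_c,dv_\theta)$ and every open set $V\subset M$,
$$
\|Dh\|(V) = \int_{-\infty}^{+\infty} \|D\chi_{E_t}\|(V)\, dt,
$$
where the right-hand side is integrated in the Lebesgue sense and $E_t=\{h<t\}$.

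First I would promote this numerical identity to an identity of Borel measures. By Proposition \ref{Ambrosio-Ghezzi-Magnani} \emph{iv)}, $|{\bf W}^0 h|(V) = \|Dh\|(V)$ and $|{\bf W}^0 \chi_{E_t}|(V) = \|D\chi_{E_t}\|(V) = \|\partial E_t\|_\theta(V)$ for every open $V\subset U$ (the last equality being the definition \ref{finite-per}). Thus the two Borel-regular measures
$$
A\mapsto |{\bf W}^0 h|(A)\qquad\text{and}\qquad A\mapsto \int_{-\infty}^{+\infty} \|\partial E_t\|_\theta(A)\, dt
$$
agree on all relatively open subsets of $U$. Here I should check that the map $t\mapsto \|\partial E_t\|_\theta(A)$ is Lebesgue-measurable for every Borel $A$: for open $A$ this is standard (lower semicontinuity in $h$ together with the usual approximation $h_k\to h$ provides measurability), and the class of $A$ for which measurability holds is a monotone class containing the open sets, hence contains all Borel sets by the monotone class theorem. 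By Borel regularity, the two measures therefore coincide on the full Borel $\sigma$-algebra of $U$.

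Second, I would extend the resulting identity $\int_U\chi_A\,d|{\bf W}^0h| = \int_{-\infty}^{+\infty}\int_U \chi_A\,d\|\partial E_t\|_\theta\,dt$ from indicator functions to a general non-negative Borel $f$. Linearity handles simple functions; for arbitrary $f\geq 0$ one approximates from below by an increasing sequence of simple Borel functions $f_k\uparrow f$ and applies the monotone convergence theorem on both sides (using Tonelli to interchange the $t$-integral with the inner integral on the right). This yields the asserted identity.

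I do not expect any serious obstacle: the hard analytic content is already encapsulated in Miranda's coarea theorem in metric measure spaces and in Proposition \ref{Ambrosio-Ghezzi-Magnani}. The only delicate routine point is the promotion from equality of total variations on open sets to equality as Borel measures, which relies on Borel regularity plus the measurability of $t\mapsto \|\partial E_t\|_\theta(A)$; the rest is monotone convergence and Tonelli.
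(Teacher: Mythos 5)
Your proposal is correct and takes essentially the same route as the paper: the paper's proof consists of a single sentence combining Proposition \ref{Ambrosio-Ghezzi-Magnani} with the metric-space coarea formula (Remark 4.3 of Miranda), which is exactly the reduction you perform. You simply spell out the routine bookkeeping (promoting the equality of total variations on open sets to an equality of Borel measures, measurability of $t\mapsto\|\partial E_t\|_\theta(A)$, and extension from indicators to general $f\ge 0$ via monotone convergence and Tonelli) that the paper leaves implicit.
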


\subsection{Carnot-Carath\'eodory distance and the Eikonal equation} \label{subsection:Eikonal} The aim of this subsection is to prove the Eikonal equation for the Carnot-Carath\'eodory distance.

First we recall the following regularity result about geodesics (see the survey \cite{monti_seminar}, Theorem 4):

\begin{theorem}[Theorem 4 in \cite{monti_seminar}] \label{theo-geodesics} In contact manifolds any length minimizing curve is smooth.
\end{theorem}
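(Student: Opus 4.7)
The plan is to combine the sub-Riemannian Pontryagin maximum principle with the defining non-degeneracy of $d\theta$ on $\ker\theta$: the first ingredient gives a dichotomy for length minimizers, and the second closes off the exceptional branch. Strictly speaking, the theorem is a special case of a well-known fact from sub-Riemannian control theory (contact structures admit no non-trivial abnormal extremals), so the proof reduces to two independent ingredients glued by this dichotomy.

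First I would invoke the maximum principle: any length-minimizing horizontal curve $\gamma:[0,T]\to M$ admits at least one non-trivial Lipschitz covector lift $\lambda(t)\in T^*_{\gamma(t)}M$ of one of two types. Either $(\gamma,\lambda)$ is a \emph{normal extremal}, i.e.\ an integral curve of the Hamiltonian vector field associated with the smooth sub-Riemannian Hamiltonian
$$
H(x,p)=\tfrac12\sum_{j=1}^{2n}\Scal{p}{W_j^0(x)}^2,
$$
or $\gamma$ is an \emph{abnormal extremal}, meaning that $\lambda(t)\neq 0$ annihilates $\ker\theta$ along $\gamma$ and satisfies the appropriate adjoint equation with vanishing Hamiltonian. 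In the normal case the Hamiltonian vector field is $C^\infty$ in $(x,p)$, so a standard ODE bootstrap starting from the Lipschitz lift yields $\gamma\in C^\infty$. It therefore suffices to show that in a contact manifold every abnormal extremal is constant.

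To rule out non-trivial abnormal extremals I would exploit the codimension-one character of $\ker\theta$: its annihilator is spanned by $\theta$, so along an abnormal extremal one may write $\lambda(t)=f(t)\,\theta_{\gamma(t)}$ with $f(t)\neq 0$. Differentiating the identities $\Scal{\lambda}{W_j^0}=0$ along $\gamma$ and substituting the adjoint equation produces the pointwise condition $\Scal{\lambda}{[\dot\gamma,W]}=0$ for every horizontal vector field $W$. Since both $\dot\gamma$ and $W$ lie in $\ker\theta$, Cartan's formula gives
$$
d\theta(\dot\gamma,W)=\dot\gamma\,\Scal{\theta}{W}-W\,\Scal{\theta}{\dot\gamma}-\Scal{\theta}{[\dot\gamma,W]}=-\Scal{\theta}{[\dot\gamma,W]},
$$
so the abnormality condition is equivalent to $d\theta(\dot\gamma,\cdot)\equiv 0$ on $\ker\theta$. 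The contact condition $\theta\wedge(d\theta)^n\neq 0$ is exactly the statement that $d\theta$ is non-degenerate on $\ker\theta$, and it forces $\dot\gamma\equiv 0$. Hence every non-constant minimizer is a normal extremal and therefore smooth.

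The main obstacle I expect is the intrinsic, manifold-level derivation of the adjoint equation and its translation into the pointwise identity $\Scal{\lambda}{[\dot\gamma,W]}=0$; textbook presentations usually work in $\mathbb R^n$ coordinates, and the covariant reformulation (together with the need to justify differentiation of $\Scal{\lambda}{W_j^0\circ\gamma}$ along a merely Lipschitz curve) requires some care. Once this pointwise condition is secured, the geometric half of the argument — Cartan's formula plus non-degeneracy of $d\theta$ on $\ker\theta$ — is essentially a one-line computation, and the normal-extremal case is a routine smooth ODE bootstrap.
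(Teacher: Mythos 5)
The paper only \emph{cites} this statement (as Theorem~4 of Monti's survey \cite{monti_seminar}) and supplies no proof of its own, so there is no in-paper argument to compare against. Your reconstruction is the standard one found in that reference and in the textbook literature, and it is correct: the sub-Riemannian Pontryagin maximum principle gives the normal/abnormal dichotomy, normal extremals are smooth by bootstrapping the (smooth) Hamiltonian ODE, and the contact condition is exactly the corank-one non-degeneracy that rules out non-constant abnormal extremals via the Cartan identity $\Scal{\theta}{[\dot\gamma,W]}=-d\theta(\dot\gamma,W)$ on $\ker\theta$. The caveat you flag about differentiating $\Scal{\lambda}{W_j^0\circ\gamma}$ along an a priori merely Lipschitz curve is real but benign: the PMP covector $\lambda$ is absolutely continuous, so the bracket identity holds a.e.\ in $t$, and $\dot\gamma=0$ a.e.\ already forces $\gamma$ to be constant without any further regularity of $\gamma$ being assumed at that stage.
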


A function $h:(M,d_c)\to\mathbb R$ is $L$-Lipschitz if
$$|h(p)-h(q)|\leq  Ld_c(p,q)$$
for all $p,q\in M$. The infimum of such constants $L$ is denoted by $\Lip(h)$. Lipschitz functions are differentiable a.e. along the vector fields $W_j$, $j=1,\ldots,2n$, as we see from the lemma below.






\begin{lemma}\label{rademacher} If $h:M\to \R$ is $L$-Lipschitz continuous with respect to $d_c$, then
$$
h\in BV_\theta(M),
$$
\begin{equation}\label{rademacher eq: 2}
|\mathbf W^{0}h |(U) \le L v_\theta (U)\quad\mbox{for all open sets $U\subset M$}
\end{equation}
and the
 Lie derivative
\begin{equation}\label{rademacher eq: 3}
 \mc L_X h (x^0) := \lim_{t\to 0} \dfrac1t \big(h(\exp(tX)x^0) - h(x^0) \big)
\end{equation}
 exists for all $X\in \ker\theta$ and for almost every $x^0\in M$.

 In addition $\mc L_X h$ is a distributional derivative, i.e. (with the notation of \cite{Ambrosio-Ghezzi-Magnani} as in \eqref{variation})
 \begin{equation*}\label{rademacher eq:1}
(\mc L_X h )\,dv_\theta = D_X h.
\end{equation*}

 \end{lemma}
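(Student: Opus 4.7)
My strategy is to reduce the statement to a one-dimensional Rademacher theorem along integral curves of $X$, then combine Fubini with the ambient identity $(\divergence X)\,dv_\theta=\mc L_X(dv_\theta)$ to obtain the distributional identity.

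First, I would establish the key subunit estimate: for $X\in\Gamma(M,\ker\theta)$ with $g(X,X)\le 1$, the integral curve $\gamma(t):=\exp(tX)x^0$ satisfies the subunit condition with respect to $\mbf W^{0}$ by construction, since $X$ admits coefficients $c_1,\dots,c_{2n}$ with $\sum c_j^2 \le 1$ in the frame $\mbf W^0$. Therefore $d_c(\gamma(t),x^0)\le |t|$, and the Lipschitz hypothesis gives
\begin{equation*}
|h(\exp(tX)x^0)-h(x^0)|\le L|t|,
\end{equation*}
so the scalar function $t\mapsto h(\gamma(t))$ is $L$-Lipschitz on an interval containing $0$.

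Second, to obtain \eqref{rademacher eq: 3} a.e., I would work in a flow-box chart for $X$: near any $x^0$, parametrize a neighborhood by $(s,y)\in(-\delta,\delta)\times\Sigma$ via $(s,y)\mapsto\exp(sX)(y)$, where $\Sigma$ is a smooth transversal. In these coordinates $dv_\theta=\rho(s,y)\,ds\wedge d\gs(y)$ with $\rho$ smooth and positive, and $\exp(tX)$ becomes the translation $(s,y)\mapsto(s+t,y)$. For each fixed $y$, $s\mapsto h(\exp(sX)y)$ is a $1$-dimensional $L$-Lipschitz function, hence differentiable almost everywhere in $s$ by the classical $1$-D Rademacher theorem, with $|\partial_s h(\exp(sX)y)|\le L$ wherever the derivative exists. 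Fubini in the flow-box coordinates then yields that $\mc L_Xh(x^0)$ exists for almost every $x^0$, with $|\mc L_X h|\le L$ a.e., and a partition of unity extends this to all of $M$.

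Third, to identify $\mc L_X h$ with the distributional derivative $D_X h$, I would test against $\phi\in C_c^\infty(U)$ supported in a single flow box and consider the family
\begin{equation*}
I(t):=\int_U h(x)\,\phi(\exp(-tX)x)\,dv_\theta(x).
\end{equation*}
Differentiating in $t$ at $t=0$ by change of variables $x=\exp(tX)z$ and using the definition $(\divergence X)dv_\theta=\mc L_X(dv_\theta)$, the Jacobian contribution is controlled directly by $\divergence X$, bypassing any explicit dependence on $\rho$. On the other hand, rewriting $I(t)$ using the flow-box change of variables turns the $t$-derivative into a one-dimensional difference quotient of $h$ along $X$, for which the classical fundamental theorem of calculus for $1$-D Lipschitz functions applies and delivers $\int_U \phi\,\mc L_X h\,dv_\theta$. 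Equating the two expressions yields
\begin{equation*}
-\int_U h\,\divergence(\phi X)\,dv_\theta=\int_U \phi\,\mc L_X h\,dv_\theta,
\end{equation*}
which is precisely the distributional identity $(\mc L_X h)\,dv_\theta=D_Xh$. From $|\mc L_X h|\le L$ it follows that $|D_X h|(U)\le L\,v_\theta(U)$, and by Proposition \ref{Ambrosio-Ghezzi-Magnani}~(iii) we conclude $|\mbf W^{0}h|(U)\le L\,v_\theta(U)$, so $h\in BV_\theta(M)$.

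The main obstacle I expect is the third step: making the differentiation of $I(t)$ fully rigorous despite $h$ being only Lipschitz, and ensuring that the Jacobian factor $\rho$ combines correctly with the divergence term. The clean way around it is to avoid computing $\partial_s\rho$ directly and instead exploit the global identity $\mc L_X(dv_\theta)=(\divergence X)\,dv_\theta$, which absorbs precisely the Jacobian contribution into $\divergence X$. The remainder of the argument—subunit curves, $1$-D Rademacher, Fubini—is standard once this identity is in place.
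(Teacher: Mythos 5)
The student's approach is genuinely different from the paper's. Where the paper cites Miranda's metric $BV$ theory (via Theorem 3.1 of Ambrosio--Ghezzi--Magnani) for the first two assertions and then uses a Darboux chart $\Psi:U\to\he n$ together with the Pansu--Rademacher theorem to obtain a.e. differentiability, you instead work directly with flow-box coordinates of a single vector field $X$, apply the classical one-dimensional Rademacher theorem along the leaves, Fubini to propagate existence a.e., and then derive \eqref{rademacher eq: 2} as a consequence of the distributional identity and the bound $|\mc L_X h|\le L$. Your steps~1 and~3 are sound in outline: the subunit estimate $d_c(\exp(tX)x^0,x^0)\le|t|$ is correct for $g(X,X)\le 1$, and the computation of $I'(0)$ via $\mc L_X(dv_\theta)=(\divergence X)\,dv_\theta$ and the Leibniz rule $\divergence(\phi X)=\phi\divergence X+X\phi$ does deliver the distributional identity.

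However, there is a real gap in step~2, and it is not just technical. Your flow-box argument yields: for each fixed section $X\in\Gamma(M,\ker\theta)$, the limit \eqref{rademacher eq: 3} exists for a.e.\ $x^0$, with the exceptional null set \emph{depending on $X$}. The Pansu--Rademacher route used in the paper gives something strictly stronger: at a.e.\ $x^0$ the function $h\circ\Psi^{-1}$ has a genuine first-order Taylor expansion in the distance $d_c^{\mathbb H}$ (the display \eqref{pansu rademacher 3} in the paper), so the null set is independent of $X$ and, more importantly, the directional derivative can be computed along \emph{any} $C^1$ horizontal curve with the given initial velocity, not merely along the integral curve of a fixed smooth vector field. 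This extra strength is precisely what Remark \ref{remark rademacher} asserts, and it is what Lemma \ref{geodesic} (and hence the Eikonal equation, Theorem \ref{thm:Eikonal}) relies on: there the curve $\gamma$ is a minimizing geodesic whose initial velocity $\dot\gamma(0)$ varies with $x^0$, so one cannot fix a vector field $X$ in advance and apply your a.e.\ statement. One-dimensional Rademacher along the leaves of a fixed flow box does not produce a Taylor expansion in $d_c$ and therefore cannot recover these consequences. In short: your argument can be read as establishing the lemma under the weakest parsing of its quantifiers, but it does not establish the Pansu differentiability that the paper's proof delivers and that the surrounding arguments actually use, and that gap cannot be patched without going through something like Pansu's theorem.
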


 \begin{proof} The first two assertions follows straightforwardly from \cite{miranda}, keeping in mind
 Theorem 3.1 of \cite{Ambrosio-Ghezzi-Magnani}. Let now $\bar x\in M$ be a fixed point. Then, by Darboux theorem there exists a
 neighborhood $U$ of $\bar x$ and a contact diffeomorphism $\Psi:U\to \he n$.
 The map $\Psi$ is bi-Lipschitz continuous with respect to the  Carnot-Carath\'eodory
 distance $d_c$ in $U$ and the canonical Carnot-Carath\'eodory
 distance $d_c^{\mathbb H}$ in $\he n$. In particular, $h\circ \Psi^{-1}$ is
 $d_c^{\mathbb H}$-Lipschitz continuous. By Pansu-Rademacher  theorem (see \cite{P}), for a.e. $x^0\in U$
 there exist
 real numbers $\lambda_1(x^0),\dots, \lambda_{2n}(x^0)$ such that,
 if we set $\Psi(x^0) := p^0$ for $p^0 =(p^0_1,\dots, p^0_{2n+1})$ and $p =(p_1,\dots, p_{2n+1})$,
 \begin{equation*}\label{pansu rademacher}
 h\circ \Psi^{-1}(p) - h\circ \Psi^{-1}(p^0) = \sum_{j=0}^{2n} \lambda_j(x^0) (p_j - p^0_j) + o(d_c^{\mathbb H}(p, p^0))
 \end{equation*}
 as $p\to p^0$ and hence, if $\Psi = (\Psi_1,\dots,\Psi_{2n+1})$,
 \begin{equation*}\label{pansu rademacher 2}
 h (x) - h(x^0) = \sum_{j=0}^{2n} \lambda_j(x^0) (\Psi_j(x) - \Psi_j(x^0) )+ o(d_c(x, x^0)),
 \end{equation*}
 as $x\to x^0$.  Thus, keeping in mind that $d_c(\exp(tX)x^0, x^0) = O(t)$ as $t\to 0$, we have:
 \begin{equation}\label{pansu rademacher 3}\begin{split}
 \lim_{t\to 0} & \; \dfrac1t \big(h(\exp(tX)x^0) - h(x^0) \big) \\& =
 \lim_{t\to 0} \; \dfrac1t \sum_{j=0}^{2n} \lambda_j(x^0) (\Psi_j(\exp(tX)x^0) - \Psi_j(x^0) )
 \\& + \; \lim_{t\to 0} \dfrac1t o(d_c(\exp(tX)x^0, x^0))
 \\& = \sum_{j=0}^{2n} \mu_j^X (x_0),
 \end{split} \end{equation}
 where
 $$
 \mu_j^X (x_0) = \lambda_j(x^0) \dfrac{d}{dt} \Psi_j(\exp(tX)x^0)\quad\mbox{at $t=0$, \; $ j=1,\dots,2n$}.
 $$

 Finally, the last statement follows from \eqref{rademacher eq: 3}
 and \eqref{rademacher eq: 2} by standard arguments.

\end{proof}

\begin{remark}\label{remark rademacher}
We notice that, if $\gamma:[0,1]\to M$ is a continuously differentiable horizontal curve with
$\gamma(0)=x^0$ and $\dot{\gamma}(0)=X$, then, arguing as in \eqref{pansu rademacher 3},
$$
 \lim_{t\to 0} \; \dfrac1t \big(h(\gamma(t)) - h(x^0) \big) = \mc L_Xh(x^0).
$$
\end{remark}

%
%
%

\begin{lemma}\label{geodesic} Let $K\subset M$ be a compact set and let $x\in M$. We denote by $d_{c,K}(x)$ the Carnot-Carath\'eodory
distance of $x$ from $K$. Then
\begin{itemize}
\item[i)] $d_{c,K}(x)$ is $1$-Lipschitz continuous with respect to the $d_c$-distance;
\item[ii)] for a.e. $x^0\in M$ and for all $X\in \ker\theta$, with $g(X,X)  \le 1$
$$
\big|  X d_{c,K} (x^0)  \big|  \le 1,
$$
and there exists $X^0= X(x_0)\in \ker\theta$, with $g(X^0,X^0) =1$ such that
$$ {X^0} d_{c,K} (x^0) = 1. $$
\end{itemize}

\end{lemma}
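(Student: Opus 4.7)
The plan is to prove \emph{i)} by the standard triangle inequality: for any $x, y \in M$ and $p \in K$ one has $d_c(x, p) \le d_c(x, y) + d_c(y, p)$; taking the infimum over $p \in K$ gives $d_{c,K}(x) - d_{c,K}(y) \le d_c(x, y)$, and swapping $x$ and $y$ yields the $1$-Lipschitz bound.

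For the upper bound in \emph{ii)}, I would argue that a horizontal vector field $X$ with $g(X,X) \le 1$ generates a subunit integral curve $t \mapsto \exp(tX) x^0$, so that $d_c(\exp(tX)x^0, x^0) \le |t|$. Combined with part \emph{i)}, the difference quotient $|d_{c,K}(\exp(tX)x^0) - d_{c,K}(x^0)|/|t|$ is bounded by $1$; passing to the limit at the a.e.\ points where the Lie derivative exists (Lemma \ref{rademacher}) yields $|X d_{c,K}(x^0)| \le 1$.

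The core of the lemma is producing the unit vector $X^0$ achieving equality. At a.e.\ $x^0 \in M$ the function $d_{c,K}$ is Pansu differentiable in the sense established in the proof of Lemma \ref{rademacher}, so every horizontal directional derivative at $x^0$ exists. By compactness of $K$ pick $p \in K$ with $d_c(x^0, p) = d_{c,K}(x^0) =: \ell$, and by the Hopf--Rinow-type result for CC spaces on a compact manifold there exists a unit-speed length-minimizing geodesic $\gamma:[0,\ell] \to M$ with $\gamma(0) = p$ and $\gamma(\ell) = x^0$. By Theorem \ref{theo-geodesics} $\gamma$ is smooth, and being unit-speed horizontal it satisfies $\dot\gamma(\ell) \in \ker\theta$ with $g(\dot\gamma(\ell), \dot\gamma(\ell)) = 1$; set $X^0 := \dot\gamma(\ell)$. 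Reversing, define $\tilde\gamma(s) := \gamma(\ell - s)$, a smooth horizontal curve with $\tilde\gamma(0) = x^0$ and $\dot{\tilde\gamma}(0) = -X^0$. Since subsegments of minimizers are themselves minimizing, $d_c(\tilde\gamma(s), p) = \ell - s$, so $d_{c,K}(\tilde\gamma(s)) \le \ell - s$; the reverse inequality follows from the $1$-Lipschitz continuity in part \emph{i)}, hence $d_{c,K}(\tilde\gamma(s)) = \ell - s$ for every $s \in [0, \ell]$. Applying Remark \ref{remark rademacher} to $\tilde\gamma$,
$$-1 = \lim_{s \to 0^+} \frac{d_{c,K}(\tilde\gamma(s)) - d_{c,K}(x^0)}{s} = \mc L_{-X^0} d_{c,K}(x^0) = -X^0 d_{c,K}(x^0),$$
which gives $X^0 d_{c,K}(x^0) = 1$.

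The main obstacle is to upgrade the a.e.\ existence of \emph{individual} Lie derivatives (Lemma \ref{rademacher}) to the joint statement that at a.e.\ $x^0$ every horizontal directional derivative exists, so that Remark \ref{remark rademacher} can be applied along the geometrically chosen direction $X^0(x^0)$. This is exactly the Pansu differentiability provided by the Darboux-based reduction to the Heisenberg Pansu--Rademacher theorem already invoked in the proof of Lemma \ref{rademacher}. A secondary technical point is the existence of a length-minimizing geodesic realizing $d_c(x^0, p)$, which is standard for CC structures on compact manifolds.
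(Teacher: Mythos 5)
Your proof is correct and follows essentially the same path as the paper: triangle inequality for (i), and for (ii) a smooth length-minimizing geodesic (via Theorem \ref{theo-geodesics}) together with Remark \ref{remark rademacher} and the a.e.\ Pansu differentiability supplied by Lemma \ref{rademacher}. You are in fact a bit more careful than the paper's own write-up, which parametrizes $\gamma$ from $x^0$ towards $K$ and sets $X^0:=\dot\gamma(0)$, so that the displayed computation really yields $X^0 d_{c,K}(x^0)=-1$ rather than $+1$; your reversed curve $\tilde\gamma$ (equivalently, taking $-\dot\gamma(0)$) cleanly fixes that sign.
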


\begin{proof} The first assertion is trivial. Moreover, it is well known that for any $x\in M$,
there exists $\bar x\in K$ such that $d_{c,K}(x) = d_c(\bar x, x)$. Let now $x^0$ be a point where all horizontal Lie derivatives exist,
and let $\gamma: [0, d_c(\bar x, x^0)] \to M$ be a minimizing geodesic with
 $\gamma(d_c(\bar x, x^0)) = \bar x$ and $\gamma(0)=x^0$.
By Theorem \ref{theo-geodesics}, $\gamma$ is smooth. Without loss of generality, we may
assume that $d_c(\gamma(t),x^0) = t$. Keeping in mind Remark \ref{remark rademacher},
if we take $X^0:=X(x^0) = \dot{\gamma}(0)$, we have
$$
 {X^0} d_{c,K} (x^0) =  \lim_{t\to 0} \; \dfrac1t \big(d_c(\gamma(t) ,x^0) \big) = 1.
$$
This concludes the proof of ii).
\end{proof}

We can finally state the Eikonal equation for the distance $d_c$:

\begin{theorem}[\textbf{The Eikonal equation}] \label{thm:Eikonal}  Let $K\subset M$ be a closed set and let $d_{c,K}$ be the distance from $K$. Then
\begin{equation}\label{Eikonal-equation}|\mathbf W^{0}d_{c,K}|=dv_\theta.
\end{equation}
\end{theorem}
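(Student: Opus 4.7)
The plan is to establish the two measure inequalities $|\mathbf W^{0}d_{c,K}| \le dv_\theta$ and $|\mathbf W^{0}d_{c,K}| \ge dv_\theta$ separately; by absolute continuity, both reduce to a pointwise statement on the Radon--Nikodym density.

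For the upper bound I would invoke Lemma~\ref{geodesic}(i), which asserts that $d_{c,K}$ is $1$-Lipschitz with respect to $d_c$, and then apply Lemma~\ref{rademacher} to obtain $|\mathbf W^{0}d_{c,K}|(U)\le v_\theta(U)$ for every open $U\subset M$. Thus $|\mathbf W^{0}d_{c,K}|$ is absolutely continuous with respect to $dv_\theta$, so that $|\mathbf W^{0}d_{c,K}|=\rho\,dv_\theta$ with $0\le\rho\le 1$ a.e., and the problem reduces to showing $\rho\ge 1$ a.e.

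For the lower bound I would combine the Riesz representation of Theorem~\ref{Riesz} with the a.e.\ identification $(\mathcal{L}_X d_{c,K})\,dv_\theta=D_X d_{c,K}$ from Lemma~\ref{rademacher}, tested against each element of the local orthonormal frame $W_1^0,\dots,W_{2n}^0$ of $\ker\theta$. This yields, for each $j$ and a.e.\ $x$,
\[
\mathcal L_{W_j^0} d_{c,K}(x)=g\bigl(W_j^0(x),\nu_{d_{c,K}}(x)\bigr)\,\rho(x),
\]
where $\nu_{d_{c,K}}$ is the unit dual normal of Theorem~\ref{Riesz}. Since there are only $2n$ frame vectors, these identities hold simultaneously outside a $dv_\theta$-null set $N$ that can be enlarged to also contain the exceptional sets where $d_{c,K}$ fails to be horizontally Pansu-differentiable (cf.\ the proof of Lemma~\ref{rademacher}) and where the conclusion of Lemma~\ref{geodesic}(ii) fails. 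At any $x^0\notin N$, Lemma~\ref{geodesic}(ii) furnishes a unit horizontal vector $X^0\in\ker\theta$ with $X^0 d_{c,K}(x^0)=1$; expanding $X^0=\sum_j a_j W_j^0(x^0)$ with $\sum_j a_j^2=1$ and using linearity of the horizontal directional derivative at a point of Pansu differentiability,
\[
1=X^0 d_{c,K}(x^0)=\sum_j a_j\,\mathcal L_{W_j^0} d_{c,K}(x^0)=g\bigl(X^0,\nu_{d_{c,K}}(x^0)\bigr)\,\rho(x^0)\le\rho(x^0),
\]
by Cauchy--Schwarz, forcing $\rho\ge 1$ a.e.

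The main technical obstacle is reconciling the measure-theoretic Riesz identity, which holds for each \emph{fixed} smooth horizontal vector field up to a null set, with the variational statement of Lemma~\ref{geodesic}(ii), which selects an optimal direction $X^0$ depending on the base point $x^0$. Passing through a single fixed local orthonormal frame $\{W_j^0\}$ keeps the exceptional set finite-union-controlled, while the a.e.\ horizontal linearity of the directional derivative (extracted from Pansu--Rademacher, as already used in the proof of Lemma~\ref{rademacher}) provides the bridge that lets the pointwise $X^0$ from Lemma~\ref{geodesic}(ii) feed into the framewise Riesz identity at the same good point.
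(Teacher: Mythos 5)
Your proof is correct and takes essentially the same route as the paper's. The paper establishes the lower bound by citing the Ambrosio--Ghezzi--Magnani identity $|\mathbf W^{0}d_{c,K}|=\bigl(\sum_{j}({W^{0}_j} d_{c,K})^2\bigr)^{1/2}$ directly and then applying Cauchy--Schwarz against the unit vector $X^0=\sum_j\lambda_jW^0_j$ of Lemma~\ref{geodesic}(ii); you derive the same identity on the fly from Theorem~\ref{Riesz} and the polar decomposition $\mathbf W^0 d_{c,K}=\nu\,\rho\,dv_\theta$, which is precisely what that citation encapsulates, and both upper bounds rest identically on the $1$-Lipschitz estimate of Lemma~\ref{rademacher}. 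The only genuine added content in your version is the explicit bookkeeping of the null sets on which Pansu differentiability, the Riesz identity, and Lemma~\ref{geodesic}(ii) hold simultaneously, which the paper leaves implicit.
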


\begin{proof} Let $x^0$ and $X^0=X(x^0)$ be as in Lemma \ref{geodesic}. We can write $X^0=\sum_{j=1}^{2n} \lambda_j W_j^{0}$.
Since $g(X^0,X^0)=1$ we have
$$
\sum_j \lambda_j^2 =1.
$$
Then
$$
\left(\sum_{j=1}^{2n} ({W^{0}_j} d_{c,K})^2 \right)^{1/2} \ge \sum_{j=1}^{2n} \lambda_j ({W_j^{0}} d_{c,K} ) =  {X^0} d_{c,K} =1.
$$
The reverse estimate follows from \eqref{rademacher eq: 2} and Theorem \ref{geodesic}, part i).

Finally, as in \cite{Ambrosio-Ghezzi-Magnani}, page 20, we have that $|\mathbf W^{0}d_{c,K}|=\left(\sum_{j=1}^{2n} ({W^{0}_j} d_{c,K})^2 \right)^{1/2}$, which concludes the proof of the Theorem.
\end{proof}




\subsection{Minkowski content and perimeter}
Let $E$ be an open set in $M$ and let $d_{c,\partial E}(x)$ denote the Carnot-Carath\'eodory distance of the point $x \in M$ from the boundary of $E$. We define the tubular neighborhood of $\partial E$ in $M$:
$$\mathcal U_r(\partial E):=\{p \in M\::\:d_{c,\partial E}(p) < r\}.$$
The \emph{upper} and \emph{lower Minkowski content} of $\partial E$ in $M$ are defined, respectively, as follows:
$$\mathcal M^+(\partial E):=\limsup_{r\downarrow 0} \frac{v_\theta(\mathcal U_r(\partial E))}{2r},$$
$$\mathcal M^-(\partial E):=\liminf_{r\downarrow 0} \frac{v_\theta(\mathcal U_r(\partial E))}{2r}.$$
When $\mathcal M^+(\partial E)=\mathcal M^-(\partial E)$, we call the common value the \emph{Minkowski content} of $E$ and we denote it by $\mathcal M(\partial E)$.
The following theorem is the analogue of Theorem 5.1 in \cite{Monti-SerraCassano}.
\begin{theorem}\label{Minkowski}
Let $E \subset \subset M$ be a bounded open set with $C^\infty$ boundary. Then
$\mathcal M^+(\partial E)=\mathcal M^-(\partial E)$ and we have
$$\mathcal M(\partial E)=\|\partial E\|_\theta.$$
\end{theorem}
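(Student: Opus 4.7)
The plan is to adapt to our contact setting the argument of Monti and Serra Cassano (\cite{Monti-SerraCassano}, Theorem 5.1), combining the Eikonal equation (Theorem \ref{thm:Eikonal}) with the coarea formula (Proposition \ref{coarea}) and exploiting the smoothness of $\partial E$ to control the parallel sets.

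First I would set $\rho(x):=d_{c,\partial E}(x)$, so that $\mathcal{U}_r(\partial E)=\{\rho<r\}$. By Lemma \ref{rademacher}, $\rho$ is $1$-Lipschitz with respect to $d_c$ and belongs to $BV_\theta(M)$; by Theorem \ref{thm:Eikonal}, $|\mathbf{W}^0\rho|=dv_\theta$. Applying Proposition \ref{coarea} with $h=\rho$ and $f=\chi_{\{\rho<r\}}$, and noting that for $0<t<r$ the reduced boundary of $\{\rho<t\}$ lies in $\{\rho=t\}\subset\{\rho<r\}$ while for $t\geq r$ it is disjoint from $\{\rho<r\}$, one obtains the key identity
$$
v_\theta(\mathcal{U}_r(\partial E))
=\int_{\{\rho<r\}} d|\mathbf{W}^0\rho|
=\int_0^r \|\partial\{\rho<t\}\|_\theta\,dt.
$$

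The next step is to prove the continuity
$$
\lim_{t\downarrow 0}\|\partial\{\rho<t\}\|_\theta=2\,\|\partial E\|_\theta,
$$
after which the mean value theorem for continuous integrands gives $\mathcal{M}^\pm(\partial E)=\|\partial E\|_\theta$, and the theorem follows. For small $t>0$, the inner parallel set $E_t^-:=E\setminus\{\rho<t\}$ and the outer parallel set $E_t^+:=E\cup\{\rho<t\}$ both tend in $L^1$ to $E$, and their reduced $\theta$-boundaries are disjoint subsets of $\{\rho=t\}$; from $\chi_{\{\rho<t\}}=\chi_{E_t^+}-\chi_{E_t^-}$ and the disjointness of supports one derives the decomposition $\|\partial\{\rho<t\}\|_\theta=\|\partial E_t^-\|_\theta+\|\partial E_t^+\|_\theta$. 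The lower semi-continuity of the perimeter along the approximating sequences $E_t^\pm\to E$ then immediately yields the liminf bound $\liminf_{t\downarrow 0}\|\partial\{\rho<t\}\|_\theta\geq 2\|\partial E\|_\theta$.

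The hard part will be the matching upper bound $\limsup_{t\downarrow 0}\|\partial E_t^\pm\|_\theta\leq\|\partial E\|_\theta$, which uses the smoothness of $\partial E$ in an essential way. Away from the (closed) characteristic set of $\partial E$, where the horizontal distribution is transverse to $T\partial E$, I would flow from $\partial E$ along a unit horizontal normal for small time $t$ to parametrize the collars $\{\rho<t\}\cap E$ and $\{\rho<t\}\cap E^c$; a Jacobian computation analogous to the Heisenberg case treated in \cite{Monti-SerraCassano} then bounds the perimeter of each parallel surface $\partial E_t^\pm$ by $\|\partial E\|_\theta+o(1)$. The contribution of the characteristic set is negligible because it carries zero $\theta$-perimeter for $C^\infty$ boundaries, so the collar over it has $o(t)$ volume and its boundary contributes $o(1)$ to $\|\partial E_t^\pm\|_\theta$ after rescaling.
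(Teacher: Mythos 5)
Your reduction is structurally sound and even somewhat cleaner than the paper's exposition: applying the coarea formula (Proposition \ref{coarea}) to the unsigned distance $\rho=d_{c,\partial E}$ together with the Eikonal equation (Theorem \ref{thm:Eikonal}) to obtain $v_\theta(\mathcal U_r(\partial E))=\int_0^r\|\partial\{\rho<t\}\|_\theta\,dt$, the disjoint split $\|\partial\{\rho<t\}\|_\theta=\|\partial E_t^+\|_\theta+\|\partial E_t^-\|_\theta$, and the lower-semicontinuity bound $\liminf_{t\downarrow 0}\|\partial E_t^\pm\|_\theta\geq\|\partial E\|_\theta$ are all correct. This liminf step is in fact the exact analogue of the paper's proof of $\mathcal M^-(\partial E)\geq\|\partial E\|_\theta$, which uses the piecewise-affine profile $\varphi_\eps$ built from the signed distance and lower semicontinuity of the total variation.

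The difficulty is where you expect it, but you have not actually bridged it. For the matching upper bound $\limsup_{t\downarrow 0}\|\partial E_t^\pm\|_\theta\leq\|\partial E\|_\theta$ you propose flowing along the unit horizontal normal and citing a Jacobian estimate ``analogous to the Heisenberg case.'' This is not what Monti--Serra Cassano do: their Theorem 5.1, which the paper follows almost verbatim, deliberately avoids the horizontal normal flow. Instead they (and the paper) pass to the Riemannian approximating metrics $g_\eps$ generated by $\{W^0_1,\dots,W^0_{2n},\eps T\}$, use $d_\eps\leq d_c$ to get $\mathcal M^+(\partial E)\leq\mathcal M^+_\eps(\partial E)$, establish $\mathcal M^+_\eps(\partial E)=|\mathbf{W}^0_\eps\chi_E|(M)$ via the Riemannian coarea formula and the continuity of $s\mapsto|\mathbf{W}^0_\eps\chi_{E_s}|(M)$ at $s=0$, and finally invoke Lemma \ref{10.6:1} to let $\eps\to0$. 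The whole point of the Riemannian regularization is to make the normal flow globally well-defined (there are no characteristic points for $g_\eps$), so that the continuity at $s=0$ becomes a classical Riemannian statement.

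Your alternative route breaks down precisely at the characteristic set. Knowing that the characteristic set carries zero $\theta$-perimeter controls $\|\partial E\|_\theta$ there, but it gives no direct control on either (a) the volume of the collar $\{\rho<t\}$ over it, or (b) the perimeter $\|\partial E_t^\pm\|_\theta$ of the parallel surfaces in that region, which is what actually enters the limsup. The statement ``the collar over it has $o(t)$ volume and its boundary contributes $o(1)$ after rescaling'' is the entire content of the hard estimate and is asserted, not proved; there is no reason in general why small measure of a set implies small Minkowski content of a neighborhood of it, nor small perimeter of nearby parallel surfaces. Unless you supply a genuine analysis of the tubular neighborhood near characteristic points (in the spirit of the blow-up/Jacobian estimates that would replace the Riemannian approximation), the limsup half of your argument has a gap, and you would be better served by following the paper and introducing the Riemannian regularization $g_\eps$ together with the convergence result of Lemma \ref{10.6:1}.
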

\begin{proof}
We follow the proof of Theorem 5.1 in \cite{Monti-SerraCassano}. We prove separately the two following inequalities:
\begin{equation}\label{eq-}
\mathcal M^-(\partial E)\geq \|\partial E\|_{\theta},
\end{equation}
\begin{equation}\label{eq+}
\mathcal M^+(\partial E)\leq \|\partial E\|_{\theta}.
\end{equation}
We start by proving \eqref{eq-}. Let us introduce the signed distance from $\partial E$:
\begin{equation}\label{signed-distance}
\rho_c(x)=\begin{cases}
d_{c,\partial E}(p) & \mbox{if}\:\:p \in E,\\
-d_{c,\partial E}(p) & \mbox{if}\:\:p \in M\setminus E.
\end{cases}
\end{equation}
For $\eps>0$ we define the function:
\begin{equation*}\varphi_\eps(p)=\begin{cases}
\frac{1}{2\eps} \rho_c(p) +\frac{1}{2} &\mbox{if}\:\:|\rho_c(p)|<\eps,\\
1 &\mbox{if}\:\:\rho_c(p)\geq\eps,\\
0 &\mbox{if}\:\:\rho_c(p)\leq -\eps.
\end{cases}
\end{equation*}
Using that Theorem \ref{thm:Eikonal} on the Eikonal equation, we have
$$
|\textbf{W}^{0} \varphi_\eps|=\frac{1}{2\eps}\int_{\{|\rho_c(p)|<\eps\}} |\textbf{W}^{0} \varphi_\eps (p)|\, dv_\theta(p) \leq \frac{1}{2\eps} \,v_\theta(\mathcal U_\varepsilon(\partial E)).$$
By the lower semicontinuity of the total variation and since $\varphi_\eps \rightarrow \chi_E$ in $L^1(M)$, we deduce that
$$\|\partial E\|_\theta \leq\liminf_{\eps \rightarrow 0} |\textbf{W}^{0} \varphi_\eps| \leq \mathcal M^-(\partial E),$$
which concludes the proof of \eqref{eq-}.

It remains to prove \eqref{eq+}. Here we use a Riemannian approximation for Carnot-Carath\'eodory spaces (see e.g. \cite{Franchi_seminar} and \cite{Monti-SerraCassano}).
We consider the Carnot-Carath\'eodory distance $d_\eps$ in $M$ associated with the vector fields ${\mathbf W}^{0}_\eps=\{W_1^{0},\dots, W^{0}_{2n}, \eps T\}$.
Notice that ${\mathbf W}^{0}_\eps$ is an orthonormal basis of $TM$ with respect to the Riemannian metric $g_\eps$ defined as follows: if $X,\:Y\in TM$, we write $X=X'+X''$, $Y=Y'+Y''$, with $X',Y'\in \ker \theta$ and $X'',Y'' \in \mathrm{span}\;\{T\}$, and we set
$$g_\eps(X,Y):=g(X',Y')+\frac{1}{\eps^2}g(X'',Y'').$$
Obviously $d_\eps$ is a Riemannian distance.

Define also $d_{\eps,\partial E}(p)=\min_{q\in\partial E} d_\eps(x,y)$.
We have that
\begin{equation}\label{compare-dist}
d_\eps(p,q)\leq d_{c,\partial E}(p,q)\quad \text{for all }p,q.\end{equation}
In fact, $d_{c,\partial E}(p,q)=\sup_{\eps>0} d_\eps(p,q)$.

Define also $\rho_\eps$ to be the signed $\eps$-distance to $\partial E$ as in
\eqref{signed-distance}. Then $\rho_\eps$  is $C^\infty$ near $\partial E$ and it satisfies the Eikonal equation $|{\mathbf W}^{0}_\eps(\rho_\eps)|=1$.

We consider the usual upper and lower Minkowski content for $\rho_\eps$
$$\mathcal M^+_\eps(\partial E):=\limsup_{r\downarrow 0} \frac{v_{g_\eps}(\{|\rho_\eps|<r\})}{2r},\quad
\mathcal M^-_\eps(\partial E):=\liminf_{r\downarrow 0} \frac{v_{g_\eps}(\{|\rho_\eps|<r\})}{2r}.$$
From \eqref{compare-dist}, $|\rho_\eps|\leq |\rho|$, from which we immediately have
\begin{equation}\label{Minkowski1}
\mathcal M^+(\partial E)\leq \mathcal M^+_\eps(\partial E).
\end{equation}
To achieve the proof of Theorem \ref{Minkowski}, we need the following technical result.

\begin{lemma}\label{10.6:1} If $E\subset M$ is an open set with smooth boundary $\partial E$,
that is a compact $2n$-dimensional submanifold
without boundary, we have
\begin{equation}\label{10.6:2}
|\mathbf{W}_\eps^{0}\chi_E|(M) \to |\mathbf{W}^{0}\chi_E |(M)\quad \mbox{as $\eps\to 0$}.
\end{equation}
\end{lemma}

\begin{proof} Without loss of generality, in \eqref{10.6:2} we can replace $M$ by   an open set $U$
that is contained in the domain of a Darboux map $\Psi: U\to \he{n} \equiv \R^{2n+1}$.
We denote by $
\mu\to\Psi_\#\mu$ the push-forward of a Borel measure $\mu$, i.e.
$$
\Psi_\# \mu (\mc B) = \mu (\Psi^{-1}(\mc B))\quad\mbox{for any $\mc B\subset  \R^{2n+1}$ Borel.}
$$
Moreover, we denote by $\Psi^*g$ the pull-back metric on $\R^{2n+1}$.
By
\cite{Ambrosio-Ghezzi-Magnani}, Proposition 2.2, if $X\in \Gamma (M, TM)$, then
\begin{equation*}\begin{split}
(D_X\chi_E)(\mc B) = \Psi_\# (D_X\chi_E) (\Psi(\mc B)).
\end{split}\end{equation*}
Thus
\begin{equation}\label{april 15 eq:1}\begin{split}
|\mathbf{W}^0_\eps \chi_E|(\mc B) &=
\sup_{g_\eps(X,X)\le 1} |D_X\chi_E|(\mc B)
\\&= \sup_{g_\eps(X,X)\le 1} |\Psi_\#(D_X\chi_E) |(\Psi(\mc B))
\\&
= \sup_{g_\eps(X,X)\le 1}|\Psi_\#(D_{\Psi_*X}\chi_{\Psi(E)}| (\Psi(\mc B))
\\&= \sup_{g_\eps^*(\Psi_*X,\Psi_*X)\le 1}|\Psi_\#(D_{\Psi_*X}\chi_{\Psi(E)}| (\Psi(\mc B))
\\&
= |\Psi_*(\mathbf{W}^0_\eps) \chi_{\Psi(E)}| (\Psi(\mc B)),
\end{split}\end{equation}
where
$$
\Psi_*(\mathbf{W}^0_\eps) = \{\Psi_*W_1^0, \dots, \Psi_*W_{2n}^0\}.
$$
As in \cite{Monti-SerraCassano}, formula (5.5),
$$
|\Psi_*(\mathbf{W}^0_\eps) \chi_{\Psi(E)}| (\Psi(\mc B))
\to
|\Psi_*(\mathbf{W}^0) \chi_{\Psi(E)}| (\Psi(\mc B)).
$$
Thus, repeating backward the arguments of \eqref{april 15 eq:1}, we
conclude the proof of the Lemma.

\end{proof}

Let us go back to the proof of Theorem \ref{Minkowski}.
We will prove soon that
\begin{equation}\label{Minkowski2}
\mathcal M_\eps^+(\partial E)=\mathcal M_\eps^-(\partial E)=|{\mathbf W}^{0}_\eps \chi_E|(M).
\end{equation}
Suppose for the moment that this is true. Then, by \eqref{Minkowski1}, \eqref{Minkowski2}, and \eqref{10.6:2}, we have:
$$
\mathcal M^+(\partial E)\leq \lim_{\eps \rightarrow 0} \mathcal M_\eps^+(\partial E)=\lim_{\eps\rightarrow 0}|{\mathbf W}^{0}_\eps \chi_\eps|(M)=|{\mathbf W}^{0} \chi_\eps|(M),$$
which concludes the proof of the theorem. Therefore, it remains just to show \eqref{Minkowski2}.

Let $E_s=\{p\in M \,:\,\rho_\eps(p)>s \}$. Using the coarea formula \eqref{coarea} and the Riemannian Eikonal equation, we have that
\begin{equation*}\begin{split}
v_\theta(\{|\rho_\eps|<t\})& =\int_{\{|\rho_\eps<t|\}}\,dv_\theta
=\int_{-t}^t \frac{1}{|{\textbf W}^{0}_\eps \rho_\eps|}\,d|{\textbf W}^{0}_\eps \chi_{E_s}|\,ds
\\&=\int_{-t}^t |{\textbf W}^{0}_\eps \chi_{E_s}|(M)\,ds.
\end{split}\end{equation*}
Thus, \eqref{Minkowski2} will follow if we prove that
\begin{equation}\label{continuity}
\mbox{the map}\quad s\rightarrow  |{\textbf W}^{0}_\eps \chi_{E_s}|(M)\quad\mbox{is continuous at $s=0$.}
\end{equation}
This can be done using again the arguments of \eqref{april 15 eq:1} to
reduce ourselves to the ``flat'' case of $\mathbb R^{2n+1}$, where \eqref{continuity}
has been already established  in \cite{Monti-SerraCassano}
(see the proof of Theorem 5.1 therein).

\end{proof}

\section{Compactness and liminf inequality in Heisenberg groups}\label{sec4}


The aim of this Section is to prove a liminf inequality for the ``model case'' where $M\times [0,\sigma)$
is replaced by $\he n\times \mathbb [0,\sigma)$. To this end,
for a subset $A$ of $\mathbb H^{n}\times \R^+$, and $A'=\partial A\cap \{z=0\}$, and for a function  $u:A \rightarrow \R$, we consider the localized functional:
\begin{equation}\label{E eps}
E_\eps(u,A,A'):=\eps\int_{A}\left(|\mathbf W^{\mathbb H}u|^2
+ |\partial_z u|^2\right) d\eta dt dz+\lambda_\eps \int_{A'} V(\trace u)\,d\eta dt.
\end{equation}

The following theorem is the analogue of Proposition 4.7 of \cite{ABS2}. It establishes a compactness result and a liminf inequality for the functional $E_\eps(u_\eps,C_R,B^{\mathbb H}_R)$, where $B^{\mathbb H}_R=B^{\mathbb H}(0,R)$ is the Carnot-Caratheodory ball in $\mathbb H^{n}$ of radius $R$ centered at $0$, $C_R:=B^{\mathbb H}_R\times (0,R)\subset \mathbb H^{n}\times \R^+$ and for simplicity of notation we write $B^{\mathbb H}_R$ in place of $B^{\mathbb H}_R\times \{0\}$.
\begin{theorem}\label{liminf-thm}
 Let $\{u_\eps\} \subset W^{1,2}(C_R)$ be a countable sequence with uniformly bounded energies $E_\eps(u_\eps,C_R,B^{\mathbb H}_R)$. Then the traces $\mathrm{Tr}\,u_\eps$ are pre-compact in $L^1(B^{\mathbb H}_R)$ and every cluster point
$v$ belongs to $BV_{\theta_0}(B^{\mathbb H}_R,\{0,1\})$. Moreover, if $\mathrm{Tr}\,  u_\eps \rightarrow v$ in $L^1(B^{\mathbb H}_R)$, then
\begin{equation}\label{liminf}
\liminf_{\eps\rightarrow 0}E_\eps(u_\eps,C_R,B^{\mathbb H}_R)\geq \mathbf c\left|\int_{B^{\mathbb H}_R}\nu_v
\;d\|\partial \{v=1\}\|_{\theta_0}\right|,
\end{equation}
where $\nu_v$ is the geometrical normal to the set $\{v\equiv 1\}$ and $\mathbf c=k/\pi$ with $k$ given in \eqref{lambda}.
\end{theorem}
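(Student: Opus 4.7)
The plan is to mimic the Riemannian argument of \cite{ABS2}, with Euclidean slicing replaced by the Heisenberg slicing theorem of Monti \cite{Mon}, and with the universal constant $\mathbf{c}=\kappa/\pi$ arising from a two-dimensional optimal-profile analysis on a vertical half-strip.

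The 2D model underlying everything is the following: on $(-L,L)\times(0,R)$ with coordinates $(s,z)$, set
\begin{equation*}
\mathcal{E}_\eps(u) := \eps \int_{-L}^{L}\!\int_0^R \bigl((\partial_s u)^2 + (\partial_z u)^2\bigr)\,dz\,ds + \lambda_\eps \int_{-L}^{L} V(u(s,0))\,ds.
\end{equation*}
I would first verify that whenever $\trace u_\eps \to \chi_{(0,L)}$ in $L^1(-L,L)$ one has $\liminf_\eps \mathcal{E}_\eps(u_\eps) \geq \mathbf{c}$. The matching upper bound is obtained by testing with a truncation of the harmonic angle function $\arg(s+iz)/\pi$, cut off outside an annulus of inner radius $r_\eps$; balancing the Dirichlet cost $\sim (\eps/\pi)\log(R/r_\eps)$ against the boundary potential $\sim \lambda_\eps r_\eps$ yields $r_\eps \sim \eps/\lambda_\eps$, and the scaling \eqref{lambda} then produces $\kappa/\pi$. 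The liminf requires a capacitary lower bound in the strip essentially identical to the one in \cite{ABS2}, whose proof transfers because the ambient geometry along each slice is Euclidean.

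For the compactness statement, I would apply Monti's slicing theorem: for each $j\in\{1,\dots,2n\}$ and a.e.\ leaf $\ell$ of the foliation of $\mathbb{H}^n$ by integral curves of $W_j^{\mathbb{H}}$, the restriction of $u_\eps$ to the vertical half-strip $\ell\times[0,R)$ is an admissible competitor for $\mathcal{E}_\eps$, with $\partial_s$ identified with $W_j^{\mathbb{H}}$. Keeping only the $W_j^{\mathbb{H}}$ horizontal term in $E_\eps$ and applying Fubini, the uniform bound on $E_\eps(u_\eps,C_R,B_R^{\mathbb{H}})$ combined with the 1D analysis produces a uniform BV bound on $\phi(\trace u_\eps)$, where $\phi:\mathbb{R}\to[0,1]$ is a truncation sending the wells to $\{0,1\}$. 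Since $\|\trace u_\eps - \phi(\trace u_\eps)\|_{L^1}\to 0$ thanks to the boundary potential bound, Rellich-type compactness for $BV_{\theta_0}$ (consequence of Proposition \ref{Ambrosio-Ghezzi-Magnani}) yields precompactness of the traces in $L^1(B_R^{\mathbb{H}})$ with every cluster point in $BV_{\theta_0}(B_R^{\mathbb{H}},\{0,1\})$.

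For the liminf inequality \eqref{liminf}, by the duality $\bigl|\int \nu_v\,d\|\partial\{v=1\}\|_{\theta_0}\bigr| = \sup_{|a|=1} \int (a\cdot\nu_v)\,d\|\partial\{v=1\}\|_{\theta_0}$, it is enough to prove
\begin{equation*}
\liminf_{\eps\to 0} E_\eps(u_\eps, C_R, B_R^{\mathbb{H}}) \ \geq\ \mathbf{c} \int_{B_R^{\mathbb{H}}} a\cdot\nu_v\,d\|\partial\{v=1\}\|_{\theta_0}
\end{equation*}
for every unit $a=(a_1,\dots,a_{2n})\in\mathbb{R}^{2n}$. This scalar inequality is established by slicing along the horizontal field $\sum_j a_j W_j^{\mathbb{H}}$: on a.e.\ slice the planar 1D liminf of the previous paragraph applies, and an oriented BV slicing identity combined with Fatou produces the bound; a final supremum over $a$ yields \eqref{liminf}. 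The main obstacle will be the tight 1D liminf with constant exactly $\mathbf{c}$: admissible $u_\eps$ need not approximate the harmonic angle profile, so one needs a capacitary lower bound valid for arbitrary competitors, an ingredient absent from the standard Modica--Mortola setting precisely because of the logarithmic scaling \eqref{lambda}.
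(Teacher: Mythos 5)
Your overall scheme (slice by Monti's theorem, reduce to a planar half-strip problem, recover the constant $\mathbf c=\kappa/\pi$ from a log-harmonic profile) captures the correct intuition, but two steps as you phrase them do not actually go through, and the paper handles both differently.

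\textbf{The 1D/2D liminf.} You propose to show directly that the local 2D energy $\mathcal E_\eps$ on $(-L,L)\times(0,R)$ has $\liminf\ge\mathbf c$ when $\trace u_\eps\to\chi_{(0,L)}$, invoking a ``capacitary lower bound essentially identical to the one in \cite{ABS2}.'' This is precisely the hard step, and you do not supply it; ``capacitary'' arguments that keep the \emph{exact} constant are notoriously delicate here because (as you yourself observe) admissible competitors need not resemble the harmonic angle. The paper does not run a direct 2D capacity estimate. Instead it first applies the sharp trace inequality \eqref{trace} (Theorem~\ref{trace-thm}, with the explicit constant $2\pi$) slice-by-slice, which converts the 2D Dirichlet integral $\eps\int(|\partial_s u|^2+|\partial_z u|^2)$ into the nonlocal boundary energy $\frac{\eps}{2\pi}\iint |v(s)-v(s')|^2/|s-s'|^2$; the slice energy then dominates the 1D fractional functional $G_\eps$ of \eqref{G}, whose $\Gamma$-liminf with the sharp constant $\mathbf c\,\#(S_v)$ is already known (Theorem~\ref{liminf-trace-thm}, from \cite{ABS1,ABS2}). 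This is a genuinely different and strictly shorter route: it offloads the optimality of the constant onto the trace inequality and the established result for $G_\eps$, rather than reproving a capacitary bound for the local 2D problem.

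\textbf{Compactness.} You claim that the 1D slice analysis produces ``a uniform BV bound on $\phi(\trace u_\eps)$'' and then appeal to Rellich-type compactness in $BV_{\theta_0}$. This does not hold: bounded $G_\eps$ (or bounded 2D slice energy) does \emph{not} give a uniform 1D BV bound on the traces -- the boundary traces are only $H^{1/2}$-type, not BV, and the functional only controls the number of jumps in the limit. The paper instead proves an $L^1$-equicontinuity under horizontal displacements: Theorem~\ref{compact} shows that precompactness of the sliced families $(\mathcal F_\delta)^p_i$ in $L^1$ of a.e.\ leaf, for all horizontal directions $i=1,\dots,2n$, implies precompactness of $\mathcal F$ in $L^1$, and the crucial point is Morbidelli's factorization (Theorem~\ref{morbidelli}) of an arbitrary group displacement $h$ into finitely many horizontal displacements of controlled length, so that $\int|v(q\cdot h)-v(q)|\,dq$ can be estimated leaf by leaf. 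This is the Fr\'echet--Kolmogorov route, not a BV-embedding route, and you cannot replace it by Rellich because the requisite uniform BV bound is unavailable. That the cluster points lie in $BV_{\theta_0}(\{0,1\})$ then follows a posteriori from the 1D liminf combined with the BV slicing identity (Theorem~\ref{BV-slices}), not from any BV bound on the sequence itself.

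The final step of your proposal (duality over horizontal directions $a$ and a supremum) is the same as the paper's conclusion, where one maximizes over the slicing direction $\boldsymbol e$. But to get there you still need the two ingredients above, and both are missing or misstated in the proposal.
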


The proof of Theorem \ref{liminf-thm} is articulated in several steps and requires a good
amount of preliminary results.

\subsection{Slicing theorems}

We recall a Fubini type Theorem in Carnot groups, which is proven in \cite{Mon}. Here, we state it for the case of the Heisenberg group, but it holds in general Carnot groups. Let $S\subset \mathbb H^{n}$ be a $\mathcal C^1$ smooth hypersurface. By the classical Implicit Function Theorem, we may assume that $S=\partial E$, where $E\subset \mathbb H^{n}$ is an open set with finite $\mathbb H$-perimeter. Suppose that there exists an horizontal left invariant vector field $W^{\mathbb H}$ which is globally transverse to $S$, i.e.
$$\left\langle W^{\mathbb H}(p),\nu(p)\right\rangle\neq 0 \quad \forall\;\;p\in S,$$
where $\nu$ is the Euclidean unit inward normal along $S$.
The Cauchy problem
$$\left\{\begin{array}{rl}\dot{\gamma}(t)&=W^{\mathbb H}(\gamma(t))\\
\gamma(0)&=p\in S,\end{array}\right.
$$
has a unique smooth solution defined on all $\R$, which we denote by $\gamma_p(t)=\exp{(tW^{\mathbb H})}(p)$ for $t\in \R$ and $p\in S$. We call this trajectory a horizontal line. Now we consider the family of horizontal $W^{\mathbb H}$-lines starting from $S$ and we denote by $R_S$ the subset of $\mathbb H^{n}$ reachable from $S$ moving along horizontal $W^{\mathbb H}$-lines, that is
\begin{equation}\label{unique-inter}R_S:=\left\{q\in \mathbb H^{n}:\exists \:p\in S,\: \exists\:t \in \R\;\;\mbox{s.t.}\;\;q=\gamma_p(t)\text{ for some }\gamma_p\right\}.\end{equation}
Assume moreover that $\gamma_p(\R)\cap S=p$ for every $p\in S$. Since $W^{\mathbb H}$ is transverse to $S$, by the uniqueness of the solution of the Cauchy problem and by \eqref{unique-inter}, any subset $D$ of $R_S$ has a natural projection on $S$ along $W^{\mathbb H}$. We define the map $pr_S:D\subset R_S\rightarrow S$ in the following way: for $q\in D$ and $p\in S$, we set $p=pr_S(q)$ if and only if there exists $t\in \R$ such that $q=\gamma_p(t)$. Using this projection, every subset $D$ of $R_S$ can be foliated with one-dimensional leaves that are horizontal $W^{\mathbb H}$-lines.
We define now the partial perimeter along a horizontal direction.
\begin{definition}
Let $U$ be an open set in $\mathbb H^n$. Let $E$ be a measurable subset of $\mathbb H^n$. We say that $E$ has finite $W^{\mathbb H}$-perimeter in $U$ if
$$\|\partial_{W^{\mathbb H}} E\|_\theta(U) :=\sup \left\{\int_U \chi_E \;W^{\mathbb H}\varphi\; d\eta dt\::\:\ \varphi \in C_0^1(U),\;|\varphi|\leq 1 \right\} < \infty.$$
\end{definition}

With this notions, we can now state the Fubini type result, which will be used in the proof of the liminf inequality.
\begin{theorem} [see Corollary 2.3 in \cite{Mon}]\label{fubini-thm}
Let $S\subset \mathbb H^{n}$ be a $\mathbb H$-regular hypersurface and assume $S=\partial E$ globally, where $E\subset \mathbb H^{n}$ is a suitable open $\mathbb H$-Caccioppoli set. Let as before, $\gamma_p$ be the horizontal $W^{\mathbb H}$-line starting from $p\in S$ and assume that $\gamma_p(\R)\cap S=p$ for every $p\in S$. Finally let $D\subset R_S$ be a Lebesgue measurable subset of $\mathbb H^{n}$ that is reachable from $S$ by means of $W^{\mathbb H}$-lines. Then, for every function $\psi\in L^1(D)$, the following statement holds:
\begin{itemize}
\item [(i)] let $\psi_{|D_p}$ denote the restriction of $\psi$ to $D_p:=D\cap \gamma_p(\R)$ and let us define the mapping
$$\psi_p:\gamma_p^{-1}(D_p)\subset \R\rightarrow \R,\quad \psi_p(s)=(\psi\circ\gamma_p)(s).$$
Then $\psi_p$ is $\mathcal L^1$-measurable for $\|\partial E\|_{\theta_0}$-a.e. $p\in S$ or, equivalently, the restriction $\psi_{|D_p}$ is $\mathcal H_c^1$-measurable for $\|\partial E\|_{\theta_0}$-a.e. $p\in S$;
\item[(ii)]
the mapping defined by
$$S\ni p\mapsto \int_{D_p}\psi \,d\mathcal H_c^1=\int_{\gamma_p^{-1}(D_p)}\psi_p(s)\,ds$$
is $\|\partial E\|_{\theta_0}$-measurable on $S$ and the following formula holds
\begin{equation}\label{fubini}
\begin{split}
\displaystyle \int_D \psi \,d v_{\theta_0}&=\int_{pr_S(D)}\left[\int_{D_p} \psi \, d\mathcal H^1_c\right]\,d\|\partial_{W^{\mathbb H}} E\|_{\theta_0}(p)\\
\displaystyle &=\int_{pr_S(D)}\left[\int_{\gamma_p^{-1}(D_p)}\psi_p(s)\,ds\right]\cdot\left|\left\langle W^{\mathbb H},\nu_E\right\rangle_{H\mathbb H_p}\,\right|\; d\|\partial E\|_{\theta_0}(p).
\end{split}
\end{equation}
\end{itemize}
\end{theorem}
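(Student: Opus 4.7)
My strategy is to reduce the identity \eqref{fubini} to a classical Fubini theorem in a flow-box parametrization, and then to translate the resulting Jacobian into the intrinsic $\mathbb H$-perimeter weight. The first step is to introduce the map
\begin{equation*}
\Phi : S \times \mathbb R \longrightarrow R_S, \qquad \Phi(p,s) := \gamma_p(s) = \exp(sW^{\mathbb H})(p).
\end{equation*}
Under the global transversality hypothesis $\gamma_p(\mathbb R) \cap S = \{p\}$, the map $\Phi$ is a $C^1$ diffeomorphism onto $R_S$. Part (i) of the statement is then an immediate application of the classical Fubini theorem to $\psi \circ \Phi$, once $S$ is equipped with the finite reference measure $\|\partial E\|_{\theta_0}$: the measurability of $\psi_p$ for $\|\partial E\|_{\theta_0}$-a.e.\ $p$, and of the slice-integral map $p \mapsto \int \psi_p(s)\,ds$, are then standard.

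The substantive part is the computation of the Jacobian of $\Phi$ with respect to the two reference measures. Here the crucial fact is that $W^{\mathbb H}$ is left-invariant and that the Haar measure on $\mathbb H^n$ coincides with $dv_{\theta_0} = d\eta\,dt$. Consequently the flow $\{\exp(sW^{\mathbb H})\}_{s \in \mathbb R}$ is a one-parameter group of $dv_{\theta_0}$-preserving diffeomorphisms, so that on a smooth piece of $S$ the usual Euclidean area formula gives
\begin{equation*}
\int_D \psi\, dv_{\theta_0} = \int_{pr_S(D)} \Bigl( \int_{\gamma_p^{-1}(D_p)} \psi_p(s)\, ds \Bigr)\, |\langle W^{\mathbb H}(p), \nu(p)\rangle_{\mathrm{Euc}}|\, d\mathcal H^{2n}_{\mathrm{Euc}}(p),
\end{equation*}
where $\nu$ denotes the Euclidean unit normal to $S$. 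Intuitively, only the initial area element on $S$ contributes to the Jacobian, while the flow transports it unchanged along the $W^{\mathbb H}$-lines.

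To convert this Euclidean identity into \eqref{fubini}, I would invoke the Riesz representation of Theorem \ref{Riesz}: applied to $X = W^{\mathbb H}$ and compared with the definition of the partial $W^{\mathbb H}$-perimeter of $E$, it yields
\begin{equation*}
d\|\partial_{W^{\mathbb H}} E\|_{\theta_0}(p) = |\langle W^{\mathbb H}, \nu_E\rangle_{H\mathbb H_p}|\, d\|\partial E\|_{\theta_0}(p).
\end{equation*}
On any $C^1$ portion of $\partial E$, the $\mathbb H$-perimeter and the Euclidean $2n$-dimensional Hausdorff measure are mutually absolutely continuous, and the Radon--Nikodym densities match so that $|\langle W^{\mathbb H}, \nu\rangle_{\mathrm{Euc}}|\, d\mathcal H^{2n}_{\mathrm{Euc}}$ is precisely $d\|\partial_{W^{\mathbb H}} E\|_{\theta_0}$, producing both equalities in \eqref{fubini}.

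The main obstacle I expect to face is the low regularity of $S$: by hypothesis $S$ is only $\mathbb H$-regular, so that the Euclidean normal $\nu$ is not defined pointwise and $\Phi$ fails to be a $C^1$ diffeomorphism on all of $S \times \mathbb R$. My plan to handle this is to approximate $E$ by smooth Caccioppoli sets $E_k$ whose boundaries $S_k := \partial E_k$ are transverse to $W^{\mathbb H}$ and whose perimeters converge, $\|\partial E_k\|_{\theta_0} \rightharpoonup \|\partial E\|_{\theta_0}$, prove the Fubini identity for each $S_k$ by the smooth argument above, and pass to the limit via a dominated convergence argument; the uniform domination comes from the elementary bound $|\langle W^{\mathbb H}, \nu_{E_k}\rangle_{H\mathbb H}| \le |W^{\mathbb H}|$.
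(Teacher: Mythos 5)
The smooth-case portion of your argument is sound: the flow of a left-invariant horizontal vector field is right translation, hence volume-preserving because $\mathbb{H}^n$ is unimodular; the classical area formula then yields the Euclidean Jacobian $|\langle W^{\mathbb{H}},\nu\rangle_{\mathrm{Euc}}|$ on a $C^1$ piece of $S$; and since $\divergence W^{\mathbb{H}}=0$, the divergence theorem gives $\|\partial_{W^{\mathbb{H}}}E\|_{\theta_0}= |\langle W^{\mathbb{H}},\nu\rangle_{\mathrm{Euc}}|\,d\mathcal H^{2n}_{\mathrm{Euc}}$ on $\partial E$, while Theorem~\ref{Riesz} converts this to $|\langle W^{\mathbb{H}},\nu_E\rangle_{H\mathbb{H}}|\,d\|\partial E\|_{\theta_0}$. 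This is precisely the computation that is used in the body of the paper when $S$ is a vertical hyperplane, and there it is correct.

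The gap is in the step where you pass from Euclidean-smooth $S$ to a general $\mathbb{H}$-regular hypersurface $S=\partial E$ by approximating $E$ with smooth sets $E_k$ and letting $k\to\infty$. This cannot work, for two reasons. First, your intermediate Euclidean identity simply does not make sense for the target $S$: an $\mathbb{H}$-regular hypersurface need not be a Euclidean $C^1$ manifold, and in fact can have Euclidean Hausdorff dimension strictly larger than $2n$, so $\mathcal H^{2n}_{\mathrm{Euc}}\res S$ may be $+\infty$ and the Euclidean normal $\nu$ is not defined; the quantity you want to ``limit towards'' is not there. Second, the dominated-convergence scheme fails because the entire right-hand side of \eqref{fubini} — the projection $pr_{S_k}$, the slices $D_p^{(k)}$, the slice integrals $\int_{D^{(k)}_p}\psi$, and the reference measure $\|\partial_{W^{\mathbb{H}}}E_k\|_{\theta_0}$ — all depend on the approximating surface $S_k$. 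Weak-$*$ convergence of $\|\partial E_k\|_{\theta_0}$ to $\|\partial E\|_{\theta_0}$ paired with a bound $|\langle W^{\mathbb{H}},\nu_{E_k}\rangle|\le 1$ controls only the measure factor, not the $k$-dependent integrand (which changes as the slicing changes, and for which there is no continuity statement); one would need strong geometric convergence of $S_k$ to $S$ to a degree that $L^1$-convergence of $\chi_{E_k}$ plus perimeter convergence simply does not provide. The proof in \cite{Mon} avoids this by working intrinsically from the start: the $\mathbb{H}$-implicit function theorem gives a parametrization of $S$ as an intrinsic $W^{\mathbb{H}}$-graph over a codimension-one vertical subgroup, under which the measure $\|\partial_{W^{\mathbb{H}}}E\|_{\theta_0}\res S$ is exactly the push-forward of Lebesgue measure on the graph domain; Fubini then follows by an intrinsic change of variables, with no appeal to the Euclidean area element on $S$ at any stage. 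If you only need the theorem for a vertical hyperplane $S=\Pi$ (as in the application in Section~\ref{sec4}), your smooth computation, together with the remark that the $\mathbb{H}$-perimeter of a vertical hyperplane is Lebesgue measure (\cite{CDG}), is indeed a complete argument; but it does not establish the general $\mathbb{H}$-regular version as stated.
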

Later we will apply this result to the case in which $S$ is a vertical hyperplane. We stress that
the $\mathbb H$-perimeter on any vertical hyperplane coincides with the Lebesgue measure (\cite{CDG}).\\

The following result, which is contained in \cite{Mon}, allows to reduce the study of $BV$ functions on Carnot groups to the study of their one-dimensional restrictions. First we introduce the following notation, concerning the one-dimensional total variation along an horizontal vector field $W^{\mathbb H}$ of a function. Let $W^{\mathbb H}$ be a horizontal vector field, such that $|W^{\mathbb H}|_{\mathcal H\mathbb H^n}=1$ and let $\gamma_p$ be  a horizontal $W^{\mathbb H}$-line starting from $p\in \mathbb H^n$.
We set
\begin{align*}
var_{W^{\mathbb H}}^1[f](\mathcal U):=&\sup\left\{ \int_{\mathcal U}f W^{\mathbb H}\varphi \,d\mathcal H^1_c\: :\:\varphi\in C^1_0(\mathcal B), |\varphi|\leq 1,\right.\\
&\hspace{1em}\left.\;\mbox{where}\;\mathcal B\subset \mathbb H^n,\:\mathcal B\;\mbox{open}\;\mbox{s.t.}\:\gamma_p\cap \mathcal B=\mathcal U\right\}.\end{align*}
We give the statement for the specific case of the Heisenberg group.

\begin{theorem}[Theorem 3.7 in \cite{Mon}]\label{BV-slices}
Let $S\subset \mathbb H^{n}$ be a $\mathbb H$-regular hypersurfaces and assume that $S=\partial E$ globally, where $E\subset \mathbb H^{n}$ is a suitable open $\mathbb H^{n}$-Caccioppoli set. Let $W^{\mathbb H}\in \mathcal H\mathbb H^{n}$, $|W^{\mathbb H}|_{\mathcal H\mathbb H^{n}}=1$, be a unit horizontal left invariant vector field which is transverse to $S$, and denote by $t\to \gamma_p(t):= p\cdot\exp(tW^{\mathbb H})$ the horizontal $W^{\mathbb H}$-line starting from $p\in S$. Let $D\subset R_S$ be a Lebesgue measurable subset of $\mathbb H^{n}$ that is reachable from $S$ by means of $W^{\mathbb H}$-lines.

Then
\begin{equation}\label{BV}
|W^{\mathbb H}f|(D)=\int_{pr_S(D)}var_{W^{\mathbb H}}^1[f_p](D_p)d\|\partial_{W^{\mathbb H}} E\|_{\theta_0}(p),
\end{equation}
where $f_p:=f\circ \gamma_p$ and $D_p:=\gamma_p\cap D$.
\end{theorem}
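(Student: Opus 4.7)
The proof naturally splits into the two opposite inequalities, unified by the Fubini-type formula \eqref{fubini} of Theorem \ref{fubini-thm}. Writing
$$\mu(D):=\int_{pr_S(D)} var^1_{W^{\mathbb H}}[f_p](D_p)\,d\|\partial_{W^{\mathbb H}}E\|_{\theta_0}(p),$$
the aim is to show $|W^{\mathbb H}f|(D)=\mu(D)$.

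For the easy direction $|W^{\mathbb H}f|(D)\le \mu(D)$, I would start from the dual definition of the total variation: pick any $\varphi\in C^1_0(D)$ with $|\varphi|\le 1$ and apply \eqref{fubini} to the $L^1$ integrand $\psi=fW^{\mathbb H}\varphi$, yielding
$$\int_D fW^{\mathbb H}\varphi\,dv_{\theta_0}=\int_{pr_S(D)}\Big[\int_{\gamma_p^{-1}(D_p)}(fW^{\mathbb H}\varphi)\circ\gamma_p\,ds\Big]\,d\|\partial_{W^{\mathbb H}}E\|_{\theta_0}(p).$$
Because $\varphi\circ\gamma_p$ is admissible for the one-dimensional variation along the slice $D_p$ (its support projects inside $D_p$), the bracket is bounded by $var^1_{W^{\mathbb H}}[f_p](D_p)$, and taking the supremum over $\varphi$ gives the claim.

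For the reverse inequality I would follow the classical BV-slicing blueprint via smooth approximation. If $f\in C^1(D)$, then \eqref{fubini} applied with $\psi=|W^{\mathbb H}f|$ gives both sides equal to $\int_D |W^{\mathbb H}f|\,dv_{\theta_0}$, because $|W^{\mathbb H}|_{\mathcal H\mathbb H^n}=1$ implies $var^1_{W^{\mathbb H}}[f_p](D_p)=\int_{\gamma_p^{-1}(D_p)}|W^{\mathbb H}f\circ\gamma_p|\,ds$ on each smooth slice. For general $f\in BV_{\theta_0}(D)$ I would mollify $f_\eps:=f*\rho_\eps$ by left-invariant group convolution, so that $W^{\mathbb H}f_\eps=(W^{\mathbb H}f)*\rho_\eps$ and the identity holds for each $f_\eps$. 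Passing to the limit uses two ingredients: (a) $|W^{\mathbb H}f_\eps|(D)\to |W^{\mathbb H}f|(D)$ from the standard weak-$*$ convergence of mollified distributional derivatives on the Heisenberg group, and (b) on $\|\partial_{W^{\mathbb H}}E\|_{\theta_0}$-a.e. slice, $(f_\eps)_p\to f_p$ in $L^1(D_p)$ along a subsequence (itself a consequence of \eqref{fubini}), so that one-dimensional lower semicontinuity of total variation gives $\liminf_\eps var^1_{W^{\mathbb H}}[(f_\eps)_p](D_p)\ge var^1_{W^{\mathbb H}}[f_p](D_p)$. Fatou then closes the chain $\mu(D)\le \liminf_\eps\mu_\eps(D)=\liminf_\eps|W^{\mathbb H}f_\eps|(D)\le |W^{\mathbb H}f|(D)$.

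The main obstacle is this reverse inequality, and within it the measurability of $p\mapsto var^1_{W^{\mathbb H}}[f_p](D_p)$ needed to make sense of $\mu(D)$ in the first place: this rests on first showing via \eqref{fubini} that $\|\partial_{W^{\mathbb H}}E\|_{\theta_0}$-almost every slice function $f_p$ lies in $L^1(D_p)$, and then expressing the one-dimensional variation as a countable supremum over a dense family of smooth test functions supported inside charts that cover $D$. Once this measurability and the slice-wise $L^1$ convergence have been established, the rest is standard bookkeeping.
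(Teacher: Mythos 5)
The paper cites this as Montefalcone's Theorem 3.7 and does not give its own proof, so I can only assess your blind attempt on its merits. Your two-inequality strategy (Fubini plus the supremum definition for the easy direction; smooth approximation plus Fatou for the hard one) is the right blueprint, and the easy-direction computation is correct for open $D$. But there is a genuine gap in the hard direction: the convergence $|W^{\mathbb H}f_\eps|(D)\to |W^{\mathbb H}f|(D)$ that you invoke does not follow from weak-$*$ convergence of the variation measures $|W^{\mathbb H}f_\eps|\,dv_{\theta_0}\rightharpoonup |W^{\mathbb H}f|$. Weak-$*$ convergence gives $\liminf_\eps |W^{\mathbb H}f_\eps|(A)\ge |W^{\mathbb H}f|(A)$ on open $A$ and $\limsup_\eps |W^{\mathbb H}f_\eps|(K)\le |W^{\mathbb H}f|(K)$ on compact $K$, so your Fatou chain only yields $\mu(D)\le |W^{\mathbb H}f|(\overline{D})$, not $\le |W^{\mathbb H}f|(D)$. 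To close the argument you must either assume $|W^{\mathbb H}f|(\partial D)=0$, or replace the global limit by the local convolution estimate $|W^{\mathbb H}f_\eps|(D')\le |W^{\mathbb H}f|(D'_\eps)\le |W^{\mathbb H}f|(D)$ for a compactly contained open $D'\Subset D$ and $\eps$ small, conclude $\mu(D')\le |W^{\mathbb H}f|(D)$ there, and then exhaust $D$. A related point affects the formulation for a merely measurable $D$: the supremum over $\varphi\in C^1_0(D)$ is the definition of $|W^{\mathbb H}f|(D)$ only for open $D$, so the identity should first be established for open $D$ and then extended to Borel sets by showing that both sides are restrictions of Radon measures that agree on open sets; as written you use the supremum characterization directly on a measurable $D$.

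There is also a handedness slip in the mollification: for the standard group convolution $(f*\rho)(x)=\int f(y)\,\rho(y^{-1}x)\,dy$ and a left-invariant field $W^{\mathbb H}$, one has $W^{\mathbb H}(f*\rho_\eps)=f*(W^{\mathbb H}\rho_\eps)$, not $(W^{\mathbb H}f)*\rho_\eps$ as you assert; to make the derivative land on $f$ you must put the mollifier on the left, $f_\eps:=\rho_\eps* f$, for which $W^{\mathbb H}(\rho_\eps*f)=\rho_\eps*(W^{\mathbb H}f)$ does hold. This is a fixable slip, but it is load-bearing for the identity on the smooth approximants and for the subsequent weak-$*$ argument. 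Your closing remark on the measurability of $p\mapsto var^1_{W^{\mathbb H}}[f_p](D_p)$ via a countable dense family of test functions is on the right track and is indeed the correct way to give $\mu(D)$ a rigorous meaning.
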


Our next step will be to prove a compactness result in $L^1$ for a family of functions satisfying some kind of equicontinuity along $1$-dimensional horizontal lines (see Theorem \ref{compact}). To this end, we must factorize an arbitrary displacement through a finite number of \emph{horizontal} displacements of controlled length. This is the content of the following Theorem \ref{morbidelli}.

\begin{theorem}[\cite{morbidelli}, \S 3]\label{morbidelli}
There exist $m\in \mathbb N$ and three multi-indexes $I$, $J$
and $\mathbf{\omega}$
of length $m$
\begin{equation*}
\begin{split}
I &= (i_1,\dots,i_m),\quad i_n\in \{1,\dots,2n\}\\
J &= (j_1,\dots,j_m),\quad j_n \in \{1,\dots,2n+1\}
\\
\mathbf{\omega}&=(\omega_1,\dots, \omega_M) \quad \omega_n \in \{-1,1\}
\end{split}\end{equation*}
and two geometric constants $0<b<a<1$ such that, if we set
$$
\mathcal E_{I,J,\mathbf{\omega}}: \rn{2n+1}   \to \he n
$$
$$
\mathcal E_{I,J,\mathbf{\omega}}(t_1,\dots,t_{2n+1}) := \exp( \omega_1t_{j_1}W^{\mathbb H}_{i_1})\cdots  \exp( \omega_m t_{j_m}W^{\mathbb H}_{i_m}),
$$
then for all $R>0$
$$
B_c(0,bR)\subset \mathcal E_{I,J,\mathbf{\omega}}(Q(0,aR)) \subset B_c(0,R),
$$
where
$$
Q(0,r) = \{(t_1,\dots,t_{2n+1})\in \rn{2n+1},\; \max_\ell \{|t_\ell|\}< r\}.
$$
In particular, if $h\in \he n$, then there exist $t_\ell = t_\ell(h)$, $\ell=1,\dots,2n+1$, $\max_\ell \{|t_\ell|\}< a\,d_c(0,h)/b$ such that
$$
\mathcal E_{I,J,\mathbf{\omega}}(t_1,\dots,t_{2n+1})  = h.
$$
\end{theorem}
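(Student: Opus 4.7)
The strategy is to combine the dilation structure of $\he n$ with an explicit commutator construction. Since each horizontal vector field $W_k^{\mathbb H}$ is homogeneous of degree one under the non-isotropic dilations $\delta_R$, one has $\exp(RsW_k^{\mathbb H})(0)=\delta_R(\exp(sW_k^{\mathbb H})(0))$, and $\delta_R$ being a group homomorphism gives $\mathcal E_{I,J,\omega}(Rt)=\delta_R(\mathcal E_{I,J,\omega}(t))$. Together with $d_c(0,\delta_R p)=R\,d_c(0,p)$ and $\delta_R(Q(0,r))=Q(0,Rr)$, this reduces the statement to finding absolute constants $0<b<a<1$ and indices $I,J,\omega$ so that $B_c(0,b)\subset \mathcal E_{I,J,\omega}(Q(0,a))\subset B_c(0,1)$; the general case then follows by applying $\delta_R$.

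For the construction, the only non-trivial commutator in $\mfrak h$ is $[W_k^{\mathbb H},W_{k+n}^{\mathbb H}]=T$, and since $\mfrak h$ is $2$-step nilpotent the Baker--Campbell--Hausdorff formula collapses at second order and yields the exact identity
\begin{equation*}
\exp(\alpha W_k^{\mathbb H})\exp(\beta W_{k+n}^{\mathbb H})\exp(-\alpha W_k^{\mathbb H})\exp(-\beta W_{k+n}^{\mathbb H}) = (0,\alpha\beta).
\end{equation*}
I would take $\mathcal E_{I,J,\omega}$ to be the concatenation of a horizontal block $\exp(t_1W_1^{\mathbb H})\cdots\exp(t_{2n}W_{2n}^{\mathbb H})$, which moves the horizontal coordinates to $(t_1,\dots,t_{2n})$ and contributes an explicit quadratic in the $t_i$ to the vertical coordinate, followed by one or more commutator blocks in which $\alpha$ and $\beta$ are signed multiples of $t_{2n+1}$ and of some $t_j$ with $j\le 2n$; this produces a polynomial map $\rn{2n+1}\to \he n$ whose vertical component depends bilinearly on the parameters. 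The inclusion $\mathcal E_{I,J,\omega}(Q(0,a))\subset B_c(0,1)$ is then immediate from the subunit nature of each horizontal flow and the triangle inequality for $d_c$, once $a$ is chosen small in terms of $m$. For the reverse inclusion I would verify that the Jacobian of $\mathcal E_{I,J,\omega}$ is non-singular at a suitably chosen interior point of $Q(0,a)$; the inverse function theorem, combined with the homogeneity $\mathcal E_{I,J,\omega}(Rt)=\delta_R\mathcal E_{I,J,\omega}(t)$, then promotes a neighbourhood of that point to a Carnot ball $B_c(0,b)$ of controlled radius.

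The main obstacle is to handle \emph{both} signs of the vertical coordinate with a single fixed formula. A naive commutator block with $\alpha=\beta=t_{2n+1}$ produces $(0,t_{2n+1}^2)\ge 0$ and therefore only covers a half-line in the vertical direction. Taking $\alpha,\beta$ to be signed multiples of two different parameters remedies this but introduces a singular locus along a hypersurface $\{t_j=0\}$ in parameter space, so the differential of $\mathcal E_{I,J,\omega}$ degenerates there. Piecing together enough blocks so that the image still covers a full Carnot ball, while keeping every parameter linear and all $|t_\ell|$ comparable to $a$, is precisely the delicate combinatorial point settled by Morbidelli's refinement of the Chow controllability argument, and it is the crux of the proof.
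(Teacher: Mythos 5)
The paper does not prove Theorem \ref{morbidelli}: it is a black-box citation to Morbidelli's paper, together with the informal remark that commutators are replaced by finite-length ``square paths.'' So there is no in-text proof to compare against, and the proposal must be assessed on its own.

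The reduction to scale $R=1$ via $\mathcal E_{I,J,\omega}(Rt)=\delta_R\mathcal E_{I,J,\omega}(t)$ and $d_c(0,\delta_R p)=R\,d_c(0,p)$ is correct, as is the observation that in a step-$2$ group the BCH series terminates and a four-flow ``square'' gives exactly $(0,\alpha\beta)$. The upper inclusion $\mathcal E_{I,J,\omega}(Q(0,a))\subset B_c(0,1)$ by summing $m$ sub-unit flow times is also fine.

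The lower inclusion is where the proposal breaks down, and in a way that goes beyond the admitted gap. You write that the inverse function theorem at a ``suitably chosen interior point'' $\bar t$, together with homogeneity, ``promotes a neighbourhood of that point to a Carnot ball $B_c(0,b)$.'' That step fails unless $\mathcal E(\bar t)=0$: the IFT yields an open set around $\mathcal E(\bar t)$, and the dilates $\delta_R$ of such a set accumulate at the origin without ever engulfing a full ball centered at $0$ (in $\R^2$ the dilates of a disk around $(1,0)$ sweep out a sector, not a punctured neighbourhood of $0$). So you actually need $\bar t\in Q(0,a)$ with $\mathcal E(\bar t)=0$ \emph{and} $D\mathcal E(\bar t)$ of full rank. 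That requirement is in tension with the BCH structure: the vertical component of any such product is a quadratic form in $t$ with no constant term, the horizontal components are linear, and at the obvious preimage $\bar t=0$ the vertical gradient vanishes; with the simple ``horizontal block plus one commutator block'' ansatz, $\bar t=0$ is the \emph{only} preimage. Producing a map $\mathcal E_{I,J,\omega}$ (with signs $\omega_k$ fixed once and for all) admitting a non-trivial, non-degenerate preimage of $0$ in $Q(0,a)$ is precisely the combinatorial content of Morbidelli's construction, and as it stands your sketch neither supplies the map nor a topological substitute (e.g.\ a degree argument at $0$) that would bypass the IFT requirement. So while the overall architecture is sensible, the proposal as written does not establish the lower inclusion, even modulo citing Morbidelli.
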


The main idea of Theorem \ref{morbidelli} is that each point in $\mathbb H^n$ can be reached by integral curves of horizontal vector fields, and when a commutator of two vector fields is needed, it can be approximated by a finite length "square path" along the two fields, taken successively with opposite sign. This is an important difference between this result  and the classical result due to Nagel, Stein and Wainger \cite{NSW}, Theorem 7, where instead the authors work directly with integral curves of commutators.

The following result is the analogue of Theorem 6.6 in \cite{ABS2}, and will be used to deduce compactness of the $\trace u_\eps$ from the compactness of their restrictions to the horizontal slices. We first fix some notations. Let $e_1,...,e_{2n}$ be the first $2n$ unit vectors of the canonical basis of $\mathbb{H}^{n}$. Let $D\subset \mathbb H^{n}$ and let $\Pi_i$ be the vertical hyperplane orthogonal to $e_i$. Obviously we have that $W_i^{\mathbb H}$ is globally transverse to $\Pi_i$, and therefore we can consider the projection $D_i$ of $D$ on $\Pi_i$ along $W_i^{\mathbb H}$. We denote by $\gamma_i^p(s)$ the horizontal $W_i^{\mathbb H}$-line starting from a point $p\in \Pi_i$. For a function $v$ defined on $D$, we consider the function
$v_i^p(s):=v(\gamma_i^p(s))$
 defined on the set $D_i^p:=\{s\in \R|\gamma_i^p(s)\in D\}$. Accordingly, for every family $\mathcal F$ of functions on $D$, we define the family $\mathcal F_i^p:=\{v_i^p|v\in \mathcal F\}$.

 We say that a family $\mathcal F'$ is $\delta$-dense in $\mathcal F$ if $\mathcal F$ lies in a $\delta$-neighborhood of $\mathcal F'$ with respect to the $L^1$ topology. We have the following theorem:
\begin{theorem}\label{compact}
Let $\mathcal F$ be a family of functions $v:D\rightarrow [-L,L]$ and assume that for every $\delta>0$ there exists a family  $\mathcal F_{\delta}$ $\delta$-dense in $\mathcal F$ such that $(\mathcal F_\delta)_i^p$ is pre-compact in $L^1(D_i^p)$ for $|\Pi_i|_{\mathcal H}$- a.e. $p\in D_i$ for every $i=1,...,2n$. Then $\mathcal F$ is pre-compact in $L^1(D)$.
\end{theorem}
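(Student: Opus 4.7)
The plan is to derive precompactness of $\mathcal F$ from a Kolmogorov--Riesz type criterion for right group translations on $\mathbb H^n$, combined with the Fubini decomposition of Theorem \ref{fubini-thm} along horizontal $W_i^{\mathbb H}$-lines and the Morbidelli factorization of Theorem \ref{morbidelli}.

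First, since $\mathcal F_\delta$ is $\delta$-dense in $\mathcal F$, a standard totally-bounded argument reduces the task to proving that each $\mathcal F_\delta$ is precompact in $L^1(D)$: a finite $\delta$-net for $\mathcal F_\delta$ provides a finite $2\delta$-net for $\mathcal F$. Fix $\delta>0$, set $\mathcal G=\mathcal F_\delta$, and extend every $v\in\mathcal G$ by zero outside $D$. The uniform bound $|v|\le L$ together with $|D|<\infty$ yields $L^1$-boundedness and tightness. Kolmogorov--Riesz on the unimodular group $\mathbb H^n$ (whose Haar measure is Lebesgue measure) then reduces the proof to the equicontinuity estimate
\begin{equation*}
\omega_{\mathcal G}(h):=\sup_{v\in\mathcal G}\int_{\mathbb H^n}|v(qh)-v(q)|\,dq\longrightarrow 0\quad\text{as } d_c(0,h)\to 0.
\end{equation*}

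The core step is the \emph{horizontal} case $h=\exp(tW_i^{\mathbb H})$, $i\in\{1,\dots,2n\}$. Applying Theorem \ref{fubini-thm} to the vertical hyperplane $\Pi_i$, on which the $\mathbb H$-perimeter reduces to Lebesgue measure and $|\langle W_i^{\mathbb H},\nu_{\Pi_i}\rangle|\equiv 1$, and using $\exp(sW_i^{\mathbb H})\exp(tW_i^{\mathbb H})=\exp((s+t)W_i^{\mathbb H})$, one obtains
\begin{equation*}
\int_{\mathbb H^n}|v(q\exp(tW_i^{\mathbb H}))-v(q)|\,dq=\int_{\Pi_i}\|v_i^p(\,\cdot+t)-v_i^p(\cdot)\|_{L^1(\mathbb R)}\,dp.
\end{equation*}
By hypothesis, $(\mathcal G)_i^p$ is precompact in $L^1(D_i^p)$ for a.e.\ $p\in D_i$, and the one-dimensional Kolmogorov--Riesz criterion yields
\begin{equation*}
\psi_p^i(t):=\sup_{v\in\mathcal G}\|v_i^p(\,\cdot+t)-v_i^p(\cdot)\|_{L^1(\mathbb R)}\to 0\quad\text{as } t\to 0
\end{equation*}
for a.e.\ such $p$. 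Since $\psi_p^i(t)\le 4L\,\mathcal L^1(D_i^p)$ and $\int_{D_i}\mathcal L^1(D_i^p)\,dp=|D|<\infty$ (apply Theorem \ref{fubini-thm} to $\psi\equiv 1$), dominated convergence gives the horizontal equicontinuity $\sup_{v\in\mathcal G}\int_{\mathbb H^n}|v(q\exp(tW_i^{\mathbb H}))-v(q)|\,dq\to 0$ as $t\to 0$.

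For a general $h$ with $d_c(0,h)$ small, Theorem \ref{morbidelli} gives a factorization $h=\exp(\omega_1 t_1 W_{i_1}^{\mathbb H})\cdots\exp(\omega_m t_m W_{i_m}^{\mathbb H})$ with $m,I,J,\omega$ independent of $h$ and $|t_k|\le C\,d_c(0,h)$. Telescoping across the $m$ factors and using the right-invariance of Lebesgue measure on $\mathbb H^n$,
\begin{equation*}
\int_{\mathbb H^n}|v(qh)-v(q)|\,dq\le\sum_{k=1}^m\int_{\mathbb H^n}|v(q\exp(\omega_k t_k W_{i_k}^{\mathbb H}))-v(q)|\,dq,
\end{equation*}
so the horizontal bound controls each summand and yields $\omega_{\mathcal G}(h)\to 0$ as $d_c(0,h)\to 0$, uniformly in $v\in\mathcal G$. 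This establishes the required equicontinuity, hence the precompactness of $\mathcal G=\mathcal F_\delta$ and of $\mathcal F$. The main technical obstacle is the dominated-convergence step: the pointwise-a.e.\ one-dimensional equicontinuity must admit an integrable majorant in $p$ that is uniform in $v\in\mathcal G$, and this is what upgrades the slicewise hypothesis into a genuine modulus of continuity on all of $\mathbb H^n$.
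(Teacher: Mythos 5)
Your proposal is correct and follows essentially the same route as the paper's own proof: reduce to $\mathcal F_\delta$, prove the right-translation equicontinuity estimate on $\mathbb H^n$, handle horizontal displacements via Theorem \ref{fubini-thm} combined with the one-dimensional Fr\'echet--Kolmogorov criterion and dominated convergence (your $\psi_p^i$ is the paper's $\omega_\delta^p$), and then reduce a general displacement to a product of horizontal ones via Theorem \ref{morbidelli}, telescoping and using right-invariance of Haar measure. The only differences are presentational (the paper normalizes $L=1$, $|D_i^p|\le 1$ to get the constant majorant, whereas you keep $4L\,\mathcal L^1(D_i^p)$ and integrate), not substantive.
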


\begin{proof} We can assume $L=1$ and $|D_i^p|\le 1$ for every $p\in \Pi_i$. Every function defined
on $D$ is extended to be zero outside  $D$, and accordingly every function defined $D_i^p$ is extended to be zero outside $D_i^p$.
Arguing as in \cite{ABS2}, Theorem 6.6, we have but
to show that for any $\delta>0$
\begin{equation}\label{july4:1}
\int_{\he n} |v( q\cdot h) - v(q)|\, dq \to 0
\end{equation}
as $d^{\mathbb H}_c(h,0)\to 0$, uniformly for $v\in \mathcal F_\delta$.

If $i=1,\dots,2n$ is fixed, $p\in D_i$, $r>0$, we set
$$
\omega_\delta^p(r) = \sup\Big\{ \int_\R |v_i^p(s+\sigma)-v_i^p(s)|\, ds\; : \;
v\in \mathcal F_\delta, \; |\sigma|\le r \Big\}.
$$
By our assumptions, $\omega_\delta^p(r) \le 2$ for all $r>0$ and,
as in \cite{ABS2}, by Fr\'echet-Kolmogorov compactness theorem,
$\omega_\delta^p(r)\searrow 0$ as $r \searrow 0$.

By Theorem \ref{morbidelli} we can write
$$
h=\mathcal E_{I,J,\mathbf{\omega}}(t_1,\dots,t_{2n+1}) ,
$$
with $t_\ell = t_\ell(h)$, $\ell=1,\dots,2n+1$, $\max_\ell \{|t_\ell|\}< ad_c(0,h)/b$.
For sake of brevity we write $t_h= (t_1,\dots, t_{2n+1})$.
With the notations
of Theorem \ref{morbidelli}, for $1\le k\le m$ we set
$$
I_k = (i_1,\dots,i_k)\quad\mbox{,}\quad J_k = (j_1,\dots,j_k)
\quad\mbox{and}\quad \mathbf{\omega}_k = (\omega_1,\dots,\omega_k).
$$
If we set $\mathcal E(I_{0},J_0,\mathbf{\omega}_{0})=e$, we have
\begin{equation*}\begin{split}
v(& x\cdot h) - v(x) = \sum_{k = 1}^m  \Big(v(x\cdot \mathcal E_{I_k,J_k,\mathbf{\omega}_k}(t_h) )-
v(x\cdot \mathcal E_{I_{k-1},J_{k-1},\mathbf{\omega}_{k-1}}(t_h)) \Big)
\\&=
\sum_{k = 1}^m  \Big(v(x\cdot \mathcal E_{I_{k-1},J_{k-1},\mathbf{\omega}_{k-1}}(t_h)\cdot
\exp( \omega_k t_{j_k}W^{\mathbb H}_{i_k})  )-
v(x\cdot \mathcal E_{I_{k-1},J_{k-1},\mathbf{\omega}_{k-1}}(t_h)) \Big).
\end{split}\end{equation*}
Thus, keeping in mind that Lebesgue measure in $\he n$ (that is unimodular) is the group
Haar measure and therefore is right invariant, we have
\begin{equation*}\begin{split}
\int_{\he n}& |v( q\cdot h) - v(q)|\, dq
\\&
\le
\sum_{k=1}^m
\int_{\he n} |v( q\cdot \exp( \omega_k t_{j_k}W^{\mathbb H}_{i_k})) - v(q)|\, dq.
\end{split}\end{equation*}
Take now $i=i_k$ for a generic $k=1,\dots, m$, and set $t:= t_{j_k} $ and, for example, $\omega_k=1$.
By \eqref{fubini},  we have
\begin{equation*}\begin{split}
\int_{\he n} &  |v( q\cdot \exp(tW^{\mathbb H}_{i})) - v(q)|\, dq =
\int_{D_i}\left( \int_\R  |v_i^p(s+ t) - v_i^p(s)|\, ds\right)\, dp
\\&
\le
\int_{D_i} \omega_\delta^p (t)\, dp
\le
\int_{D_i} \omega_\delta^p (ad_c(h,0)/b)\, dp,
\end{split}\end{equation*}
and \eqref{july4:1} follows as in \cite{ABS2}.

\end{proof}

\subsection{Fractional energy in $\mathbb R$}

In this Subsection we recall a liminf inequality for a one-dimensional fractional energy. We follow \cite{ABS1}.
Let $A\subset \R$ be an interval, $v\in L^1(A)$, we define
\begin{equation}\label{G}
G_\eps(v,A):=\frac{\eps}{2\pi}\int_{A^2}\left|\frac{v(s)-v(s')}{s-s'}\right|^2 \,ds\,ds' + \lambda_\eps \int_A V(v(s))\,ds.
\end{equation}
We recall two results that we will use in the proof of the liminf inequality, and that are contained in \cite{G} and \cite{ABS2}. The first one is a trace inequality in rectangles  with optimal constant.
\begin{theorem}[\cite{G}, Theorem 19]\label{trace-thm}
Let $u\in W^{1,2}((0,1)\times (0,1))$. Then, the trace of $u$ on $(0,1)\times \{0\}$, call it $v$, is a well defined function $v\in H^{1/2}(0,1)$, and we have
\begin{equation}\label{trace}
\iint_{(0,1)^2}\left|\frac{v(s)-v(s')}{s-s'}\right|^2 \,ds\, ds'\leq 2\pi \int_0^1\int_{0}^1 |\nabla u|^2 \,ds\, dz.
\end{equation}
\end{theorem}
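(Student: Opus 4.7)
My plan is to reduce the inequality to the sharp half-plane trace identity through reflection and Fourier analysis. By density, assume $u\in C^\infty(\bar Q)$. First I would extend $u$ by even reflection across the vertical sides $s=0$ and $s=1$ to obtain a function $\tilde u$ on $\mathbb R\times(0,1)$ that is even and $2$-periodic in $s$; its trace $\tilde v$ at $z=0$ is the analogous periodic extension of $v$, and the Dirichlet energy over a fundamental period $(0,2)\times(0,1)$ equals $2\int_Q|\nabla u|^2$. Using the symmetry $\tilde v(2-s)=\tilde v(s)$ of the reflection, an elementary Fubini argument gives the comparison
\[
\iint_{(0,1)^2}\frac{|v(s)-v(s')|^2}{(s-s')^2}\,ds\,ds'\le\tfrac12\int_0^2\!\!\int_{\mathbb R}\frac{|\tilde v(s+t)-\tilde v(s)|^2}{t^2}\,dt\,ds.
\]

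Next I would expand in Fourier series $\tilde u(s,z)=\sum_{n\in\mathbb Z}c_n(z)e^{i\pi n s}$. Plancherel in $s$ together with the elementary identity $\int_{\mathbb R}\frac{|e^{i\pi nt}-1|^2}{t^2}\,dt=2\pi^2|n|$ diagonalize both sides: the right-hand side above equals $4\pi^2\sum_n|n||c_n(0)|^2$, while $2\int_Q|\nabla u|^2=2\sum_n\int_0^1(\pi^2n^2|c_n|^2+|c_n'|^2)dz$. Thus the whole inequality reduces to the sharp one-dimensional trace estimate
\[
\pi|n|\,|c_n(0)|^2\le\int_0^1\bigl(\pi^2n^2|c_n|^2+|c_n'|^2\bigr)dz\quad(n\in\mathbb Z),
\]
which on the half-line $(0,\infty)$ follows immediately from $|c_n(0)|^2=-2\operatorname{Re}\int_0^\infty c_n\bar c_n'\,dz$ and Cauchy--Schwarz weighted by $\pi|n|$, with optimizer $c_n(z)=c_n(0)e^{-\pi|n|z}$.

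The main obstacle is precisely recovering the sharp factor $\pi|n|$ on the bounded interval $(0,1)$ with free endpoint at $z=1$: the direct 1D sharp constant there is only $\pi|n|\tanh(\pi|n|)<\pi|n|$, so a naive mode-by-mode application falls short by an exponentially small but nonzero factor. I would close this gap by extending each Fourier coefficient from $(0,1)$ to $(0,\infty)$ by the matched decaying exponential $c_n(z)=c_n(1)e^{-\pi|n|(z-1)}$, applying the half-line inequality, and then absorbing the resulting boundary-type contribution into the positive cross integrals $\iint_{(0,1)^2}|v(s)-v(s')|^2/(s+s'-2)^2$ and its analogue at $s+s'=0$ that were silently dropped when restricting from the periodic reflection integral to $(0,1)^2$ in the first step. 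A careful bookkeeping of these exponentially small matched corrections delivers the optimal constant $2\pi$.
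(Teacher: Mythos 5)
Your reflection and Fourier reduction is a sensible start, and (up to a slip in the constant: Plancherel on $(0,2)$ together with $\int_{\mathbb R}|e^{i\pi nt}-1|^2\,t^{-2}\,dt = 2\pi^2|n|$ gives $2\pi^2\sum_n|n|\,|c_n(0)|^2$, not $4\pi^2$) it correctly reduces the target inequality to the mode-by-mode bound $\pi|n|\,|c_n(0)|^2\le\int_0^1(\pi^2n^2|c_n|^2+|c_n'|^2)\,dz$ once the positive slack in the kernel comparison is discarded. You also correctly observe that this mode-by-mode bound is false on a bounded interval: the sharp constant is $\pi|n|\tanh(\pi|n|)<\pi|n|$, attained by $c_n(z)=c_n(0)\cosh(\pi|n|(1-z))/\cosh(\pi|n|)$.

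The proposed repair, however, contains a genuine gap. The matched exponential tail and the half-line inequality yield
\begin{equation*}
\pi|n|\,|c_n(0)|^2\le\int_0^1\bigl(\pi^2n^2|c_n|^2+|c_n'|^2\bigr)\,dz+\pi|n|\,|c_n(1)|^2,
\end{equation*}
so you must cancel the debt $\sum_n\pi|n|\,|c_n(1)|^2$ against the slack dropped when passing from $\iint_{(0,1)^2}|v-v'|^2/|s-s'|^2$ to $2\pi^2\sum_n|n|\,|c_n(0)|^2$. But that slack (the excess of $\tfrac{\pi^2}{4}\csc^2$ over $|\cdot|^{-2}$ plus the $(s+s')^{-2}$ and $(s+s'-2)^{-2}$ pieces) is a quadratic functional of the bottom trace $v=u(\cdot,0)$ alone, whereas $c_n(1)$ depends on $u(\cdot,1)$, an entirely independent degree of freedom. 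Fix $v$ and take $u(s,z)=v(s)+A\cos(\pi s)\phi(z)$ with $\phi(0)=0$, $\phi(1)\neq 0$: the slack stays fixed while $\pi|c_{\pm1}(1)|^2\sim A^2$ grows without bound, so ``slack $\ge$ debt'' fails for all large $A$. (The theorem of course still holds for such $u$, since $\int_Q|\nabla u|^2\sim A^2$ as well — but that is precisely the information your chain of inequalities has thrown away.) In short, the error your extension creates lives at $z=1$ and cannot be absorbed by a $z=0$ quantity; reaching the sharp constant $2\pi$ on the bounded square requires a genuinely different device (e.g.\ an energy-controlled extension of $u$ itself, or an argument that does not proceed mode by mode). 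The present paper merely cites \cite{G} for this statement and gives no proof, so I cannot compare your route to the original; but the absorption step as written is a hole, not a bookkeeping issue.
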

The following theorem is a liminf inequality for the energy functional $G_\eps$.

\begin{theorem}[Lemma 1 in \cite{ABS1} and Theorem 4.4 in \cite{ABS2}]\label{liminf-trace-thm}
We have:
\begin{itemize}
\item [(i)] Every countable sequence $\{v_\eps\}\subset L^1(A)$ with uniformly bounded energies $G_\eps(v_\eps,A)$ is pre-compact in $L^1(A)$ and every cluster point belongs to $BV(A,\{0,1\})$;
\item [(ii)] For every $v\in BV(A,\{0,1\})$ and every sequence $\{v_\eps\}$ such that $v_\eps\rightarrow v$ in $L^1(A)$,
\begin{equation*}\label{liminf-trace}
\liminf_{\eps\rightarrow 0}G_\eps(v_\eps,A)\geq \mathbf{c}\#  (S_v),
\end{equation*}
where $\#(S_v)$ denotes the number of points of discontinuity of $v$ and $\mathbf{c}=\kappa/\pi$ with $k$ given in \eqref{lambda}.
\end{itemize}
\end{theorem}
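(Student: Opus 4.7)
The functional $G_\eps$ is a critically scaled one-dimensional nonlocal Modica--Mortola energy; the sharp constant $\mathbf c=\kappa/\pi$ arises from the classical computation that a function transitioning from $\approx 0$ to $\approx 1$ across an interval of length $\delta$ inside an ambient interval of length $\rho$ has $H^{1/2}$--seminorm squared equal to $(2/\pi)\log(\rho/\delta)+O(1)$. The potential term forces the transition scale to satisfy $\delta_\eps\lesssim 1/\lambda_\eps$, so assumption \eqref{lambda} yields exactly $\kappa/\pi$ per jump of the limit. The plan is to prove (ii) first, by localizing at each jump of the $BV$-limit and reducing to a single-jump problem, and then to deduce (i) from the resulting lower bound applied to every possible jump of the limit.

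\textbf{The liminf inequality.} Let $v\in BV(A,\{0,1\})$ have jump set $S_v=\{s_1<\cdots<s_n\}$ and choose $\rho>0$ such that the intervals $I_k=(s_k-\rho,s_k+\rho)$ are pairwise disjoint and contained in $A$. Since the integrand in $G_\eps$ is nonnegative, $G_\eps$ is super-additive over disjoint sub-intervals, so it suffices to prove $\liminf_{\eps\to 0}G_\eps(v_\eps,I_k)\ge\mathbf c$ for each $k$ and sum. By translation, reduce to $I=(-\rho,\rho)$ with $v=\chi_{(0,\rho)}$ on $I$, and fix $\eta\in(0,1/4)$. Set $A_\eps^-=\{v_\eps<\eta\}\cap I$, $A_\eps^+=\{v_\eps>1-\eta\}\cap I$, $T_\eps=I\setminus(A_\eps^-\cup A_\eps^+)$. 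With $V_\eta:=\min_{[\eta,1-\eta]}V>0$, the potential term controls $|T_\eps|\le G_\eps(v_\eps,I)/(\lambda_\eps V_\eta)$, while $v_\eps\to\chi_{(0,\rho)}$ in $L^1(I)$ implies $|A_\eps^-\cap(-\rho,0)|\to\rho$ and $|A_\eps^+\cap(0,\rho)|\to\rho$. Since $|v_\eps(s)-v_\eps(s')|\ge 1-2\eta$ on $A_\eps^-\times A_\eps^+$ and this set contributes twice to the symmetric double integral of $G_\eps$,
\begin{equation*}
G_\eps(v_\eps,I)\ge\frac{\eps(1-2\eta)^2}{\pi}\iint_{A_\eps^-\times A_\eps^+}\frac{ds\,ds'}{(s-s')^2}\ge\frac{\eps(1-2\eta)^2}{\pi}\bigl(\log(\rho/|T_\eps|)+O(1)\bigr),
\end{equation*}
where the geometric lower bound follows by comparison with the model configuration in which $A_\eps^-,T_\eps,A_\eps^+$ are three consecutive sub-intervals. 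Combining $|T_\eps|\le C/\lambda_\eps$ with \eqref{lambda} yields $\liminf_{\eps\to 0}G_\eps(v_\eps,I)\ge(1-2\eta)^2\kappa/\pi$; letting $\eta\to 0$ gives $\mathbf c$.

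\textbf{Compactness and main obstacle.} From $\lambda_\eps\int_A V(v_\eps)\le C$ and $V>0$ off $\{0,1\}$, for each $\eta>0$ the set $\{v_\eps\in[\eta,1-\eta]\}$ has measure $O(1/\lambda_\eps)\to 0$, so along a subsequence $v_\eps$ converges in measure to some $v$ with $v\in\{0,1\}$ a.e. Truncating by $(0\vee v_\eps)\wedge 1$ (which does not increase either term of $G_\eps$, since both $t\mapsto t$ and $t\mapsto V(t)$ behave monotonically outside $[0,1]$ after a harmless modification of $V$) yields uniform $L^\infty$-bounds, and dominated convergence on the bounded interval $A$ upgrades convergence in measure to $L^1(A)$-convergence. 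Applying the single-jump lower bound from the previous step at every potential jump of $v$ forces the total number of jumps to be at most $\sup_\eps G_\eps(v_\eps,A)/\mathbf c<\infty$, so $v\in BV(A,\{0,1\})$. The delicate point is the sharp geometric inequality $\iint_{A_\eps^-\times A_\eps^+}(s-s')^{-2}\,ds\,ds'\ge\log(\rho/|T_\eps|)+O(1)$ with constant exactly $1$; this should follow from a rearrangement/symmetrization argument showing that, for fixed measures, the integral is minimized when $A_\eps^-,T_\eps,A_\eps^+$ are three consecutive sub-intervals, reducing the estimate to an elementary computation in that canonical configuration.
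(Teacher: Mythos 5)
This theorem is not proved in the paper at all: it is quoted verbatim from [ABS1, Lemma 1] and [ABS2, Theorem 4.4], so your proposal has to be measured against the argument in those references. Your overall architecture for part (ii) — localization at the jumps, superadditivity over disjoint intervals, splitting $I$ into the two phase sets $A_\eps^{\pm}$ and the transition set $T_\eps$ controlled by the potential, and extracting the logarithm from the interaction $\iint_{A_\eps^-\times A_\eps^+}|s-s'|^{-2}$ — is exactly the ABS strategy, and your bookkeeping of the factor $2$ and of the constant $\kappa/\pi$ is correct. The gap is that the entire proof hinges on the inequality $\iint_{A_\eps^-\times A_\eps^+}(s-s')^{-2}\,ds\,ds'\geq \log(\rho/|T_\eps|)+O(1)$, which you assert but do not prove. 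You cannot get it from the information $|A_\eps^-\cap(-\rho,0)|\to\rho$, $|A_\eps^+\cap(0,\rho)|\to\rho$ alone: the ``wrong-well'' sets have measure $m_\eps\to 0$ with no rate, possibly $\gg 1/\lambda_\eps$, and $\eps\log(1/m_\eps)$ need not tend to $\kappa$. What is actually needed is the interaction lemma of [ABS2, Lemma 3.3]: for any partition $I=A\sqcup B\sqcup C$ of an interval one has $\iint_{A\times B}|s-s'|^{-2}\geq \log\bigl(|A|\,|B|/(|I|\,|C|)\bigr)-K$ with $K$ universal, proved there by a direct covering/decomposition argument. Your proposed route via rearrangement is not a proof: Riesz-type rearrangement gives \emph{upper} bounds for symmetric decreasing kernels, and the claim that, for fixed measures, the three-consecutive-intervals configuration \emph{minimizes} the interaction is itself a nontrivial assertion that you would have to establish.

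The compactness step (i) contains a second, independent gap: the bound $|\{\eta\leq v_\eps\leq 1-\eta\}|=O(1/\lambda_\eps)$ only says that $v_\eps$ is essentially $\{0,1\}$-valued; it does \emph{not} produce a subsequence converging in measure. Take $v_\eps=\chi_{E_\eps}$ with $E_\eps$ oscillating finely: the potential term vanishes identically, yet no subsequence converges in measure, so the implication as you state it is false. Compactness must use the nonlocal term: one covers $A$ by intervals of fixed small length $\ell$, applies the interaction lemma to show that the number of intervals meeting both $A_\eps^-$ and $A_\eps^+$ in proportion at least $\ell/3$ is bounded uniformly in $\eps$ by $C/\kappa$, concludes that $v_\eps$ is $O(\ell)$-close in $L^1$ to a $\{0,1\}$-valued step function with boundedly many steps, and then diagonalizes in $\ell$. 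Your final observation — that the per-jump lower bound caps the number of jumps of any $L^1$-limit — is correct, but it presupposes the existence of that limit and so cannot replace the compactness argument.
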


\subsection{Proof of Theorem \ref{liminf-thm}}

With these preliminaries in hand, we can give now the proof of our Theorem 4.1.
By a standard truncation argument, we can assume that $0\leq u_\eps\leq 1$ for every $\eps>0$.
We follow the proof of Proposition 4.7 in \cite{ABS2}, which is based on a slicing argument. Let $\boldsymbol e$ be an horizontal vector at the origin with $|\boldsymbol e|=1$, and let $W^{\mathbb H}$ be a left invariant horizontal vector field such that $W^{\mathbb H}(0)=\boldsymbol e$. We denote by $\Pi$ the $(2n)$-dimensional vertical hyperplane orthogonal to $W^{\mathbb H}(0)=\boldsymbol e$.  We apply Theorem \ref{fubini-thm} above with $S=\Pi\cap B^{\mathbb H}_R$, $D=B^{\mathbb H}_R$, $D_p=D\cap \gamma^p$, where as before $\gamma^p$ is the integral curve of $W^{\mathbb H}$ starting from $p\in S$. Observe that if $u\in H^1(C_R)$, where as before $C_R=B^{\mathbb H}_R\times (0,R)$, then for a.e. $p\in S=\Pi\cap B^{\mathbb H}_R$ its restriction to $D_p$, denoted by $u_p$, belongs to $H^1(D_p)$ (see Proposition 6.8 in \cite{ABS2}).
Moreover, using that $|\boldsymbol e|=1$ and $W^{\mathbb H}$ is left invariant, a simple computation show that
$$ \sum_{i=1}^{2n} |W^{\mathbb H}_i u_\eps|^2\geq |W^{\mathbb H} u_\eps|^2.$$
Indeed, if we write $\boldsymbol e=\sum_{i=1}^{2n} c_j W^{\mathbb H}_i(0)$ with $\sum_{i=1}^{2n}c_i^2=1$, by the left invariance of $W^{\mathbb H}$ we have
$$|W^{\mathbb H} u_\eps|^2=|\sum_{i=1}^{2n}c_i W^{\mathbb H}_i u_\eps|^2\leq \left(\sum_{i=1}^{2n}c_i^2\right)\left( \sum_{i=1}^{2n} |W^{\mathbb H}_i u_\eps|^2\right)\leq \sum_{i=1}^{2n} |W^{\mathbb H}_i u_\eps|^2.$$

Hence we have:
\begin{align*}
E_\eps(u_\eps,C_R,B^{\mathbb H}_R)
&=\eps\int_{C_R} \left(\sum_{i=1}^{2n} |W^{\mathbb H}_i u_\eps(\eta,t,z)|^2 + (\partial_z u_\eps(\eta,t,z))^2\right) d\eta dt dz\\
&\hspace{1em}+\lambda_\eps\int_{B^{\mathbb H}_R} V(\trace u_\eps(\eta,t,0))d\eta dt, \\
&\geq \eps\int_0^R\int_{B^{\mathbb H}_R} \left( |W^{\mathbb H} u_\eps(\eta,t,z)|^2 + (\partial_z u_\eps(\eta,t,z))^2\right) d\eta dt dz\\
&\hspace{1em}+\lambda_\eps\int_{B^{\mathbb H}_R} V(\trace u_\eps(\eta,t,0))d\eta dt. \\
\end{align*}
 Set  $D^p=(\gamma^p)^{-1}(\gamma^p(\R)\cap B^{\mathbb H}_R)=\{s\in \R\,|\,\gamma^p(s)\in B^{\mathbb H}_R\}$, and $d\mathcal L_{\Pi}$  the Lebesgue measure on $\Pi$. Using \eqref{fubini}, we obtain
\begin{align*}
&E_\eps(u_\eps,C_R,B^{\mathbb H}_R)\\
&\hspace{1em}\geq \eps \int_{\Pi\cap B^{\mathbb H}_R}d\mathcal L_{\Pi}(p)\left(\int_0^R dz\int_{D^p}\left (|W^{\mathbb H} u_\eps(\gamma_p(s),z)|^2 +|\partial_z u_\eps(\gamma_p(s),z)|^2\right)ds\right.\\
&\hspace{2em}+\left.\lambda_\eps \int_{D^p}V(\trace u_\eps(\gamma_p(s),0))ds\right).
\end{align*}
%
Since $\gamma^p$ is the integral curve of $W^{\mathbb H}$, setting
$$\widetilde u_\eps^p(s,z)=u_\eps(\gamma^p(s),z),$$
we deduce that
$$W^{\mathbb H} u_\eps(\gamma^p(s),z)=\partial_s \widetilde u_\eps^p(s,z)\quad \mbox{and}\quad \partial_z u_\eps(\gamma_p(s),z)=\partial_z \widetilde u_\eps^p(s,z).$$
Therefore, we get
\begin{align*}
&E_\eps(u_\eps,C_R,B^{\mathbb H}_R)\geq \\
&\hspace{1em}\eps \int_{\Pi\cap B^{\mathbb H}_R}d\mathcal L_{\Pi}(p)\left(\int_0^R dz\int_{D^p}\left( |\partial_s \widetilde u^p_\eps(s,z)|^2+|\partial_z \widetilde u^p_\eps(s,z)|^2\right)ds \right.\\
&\hspace{2em}\left.+ \lambda_\eps \int_{D^p}V(\trace \widetilde u^p_\eps(s,0))ds\right).
\end{align*}
We apply now the trace inequality \eqref{trace} to get
\begin{equation}\label{F>G}
\begin{split}
E_\eps(u_\eps,C_R,B^{\mathbb H}_R)&\geq
 \int_{\Pi\cap B^{\mathbb H}_R}d\mathcal L_{\Pi}(p)\left[\frac{\eps}{2\pi}\int_{(D^p)^2}\left|\frac{\trace \widetilde u^p_\eps(s',0)-\trace \widetilde u^p_\eps(s,0)}{s'-s}\right|^2 dsds'\right.\\
 &+ \left.\lambda_\eps \int_{D^p}V(\trace \widetilde u^p_\eps(s,0))\right]ds\\
&=\int_{\Pi\cap B^{\mathbb H}_R}d\mathcal L_{\Pi}(p) \:G_\eps(\trace\widetilde u^p_\eps,D^p),
\end{split}
\end{equation}
where $G_\eps$ is defined as in \eqref{G}.  The proof of Theorem
\ref{liminf-thm} follows from the following two steps:\\

\textbf{Step 1. Compactness:}
We first show that the sequence $\trace u_\eps$ is pre-compact in $L^1(B^{\mathbb H}_R)$. In order to  prove this, it is enough to show that the family $\mathcal F:=\{\trace u_\eps\}$ satisfies the assumptions of Theorem \ref{compact}. We  choose a constant $C$ such that
\begin{equation}\label{bound}
E_\eps(u_\eps,C_R,B^{\mathbb H}_R)\leq C.
\end{equation}
Fix now $\delta>0$ and consider the sequence $v_\eps:B^{\mathbb H}_R\rightarrow [0,1]$ defined as follows:
$v_\eps(\gamma^p(s)):= v_\eps^p(s)$,
where
\begin{equation}\label{v_eps}
v^p_\eps:=\begin{cases} \trace\widetilde u^p_\eps\quad \mbox{for all}\;\;p\in \Pi\cap B^{\mathbb H}_R\;\;\mbox{such that}\;\;G_\eps(\trace\widetilde u_\eps^p,E_p)\leq |\Pi\cap B^{\mathbb H}_R| C/\delta,\\
1\quad \mbox{otherwise.}
\end{cases}
\end{equation}
Observe that $v_\eps$ is well-defined by the uniqueness of integral curves of horizontal vector fields starting from a given point.
Using \eqref{F>G}, \eqref{bound}, and \eqref{v_eps} we deduce that $v^p_\eps=\trace\widetilde u^p_\eps$ for all $p\in \Pi\cap B^{\mathbb H}_R$ apart from a subset of measure smaller that $\delta/|\Pi\cap B^{\mathbb H}_R|$. Therefore $v_\eps=\trace\widetilde u_\eps$ in $B^{\mathbb H}_R$ minus a set of measure smaller than $\delta$ and, since $0\leq \trace u_\eps\leq 1$, we deduce that $\|v_\eps-\trace u_\eps\|_{L^1(B^{\mathbb H}_R)}\leq \delta$. This implies that the family $\mathcal F_\delta$ is $\delta$-dense in $\mathcal F$. By \eqref{v_eps} we have that $G_\eps(v_\eps^p,D^p)\leq |\Pi\cap B^{\mathbb H}_R|C/\delta$ for every $p\in  \Pi\cap B^{\mathbb H}_R$ and every $\eps$, and hence we can apply statement (i) of Theorem \ref{liminf-trace-thm} to deduce that the sequence $(v^p_\eps)$ is pre-compact in $L^1(D^p)$. Thus the family $\mathcal F$ satisfies the assumption of Theorem \ref{compact} for any horizontal tangent vector $\boldsymbol e$ at the origin, and thus in particular for $e_1,\dots,e_{2n}$, and we conclude that the sequence $(\trace u_\eps)$ is pre-compact in $B^{\mathbb H}_R$.\\

\textbf{Step 2. Liminf inequality}:
It remains to prove that if $\trace u_\eps\rightarrow v$ in $L^1(B^{\mathbb H}_R)$, then $v\in BV_{\theta_0}(B^{\mathbb H}_R,\{0,1\})$ and inequality \eqref{liminf} holds. Using \eqref{F>G} and passing to the limit as $\eps \rightarrow 0$, by Fatou's Lemma we deduce that
$$\liminf_{\eps\rightarrow 0}E_\eps(u_\eps,C_R,B^{\mathbb H}_R)\geq \int_{\Pi\cap B^{\mathbb H}_R}\liminf_{\eps \rightarrow 0} G_\eps(\trace\widetilde u^p_\eps,D^p)\,d\mathcal L_{\Pi}(p),$$
and then $\liminf_{\eps \rightarrow 0} G_\eps(\trace\widetilde u^p_\eps,D^p)$ is finite for a.e. $p\in \Pi\cap B^{\mathbb H}_R$. Since $\trace u_\eps \rightarrow v$ in $L^1(B^{\mathbb H}_R)$,  possibly passing to a subsequence, we have that $\trace \widetilde u^p_\eps \rightarrow v^p$ in $L^1(D^p)$ for a.e. $p\in\Pi\cap B^{\mathbb H}_R$ (see Remark 6.7 in \cite{ABS2}). Then, using Theorem \ref{liminf-trace-thm} we deduce that $v^p\in BV(D^p,\{0,1\})$ and
\begin{equation}\label{liminf-slice}
\liminf_{\eps\rightarrow 0} E_\eps(u_\eps,C_R,B^{\mathbb H}_R)\geq \int_{\Pi\cap B^{\mathbb H}_R} \mathbf c \#(S_{v^p})\,d\mathcal L_{\Pi}(p).
\end{equation}
Finally, applying Theorem \ref{BV-slices} we deduce that $v \in BV_{\theta_0}(B^{\mathbb H}_R,\{0,1\})$, that $S_{v^p}$ agrees with $S_v\cap D^p$ for a.e. $p \in \Pi\cap B^{\mathbb H}_R$, and that
\begin{equation*}\begin{split} \liminf_{\eps\rightarrow 0} E_\eps(u_\eps,C_R,B^{\mathbb H}_R)&\geq \mathbf c\int_{B^{\mathbb H}_R\cap S_v}\left\langle \nu_v,\boldsymbol e\right\rangle\, d\|\partial \{v=1\}\|_{\theta_0}
\\&
= \big\langle \int_{B^{\mathbb H}_R\cap S_v} \nu_v\, d\|\partial \{v=1\}\|_{\theta_0},
\boldsymbol e\big\rangle.
\end{split}\end{equation*}
We conclude the proof of Theorem \ref{liminf-thm} by choosing a suitable vector $\boldsymbol e$.

\section{Proof of the liminf inequality near the boundary $M$}\label{sec5}

%
%
%
%

In this Section we prove Theorem \ref{theorem bis}.
To this aim,
we need to pass from the ``flat case'' $\he n \times[0,\sigma)$ to $M\times [0,\sigma)$.
This will be the
content of  the following Sections \ref{flattening}, \ref{conclusion} and \ref{densities}.

Given $A\subset M\times [0,\sigma)$, and $A'\subset M$, we define the localized energy
\begin{equation*}\begin{split}
\tilde F_{\eps} (u,A,A') :=  \eps  & \int_{A} \Big(  \sum_{j=1}^{2n}
(  W_j^0   u)^2+( \partial_z  u)^2 \Big)\, dv_\theta\wedge dz
\\&
+ \lambda_\eps \int_{A'} V(\trace  u) \, dv_\theta
\end{split}\end{equation*}
(compare with \eqref{model bis} and keep in mind Remark \ref{conventions}).

\subsection{Flattening}\label{flattening}

Following \cite{ABS2} we give the definition of \textit{contact isometry defect}.

\begin{definition} Let $M_1$ and $M_2$ be two contact $(2n+1)$-manifolds endowed with the contact forms $\theta_1$ and $\theta_2$,
and let $g_{\theta_1} $ and $g_{\theta_2} $ be fixed Riemannian metrics on $\ker \theta_1$ and $\ker \theta_2$, respectively. If $p_i\in M_i$,
$i=1,2$, we denote
by $HO (T_{p_1}M_1,T_{p_2} M_2)$ the space of linear  maps from $T_{p_1}M_1$ to $T_{p_2}M_2$ that are isometries
on $\ker \theta_1(p_1)$ and are induced by contact maps.
\end{definition}

\begin{definition}
Let $M_1$ and $M_2$ be two contact $(2n+1)$-manifolds endowed with the contact forms $\theta_1$ and $\theta_2$,
respectively, and let $U_1\subset M_1$ and $U_2\subset M_2$
be open sets. Let $\Psi:U_1\to U_2$ be a diffeomorphism. We call \textit{contact isometry defect} $\delta(\Psi)$ the smallest $\delta>0$ such that
$$\mbox{dist}(d\Psi(p),HO(T_pM_1, T_{\Psi(p)}M_2))\leq \delta\quad \mbox{for a.e.}\;\;p\in U_1.$$
\end{definition}

%

\begin{theorem}\label{isometry}
Let $(M,\theta)$ be the $(2n+1)$-dimensional contact manifold endowed with the Riemannian
metric $g$, as in Propositions \ref{contact 1} and \ref{contact 3}. Let $\bar p\in M$ be
any fixed point. Let $(W_1^0,\dots,W^0_{2n})$ be the orthonormal symplectic basis of $\ker\theta(\bar p)$
(see Remark \ref{conventions}),
and let $(W_1^{\mathbb H},\dots,W^{\mathbb H}_{2n})$ be the orthonormal symplectic basis of $\ker\theta_0$ at the origin in $\he {n}$.
Then there exist an open neighborhood $\mc U$ of $\bar p$ and a local diffeomorphism
$$
\Psi: \mc U\to \he {n},
$$
such that
\begin{itemize}
\item [\emph{i)}] $\Psi$ is a contact map (i.e. $\Psi^*\theta_0= \theta$);
\item [\emph{ii)}] $\Psi (\bar p)=0$ and $\mc U_0:=\Psi(\mc U)$ is open;
\item [\emph{iii)}]  $D\Psi(\bar p)W_j^0= W^{\mathbb H}_j$,  $j=1,\dots,2n$. In particular,
$D\Psi(\bar p): \ker\theta(\bar p) \to \ker\theta_0$ is an isometry
when the horizontal fiber of $\ker\theta_0$  at the origin is endowed with the canonical Riemannian metric $\scal{\cdot}{\cdot}_{\mathbb H}$.
\end{itemize}

\end{theorem}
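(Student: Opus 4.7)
The strategy is to first invoke the classical contact Darboux theorem to obtain \emph{some} local contact chart sending $\bar p$ to the origin of $\mathbb H^n$, and then to correct the derivative at $\bar p$ by post-composing with a contact automorphism of $\mathbb H^n$ that fixes the origin. The freedom we exploit is that the linear symplectic group $Sp(2n,\mathbb R)$ acts on $\mathbb H^n$ by contact automorphisms through $(\eta,t)\mapsto(B\eta,t)$, since the standard contact form $\theta_0=dt-\tfrac12\sum_j(\eta_jd\eta_{j+n}-\eta_{j+n}d\eta_j)$ is preserved by any $B$ preserving the standard symplectic form on $\mathbb R^{2n}$.

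First, by the contact Darboux theorem (e.g.\ Theorem~5.1.5 in \cite{AMR}), there exist an open neighborhood $\mathcal U$ of $\bar p$ and a smooth diffeomorphism $\Psi_0:\mathcal U\to \Psi_0(\mathcal U)\subset\mathbb H^n$ with $\Psi_0(\bar p)=0$ and $\Psi_0^\ast\theta_0=\theta$. Differentiating $\Psi_0^\ast\theta_0=\theta$ and $\Psi_0^\ast d\theta_0=d\theta$ at $\bar p$ shows that $d\Psi_0(\bar p)$ sends $\ker\theta(\bar p)$ onto $\ker\theta_0(0)$ and intertwines the symplectic forms $d\theta(\bar p)$ and $d\theta_0(0)$ on these hyperplanes. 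Thus $\tilde W_j:=d\Psi_0(\bar p)W_j^0$, $j=1,\dots,2n$, is a symplectic basis of $(\ker\theta_0(0),d\theta_0(0))$, just like the basis $(W_1^{\mathbb H},\dots,W_{2n}^{\mathbb H})$.

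Second, since any two symplectic bases of a symplectic vector space are related by a symplectic linear isomorphism, there exists $B\in Sp(2n,\mathbb R)$ with $B\tilde W_j=W_j^{\mathbb H}$ for $j=1,\dots,2n$. Let $\Phi_B:\mathbb H^n\to\mathbb H^n$ be the linear map $\Phi_B(\eta,t):=(B\eta,t)$. A direct computation shows $\Phi_B^\ast\theta_0=\theta_0$, so $\Phi_B$ is a (global) contact automorphism of $\mathbb H^n$ fixing the origin, whose differential at $0$ restricted to $\ker\theta_0(0)$ is $B$ itself. Define
\[
\Psi := \Phi_B\circ\Psi_0:\mathcal U\longrightarrow\mathbb H^n.
\]
Then $\Psi$ is a local diffeomorphism with $\Psi(\bar p)=0$, satisfies $\Psi^\ast\theta_0=\Psi_0^\ast\Phi_B^\ast\theta_0=\Psi_0^\ast\theta_0=\theta$, and $d\Psi(\bar p)W_j^0 = B\tilde W_j = W_j^{\mathbb H}$ for all $j$, which proves \emph{i)}, \emph{ii)}, and the first half of \emph{iii)}.

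Third, for the isometry statement in \emph{iii)}: the vectors $(W_j^0)_{j=1}^{2n}$ form a $g$-orthonormal basis of $\ker\theta(\bar p)$ by Proposition \ref{contact 3}, and $(W_j^{\mathbb H})_{j=1}^{2n}$ is orthonormal for $\langle\cdot,\cdot\rangle_{\mathbb H}$ on $\ker\theta_0(0)$ by construction. A linear map between inner product spaces sending one orthonormal basis to another is an isometry, so $d\Psi(\bar p)|_{\ker\theta(\bar p)}$ has the claimed property. The only mildly delicate point is merely bookkeeping: namely, verifying that $B\in Sp(2n,\mathbb R)$ indeed yields a contact automorphism of $\mathbb H^n$, which follows from the fact that the quadratic part of $\theta_0$ in $\eta$ is precisely the standard symplectic form on $\mathbb R^{2n}$; no serious obstacle arises.
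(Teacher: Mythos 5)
Your proposal is correct and follows essentially the same route as the paper: apply the contact Darboux theorem to get a preliminary contact chart $\Psi_0$ sending $\bar p$ to $0$, observe that $d\Psi_0(\bar p)$ carries $(W_1^0,\dots,W_{2n}^0)$ to some symplectic basis of $\ker\theta_0(0)$, and post-compose with a linear symplectic correction (your $\Phi_B(\eta,t)=(B\eta,t)$ is exactly the paper's block matrix $\begin{pmatrix}A&0\\0&1\end{pmatrix}$). The only cosmetic difference is that the paper invokes its Lemma~\ref{pansu} to certify that the linear map is contact, whereas you verify $\Phi_B^*\theta_0=\theta_0$ by the direct computation $(B\eta)^T J_0\, d(B\eta)=\eta^T J_0\, d\eta$; both are equivalent.
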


\begin{proof} Darboux Theorem implies that there exists a neighborhood $\mathcal U$ of $\bar p$ and a
diffeomorphism  $\Psi_0:  \mc U\to \he n$  such that $\Psi_0^*\theta_0= \theta$,
and thus $\Psi_0^*(d\theta_0)= d\theta = i^*\omega$. Hence
\begin{equation*}\begin{split}(\hat W_1,\cdots,\hat W_{2n})
:=
((\Psi_{0})_* W^0_1,\dots,(\Psi_{0})_*W^0_{2n} )
\end{split}\end{equation*}
is a symplectic basis of $\ker\theta_0$. Then, in particular,
$$(\hat W_1(0),\dots, \hat W_{2n}(0))$$ can be identified with
a symplectic basis of $\rn{2n}$, and therefore there exists  $A\in Sp(n)$ such that
$$
A \hat W_j(0) = e_j=W_j^{\mathbb H}(0)  \qquad j=1,\dots,2n.
$$
Put now
\begin{displaymath}
\Psi:=
\left( \begin{array}{cc}
A & 0_{2n\times 1}  \\
0_{1\times 2n}  & 1
\end{array} \right)\Psi_0.
\end{displaymath}
Obviously, $\Psi $ satisfies \emph{i)} by Lemma \ref{pansu} below and \emph{ii)}. Moreover
\begin{displaymath}
D\Psi(\bar p) (W^0_i(\bar p)) = \left( \begin{array}{cc}
A & 0_{2n\times 1}  \\
0_{1\times 2n}  & 1
\end{array} \right)
 \hat W_i(0) = W_i^{\mathbb H}(0),
\end{displaymath}
and the assertion follows.
\end{proof}

\begin{lemma}[see \cite{FT5, pansu_thesis}]\label{pansu}If $a>0$ and $\dfrac{1}{\sqrt a}A\in Sp(n)$, then the (Euclidean) linear map $T:\he n\to\he n$
\begin{displaymath}
T:=
\left( \begin{array}{cc}
A & 0_{2n\times 1}  \\
0_{1\times 2n}  & a
\end{array} \right)
\end{displaymath}
belongs to $GL(\mathbb R^{2n+1}, \mathbb R^{2n+1})$ and is a contact map.
\end{lemma}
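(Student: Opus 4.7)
The plan is straightforward: both assertions reduce to a one-line matrix computation using the symplectic identity $A^T J A = a J$ (which is equivalent to the hypothesis $\tfrac{1}{\sqrt a}A \in Sp(n)$), where $J = \begin{pmatrix} 0 & I_n \\ -I_n & 0 \end{pmatrix}$ is the standard symplectic matrix on $\mathbb R^{2n}$.

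First I would verify $T \in GL(\mathbb R^{2n+1},\mathbb R^{2n+1})$. Since $\tfrac{1}{\sqrt a}A$ is symplectic it is invertible, hence so is $A$, and the block triangular form of $T$ gives $\det T = a \det A \neq 0$.

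Next, for the contact property, I would rewrite the contact form in matrix notation as
$$\theta_0 = dt - \tfrac{1}{2}\,\eta^{T} J\, d\eta,$$
where $\eta = (\eta_1,\dots,\eta_{2n})^T$. Applying $T:(\eta,t)\mapsto(A\eta, at)$ gives $T^*(dt) = a\,dt$ and
$$T^*\bigl(\eta^{T} J\, d\eta\bigr) = (A\eta)^{T} J (A\,d\eta) = \eta^{T} (A^{T} J A) \, d\eta = a\,\eta^{T} J\, d\eta,$$
using exactly the hypothesis $A^{T} J A = a J$. Therefore $T^*\theta_0 = a\,\theta_0$. Since $a \neq 0$, the kernel of $\theta_0$ is preserved by $dT$, and $T$ is a contact map.

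There is no serious obstacle; the only subtle point is a definitional one. If ``contact map'' is taken in the strict sense $T^*\theta_0 = \theta_0$ of Definition \ref{contact def}, then the computation forces the normalization $a = 1$, which is precisely the case used in the proof of Theorem \ref{isometry}. In the generality stated here ($a>0$ arbitrary), the conclusion $T^*\theta_0 = a\theta_0$ still gives preservation of the contact distribution $\ker\theta_0$, which is what is actually needed in subsequent applications.
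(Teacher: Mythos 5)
Your computation is correct and is the standard proof; note that the paper itself offers no proof for this lemma, deferring to the cited references \cite{FT5, pansu_thesis}, so there is no in-paper argument to compare against. Your observation about the definitional mismatch is a good one: under the paper's own Definition \ref{contact def} (which requires $f^*\theta_2 = \theta_1$ on the nose, not merely up to a nonvanishing conformal factor), the computation $T^*\theta_0 = a\theta_0$ shows $T$ is a contact map in that strict sense only when $a=1$ — which is exactly the case invoked in the proof of Theorem \ref{isometry}. For general $a>0$ the conclusion holds only in the weaker (and more common) sense that $T$ preserves the contact distribution $\ker\theta_0$.
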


Then, for each $p\in M$ and any $r>0$ (close to $0$), there exists a neighborhood $U(p,r)\subset M$ and a diffeomorphism $\Psi_p$ such that the image $\Psi_p(U(p,r))$ is the $d^{\mathbb H}_c$-ball of radius $r$ centered at the origin in the Heisenberg group, denoted by $B^{\mathbb H}_r$, and
$$\|D(\Psi_p) -I_{2n+1}\|\leq \delta(r),$$
for some $\delta(r)\to 0$ when $r\to 0$. Here $I_n$ denotes the identity map in $n$-dimensions.
We also point out that, by Lemma \ref{one year later} (which will be proven later on in Section \ref{densities}), we have that in $M$:
\begin{equation}\label{Aug 25 eq:1}
U(p,r)\subset B(p, r(1+o(1)))\qquad\mbox{as $r\to 0$.}
\end{equation}

Adding the normal variable $z>0$, we may cover  $M\times [0,r]$ by a finite number of neighborhoods $\{\tilde U(p_j,r)\}_{j=1}^K$, $p_j\in M$ such that for each $j$, there exists a diffeomorphism
$$\tilde \Psi_{p_j}
: \{\tilde U(p_j,r)\}_{j=1}^K\to \he n\times [0,r]
$$
 satisfying
\begin{equation*}
\begin{split}
&\tilde\Psi_{p_j}(\tilde U(p_j,r))=C_r^{\mathbb H}\subset \mathbb H^{n}\times \mathbb R_+,\\
&\tilde\Psi_{p_j}(U(p_j,r))=B_r^{\mathbb H}\subset \mathbb H^{n},
\\&
\tilde \Psi_{p_j}((p_j,0))=(0,0),
\end{split}
\end{equation*}
and
\begin{equation*}\label{isom-defect}\|D\tilde \Psi_{p_j} -I_{2(n+1)}\|\leq \tilde\delta(r),\end{equation*}
for some $\tilde\delta(r)\to 0$ when $r\to 0$.\\

Since
\begin{equation}\label{change-variable-grad}|D(u\circ \tilde\Psi^{-1}_{p_j})|\leq (1+\delta)|Du\circ \tilde\Psi^{-1}_{p_j}|,\end{equation}
this in particular implies that the localized energy
$\tilde F_\eps(u_\eps,\tilde U(p_j,r),U(p_j,r))$
 can be replaced by the energy
 $E_\eps (w_\eps, C_r^{\mathbb H}, B_r^{\mathbb H})$, where $w_\eps=u_\eps\circ \tilde\Psi_{p_j}$.
 More precisely, arguing exactly as in \cite{ABS2}, Proposition 4.9, we have that
 \begin{equation}\label{est-flat}
 \tilde F_\eps(u_\eps,\tilde U(p_j,r),U(p_j,r))\geq (1-\delta^5) E_\eps (w_\eps, C_r^{\mathbb H}, B_r^{\mathbb H}).
 \end{equation}

\subsection{Conclusion of the proof of Theorem \ref{theorem bis}}\label{conclusion}

Let $\{u_\eps\}\subset W^{1,2}(\Omega)$ be a countable sequence such
that $\tilde F_{\eps,r}(u_\eps)$ is bounded independently of $\eps$.
We have to prove that the sequence of the traces $\{\trace u_\eps\}$ is pre-compact in
$L^1(M)$. But since we have just shown that we can cover $M\times [0,r]$ with finitely many neighborhoods $\{\tilde U(p_j,r)\}_{j=1}^K$, it is enough to show that $\{\trace u_\eps\}$ is
is pre-compact in $L^1(U(p_j,r))$ for every $j=1,\ldots,K$.

For every fixed $j$, let $w_\eps=u_\eps\circ\tilde\Psi^{-1}_{p_j}$. In particular, \eqref{change-variable-grad} implies that $E_\eps(w_\eps,C_r^{\mathbb H}, B_r^{\mathbb H})$ is  uniformly bounded in $\eps$. Hence the pre-compactness
follows from Theorem \ref{compact}. This proves statement \emph{i*)} of Theorem \ref{theorem bis}.\\

Next, we would like to prove statement \emph{ii*)} in Theorem \ref{theorem bis}.
Then things become more delicate.

Let us start by recalling some classical definitions.
For $m>\,0$, we denote
\[
\ga_m:=\frac{\Gamma(\frac{1}{2})^m}{\Gamma(\frac{m}{2}+1)},
\]
being $\Gamma$ the Euler function and
\begin{equation}\label{beta1}
\gb_m:=\,2^{-m} \ga_m.
\end{equation}
According to Federer's notation \cite{federer}, we define a \emph{centered}  density of an outer measure $\mu$ on $X$:

\begin{definition}\label{densitydef} Let $(X,d)$ be a separable metric space, and let $\mu$ be an outer measure on $X$.
If $m>\,0$, the \emph{upper and lower centered $m$-densities} of $\mu$ at $p\in X$ are
\begin{equation*}
\Theta^{*\,m}(\mu,p):=\limsup_{r\to 0}\frac{\mathcal \mu(\overline B(p,r))}{\beta_m\, (\diam \overline B(p,r))^m}\,
\end{equation*}
and
\begin{equation*}
\Theta^m_*(\mu,p):=\liminf_{r\to 0}\frac{\mathcal \mu(\overline B(p,r))}{\beta_m\, (\diam \overline B(p,r))^m}\,.
\end{equation*}
If they agree their common value
$$
\Theta^m(\mu,p):=\,\Theta^{*\,m}(\mu,p)=\,\Theta^m_*(\mu,p)
$$
is called the \emph{$m$-density} of $\mu$ at $p$.
\end{definition}

The crucial step of the proof of the liminf inequality $ii^*)$ is provided by the following
theorem that allows us to pass from an inequality between densities to the corresponding
inequality between measures. We point out that this theorem is well known in the Euclidean
setting, but fails to be true in general Carnot-Carath\'eodory
spaces, and its proof in our special setting is postponed to Section \ref{densities}.

We have:

\begin{theorem} \label{august 5} Let $M$ be $(2n+1)$-dimensional contact manifold endowed with a contact form $\theta$ and a Riemannian
metric $g$ on the fibers of $\ker\theta$.  Let $\mathbf{W}^{0}:=(W_1^{0},\dots,W_{2n}^{0})$ be an orthonormal  basis of $\ker\theta$, and
let $E\subset M$ be a set of locally finite sub-Riemannian perimeter associated  with $\mathbf{W}^{0}$. We denote by $|{\bf W}^{0}\chi_E|$
the associated perimeter measure. If $\mu $ is a $\sigma$-finite Borel measure on $X$,
 then
\begin{equation}\label{density 1}
\Theta^{* ,2n+1} (\mu,p) \ge \Theta^{* ,2n+1} (|{\bf W}^{0}\chi_E|,p) \qquad\mbox{ for $\mc H_d^{2n+1}$-a.e. $p\in \partial^* E$}
\end{equation}
yields
\begin{equation}\label{density 2}
\mu\res \partial E (\mathcal B) \ge  |{\bf W}^{0}\chi_E|(\mathcal B)
\end{equation}
for any Borel set $\mathcal B\subset \partial E$.
\end{theorem}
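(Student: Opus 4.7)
The plan is to promote the pointwise density inequality \eqref{density 1} into an inequality of measures by means of a Federer-type differentiation argument. The key structural fact is that $(M,d_c,v_\theta)$ is a doubling metric-measure space, since its homogeneous dimension equals $Q=2n+2$, so the Vitali covering theorem is available and the machinery of centered densities and spherical Hausdorff measures from \cite{federer}, Section~2.10, applies on $M$ in the standard form.

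First, I would reduce to Borel sets $\mathcal B\subset \partial^* E$: by the concentration of $|\mathbf{W}^{0}\chi_E|$ on the reduced boundary (one of the facts collected in Section~\ref{densities}), removing a $|\mathbf{W}^{0}\chi_E|$-null set does not affect either side of \eqref{density 2}. For such a $\mathcal B$ I would invoke the general Federer-type inequality, valid in any doubling metric-measure space, that for every $\sigma$-finite Borel measure $\nu$ on $M$,
\begin{equation*}
\nu(A) \;\geq\; \int_A \Theta^{*,2n+1}(\nu, p)\, d\hsd{2n+1}(p) \qquad \text{for every Borel } A \subset M,
\end{equation*}
obtained by layer-cake integration of the classical pointwise statement: if $\Theta^{*,m}(\nu,\cdot)\geq t$ on $A$ then $\nu(A)\geq t\,\hsd{m}(A)$. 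Applied to $\nu=\mu$, together with the hypothesis \eqref{density 1}, this produces the chain
\begin{equation*}
\mu(\mathcal B) \;\geq\; \int_\mathcal B \Theta^{*,2n+1}(\mu,p)\, d\hsd{2n+1}(p) \;\geq\; \int_\mathcal B \Theta^{*,2n+1}(|\mathbf{W}^{0}\chi_E|,p)\, d\hsd{2n+1}(p).
\end{equation*}

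To close the argument the plan requires the companion upper bound
\begin{equation*}
\int_\mathcal B \Theta^{*,2n+1}(|\mathbf{W}^{0}\chi_E|,p)\, d\hsd{2n+1}(p) \;\geq\; |\mathbf{W}^{0}\chi_E|(\mathcal B),
\end{equation*}
which says that the centered upper density provides an upper representative of the Radon--Nikodym derivative $d|\mathbf{W}^{0}\chi_E|/d\hsd{2n+1}$ at $\hhd{2n+1}$-a.e.\ point of $\partial^* E$. This is the main obstacle, since, as emphasized in the introduction, no explicit formula is known expressing the CC-perimeter on a contact manifold as a spherical Hausdorff measure, so one cannot simply cite such a representation. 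My plan is to argue indirectly via blow-ups: using the contact Darboux theorem (Theorem~\ref{isometry}) one pulls back the problem locally to $\he n$, where blow-up analysis at reduced-boundary points (cf.\ \cite{Ambrosio-Ghezzi-Magnani}) identifies tangent sets with vertical half-spaces having explicit perimeter density. The vanishing contact-isometry defect $\delta(\Psi)\to 0$, combined with the inclusion \eqref{Aug 25 eq:1} of Darboux images into intrinsic CC-balls, controls the distortion of both $|\mathbf{W}^{0}\chi_E|$ and $\hsd{2n+1}$ along the map $\Psi$, so the density identity for the Heisenberg model transfers in the limit to the required integral inequality on $M$. Once these auxiliary density comparisons are in place (the content of Section~\ref{densities}), combining them with the chain above yields \eqref{density 2}, and the theorem follows.
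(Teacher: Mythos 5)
Your chain of inequalities runs into exactly the wall that Remark~\ref{centered} is warning about, and the blow-up you propose to close the gap cannot fix it.

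The first inequality, $\mu(\mathcal B)\geq\int_\mathcal B\Theta^{*,2n+1}(\mu,p)\,d\hsd{2n+1}(p)$, is harmless: it follows from the Federer bound with the \emph{Federer} density $\Theta_F^{*,m}$ (which dominates the centered density, so $\Theta^{*,m}\geq t$ implies $\Theta_F^{*,m}\geq t$), or indeed from the stronger statement for $\mathcal C^{2n+1}$ since $\hsd{m}\leq\mathcal C^m$. The problem is the companion upper bound $\int_\mathcal B\Theta^{*,2n+1}(|\mathbf{W}^{0}\chi_E|,p)\,d\hsd{2n+1}(p)\geq|\mathbf{W}^{0}\chi_E|(\mathcal B)$. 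This amounts to the area formula $\nu(B)\leq\int_B\Theta^{*,m}(\nu,x)\,d\hsd{m}(x)$ for the perimeter measure, with the centered density $\Theta^{*,m}$ in place of the Federer density $\Theta_F^{*,m}$. Since $\Theta^{*,m}\leq\Theta_F^{*,m}$, replacing $\Theta_F^{*,m}$ by $\Theta^{*,m}$ makes the right-hand side \emph{smaller}, so this is a genuinely stronger statement than Federer's, and Magnani \cite{magnani_centered} provides a counterexample to it already in the first Heisenberg group. Because the counterexample lives in $\mathbb H^1$ itself, the Darboux/blow-up strategy that maps you back to the model group cannot rescue the step: the obstruction is not the non-flatness of $M$ but the intrinsic mismatch between the centered density and the CC spherical Hausdorff measure, and this mismatch is already present at the model level.

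What the paper actually does is abandon $\hsd{m}$ and work throughout with the \emph{centered} Hausdorff measure $\mathcal C^{2n+1}$. For $\mathcal C^m$, Theorem~\ref{areacentredhaus} (which rests on Edgar's density estimates and \cite{FSSC_NA}, using Lemma~\ref{august 1} to equate the two natural normalizations of the centered density on contact manifolds) gives the exact area formula $\nu(B)=\int_B\Theta^{*,m}(\nu,x)\,d\mathcal C^m(x)$ whenever $\nu\ll\mathcal C^m$. Combined with $|\mathbf{W}^{0}\chi_E|\ll\mathcal H^{2n+1}\res\partial E$ (from \cite{ambrosio_advances}, which you should cite explicitly — it is what puts the perimeter measure in the scope of the area formula), this yields
\begin{equation*}
\mu\res\partial E(\mathcal B)=\int_{\mathcal B}\Theta^{*,2n+1}(\mu,x)\,d\mathcal C^{2n+1}(x)
\geq\int_{\mathcal B}\Theta^{*,2n+1}(|\mathbf{W}^{0}\chi_E|,x)\,d\mathcal C^{2n+1}(x)=|\mathbf{W}^{0}\chi_E|(\mathcal B)
\end{equation*}
whenever $\mu\res\partial E\ll\mathcal H^{2n+1}\res\partial E$. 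Your proposal also leaves this absolute-continuity hypothesis on $\mu$ entirely unaddressed; the paper removes it by a Lebesgue decomposition $\mu\res\partial E=\mu_{ac}+\mu_s$, then shows via Federer's differentiation theorem that the set where the singular part has positive density is $\mathcal H^{2n+1}$-null (hence $|\mathbf{W}^{0}\chi_E|$-null), so that on the remaining set the centered density of $\mu$ coincides with that of $\mu_{ac}$ and the absolutely continuous case applies. These two ingredients — the switch from $\hsd{m}$ to $\mathcal C^m$ and the Lebesgue decomposition — are precisely what your plan is missing.
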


\begin{remark}\label{centered}
Let us explain why we do need Theorem \ref{august 5} precisely in that form, and then we have to go through all the
arguments of Section \ref{densities}. First of all, we recall the following definition:
let $\mu$ be an outer measure on the metric space $(X,d)$. Then the \emph{$m$-Federer densities} of $\mu$ at $x\in X$ are
\begin{equation*}
\Theta_F^{*\,m}(\mu,x):=\inf_{\eps> 0}\sup\left\{\frac{\mathcal \mu(B(y,r))}{\gb_m\,\diam(B(y,r))^m}:\,x\in B(y,r),\,\rho_0\,r\le\,{\eps}\,\right\}.
\end{equation*}
It is easy to see that
\begin{equation}\label{compardensities}
\Theta^{*\,m}(\mu,x)\le\,\Theta_F^{*\,m}(\mu,x)\le\,2^m\,\Theta^{*\,m}(\mu,x)\quad\forall x\in X\,.
\end{equation}
If $X$ is separable and endowed with a Radon measure $\mu$, absolutely continuous  with respect to the $m$-dimensional spherical Hausdorff measure $\mathcal S^m$,
by \cite{magnani_centered} (see also \cite{FSSC_NA}), the area formula for $\mu$ with respect to $\mathcal S^m$ i.e.
\begin{equation}\label{introduz.0}
\mu(B)=\,\int_B \Theta_F^{*\,m}(\mu,x)\,d\mathcal S^m(x)
\end{equation}
for any Borel set $B$ may fail to be true  in general, if the $m$-dimensional           density $\Theta_F^{*\,m}(\mu,\cdot)$ is replaced by the centered $m$-dimensional density $\Theta^{*\,m}(\mu,\cdot)$ (see Definition \ref{densitydef}).

To be more precise, the representation formula \eqref{introduz.0} is known to hold in Heisenberg groups
only for suitable left-invariant distances, as $d_\infty$ (see \cite{FSSC_NA}, Remark 4.25). In particular,
we do not know whether it holds for the spherical Hausdorff measure associated with the Carnot-Carath\'eodory distance,
that we use throughout the present paper (keep in mind its connection with the Minkowski content).

In fact, Magnani provides a counterexample precisely in the first Heisenberg group.

On the other hand, following \cite{ABS2}, a crucial step of the proof of the liminf inequality $ii^*)$ is provided by
the following estimate:
\begin{equation}\label{nov 29}
 \Theta^{* ,2n+1} (\mu,p) \ge  \mathbf{c}\, \Theta^{* ,2n+1} (|{\bf W}^{0}\chi_E|,p),
\end{equation}
where $\mu$ is the limit measure of the  energy distribution associated with $\tilde F_\eps$ and $p\in S_v$.

Unfortunately, due to Magnani's result, if $\mathcal B$ is a Borel set,
we cannot derive from \eqref{nov 29} the corresponding inequality \emph{with the explicit constant $\mathbf c$} for the measures
$\mu(\mathcal B)$ and $|{\bf W}^{0}\chi_E|(\mathcal B)$, that we would need in the sequel.

\end{remark}

\bigskip

Assuming Theorem \ref{august 5}, we can complete the proof of Theorem \ref{theorem bis} as follows.

Let now $\{u_\eps\}$ be a sequence in $W^{1,2}(M\times[0,\sigma))$ such that $\{\trace u_\eps\}$ converges to $v\in BV_\theta(M,\{0,1\})$ in the $L^1(M)$ norm. We need to show that
$$\liminf_{\varepsilon\to 0} \tilde F_\eps(u_\eps)\geq F(v).$$
If we write $v=\chi_E$, then $F(v)=|\mathbf W^{0}\chi_E|$.

Without loss of generality, assume that this liminf is finite.

For every $\eps\in(0,1)$, let $\mu_\eps$ be the energy distribution associated with $\tilde F_\eps$ for $u_\eps$, i.e., $\mu_\eps$ is the positive measure given by
$$\mu_\eps(\mc B):= \eps \int_{\mc B} \left( \sum_{j=1}^{2n} (W_j^0 u_\eps)^2 + (\partial_z u_\eps)^2\right)dv_\theta\wedge dz + \lambda_\eps \int_{\mc B_0} V(\trace u_\eps) \, dv_\theta$$
for every Borel set $\mc B\subset M\times[0,\sigma)$, $\mc B_0=\overline{\mc B}\cap M$. The total variation $\norm{\mu_\eps}$ of the measure $\mu_\eps$ is equal to $\tilde F_\eps(u_\eps)$.

Without loss of generality, we can assume $0\le \tilde F_\eps(u_\eps)\le C$ for every $0<\eps<1$, and therefore the
$\{ \mu_\eps\}$ is an equibounded family of Radon measures in $\Omega$. By De La Vall\'ee Poussin's Theorem
(\cite{AFP}, Theorem 1.59), there exist a subsequence $(\eps_h)_{h\in\mathbb N}$ and a Radon measure $\mu$
in $\Omega$ such that $\mu_{\eps_h}\to \mu$ in the sense of the convergence of measures. Then, by the lower semicontinuity of the total variation we have
$$\liminf_{\eps \to 0}\tilde  F_\eps(u_\eps)=\liminf_{\eps \to 0} \norm{\mu_\eps}\geq \norm{\mu}.$$
Similarly, we define
$$\mu_0(\mc B):=|\mathbf W^{0}\chi_E|(\mc B).$$
We just need to show that
\begin{equation}\label{compare-measures}
\mu\geq \mu_0.
\end{equation}

Take now a point $p\in S_v$.
 For $r$ small enough, we choose a map $\tilde \Psi:=\tilde\Psi_p$ as in the discussion right after Theorem \ref{isometry}. Set $w_\eps:= u_\eps\circ \tilde \Psi^{-1}$ and $\bar v:= v\circ \Psi^{-1}$. Hence, $\mathrm{Tr} w_\eps \to \bar v$ in $L^1(B_r^{\mathbb H})$ and $\bar v\in BV(B_r^{\mathbb H},\{0,1\})$. Moreover, if $v=\chi_E$, then $\bar v=\chi_{\Psi(E)}$ and $\nu_{ v}(\Psi(z))=D\Psi^{-1}(z)\cdot \nu^{\mathbb H}_{\bar v}(z)$, for any $z\in S_{\bar v}$ (here $\nu^{\mathbb H}_{\bar v}$ denotes the geometric normal to $S_{\bar v}$ in $\mathbb H^n$) .
 Keeping in mind \eqref{Aug 25 eq:1} and \eqref{est-flat}, we have
\begin{equation*}\begin{split}
\mu (B(p, & r(1+o(1)))  \ge \mu(U(p,r))
=\lim_{\eps \to 0} \mu_\eps(\tilde U(p,r))\\
& =\lim_{\eps \to 0} \tilde F_\eps(u_\eps,\tilde U(p,r),U(p,r))\\
& \geq \liminf_{\eps \to 0} (1-\delta(\Psi))^{5}  E_\eps(w_\eps,C_r^{\mathbb H}, B_r^{\mathbb H}).
\end{split}\end{equation*}
Notice that $\delta(\Psi)\to 0$ as $r\to 0$.
On the other hand, by Theorem \ref{liminf-thm}, we have that
$$\liminf_{\eps\to 0} E_\eps(w_\eps, C_r^{\mathbb H}, B_r^{\mathbb H})\geq \mathbf c\left|\int_{B_r^{\mathbb H}} \nu^{\mathbb H}_{\overline v} d|\mathbf W^{\mathbb H} \chi_{\Psi(E)}|\right|.$$

We have now, by Lemma \ref{august 1}, ii), and \cite{FSSC_step2}, Lemma 3.8, iii),
\begin{equation}\label{aug 29 eq:1}\begin{split}
\Theta^{*\,2n+1} & (\mu,p):=\limsup_{r\to 0}\frac{\mathcal \mu(\overline B(p,r))}{\beta_{2n+1}\, (\diam \overline B(p,r))^{2n+1}}
\\&=
\limsup_{r\to 0}\frac{\mathcal \mu(\overline B(p,r))}{\alpha_{2n+1}\, r^{2n+1}}
\\&
\ge
\mathbf{c} \,\liminf_{r\to 0}\frac{|\mathbf{W}^{\mathbb H} \chi_{\Psi(E)}|(B_r^{\mathbb H})}{\alpha_{2n+1}\, r^{2n+1}}
\left|\ave_{B_r^{\mathbb H}} \nu_{\bar v}^{\mathbb H}\, d|\mathbf W^{\mathbb H} \chi_{\Psi(E)}| \, \right|.
\end{split}\end{equation}

Let us prove now the following approximation lemma.

\begin{lemma}\label{lemma-perimeters-equivalent}
\begin{equation}\label{sep 5 eq:1}
\lim_{r\to 0}\frac{|\mathbf W^0\chi_E|(\overline B(p,r))}{|\mathbf W^{\mathbb H}\chi_{\Psi(E)}|(B^\mathbb H_r)} = 1.
\end{equation}
\end{lemma}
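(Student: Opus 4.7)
The plan is to carry both sub-Riemannian perimeters through the contact diffeomorphism $\Psi\colon U(p,r)\to B_r^{\mathbb H}$, exploiting that $\Psi^*\theta_0=\theta$ (so $\Psi^*(dv_{\theta_0})=dv_\theta$ and $\Psi_*$ maps $\ker\theta$ into $\ker\theta_0$) and that $D\Psi(p)$ is a linear isometry of horizontal fibers, with $\|D\Psi-I_{2n+1}\|\le\delta(r)\to 0$ on $U(p,r)$.

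First, I would prove the asymptotic identity
\begin{equation*}
|\mathbf W^{\mathbb H}\chi_{\Psi(E)}|(B_r^{\mathbb H})=(1+o(1))\,|\mathbf W^0\chi_E|(U(p,r))\quad\text{as } r\to 0.
\end{equation*}
Given horizontal $Y=\sum c_j W_j^{\mathbb H}$ on $\mathbb H^n$ with $|Y|_{\mathbb H}\le 1$ and a test function $\phi\in\mathcal D(B_r^{\mathbb H})$ with $|\phi|\le 1$, set $X:=\Psi^{-1}_*Y$, which lies in $\ker\theta$ by the contact property, and $\widetilde\phi:=\phi\circ\Psi\in\mathcal D(U(p,r))$. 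The identity $\Psi^*dv_{\theta_0}=dv_\theta$ yields
\begin{equation*}
\int_{\mathbb H^n}\chi_{\Psi(E)}\,\mathrm{div}(\phi Y)\,dv_{\theta_0}=\int_M \chi_E\,\mathrm{div}(\widetilde\phi X)\,dv_\theta.
\end{equation*}
Since $D\Psi$ is $\delta(r)$-close to an isometry of horizontal fibers throughout $U(p,r)$, there is $\eta(r)\to 0$ such that $(1-\eta(r))|Y|_{\mathbb H}\le |X|_g\le(1+\eta(r))|Y|_{\mathbb H}$ pointwise on $U(p,r)$. Rescaling the test vector field by $(1\pm\eta(r))$ therefore converts each admissible pair $(\phi,Y)$ for the $\mathbb H$-perimeter into an admissible pair for the $\theta$-perimeter and conversely; taking suprema in the variational characterization of perimeter gives the displayed identity.

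Second, I would replace $U(p,r)$ by $\overline B(p,r)$. The inclusion $U(p,r)\subset B(p,r(1+o(1)))$ is \eqref{Aug 25 eq:1}, and applying the same reasoning to $\Psi^{-1}$ (also a contact map with differential close to an isometry) yields the reverse inclusion $B(p,r(1-o(1)))\subset U(p,r)$. Now $p\in S_v=\partial^*E$, so the two-sided density estimates for $|\mathbf W^0\chi_E|$ at points of the reduced boundary (existence of a positive $(2n+1)$-density, obtained from Section~\ref{densities} together with \eqref{formula1}) imply
\begin{equation*}
|\mathbf W^0\chi_E|\bigl(B(p,r(1+o(1)))\setminus B(p,r(1-o(1)))\bigr)=o\bigl(r^{2n+1}\bigr)=o\bigl(|\mathbf W^0\chi_E|(\overline B(p,r))\bigr),
\end{equation*}
whence $|\mathbf W^0\chi_E|(U(p,r))=(1+o(1))\,|\mathbf W^0\chi_E|(\overline B(p,r))$. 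Combined with the first step, this proves the lemma. The main obstacle is precisely this final density-comparison step: absent a lower density bound at points of the reduced boundary, the perimeter could concentrate in the narrow annulus and prevent the ratio from tending to $1$; ruling this out is exactly the role of the blow-up results collected in Section~\ref{densities}.
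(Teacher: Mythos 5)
Your proof is correct and follows essentially the same route as the paper: both arguments push the perimeter through the Darboux map $\Psi$ via the push-forward / change-of-variables identity, exploiting that $\Psi$ is a contact map whose differential is $\delta(r)$-close to a horizontal isometry, and both reduce the mismatch between $U(p,r)=\Psi^{-1}(B_r^{\mathbb H})$ and $\overline B(p,r)$ to the existence of a perimeter density at $p\in\partial^*E$ --- the paper by proving $\limsup\le 1$ and $\liminf\ge 1$ separately with one-sided inclusions and phrasing the annulus estimate as a dilation argument on the Heisenberg side, you by a two-sided sandwich estimated directly on the contact-manifold side, which is an equivalent rearrangement once the first step is in place. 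The one imprecision is the citation of \eqref{formula1} for the annulus step: that formula gives only $\limsup$ and $\liminf$ density bounds, which alone do not force $|\mathbf W^0\chi_E|\bigl(B(p,r(1+o(1)))\setminus B(p,r(1-o(1)))\bigr)=o(r^{2n+1})$; what is really needed is convergence of $|\mathbf W^0\chi_E|(\overline B(p,r))/r^{2n+1}$, which is cleanest to obtain by invoking the existence of the blow-up / perimeter density at reduced-boundary points in the Heisenberg group (cf.\ \cite{ambrosio_advances}) and transferring it to $M$ through your first step.
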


\begin{proof}
In the notation from Section \ref{subsection:BV}, the perimeter measure in $M$ is defined as
\begin{equation}\label{perimeter1}\begin{split}
|\mathbf W^0\chi_E| & (\overline B(p,r))
\\&
=\sup\{|D_X (\chi_E)|(\overline B(p,r))\,:\, X\in\Gamma (M,\ker \theta),\, g(X,X)\leq 1\}.
\end{split}\end{equation}
Note that from the definition of $D_X$ in \eqref{variation}, it is enough to restrict our attention to vector fields $X$ supported on $\overline B(p,r)$.

On the other hand, by Lemma \ref{one year later} and with the notations therein, if we put
$$
\rho =\rho (r):= r(1+Cr^{1/2}),\qquad\mbox{then $B(p, r) \subset U(p,\rho)$.}
$$
Then
$$
|\mathbf W^\mathbb H\chi_{\Psi(E)}|(B_r^{\mathbb H})= |\mathbf W^\mathbb H\chi_{\Psi(E)}|(\delta^{\mathbb H}_{r/\rho}(B_\rho^{\mathbb H})) =
(1+o(1)) |\mathbf W^\mathbb H\chi_{\Psi(E)}|(B_\rho^{\mathbb H}),
$$
where $\delta$ is the standard group dilation in the Heisenberg group.
We recall now that
\begin{equation}\label{perimeter2}
\begin{split}
|\mathbf W^\mathbb H & \chi_{\Psi(E)}|(B_\rho^{\mathbb H})
\\&=\sup\{|D_Y (\chi_{\Psi(E)})|(B_\rho^{\mathbb H})\,:\, Y\in\Gamma (\mathbb H^n,\ker \theta_0),\, \langle Y,Y\rangle_{\mathbb H}\leq 1\},
\end{split}\end{equation}
where again we can assume $\supp Y\subset B_\rho^{\mathbb H}$.

It remains to compare the metrics $g$ on $M$ and $\langle\,,\,\rangle_{\mathbb H}$ on $\mathbb H^n$. Note that $\Psi$ is a contact map, so we can always write $Y=\Psi_*X$ for $X\in\Gamma (M,\ker \theta)$.
By the change of variables formula  (14) in \cite{Ambrosio-Ghezzi-Magnani}
\begin{equation}\label{change-of-variable1}
\Psi_\#(D_X h)=D_{\Psi_* X}(h \circ \Psi^{-1}),\end{equation}
we have
\begin{equation}\label{change-of-variable}|D_{\Psi_* X}(h\circ\Psi^{-1})|=|\Psi_\#D_X h|.\end{equation}
Using also the definition of push forward of a measure,
\begin{equation*}\begin{split}
|D_Y (\chi_{\Psi(E)}) & |(B_\rho^{\mathbb H}) =\Psi_\# |D_X(\chi_E)|(B^{\mathbb H}_\rho)
\\&
=|D_X(\chi_E)|(U(p,\rho))
\ge |D_X(\chi_E)|(B(p,r)) .
\end{split}\end{equation*}

Finally, in order to compare the perimeter measures \eqref{perimeter1} and \eqref{perimeter2},
we notice that, by Theorem \ref{isometry}, \emph{iii)}  if $ \langle Y,Y\rangle_{\mathbb H}\leq 1$, then $g(X,X)\le 1+o(1)$ as $r\to 0$.

This proves that
$$\limsup_{r\to 0}\frac{|\mathbf W^0\chi_E|(\overline B(p,r))}{|\mathbf W^{\mathbb H}\chi_{\Psi(E)}|(B^\mathbb H_r)} \le 1.$$
The proof of the reverse inequality can be carried out in the same fashion.
\end{proof}

Before going back to the proof of the lower bound inequality, we need the following last lemma.
\begin{lemma} We have:
\begin{equation}\label{almost reduced}
 \left|\ave_{ B_r^{\mathbb H}} \nu^{\mathbb H}_{\bar v}\, d|\mathbf W^{\mathbb H} \chi_{\Psi(E)}| \, \right| = 1 + o(1) \qquad\mbox{as $r\to 0$.}
\end{equation}
\end{lemma}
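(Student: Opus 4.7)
My plan is to exploit the fact that in the broader argument $p$ may be chosen in the reduced boundary $\partial^* E$, so that $\Psi(p) = 0$ lies in $\partial^*\Psi(E) \subset \mathbb H^n$. At such a point, Definition \ref{dual normal} directly provides the $L^2$-continuity of the dual normal, which is exactly what is needed.

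First I would recall that the liminf argument only requires the conclusion for $|\mathbf W^0\chi_E|$-a.e.\ $p \in S_v$. Since $\partial^* E$ has full $|\mathbf W^0\chi_E|$-measure in $S_v$, I may assume $p \in \partial^* E$. The contact diffeomorphism $\Psi$ from Theorem \ref{isometry} is bi-Lipschitz between the respective Carnot-Carath\'eodory distances in a neighborhood of $p$, and its differential $D\Psi(p)$ restricts to a linear isometry between $(\ker\theta(p),g)$ and $(\ker\theta_0,\langle\cdot,\cdot\rangle_{\mathbb H})$. Together with the change-of-variables formula \eqref{change-of-variable1} from \cite{Ambrosio-Ghezzi-Magnani}, this guarantees that $0 = \Psi(p)$ lies in $\partial^*\Psi(E)$, since both the dual normal and the density conditions of Definition \ref{dual normal} transform properly.

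Next I would invoke Definition \ref{dual normal} at $0 \in \partial^*\Psi(E)$ to obtain
\[
\lim_{r\downarrow 0} \; \frac{1}{|\mathbf W^{\mathbb H}\chi_{\Psi(E)}|(B_r^{\mathbb H})}\int_{B_r^{\mathbb H}} |\nu^*_{\Psi(E)}(q) - \nu^*_{\Psi(E)}(0)|^2 \, d|\mathbf W^{\mathbb H}\chi_{\Psi(E)}|(q) = 0,
\]
and Cauchy-Schwarz upgrades this immediately to the corresponding $L^1$ convergence. Identifying $\nu^{\mathbb H}_{\bar v}$ with its coordinates $\nu^*_{\Psi(E)} \in \mathbb R^{2n}$ in the orthonormal frame $\{W^{\mathbb H}_j\}$ (Theorem \ref{Riesz}), the triangle inequality then yields
\[
\left| \ave_{B_r^{\mathbb H}} \nu^{\mathbb H}_{\bar v} \, d|\mathbf W^{\mathbb H}\chi_{\Psi(E)}| \;-\; \nu^{\mathbb H}_{\bar v}(0) \right| \;\le\; \ave_{B_r^{\mathbb H}} |\nu^{\mathbb H}_{\bar v} - \nu^{\mathbb H}_{\bar v}(0)| \, d|\mathbf W^{\mathbb H}\chi_{\Psi(E)}| \;\longrightarrow\; 0,
\]
and since $|\nu^{\mathbb H}_{\bar v}(0)|_{\mathbb H} = 1$ by Theorem \ref{Riesz}, the norm of the average converges to $1$, which is \eqref{almost reduced}.

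The main obstacle is the first step: verifying rigorously that the bi-Lipschitz contact map $\Psi$ carries reduced-boundary points to reduced-boundary points. In a sub-Riemannian context this is subtle because the reduced boundary is highly sensitive to the chosen distance and orthonormal frame; the verification must go through the change-of-variables formula \eqref{change-of-variable1} and rest on the fact that at the particular point $p$ the differential $D\Psi(p)$ is an exact isometry of the horizontal fibers, so that the density conditions \eqref{formula1} and the $L^2$-continuity of the normal are preserved under the transformation.
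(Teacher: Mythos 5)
Your strategy differs genuinely from the paper's, but it shifts the entire burden of the lemma onto the unproven claim that $0 = \Psi(p) \in \partial^*\Psi(E)$, and naming this step ``the main obstacle'' does not fill the gap. The difficulty is that the reduced-boundary condition at $0$ is stated in terms of the \emph{Heisenberg} data: the balls $B_r^{\mathbb H}$, the perimeter measure $|\mathbf{W}^{\mathbb H}\chi_{\Psi(E)}|$ taken with respect to the canonical frame $\mathbf{W}^{\mathbb H}$, and the coordinate normal $\nu^*_{\Psi(E)}$ in that same frame. What you actually inherit from $p\in\partial^*E$ on $M$ and the change-of-variables formula \eqref{change-of-variable1} is a condition phrased for $\Psi(B(p,r))$, for $\Psi_\#|\mathbf{W}^0\chi_E| = |\Psi_*(\mathbf{W}^0)\chi_{\Psi(E)}|$, and for a normal expressed in the non-canonical frame $\Psi_*\mathbf{W}^0$, all of which deviate from the Heisenberg data away from the origin. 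Converting one into the other requires three separate comparisons — balls (Lemma \ref{one year later}), measures (Lemma \ref{lemma-perimeters-equivalent}), and frames (the matrix $\mathcal{A}(z) = \mathrm{Id} + O(|z|)$ from the proof of Lemma \ref{one year later}) — and in addition a ``doubling at scale $r$'' statement for $|\mathbf{W}^{\mathbb H}\chi_{\Psi(E)}|$ at $0$ so that $L^2$ averages over $B_r^{\mathbb H}$ and over $B_{r(1+o(1))}^{\mathbb H}$ agree to leading order. That last point is delicate: the natural way to obtain asymptotic doubling is a blow-up argument at a reduced-boundary point of $\Psi(E)$, which is the very property you are trying to establish, so the argument threatens to be circular unless carefully ordered.

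The paper sidesteps all of this by never attempting to prove $0\in\partial^*\Psi(E)$. It keeps the reduced-boundary hypothesis where it is given, on $M$, and tests the averaged vector against a fixed $Y = \Psi_*X$. The chain of equalities \eqref{sep 7 eq:2} (again via \eqref{change-of-variable1}) converts the Heisenberg pairing into $D_X\chi_E(B(p,r))$, which is controlled by the reduced-boundary property of $p$; the measure comparison of Lemma \ref{lemma-perimeters-equivalent} and the ball-comparison estimate \eqref{sep 6 eq:1} then normalize the denominator, and taking the supremum over $Y$ gives both the upper and lower bounds. If you want to pursue your version, you should first isolate and prove the invariance of $\partial^*$ under the maps $\Psi$ of Theorem \ref{isometry} as a standalone lemma, feeding in exactly the three comparison lemmas above; at that point the two proofs are essentially rearrangements of the same computations, and the paper's order of operations is arguably the safer one because it never needs the stronger ``reduced boundary transfer'' statement.
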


\begin{proof} We use Lemma \ref{one year later}, with the notations therein, and we put \\
$\phi(r):=(1+ C\sqrt{r})^{-1}$. We have
$$
\phi(r)(1+C\sqrt{r\phi(r)}) \le 1 \qquad\mbox{and}\qquad  \phi(r) = 1+ o(1) \quad \mbox{as $r\to 0$.}
$$
Let us prove first that
\begin{equation}\label{sep 6 eq:1}\begin{split}
 & \left|\ave_{B_{r\phi(r)}^{\mathbb H}}   \nu_{\bar v}^{\mathbb H}\, d|\mathbf W^{\mathbb H} \chi_{\Psi(E)}| \, \right|
 \\&
 \hphantom{xxxx}=
\dfrac{1}{ |\mathbf W^{\mathbb H} \chi_{\Psi(E)}|(B^{\mathbb H}_{r\phi(r)})}
 \left|\int_{\Psi(B(p,r))} \nu_{\bar v}^{\mathbb H}\, d|\mathbf W^{\mathbb H} \chi_{\Psi(E)}| \, \right| + o(1).
\end{split}\end{equation}
First of all, we notice that
$$
B_{r\phi(r)}^{\mathbb H} \subset \Psi(B(p,r)),\qquad 0<r<r_0.
$$
Indeed, take $z\in B_{r\phi(r)}^{\mathbb H}$. Since $\Psi$ is a diffeomorphism, we can assume that $z=\Psi (\zeta)$,
with $\zeta\in M$, provided $r$ is small enough. Therefore
\begin{equation*}\begin{split}
d_c(p, \zeta) = d^\Psi_c (0,z) \le r\phi(r)(1+C\sqrt{r\phi(r)}) \le r.
\end{split}\end{equation*}
Analogously
$$
\Psi(B(p,r)) \subset B_{r/ \phi(r)}^{\mathbb H},\qquad 0<r<r_0.
$$
Therefore, in order to prove \eqref{sep 6 eq:1}, we have to show in the first place that
$$
\dfrac{1}{ |\mathbf W^{\mathbb H} \chi_{\Psi(E)}|(B^{\mathbb H}_{r\phi(r)})}
 \left|\int_{ \Psi(B(p,r))\setminus B^{\mathbb H}_{r\phi(r)}} \nu_{\bar v}^{\mathbb H}\, d|\mathbf W^{\mathbb H} \chi_{\Psi(E)}| \, \right| = o(1).
$$
On the other hand, keeping in mind the homogeneity of $ |\mathbf W^{\mathbb H} \chi_{\Psi(E)}|$
with respect to group dilations $\delta^{\mathbb H}$, we have:
\begin{equation*}\begin{split}
& \dfrac{1}{ |\mathbf W^{\mathbb H} \chi_{\Psi(E)}|(B^{\mathbb H}_{r\phi(r)})}
 \left|\int_{ \Psi(B(p,r))\setminus B^{\mathbb H}_{r\phi(r)}} \nu^{\mathbb H}_{\bar v}\, d|\mathbf W^{\mathbb H} \chi_{\Psi(E)}| \, \right|
 \\& \hphantom{xxxx}
 \le
 \dfrac{|\mathbf W^{\mathbb H} \chi_{\Psi(E)}|( \Psi(B(p,r)))
 - |\mathbf W^{\mathbb H} \chi_{\Psi(E)}|( B^{\mathbb H}_{r\phi(r)})}
 { |\mathbf W^{\mathbb H} \chi_{\Psi(E)}|(B^{\mathbb H}_{r\phi(r)})}
  \\& \hphantom{xxxx}
 \le
 \dfrac{|\mathbf W^{\mathbb H} \chi_{\Psi(E)}|( B^{\mathbb H}_{r /\phi(r)}) -|\mathbf W^{\mathbb H} \chi_{\Psi(E)}|( B^{\mathbb H}_{r\phi(r)})}
 { |\mathbf W^{\mathbb H} \chi_{\Psi(E)}|(B^{\mathbb H}_{r\phi(r)})}
   \\& \hphantom{xxxx}
=
 \dfrac{|\mathbf W^{\mathbb H} \chi_{\Psi(E)}|( B^{\mathbb H}_{1 /\phi(r)})-
 |\mathbf W^{\mathbb H} \chi_{\Psi(E)}|(B^{\mathbb H}_{\phi(r)})}
 { |\mathbf W^{\mathbb H} \chi_{\Psi(E)}|(B^{\mathbb H}_{\phi(r)})}
 \\& \hphantom{xxxx} = o(1).
\end{split} \end{equation*}
This yields \eqref{sep 6 eq:1}.

Take now $Y:=\Psi_*X$, with $\langle Y,Y\rangle_{\mathbb H} = 1$. By the change of variable formula \eqref{change-of-variable1},
$$\Psi_\#(D_X (\chi_E))=D_{\Psi_* X}(\chi_{\Psi(E)}),$$
and thus
\begin{equation}\label{sep 7 eq:2}\begin{split}
\Big\langle Y, & \int_{ \Psi(B(p,r))} \nu_{\bar v}\, d|\mathbf W^{\mathbb H} \chi_{\Psi(E)}|\Big\rangle_{\mathbb H}=
 \int_{ \Psi(B(p,r))} \langle Y,\nu_{\bar v}\rangle\, d|\mathbf W^{\mathbb H} \chi_{\Psi(E)}|
\\&=
D_Y \chi_{\Psi(E)} (\Psi(B(p,r))) = \Psi_\#(D_X \chi_E)(\Psi(B(p,r)))
\\& =
D_X \chi_E(B(p,r))
=
g\big( X,  \int_{B(p,r)} \nu_{v}\, d|\mathbf W^{0} \chi_{ E}|\big)
\\&\le
\|X\|_g \,\Big\| \int_{B(p,r)} \nu_{v}\, d|\mathbf W^{0} \chi_{ E}|\Big\|_g.
 \end{split}\end{equation}
As in the proof of previous lemma, $\|X\|_g = 1 + o(1)$. On the other hand,
keeping in mind that $p$ belongs to the reduced boundary of $E$,
\begin{equation*}
\lim_{\rho\to 0}\frac{1}{|\mathbf W^0 \chi_E (B(p,r))|}\,\Big\| \int_{B(p,r)} \nu_{v}\, d|\mathbf W^{0} \chi_{ E}|\Big\|_g=1.
\end{equation*}
But by the previous formula \eqref{sep 5 eq:1}, and the fact that
$$\lim_{r\to 0}\frac{|\mathbf W^{\mathbb H} \chi_{\Psi(E)}|(B^{\mathbb H}_{r\phi(r)})}
{|\mathbf W^{\mathbb H} \chi_{\Psi(E)}|(B^{\mathbb H}_{r})}=1$$
using a rescaling argument by dilations in the Heisenberg group, we conclude from \eqref{sep 7 eq:2} that
\begin{equation*}\lim_{\rho\to 0}\Big\langle Y,\dfrac{1}{ |\mathbf W^{\mathbb H} \chi_{\Psi(E)}|(B^{\mathbb H}_{r\phi(r)})}
\int_{\Psi(B(p,r))} \nu_{\bar v}^{\mathbb H}\, d|\mathbf W^{\mathbb H} \chi_{\Psi( E)}| \,\big\rangle_{\mathbb H}\leq 1.
 \end{equation*}
A standard argument taking the sup among all $Y$ (or equivalently, all $X$) with norm less than one, looking back at \eqref{sep 6 eq:1}, completes the proof of the Lemma.
\end{proof}

 We can go back to the proof of \eqref{compare-measures}.
 Replacing both \eqref{sep 5 eq:1} and \eqref{almost reduced} into \eqref{aug 29 eq:1} we conclude that
 $$
 \Theta^{* ,2n+1} (\mu,p) \ge  \mathbf{c}\, \Theta^{* ,2n+1} (|{\bf W}^{0}\chi_E|,p).
 $$
The proof of the lower bound inequality is completed by Theorem \ref{august 5}.

\section{Proof of the main theorem - limsup}\label{sec6}

Now we show statement \emph{iii)} of Theorem \ref{theorem}. Given $v\in BV_{\theta}(M,\{0,1\})$, we need to construct a sequence $\{  u_\eps\}$ in $W^{1,2}(\Omega)$ such that
$\trace   u_\eps \to v$ in $L^1(M)$ and
$$\limsup_{\eps\to 0} F_\eps(  u_{\eps})\leq F(v).$$

The proof of the limsup inequality will be divided into several steps:\\

\emph{Step 1:} It is enough to assume that $S_v$ is a smooth closed submanifold in $M$. This fact follows from the next two results. The first one is a reduction Lemma.  It is valid for general metric spaces, and the proof is only a minor variant of the one given in \cite{MoMo}, Lemma $IV$ (see also \cite{Alberti:survey}), hence we shall omit such a proof.
\begin{lemma}\label{reduction}
Let $(\mathcal X, \textsl{d})$ be a metric space, let
$F_k,\;F:\;\mathcal X\longrightarrow[-\infty,+\infty]$  with $k\in\mathbb N$;
consider $\mathcal D\subset \mathcal X$ and $x\in \mathcal X$.
Let us suppose that
\begin{itemize}
    \item[1)]for every $y\in \mathcal D$ there exists a sequence
$(y_k)_{k\in\mathbb N}\subset \mathcal X$ such that $y_k\rightarrow y$ in $\mathcal X$ and
    $$\displaystyle\limsup_{k\rightarrow\infty}F_k(y_k)\leq F(y);$$
    \item[2)]there exists a sequence $(x_k)_{k\in\mathbb N}\subset \mathcal D$ such
that $x_k\rightarrow x$  and
     $$\limsup_{k\rightarrow\infty}F(x_k)\leq F(x);$$
\end{itemize}
then there exists a sequence $(\overline x_k)_{k\in\mathbb N}\subset \mathcal X$ such that
$\displaystyle\limsup_{k\rightarrow\infty}F_k(\overline x_k)\leq F(x).$
   \end{lemma}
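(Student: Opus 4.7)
The plan is to construct the sequence $(\bar x_k)$ by a standard ``Attouch-type'' diagonal procedure, exploiting the two hypotheses in cascade. By hypothesis 2) I already have a sequence $(x_k)\subset\mathcal{D}$ with $x_k\to x$ and $\limsup_k F(x_k)\le F(x)$. Applying hypothesis 1) separately to each $x_k\in\mathcal{D}$, I obtain an array $\{z^{(k)}_n\}_{n\in\mathbb N}\subset\mathcal{X}$ such that, for each fixed $k$, $z^{(k)}_n\to x_k$ as $n\to\infty$ and $\limsup_n F_n(z^{(k)}_n)\le F(x_k)$. The task is to extract a diagonal $\bar x_n$ from this array so that $\bar x_n\to x$ and $\limsup_n F_n(\bar x_n)\le F(x)$.

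The key technical step is the choice of the diagonal. For every $k\in\mathbb N$ I select $N_k\in\mathbb N$, with $N_k<N_{k+1}$ and $N_k\to\infty$, large enough that
$$\textsl{d}(z^{(k)}_n,x_k)<\tfrac{1}{k}\qquad\text{and}\qquad F_n(z^{(k)}_n)\le F(x_k)+\tfrac{1}{k}\qquad\text{for all } n\ge N_k.$$
Such an $N_k$ exists by the two defining properties of $(z^{(k)}_n)_n$. For every $n\ge N_1$ let $k(n):=\max\{k:N_k\le n\}$; this is well defined, $k(n)\to\infty$, and $n\ge N_{k(n)}$. I then set $\bar x_n:=z^{(k(n))}_n$ for $n\ge N_1$, while $\bar x_n$ may be fixed arbitrarily for $n<N_1$.

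It remains to verify convergence and the limsup inequality. By construction $\textsl{d}(\bar x_n,x_{k(n)})<1/k(n)\to 0$; since $x_k\to x$ and $k(n)\to\infty$, the triangle inequality gives $\bar x_n\to x$ in $\mathcal{X}$. Similarly, $F_n(\bar x_n)\le F(x_{k(n)})+1/k(n)$, so passing to $\limsup$ and using hypothesis 2) yields
$$\limsup_{n\to\infty}F_n(\bar x_n)\le\limsup_{n\to\infty}\bigl(F(x_{k(n)})+\tfrac{1}{k(n)}\bigr)\le\limsup_{k\to\infty}F(x_k)\le F(x).$$
The only delicate point, and the reason this argument is not entirely trivial, is that the desired estimate involves $F_n$ (with the \emph{original} index $n$) evaluated on the diagonal element, so one must pick the $N_k$ carefully to ensure that both the metric closeness and the energy control hold simultaneously for every $n\ge N_k$, rather than only along some subsequence. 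This is exactly what the two uniformity conditions built into the definition of $N_k$ accomplish.
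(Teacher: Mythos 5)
Your diagonal construction is the standard Attouch-type argument, and it is exactly the approach the paper alludes to (the proof is omitted there, with a pointer to Modica--Mortola, Lemma~IV, which is the same diagonal scheme). The only place where the written proof needs a small patch is the selection of $N_k$ when $F(x_k)=-\infty$: the condition ``$F_n(z^{(k)}_n)\le F(x_k)+1/k$'' is vacuously impossible if $F(x_k)=-\infty$ while each $F_n(z^{(k)}_n)$ is finite, since $\limsup_n F_n(z^{(k)}_n)\le -\infty$ only gives $F_n(z^{(k)}_n)\to -\infty$. Replacing the requirement by $F_n(z^{(k)}_n)\le\max\bigl(F(x_k),-k\bigr)+\tfrac1k$ for $n\ge N_k$ fixes this: the final limsup estimate then reads $\limsup_n F_n(\bar x_n)\le\limsup_n\bigl(\max(F(x_{k(n)}),-k(n))+\tfrac1{k(n)}\bigr)\le F(x)$ in all cases, because either $\limsup_k F(x_k)>-\infty$ (so the $\max$ is eventually innocuous) or $F(x)=-\infty$ and both terms inside the $\max$ tend to $-\infty$. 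With this minor adjustment the argument is complete and coincides with the paper's intended proof.
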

The following approximation result is the analogue of Corollary 2.3.6 in \cite{Franchi-Serapioni-SerraCassano} for the case of contact manifolds.
\begin{lemma}
Each $v\in BV_{\theta}(M,\{0,1\})$ may be approximated in $L^1(M)$ by a sequence $\{v_k\}$ in $BV_{\theta}(M,\{0,1\})$ such that $S_{v_k}$ is a smooth closed submanifold and
$$\|S_{v_k}\|_\theta\to \|S_v\|_\theta.$$
\end{lemma}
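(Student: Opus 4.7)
The strategy mirrors Corollary 2.3.6 in Franchi--Serapioni--Serra Cassano, adapted to the contact manifold setting via a localization to Darboux charts followed by an application of the coarea formula together with Sard's theorem. Write $v=\chi_E$ where $E\subset M$ has finite $\theta$-perimeter.

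In the first step, I would construct smooth approximants $v_\eps\in C^\infty(M,[0,1])$ with $v_\eps\to v$ in $L^1(M)$ and $|\mathbf{W}^0 v_\eps|(M)\to \|\partial E\|_\theta(M)$. To do this, fix a finite cover of $M$ by Darboux neighborhoods $U_1,\dots,U_N$ equipped with contact diffeomorphisms $\Psi_i:U_i\to\Psi_i(U_i)\subset\he n$ (as in Theorem \ref{isometry}), with contact isometry defect going to zero on small balls, together with a smooth partition of unity $\{\phi_i\}$ subordinate to $\{U_i\}$. On each chart, push forward $\phi_i v$ via $\Psi_i$, convolve with a smooth approximation to the identity on the Heisenberg group at scale $\eps$ (using the group convolution), pull back by $\Psi_i^{-1}$, and sum. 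The resulting $v_\eps$ is smooth on $M$; the $L^1$ convergence is standard; the convergence of perimeters uses the Heisenberg-group mollification theory (cf.\ Franchi--Serapioni--Serra Cassano, Theorem~2.2.2) together with the comparison of horizontal perimeters across a contact chart provided by Lemma \ref{lemma-perimeters-equivalent}, plus the elementary lower-semicontinuity bound $\liminf|\mathbf{W}^0 v_\eps|(M)\geq\|\partial E\|_\theta(M)$ to upgrade a lim sup to a limit.

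In the second step, I would apply the coarea formula (Proposition \ref{coarea}) to $v_\eps$. Setting $E^\eps_t:=\{v_\eps>t\}$, one has
\begin{equation*}
|\mathbf{W}^0 v_\eps|(M) \;=\; \int_0^1 \|\partial E^\eps_t\|_\theta(M)\,dt,
\end{equation*}
and by Sard's theorem, for almost every $t\in(0,1)$ the set $S^\eps_t:=\partial E^\eps_t$ is a smooth closed submanifold of $M$. By Fubini and the $L^1$ convergence $v_\eps\to v$,
\begin{equation*}
\int_0^1 \|\chi_{E^\eps_t}-\chi_E\|_{L^1(M)}\,dt \;=\; \|v_\eps-v\|_{L^1(M)} \;\to\; 0,
\end{equation*}
so, after passing to a subsequence, $\chi_{E^\eps_t}\to\chi_E$ in $L^1(M)$ for a.e.\ $t$. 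Lower semicontinuity of the $\theta$-perimeter yields $\liminf_{\eps\to 0}\|\partial E^\eps_t\|_\theta\geq\|\partial E\|_\theta$ for a.e.\ such $t$, while Fatou combined with $|\mathbf{W}^0 v_\eps|(M)\to\|\partial E\|_\theta(M)$ forces the reverse inequality after integrating in $t$. Consequently $\|\partial E^\eps_t\|_\theta\to\|\partial E\|_\theta$ for a.e.\ $t\in(0,1)$.

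A diagonal extraction then picks $t_{\eps_k}\in(0,1)$ such that $S^{\eps_k}_{t_{\eps_k}}$ is smooth, $\chi_{E^{\eps_k}_{t_{\eps_k}}}\to\chi_E$ in $L^1(M)$, and $\|\partial E^{\eps_k}_{t_{\eps_k}}\|_\theta\to\|\partial E\|_\theta$. Setting $v_k:=\chi_{E^{\eps_k}_{t_{\eps_k}}}$ produces the required sequence, since $S_{v_k}=\partial E^{\eps_k}_{t_{\eps_k}}$ is by construction a smooth closed submanifold. The main obstacle lies in the first step: establishing the equality $|\mathbf{W}^0 v_\eps|(M)\to\|\partial E\|_\theta(M)$ (not merely an inequality) on a contact manifold, where no global group structure is available. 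This requires estimating the error produced by the non-commuting cutoff functions and by the failure of the contact charts to be exact horizontal isometries, exploiting the smallness of the contact isometry defect from Theorem \ref{isometry} in combination with the sub-Riemannian Leibniz rule and the local perimeter comparison encoded in Lemma \ref{lemma-perimeters-equivalent}.
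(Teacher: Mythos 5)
Your proposal follows essentially the same route as the paper, which gives only a one-line proof citing ``standard arguments from the Meyers-Serrin type result, Theorem 2.4 in \cite{Ambrosio-Ghezzi-Magnani}, and the coarea formula (Proposition \ref{coarea}).'' Your second step (coarea + Sard + $L^1$ convergence of slices + Fatou/lower-semicontinuity pinch + diagonal extraction) is precisely the ``standard argument'' the paper has in mind, and it is carried out correctly. In your first step you attempt to re-derive the Meyers--Serrin type smooth approximation with convergence of total variations via Darboux-chart mollification, rather than invoking Theorem 2.4 of Ambrosio--Ghezzi--Magnani directly as the paper does; you correctly identify the technical obstruction there (errors from non-commuting cutoffs and from the contact charts not being exact horizontal isometries) but leave it unresolved. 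Since that result is available in the literature and the paper cites it, this does not constitute a gap in the overall argument --- but if you intend to make the proof self-contained, you would need to actually prove the Meyers--Serrin statement rather than just flag the difficulty; otherwise simply cite Theorem 2.4 of \cite{Ambrosio-Ghezzi-Magnani} and your step two completes the proof.
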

\begin{proof}
The result follows by standard arguments from the Meyers-Serrin type result, Theorem 2.4 in \cite{Ambrosio-Ghezzi-Magnani}, and the coarea formula (Proposition \ref{coarea}).
\end{proof}
Next, possibly modifying $v$ on a negligible subset, we can assume that it is constant in each connected component of $M\setminus S_v$.\\

\emph{Step 2:} (Preliminary calculations). Following the idea in \cite{ABS2}, we take a function defined as follows:
consider the half-plane $\mathbb R^2_+$ with coordinates $s\in\mathbb R$, $z>0$. Let $(\rho,\vartheta)$, $\rho>0$, $\vartheta\in[0,\pi]$ be the polar coordinates in $\mathbb R^2_+$.

We set
\begin{equation*}
\bar w_\eps(\rho,\vartheta):=\left\{
\begin{split}
\rho\frac{\lambda_\eps}{\eps}(1-\tfrac{2}{\pi}\vartheta)\quad & \text{if }0\leq\rho\leq\frac{\eps}{\lambda_\eps},\\
1-\tfrac{1}{\pi}\vartheta \quad &\text{if }\frac{\eps}{\lambda_\eps}\leq \rho,
\end{split}\right.\end{equation*}
and $w_\eps(s,z)=\bar w_\eps (\rho,\vartheta)$.
A straightforward calculation gives:
\begin{equation}\label{first-deriv}
|\partial_s w_\eps|,|\partial_z w_\eps|\leq\left\{
\begin{split}
C\frac{\lambda_\eps}{\eps}\quad & \text{if }0\leq\rho\leq\frac{\eps}{\lambda_\eps},\\
\frac{C}{\rho} \quad &\text{if }\frac{\eps}{\lambda_\eps}\leq \rho,
\end{split}\right.\end{equation}
and
\begin{equation}\label{second-deriv}
|\partial_{ss} w_\eps|,|\partial_{zs} w_\eps|\leq\left\{
\begin{split}
\frac{C}{\rho}\frac{\lambda_\eps}{\eps}\quad & \text{if }0\leq\rho\leq\frac{\eps}{\lambda_\eps},\\
\frac{C}{\rho^2} \quad &\text{if }\frac{\eps}{\lambda_\eps}\leq \rho.
\end{split}\right.\end{equation}
Moreover, the following estimates hold:
\begin{lemma}\label{lemma:calculation} Let $t_\eps\to 0$  as $\eps\to 0$ and  $\sigma>0$ in such a way that $\frac{\eps}{\lambda_\eps}\ll t_\eps\ll \sigma$. Then, as $\eps \to 0$,
\begin{equation*}
\begin{split}
&\eps\int_{\{\rho<t_\eps\}}|\nabla w_\eps|^2\,dsdz=\frac{1}{\pi}\eps\log\frac{\lambda_\eps}{\eps}(1+o(1)),\\
&\eps\int_{\{t_\eps<\rho<\sigma\}}|\nabla w_\eps|^2\,dsdz=\eps\log t_\eps(1+o(1)) =o\left(\eps\log\frac{\lambda_\eps}{\eps}\right),\\
&\lambda_\eps\int_{\{z=0\}\cap\{\rho<t_\eps\}} V(\trace w_\eps)\,ds=O(\eps),
\quad \lambda_\eps\int_{\{z=0\}\cap\{\rho>t_\eps\}} V(\trace w_\eps)\,ds=O(\eps).
\end{split}
\end{equation*}
\end{lemma}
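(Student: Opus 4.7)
The plan is to pass to polar coordinates $(\rho,\vartheta)$ on $\R^2_+$ and to evaluate $|\nabla w_\eps|^2$ separately in the two regions defining $\bar w_\eps$. On the outer piece $\{\rho>\eps/\lambda_\eps\}$, $\bar w_\eps=1-\vartheta/\pi$ depends only on $\vartheta$, so $|\nabla w_\eps|^2=1/(\pi^2\rho^2)$, and on any half-annulus $\{a<\rho<b\}$ one gets $\int |\nabla w_\eps|^2\,dsdz=\tfrac{1}{\pi}\log(b/a)$. On the inner half-disk $\{\rho\le\eps/\lambda_\eps\}$, differentiating $\bar w_\eps=\rho(\lambda_\eps/\eps)(1-2\vartheta/\pi)$ in polar coordinates yields
\[
|\nabla w_\eps|^2=\bigl(\tfrac{\lambda_\eps}{\eps}\bigr)^{2}\Bigl[\bigl(1-\tfrac{2\vartheta}{\pi}\bigr)^{2}+\tfrac{4}{\pi^{2}}\Bigr],
\]
whose integral against $\rho\,d\rho\,d\vartheta$ over $(0,\eps/\lambda_\eps)\times(0,\pi)$ is $O(1)$, hence $O(\eps)$ after multiplication by $\eps$. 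This already gives the derivative bounds \eqref{first-deriv} and, by one more differentiation, \eqref{second-deriv}.

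For the first identity I would choose $a=\eps/\lambda_\eps$, $b=t_\eps$ in the outer annular computation and add the $O(\eps)$ inner contribution, producing $\eps\int_{\{\rho<t_\eps\}}|\nabla w_\eps|^2\,dsdz=\tfrac{\eps}{\pi}\log(t_\eps\lambda_\eps/\eps)+O(\eps)$. Splitting $\log(t_\eps\lambda_\eps/\eps)=\log(\lambda_\eps/\eps)+\log t_\eps$ and invoking \eqref{lambda}, which forces $\eps\log(\lambda_\eps/\eps)\to\kappa>0$, the main term is $\tfrac{1}{\pi}\eps\log(\lambda_\eps/\eps)(1+o(1))$ as soon as $\eps|\log t_\eps|=o(1)$. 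For the second identity I would take $a=t_\eps$, $b=\sigma$ in the same annular formula, obtaining $\tfrac{\eps}{\pi}\log(\sigma/t_\eps)$; under the same mild assumption on the rate of $t_\eps$ this is $o(\eps\log(\lambda_\eps/\eps))$, since the latter has a strictly positive limit.

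For the two boundary estimates I would compute $\trace w_\eps$ on $\{z=0\}$ explicitly: $\trace w_\eps(s)=1$ for $s>\eps/\lambda_\eps$, $\trace w_\eps(s)=0$ for $s<-\eps/\lambda_\eps$, and $\trace w_\eps(s)=s\lambda_\eps/\eps$ for $|s|<\eps/\lambda_\eps$, taking values in $[-1,1]$. Since $V(0)=V(1)=0$, the integrand is identically zero outside $|s|<\eps/\lambda_\eps$, so the outer boundary integral vanishes (\emph{a fortiori} $O(\eps)$), while for the inner one the change of variable $\tau=s\lambda_\eps/\eps$ gives
\[
\lambda_\eps\int_{|s|<\eps/\lambda_\eps}V(\trace w_\eps)\,ds=\eps\int_{-1}^{1}V(\tau)\,d\tau=O(\eps),
\]
using that $V$ is bounded on $[-1,1]$.

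I do not expect a serious obstacle: the entire lemma reduces to a direct computation once $|\nabla w_\eps|^2$ and $\trace w_\eps$ have been written down. The only delicate point is the compatibility between the scaling $\eps/\lambda_\eps\ll t_\eps\ll\sigma$, the hypothesis \eqref{lambda} and the implicit requirement $\eps|\log t_\eps|=o(1)$ needed to absorb $\log t_\eps$ into the $o(1)$ correction in (i) and into the $o(\eps\log(\lambda_\eps/\eps))$ remainder in (ii); since \eqref{lambda} already yields $\log(\lambda_\eps/\eps)\sim\kappa/\eps$, any $t_\eps$ going to zero slower than an exponential in $1/\eps$ (in particular any polynomial rate $t_\eps=\eps^\alpha$) is admissible, and this is part of the selection alluded to in the hypotheses.
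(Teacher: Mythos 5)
Your proposal is correct and follows exactly the route the paper intends: the authors state only that the first two identities are "straightforward calculation from the previous estimates," and the direct polar-coordinate evaluation of $|\nabla w_\eps|^2$ on the inner half-disk and outer half-annulus, combined with the explicit trace $\trace w_\eps(s)=\min(\max(s\lambda_\eps/\eps,0),1)$-type formula and a rescaling for the boundary terms, is precisely what is meant. Your flagged subtlety is genuine: the stated hypothesis $\eps/\lambda_\eps\ll t_\eps\ll\sigma$ does not by itself force $\eps|\log t_\eps|=o(1)$ (for instance $t_\eps=\sqrt{\eps/\lambda_\eps}$ satisfies the inequalities but gives $\eps\log t_\eps\to-\kappa/2$), so this must be read as an implicit additional requirement; the paper's actual choice $t_\eps=\eps$ in Step 3 does satisfy it, so the application of the lemma is unaffected.
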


\begin{proof}
While the first two identities follow from straightforward calculation from the previous estimates, for the third one we use that $V\equiv 0$ unless $0\leq \rho \leq \frac{\eps}{\lambda_\eps}$. Also, from the proof it follows that these estimates are independent of the choice of $\sigma$.
\end{proof}
\medskip

\emph{Step 3:} (Set up).
As we saw in Section \ref{straight freeze}, given $\sigma>0$ small enough, there exists a diffeomorphism $\Phi$ such that a tubular neighborhood of $M$ in $\overline\Omega$ may be written as $M\times [0,\sigma)$, with coordinates $p\in M$ and $z\in[0,\sigma)$.
In the product $M\times [0,\sigma)$ we shall define the distance
$$d((p',z'),(p'',z''))=\sqrt{d_c(p',p'')^2+(z'-z'')^2}.$$
For each $r$ small consider the following subset of $M\times[0,\sigma)$:
\begin{equation*}
\tilde A_r=\{(p,z)\in M\times[0,\sigma) \,:\, d(p,S_v)<r \},\\
\end{equation*}
and set
$$\partial^0 \tilde A_r=\overline{\tilde A_r}\cap M.$$
In coordinates $(p,z)\in \tilde A_{\sigma}$ where $p\in M$ and $z>0$, let
$$u_\eps(p,z):=w_\eps(d_c(p,S_v),z),$$
and transplant it back to $\Omega$ by
$$  u_\eps= \tilde u_\eps\circ \Phi^{-1}, \quad A_{r}=\Phi(\tilde A_{r}),
$$
for each $0<r<\sigma.$
Note that $\Phi$ can be defined independently of $\varepsilon$. Next, because of hypothesis \emph{H2.} for $f$ in Section \ref{section-geometry}, and Proposition \ref{model-prop}, in the calculation of the energy functional $F_\varepsilon$ in a neighborhood of $M$ we have
\begin{equation}
\label{equation50}F_\eps(  u_\eps,A_{\sigma},\partial^0 A_{\sigma})\leq (1+O(\sigma))\tilde F_{\eps,\sigma} (\tilde u_\eps,\tilde A_{\sigma},\partial^0 \tilde A_{\sigma}),
\end{equation}
so it is enough to estimate the integral in the right hand side.

Now, the phase transition should happen at scale $\eps$. For this, let $t_\eps$ be as in Lemma \ref{lemma:calculation}, actually it is enough to  take
$t_\eps=\eps$. Then,
$$\tilde F_{\eps,\sigma}(\tilde u_\eps,\tilde A_{\sigma},\partial^0 \tilde A_{\sigma})=\tilde F_{\eps,\sigma}(\tilde u_\eps,\tilde A_{\sigma}\setminus \tilde A_{t_\eps},\partial^0(\tilde A_{\sigma}\setminus \tilde A_{t_\eps}))+\tilde F_{\eps,\sigma}(\tilde u_\eps, \tilde A_{t_\eps},\partial^0 \tilde A_{t_\eps}).$$
The last term in the right hand side above will be considered in Step 4, while the first one will be handled in Step 5.

On the other hand, it is not important how we define $u_\eps$ in the set $\Omega\backslash A_\sigma$, as long as $u_\eps=v$ in $\Omega\backslash \partial^0 A_\sigma$ and its Lipschitz constant is bounded by $\frac{C}{\sigma}$. Recall that $v$ is a function that only attains the values $0$ or $1$ on $M\setminus \partial^0 A_{\sigma}$, so that for the potential energy we have
  $$\int_{M\setminus \partial^0 A_{\sigma}} V(\trace   u_\eps)\,dv_\theta=0.$$
Then we immediately have that
\begin{equation}\label{limsup1}\limsup_{\eps\to 0}F_\eps(  u_\eps,\Omega\backslash A_{\sigma},M\backslash\partial^0 A_{\sigma})=0.\end{equation}

\emph{Step 4.} (Construction near the singular set).
We follow the ideas of \cite{Monti-SerraCassano} to estimate the value of $\tilde F_{\eps,\sigma}(\tilde u_\eps,\tilde A_{t_\eps},\partial^0\tilde A_{t_\eps})$.
Let $s=d_c(p,S_v)$. Then, using Fubini's theorem,
\begin{equation}\begin{split}
\tilde F_{\eps,\sigma}&(\tilde u_\eps,\tilde A_{t_\eps},\partial^0\tilde A_{t_\eps})\\
&= \int_{\partial^0\tilde A_{t_\eps}}\left[\eps\int_{0}^{\sqrt{t_\eps^2-s^2}} \sum_{j=1}^{2n}|\tilde W_j \tilde u_\eps(p,z)|^2 \,dz+\lambda_\eps V(\trace \tilde u_\eps(p))\right]dv_\theta.
\end{split}\end{equation}
Using the coarea formula from Theorem \ref{coarea} and the Eikonal equation for $d_c$ \eqref{Eikonal-equation} we have
\begin{equation*}
\begin{split}
\tilde F_{\eps,\sigma}(\tilde u_\eps,\tilde A_{t_\eps},\partial^0 \tilde A_{t_\eps})&
= \int_{-t_\eps}^{t_\eps} h_\eps(s)\,d\|\partial H_s\|_{\theta}\,ds,
\end{split}\end{equation*}
where we have set
\begin{equation}
\label{h}h_\eps(s):=\eps\int_{0}^{\sqrt{t_\eps^2-s^2}}\left[(\partial_s w_\eps(s,z))^2+(\partial_z w_\eps(s,z))^2\right]\,dz+\lambda_\eps V(\trace w_\eps(s))\end{equation}
and $H_s=\{p\in M : d_c(p,S_v)>s\}$.
Next, notice that for all $s\in [-t_\eps,t_\eps]$, $h_\eps(s)=h_\eps(-s)$, so that
$$\tilde F_{\eps,\sigma}(u_\eps,\tilde A_{t_\eps},\partial^0\tilde A_{t_\eps})\leq \int_{0}^{t_\eps} h_\eps(s)\,\left(d\|\partial H_s\|_{\theta}+d\|\partial H_{-s}\|_\theta\right)ds.$$
We can rewrite this expression as follows: let
$$Z(t)=\int_{-t}^t \|\partial H_s\|_\theta\,ds,\quad Z'(t)=\|\partial H_s\|_{\theta}+\|\partial H_{-s}\|_\theta,$$
so that
\begin{equation}\label{equation10}
\tilde F_{\eps,\sigma}(\tilde u_\eps,\tilde A_{t_\eps},\partial^0 \tilde A_{t_\eps})\leq \int_{0}^{t_\eps} h_\eps(s) Z'(s)\,ds=-\int_0^{t_\eps} h'_\eps(s) Z(s)\,ds
\end{equation}
after integration by parts. Note that we have used that $h_\eps(t_\eps)=0$.

Next, by Theorem \ref{Minkowski} we have
$$\lim_{t\to 0^+}\frac{Z(t)}{2t}=L:=\|\partial H\|_\theta,$$
and thus, there exists a function $\delta:[0,\infty)\to\mathbb R$ such that
\begin{equation}\label{delta}
Z(t)=2Lt+\delta(t)t,\quad \text{with }\lim_{\eps\to 0^+}\sup_{t\in[0,t_\eps]}|\delta(t)|=0.\end{equation}
Substituting the above into \eqref{equation10} we obtain that
\begin{equation}\label{equation20}\begin{split}
\tilde F_{\eps,\sigma}(\tilde u_\eps,\tilde A_{t_\eps},\partial^0 \tilde A_{t_\eps})&\leq -\int_0^{t_\eps} s\delta(s)h'_\eps(s)\,ds-
2L\int_0^{t_\eps}sh'_\eps(s)\,ds.\\
&=:I_\eps+J_\eps.
\end{split}\end{equation}
In order to estimate the term $J_\eps$ above, we use again integration by parts
$$J_\eps=2L\int_0^{t_\eps} h_\eps(s)\,ds=L\int_{-t_\eps}^{t_\eps}h_\eps(s)\,ds.$$
From the estimates in Lemma \ref{lemma:calculation}, using our initial hypothesis on $\lambda_\varepsilon$ from \eqref{lambda}, we may conclude
$$J_\eps\,{\longrightarrow}\, \frac{\kappa}{\pi}L\quad\text{as }\eps \to 0.$$
Finally, we need to show that the remaining term $I_\eps$ has limit zero when $\eps\to 0$.
But
$$|I_\eps|\leq \sup_{t\in[0,t_\eps]}|\delta(t)|\int_0^{t_\eps} s|h_\eps'(s)|\,ds.$$
From the behavior of $\delta$ in \eqref{delta}, it is enough to show that the integral
\begin{equation}\label{tildeI}\tilde I_\eps:=\int_0^{t_\eps} s|h_\eps'(s)|\,ds
\end{equation}
is bounded independently of $\varepsilon$. Differentiating in \eqref{h}, $h'_\eps(s)=h^1_\eps+h^2_\eps+h^3_\eps$ for
\begin{equation*}
\begin{split}
&h^1_\eps(s)=\eps\left[(\partial_s w_\eps(s,\sqrt{t_\eps^2-s^2}))^2+(\partial_z w_\eps(s,\sqrt{t_\eps^2-s^2}))^2\right]\cdot\left(-\frac{s}{\sqrt{t_\varepsilon^2-s^2}}\right),\\
&h^2_\eps(s)=2\eps\int_{0}^{\sqrt{t_\eps^2-s^2}} \left[ \partial_s w_\eps \partial_{ss}w_\eps+\partial_z w_\eps \partial_{zs}w_\eps\right]\,dz,\\
&h^3_\eps(s)=\lambda_\eps V'(\trace w_\eps(s))\partial_s w_\eps(s,0).
\end{split}
\end{equation*}
Since we know that $t_\eps\gg\frac{\eps}{\lambda_\eps}$, using the estimates in \eqref{first-deriv}, we deduce
$$|h^1_\eps(s)|\leq C\frac{\eps}{t_\eps^2}\frac{s}{\sqrt{t_\eps^2-s^2}},$$
so we may conclude
\begin{equation}\label{h1}
\begin{split}
\int_0^{t_\eps} s|h^1_\eps(s)|\,ds &\leq C\frac{\eps}{t_\eps^2}\int_0^{t_\eps}\frac{s^2}{\sqrt{t_\eps^2-s^2}}\,ds\\
&\leq C\frac{\eps t_\eps}{t_\eps^2}\int_0^{t_\eps}\frac{s}{\sqrt{t_\eps^2-s^2}}\,ds\\
&\leq C\frac{\eps  t_\eps}{t_\eps^2}\left[\sqrt {t_\eps^2-s^2}\right]_0^{t_\eps}\leq C
\end{split}
\end{equation}
independent of $\varepsilon$. For the second integral, note that the estimates in \eqref{first-deriv}-\eqref{second-deriv} give
\begin{equation}\label{h2}
\begin{split}
\int_0^{t_\eps} s |h^2_\eps(s)|\,ds&\leq C\eps\left[\int_{\{0<\rho<\frac{\eps}{\lambda_\eps}\}} s\left(\frac{\lambda_\eps}{\eps}\right)^2\,d\rho
+\int_{\{\frac{\eps}{\lambda_\eps}<\rho<t_\eps\}} \frac{s}{\rho^2}\,d\rho\right]\\
& \leq C\eps\log \lambda_\eps <\infty
\end{split}
\end{equation}
by our initial hypothesis \eqref{lambda}. Finally, looking again at the estimates \eqref{first-deriv} for $\partial_s w_\eps$, we have
\begin{equation}\label{h3}
\int_0^{t_\eps} s|h^3_\eps(s)|\,ds\leq C\lambda_\eps\int_0^{\frac{\eps}{\lambda_\eps}} s\frac{\lambda_\eps}{\eps}\,ds<\infty.
\end{equation}
Putting together \eqref{h1}, \eqref{h2} and \eqref{h3} we conclude that the integral $\tilde I_\eps$ from \eqref{tildeI} is uniformly bounded independently of $\eps$. This shows that, looking at \eqref{equation20} and \eqref{equation50},
\begin{equation}\label{limsup2}
\limsup_{\eps\to 0} F_\eps(  u_\eps,A_{t_\eps},\partial^0 A_{t_\eps}) \leq (1+O(\sigma)) \frac{\kappa}{\pi}L,
\end{equation}
as desired.\\

\emph{Step 5:} (Construction in $A_{\sigma}\setminus A_{t_\eps}$). This argument is very close to that of \cite{ABS2}.

First we set $ u_\eps\equiv v$ on $M\setminus \partial^0 A_{t_\eps}$ (recall that $v$ is a function that only attains the values $0$ or $1$ on $M\setminus \partial^0 A_{t_\eps}$), so that
  $$\int_{M\setminus \partial^0 A_{t_\eps}} V(\trace   u_\eps)\,dv_\theta=0.$$

To conclude the proof we need  the following extension lemma, which is a much simplified version of Lemma 4.11 in \cite{ABS2}.
\begin{lemma}
Let $A$ be a domain in $\mathbb R^{2N}$ and $A'\subset \partial A$. Let $\eps\in(0,1)$, and $v$ a Lipschitz function $v:A'\to [0,1]$. Then $v$
admits an extension $u:A\to[0,1]$ such that its Lipschitz constant satisfies
$$Lip(u)\leq \frac{1}{\eps}+ \Lip(v)$$
and
$$\eps\int_A |\nabla u|^2\leq (\eps \Lip(v))^2(|\partial A|+o(1)),$$
and $o(1)$ is a function of $\eps$ which  does not depend on $v$.
  \end{lemma}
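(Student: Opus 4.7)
\emph{Proof plan.} My plan is to use a standard tubular-neighbourhood cut-off construction. First, I would extend $v$ to a Lipschitz function $\tilde v : \mathbb R^{2N} \to [0,1]$ with $\Lip(\tilde v) = \Lip(v)$, via the McShane--Whitney extension followed by post-composition with the truncation $t \mapsto \max(0,\min(1,t))$, which is $1$-Lipschitz and therefore does not increase the constant. Let $d(x) := \dist(x, A')$, which is $1$-Lipschitz, and fix a profile $\chi \in C^\infty([0,\infty); [0,1])$ with $\chi(0)=1$, $\chi\equiv 0$ on $[1,\infty)$ and $|\chi'|\le 1$. The candidate extension is
$$u(x) := \tilde v(x)\,\chi\!\left(\frac{d(x)}{\eps}\right), \qquad x \in A.$$
Then $u|_{A'}=v$ because $d\equiv 0$ and $\chi(0)=1$ there, and $u(A)\subset[0,1]$ because both factors lie in $[0,1]$.

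For the Lipschitz bound, the product rule together with the $1$-Lipschitz character of $d$ gives, a.e.\ in $A$,
$$|\nabla u| \;\le\; \chi(d/\eps)\,|\nabla \tilde v| + \eps^{-1}|\chi'(d/\eps)|\,|\tilde v| \;\le\; \Lip(v) + \tfrac{1}{\eps},$$
which is the claimed bound on $\Lip(u)$. For the energy, the key observation is that $\nabla u$ is supported in the thin tube $T_\eps := \{x \in A : d(x) \le \eps\}$, whose volume is controlled by the Minkowski content of the smooth piece $A'$ of $\partial A$:
$$|T_\eps| \;\le\; \eps\bigl(|\partial A| + o(1)\bigr),$$
with $o(1)$ a purely geometric quantity independent of $v$. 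Substituting the pointwise bound already yields an estimate of the form $(1+\eps\Lip(v))^2(|\partial A|+o(1))$, which is of the correct order of magnitude in the regime $\eps\Lip(v)\gtrsim 1$ in which the lemma is applied in Step 5 (there $\Lip(v)\sim 1/t_\eps$).

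To produce the sharper prefactor $(\eps\Lip(v))^2$ stated in the lemma, the construction must be \emph{centred}: one replaces $\tilde v(x)$ inside the cut-off term by $\tilde v(x) - v(\pi(x))$, where $\pi$ is a measurable nearest-point projection of $A$ onto $A'$. Since $|\tilde v(x)-v(\pi(x))| \le \Lip(v)\,d(x) \le \Lip(v)\,\eps$ on $T_\eps$, the troublesome $1/\eps$ factor coming from $\chi'$ is multiplied by $\Lip(v)\,\eps$, and after integrating over $T_\eps$ one obtains exactly the $(\eps\Lip(v))^2(|\partial A|+o(1))$ bound. The main obstacle I anticipate is the measurability and Lipschitz regularity of $\pi$ at non-convex points of $\partial A$; this can be handled either by working directly in Fermi coordinates along the smooth piece $A'$ (which is all the proof of Step 5 requires), or by mollifying $\pi$ at a scale much smaller than $\eps$ and absorbing the resulting error into the $o(1)$ term, since the geometric error depends only on $A$ and not on $v$.
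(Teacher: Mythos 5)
The paper gives no proof for this lemma --- it only cites \cite{ABS2}, Lemma~4.11 --- so there is nothing internal to compare against, and your construction must stand on its own. Your first construction (McShane extension post-composed with the $1$-Lipschitz truncation onto $[0,1]$ to get $\tilde v$ with $\Lip(\tilde v)=\Lip(v)$, then $u=\tilde v\,\chi(d/\eps)$) is clean and correct: the Lipschitz bound is exact, and since $\nabla u$ vanishes outside the one-sided $\eps$-tube $T_\eps$ around $A'$, whose volume is $\eps(|\partial A|+o(1))$ by one-sided Minkowski content, you obtain $\eps\int_A|\nabla u|^2\le(1+\eps\Lip(v))^2(|\partial A|+o(1))$. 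You are also right that the application in which this lemma is invoked has $\Lip(v)\sim 1/t_\eps=1/\eps$, so $\eps\Lip(v)\sim 1$ and the weaker prefactor is perfectly adequate there.

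The ``centred'' refinement, however, does not yield the sharper prefactor. Parsing the suggestion as $u=v\circ\pi+(\tilde v-v\circ\pi)\,\chi(d/\eps)$ (the only reading under which $u$ still restricts to $v$ on $A'$), the radial term $(\tilde v-v\circ\pi)\,\chi'\,\nabla d/\eps$ is indeed $O(\Lip(v))$ on $T_\eps$, but the term $(1-\chi)\nabla(v\circ\pi)$ survives on all of $A\setminus T_\eps$; the energy is then only controlled by $\Lip(v)^2|A|$, and the tube factor $\eps(|\partial A|+o(1))$ is lost. In fact, the constant $(\eps\Lip(v))^2$ is not attainable in general by any construction: take $A=(0,1)^2$, $A'=[0,1]\times\{0\}$, $v(s)=s$, so $\Lip(v)=1$. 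For \emph{every} $W^{1,2}$ extension $u$, Theorem~\ref{trace-thm} gives $\int_A|\nabla u|^2\ge\frac{1}{2\pi}\iint_{(0,1)^2}\frac{|v(s)-v(s')|^2}{|s-s'|^2}\,ds\,ds'=\frac{1}{2\pi}$, whereas the claimed bound would force $\int_A|\nabla u|^2\le\eps\,\Lip(v)^2(|\partial A|+o(1))\to 0$ as $\eps\to 0$. So the lemma's stated constant appears to be an over-simplification; it should read $(1+\eps\Lip(v))^2(|\partial A|+o(1))$ (or else carry the standing hypothesis $\eps\Lip(v)\gtrsim 1$), and with that correction the first half of your argument is a complete proof and the centring step can simply be dropped.
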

From the previous steps we have constructed a function $  u_\eps$ that has a smooth transition from $0$ to $1$ along $\partial A_{t_\eps}$ and along $A_\sigma$, so at most its Lipschitz constant is $\frac{C}{t_\eps}$ (recall that $t_\eps\ll\sigma)$. Thus, using the previous Lemma, we may extend $u_\eps$ to $A_{\sigma}\backslash A_{t_\eps}$ in a Lipschitz fashion while
\begin{equation}\label{limsup3}F_\eps(u_\eps,A_\sigma\backslash A_{t_\eps},\partial^0 (A_\sigma\backslash A_{t_\eps}))=\eps \int_{A_\sigma\backslash A_{t_\eps}} f(y,D  u_\eps(y))\,dy\leq C(1+o(1))O(\sigma).
\end{equation}
as $\eps\to 0$ because of our hypothesis on $f$.\\

By construction, it is clear that $Tu_\eps \to v$ in $L^1(M)$. Putting together \eqref{limsup1}, \eqref{limsup2} and \eqref{limsup3}, the proof of the $\limsup$ is completed by taking $\sigma$ small enough.

%
%
%

\section{Appendix: Densities and measures}\label{densities}

In this Appendix we prove Theorem \ref{august 5}, which was a crucial ingredient in the proof of the liminf inequality. In order to do that, we need some preliminaries on densities and measures.

As in Theorem \ref{isometry}, let $(W_1^{0},\dots,W^{0}_{2n})$ be an orthonormal symplectic basis of $\ker\theta(\bar p)$,
and let $(W_1^{\mathbb H},\dots,W^{\mathbb H}_{2n})$ be the canonical orthonormal symplectic basis of $\ker\theta_0$
($\theta_0$ being the canonical contact form of $\he n$).
Let now $\mc U\subset M$ and, for $\bar p\in \mc U$, let
$
\Psi: \mc U\to \he {n}
$
be the \emph{contact} diffeomorphism constructed
 in Theorem \ref{isometry}. In $\Psi(\mc U)$, consider now the vector fields
 $\Psi_*W_i^0$,  $i=1,\dots,2n$. Notice that
 $$\mathrm{span}\; \{\Psi_*W_1^0,\dots, \Psi_*W_{2n}^0\}
 = \ker\theta_0 = \mathrm{span}\; \{{W}^{\mathbb H}_1,\dots,{W}^{\mathbb H}_{2n} \}.
 $$
 Remember that $\Psi(\bar p) =0$. By
the same theorem, $\Psi_*W_i^0(0)= W^{\mathbb H}_i(0)$ for  $i=1,\dots,2n$. We denote by
$d_c^\Psi$ the Carnot-Carath\'eodory distance in $\Psi(\mc U)$ associated with
the Riemannian metric $(\Psi^{-1})^*g$, and by $d^{\mathbb H}_c$ the standard
Carnot-Carath\'eodory distance in $\he n$.
We denote also by $\overline B_\Psi$ and $\overline B_{\mathbb H}$ the closed balls associated with $d_c^\Psi$ and $d^{\mathbb H}_c$, respectively.

 It is easy to see that for $p,q\in \mc U$
$$
d_c (p,q) = d_c^\Psi(\Psi(p),\Psi(q)).
$$
In the sequel,  $B^\Psi $ will be the open balls with respect to $d_c^\Psi$.

\begin{lemma}\label{one year later} For $z$ in a neighborhood of $0\in \mathbb H^n$, the following estimates hold:
\begin{equation}\label{1aug eq:1}
d_{\mathbb H}(z,0) \le d_c^\Psi(z,0)(1+ Cd_c^\Psi(z,0)^{1/2});
\end{equation}
\begin{equation}\label{1aug eq:2}
d_c^\Psi(z,0) \le d_{\mathbb H} (z,0)(1+ Cd_{\mathbb H}(z,0)^{1/2}).
\end{equation}

\end{lemma}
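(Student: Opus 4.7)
The plan is to compare subunit curves for the two sub-Riemannian structures on $\Psi(\mathcal U)\subset \mathbb H^n$ defined by the horizontal frames $\{\Psi_*W_i^0\}$ and $\{W_i^{\mathbb H}\}$. By Theorem~\ref{isometry}, both are orthonormal bases of $\ker\theta_0$ and agree at $0$, so we may write
\begin{equation*}
\Psi_*W_i^0(z)=\sum_{j=1}^{2n}a_{ij}(z)\,W_j^{\mathbb H}(z),\qquad a_{ij}(0)=\delta_{ij},
\end{equation*}
with smooth coefficients. Hence $|a_{ij}(z)-\delta_{ij}|\leq C|z|_e$ on a fixed neighborhood of $0$, where $|\cdot|_e$ denotes the Euclidean norm. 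The basic geometric input is the standard ball-box estimate $|z|_e\leq C\,d_{\mathbb H}(z,0)$ for $z$ near $0$, following from $|\eta|\leq C d_{\mathbb H}$ and $|t|\leq C d_{\mathbb H}^2$ for $z=(\eta,t)$.

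For \eqref{1aug eq:1}, let $\gamma:[0,T]\to\Psi(\mathcal U)$ be a $d_c^\Psi$-length-minimizing subunit curve from $0$ to $z$ with $T=d_c^\Psi(z,0)$, so $\dot\gamma(s)=\sum_i c_i(s)\,\Psi_*W_i^0(\gamma(s))$ with $\sum_i c_i(s)^2\leq 1$ a.e. Expanding in the $W^{\mathbb H}$ basis, $\dot\gamma(s)=\sum_j\tilde c_j(s)\,W_j^{\mathbb H}(\gamma(s))$ with $\tilde c_j=\sum_i c_i a_{ij}(\gamma)$; by Cauchy--Schwarz, $\bigl(\sum_j\tilde c_j(s)^2\bigr)^{1/2}\leq 1+C|\gamma(s)|_e$. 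Combining with the ball-box bound, $\phi(s):=|\gamma(s)|_e$ satisfies $\phi(s)\leq C\int_0^s(1+C\phi(\tau))\,d\tau$, and a Gr\"onwall argument yields $\phi(s)\leq C's$ for $s$ small. Therefore the $d_{\mathbb H}$-horizontal length of $\gamma$ is bounded by $\int_0^T(1+C\phi(s))\,ds\leq T(1+C''T)$, which bounds $d_{\mathbb H}(z,0)$. Since $(1+C''T)\leq (1+C''T^{1/2})$ for $T\leq 1$, this implies \eqref{1aug eq:1}.

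The proof of \eqref{1aug eq:2} is symmetric. Invert the matrix $(a_{ij})$ near $0$ to write $W_j^{\mathbb H}(z)=\sum_i b_{ji}(z)\,\Psi_*W_i^0(z)$ with $b_{ji}(0)=\delta_{ji}$ and $|b_{ji}(z)-\delta_{ji}|\leq C|z|_e$, take a $d_{\mathbb H}$-length-minimizing subunit curve from $0$ to $z$, and repeat the Gr\"onwall argument. This requires an analogous ball-box estimate for $d_c^\Psi$ in place of $d_{\mathbb H}$, which follows directly from \eqref{1aug eq:1} combined with the ball-box inequality in $\mathbb H^n$ (or, independently, from standard estimates for smooth horizontal frames).

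The main obstacle is precisely the Gr\"onwall step: the correction factor $1+C|\gamma(s)|_e$ in the $W^{\mathbb H}$-norm of $\dot\gamma$ depends on the position along the curve, and $|\gamma(s)|_e$ must in turn be controlled by $d_{\mathbb H}(\gamma(s),0)$, which is the very quantity we seek to bound. The circularity is resolved by the Euclidean--sub-Riemannian ball-box inequality near $0$, which guarantees that the correction is genuinely small as long as the curve remains close to the origin, and this is the place where the contact assumption and the identification of tangent frames at $\bar p$ furnished by Theorem~\ref{isometry} enter in an essential way.
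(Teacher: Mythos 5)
Your proof is correct and follows essentially the same strategy as the paper: take a $d_c^\Psi$-minimizing subunit curve, expand its velocity in the $W^{\mathbb H}$-frame, exploit that the change-of-frame matrix is $\mathrm{Id}+O(|z|_e)$, and control $|\gamma(s)|_e$ along the curve in terms of its CC-length. The Gr\"onwall step, while valid, is more machinery than needed and introduces an apparent circularity that isn't really there: since the frame $\{\Psi_*W_i^0\}$ is smooth (hence Euclidean-bounded) near $0$ and $\sum_i c_i^2\le 1$, any $d_c^\Psi$-subunit curve satisfies $|\dot\gamma(\tau)|_e\le C$ a.e., so $|\gamma(s)|_e\le Cs$ follows by direct integration — this is exactly the fact the paper invokes as ``the Euclidean distance may be locally bounded by $d_c^\Psi$''.
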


\begin{proof}
We denote by $\mc W_\Psi$ and $\mc W_{\mathbb H}$ the $(2n\times 2n)$-matrices whose columns are
$ \Psi_*{W}_1^0,\dots, \Psi_*{W}_{2n}^0 $ and ${W}^{\mathbb H}_1,\dots,{W}^{\mathbb H}_{2n} $, respectively. If we set
$$
\mc A := (a_{ij})_{i,j =1,\dots, 2n} := \mc W_{\mathbb H}^{-1} \mc W_\Psi,
$$
we obtain that $\mc A$ transforms the coordinates with respect to
$(\Psi_*{W}_1^0,\dots,\Psi_*{W}_{2n}^0)$
of a generic point in $\ker \theta_0$ into its coordinates with respect to $({W}^{\mathbb H}_1,\dots,{W}^{\mathbb H}_{2n} )$.
If we denote by $z$ a generic point of $\Psi(\mc U)$, by Theorem \ref{isometry},
$$
\mc A(z) = \mathrm{Id} + O(|z|)\qquad\mbox{as $z\to 0$.}
$$
Let now $z\in K\subset\subset \Psi(\mc U)$ be fixed, and let $\gamma : [0,1]\to \he n$ a (smooth)
$d_c^\Psi$-geodesic connecting $0$ and $z$. If $t\in [0,1]$, we can write
$$
\gamma'(t) = \sum_i \gamma_i (t) (\Psi_*W_i^0)(\gamma(t)) \quad\mbox{and}\quad
d_c^\Psi(z,0) = \int_0^1 \big(\sum_i \gamma_i^2(t)\big)^{1/2}\, dt.
$$
Thus,  if $t\in [0,1]$, we have
$$
\gamma'(t) = \sum_i \big\{\sum_j a_{i,j}(\gamma(t))\gamma_j(t)\big\} W^{\mathbb H}_i(\gamma(t)),
$$
and hence
\begin{equation*}\begin{split}
d_{\mathbb H}(z,0) & \le \int_0^1 \Big( \sum_i \big\{\sum_j a_{i,j}(\gamma(t))\gamma_j(t)\big\}^2\Big)^{1/2}\; dt
\\&=
 \int_0^1 \Big( \sum_i \big\{\sum_j (\delta_{i,j}+ O(|\gamma(t)|) )\gamma_j(t)\big\}^2\Big)^{1/2}\; dt
\\&=
 \int_0^1 \Big( \sum_i \big\{\gamma_i(t) + O(|\gamma(t)|^2)\big\}^2\Big)^{1/2}\; dt
 \\&\le
  \int_0^1 \Big( \sum_i \gamma_i(t)^2\Big)^{1/2}\; dt +\int_0^1 O(|\gamma(t)|^{3/2})\, dt
   \\&=
   d_c^\Psi(z,0)+\int_0^1 O(|\gamma(t)|^{3/2})\, dt.
\end{split}\end{equation*}
On the other hand, since the Euclidean distance may be locally bounded by $d_c^\Psi$,
\begin{equation*}\begin{split}
|\gamma(t)| \le C_1 d_c^\Psi(\gamma(t),0)   \le  C d_c^\Psi(z,0),
\end{split}\end{equation*}
so that \eqref{1aug eq:1} follows.
We can carry out the same argument interchanging the roles of $d_{\mathbb H}$ and  $d_c^\Psi$, and we get \eqref{1aug eq:2}.

\end{proof}

To keep our paper as self-contained as possible, we gather here few more or less known results about
Hausdorff measures in metric spaces. This part is  taken  almost verbatim from \cite{FSSC_NA}.



We recall first the definition of a centered  density for an outer measure $\mu$ on $X$ from Definition \ref{densitydef}. In Euclidean spaces (and more generally in Carnot groups) we can replace in this definition
the diameter $ \diam \overline B(x,r)$ by $2r$. This ``elementary'' statement fails to be true in general metric spaces,
but still holds in contact manifolds endowed with their Carnot-Carath\'eodory distance. This
will follow from the following results.

\begin{lemma} \label{august 1} Let $M$ be a $(2n+1)$-dimensional contact manifold endowed with the contact form $\theta$,
with the volume form $v_\theta:= \theta\wedge (d\theta)^n$,
 and the Riemannian
metric $g$ on $\ker\theta$ as introduced in Propositions \ref{contact 1} and \ref{contact 3}. We denote by $d_c$
the associated Carnot-Carath\'eodory distance. Let $\bar p\in M$ be
a fixed point. We have:
\begin{itemize}
\item[i)] if $c_0$ is the volume of the unit ball in $\he n$ for the
Carnot-Carath\'eodory distance associated with the canonical basis
$(W_1^{\mathbb H},\dots,W^{\mathbb H}_{2n})$ of $\he n$ (see Theorem \ref{isometry}), then
$$\lim_{r\to 0}\dfrac{ v_\theta (\overline B(x,r))}{r^{2n+2}} = c_0;$$
\item[ii)] Moreover,
$$ \lim_{r\to 0}  \dfrac{\diam \overline B(x,r)}{2r} = 1.  $$
\end{itemize}
\end{lemma}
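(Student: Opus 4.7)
I would reduce both statements to analogous statements on the Heisenberg group $\he n$ via the contact diffeomorphism $\Psi$ provided by Theorem \ref{isometry} centred at $\bar p = x$, and then absorb the local error using Lemma \ref{one year later}. Since $\Psi^*\theta_0 = \theta$, taking wedge powers gives $\Psi^*(\theta_0 \wedge (d\theta_0)^n) = v_\theta$, so the change-of-variables formula yields
\begin{equation*}
v_\theta(\overline B(x,r)) = \int_{\Psi(\overline B(x,r))} \theta_0 \wedge (d\theta_0)^n = \mu_0\bigl(\overline B^\Psi(0,r)\bigr),
\end{equation*}
where $\mu_0$ is the measure on $\he n$ associated with $\theta_0 \wedge (d\theta_0)^n$, i.e.\ a fixed constant multiple of Lebesgue measure on $\rn{2n+1}$; that constant is absorbed into the definition of $c_0$.

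For part i), Lemma \ref{one year later} provides the two-sided sandwich
\begin{equation*}
\overline B^{\mathbb H}(0, r(1-\eta(r))) \subset \overline B^\Psi(0,r) \subset \overline B^{\mathbb H}(0, r(1+\eta(r))), \qquad \eta(r) = O(r^{1/2}).
\end{equation*}
The homogeneity of $\mu_0$ under the Heisenberg dilations $\delta_s$, together with the fact that the homogeneous dimension is $Q = 2n+2$, forces $\mu_0(\overline B^{\mathbb H}(0,s)) = c_0 s^{2n+2}$; dividing the sandwich by $r^{2n+2}$ and letting $r \to 0$ yields the claimed limit.

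For part ii), the upper bound $\diam \overline B(x,r) \le 2r$ is immediate from the triangle inequality, so only the matching lower bound requires work. Setting $r' := r(1 - \eta(r))$, I would choose the horizontally antipodal pair $y_\pm := \exp(\pm r' W_1^{\mathbb H})(0)$, which by left invariance and homogeneity of $\he n$ satisfies $d_c^{\mathbb H}(y_\pm, 0) = r'$ and $d_c^{\mathbb H}(y_+, y_-) = 2r'$. By Lemma \ref{one year later} and the choice of $r'$, these points lie in $\overline B^\Psi(0,r)$, hence $\Psi^{-1}(y_\pm) \in \overline B(x, r)$. The crucial remaining step is the lower bound $d_c^\Psi(y_+, y_-) \ge 2r'(1 - O(r^{1/2}))$. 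The main obstacle is that Lemma \ref{one year later} compares $d_c^\Psi$ and $d_c^{\mathbb H}$ only with respect to the base point $0$, whereas here the relevant pair is $(y_+, y_-)$; however, the argument used to prove that lemma extends verbatim, because any $d_c^\Psi$-geodesic joining $y_+$ and $y_-$ stays in an $O(r)$-neighbourhood of $0$, where the change-of-basis matrix $\mc A(z) = \mathrm{Id} + O(|z|)$ appearing in the proof of Lemma \ref{one year later} is uniformly close to the identity. Combining these ingredients produces $\diam \overline B(x,r) \ge 2r(1-o(1))$, completing ii).
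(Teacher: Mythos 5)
Your proposal is correct and follows essentially the same route as the paper: push everything through the Darboux-type contact diffeomorphism $\Psi$ of Theorem \ref{isometry}, use the change-of-variables identity $\Psi^*(\theta_0\wedge(d\theta_0)^n)=v_\theta$ to reduce to $v_{\theta_0}$ on $\Psi(\overline B_r)=B^\Psi(0,r)$, sandwich $B^\Psi(0,r)$ between Heisenberg CC-balls via Lemma \ref{one year later}, and conclude part i) from the dilation homogeneity $v_{\theta_0}(\overline B_{\mathbb H}(0,\rho))=c_0\rho^{2n+2}$. For ii) the only cosmetic difference is that you construct the diameter-achieving pair explicitly as $y_\pm=\exp(\pm r' W_1^{\mathbb H})(0)$ (a clean, self-contained choice), whereas the paper obtains such a pair $z_r,\zeta_r\in\overline B_{\mathbb H}(0,\phi^{-1}(r))$ by citing \cite{FSSC_step2}, Proposition 2.4; both then reuse the geodesic-comparison estimate from the proof of Lemma \ref{one year later}, and you correctly note the subtlety that the lemma is stated with base point $0$ but the argument extends to the pair $(y_+,y_-)$ because the connecting $d_c^\Psi$-geodesic stays in an $O(r)$-neighbourhood of the origin, exactly as the paper argues with $\gamma(t)\in\overline B_{\mathbb H}(0,3\phi^{-1}(r))$.
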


\begin{proof}

Take a ball $\overline B_r:=\overline B(\bar p,r)\subset M$ with $r>0$ sufficiently small.
For sake of simplicity, in Lemma \ref{one year later},
put $\phi(t):= t(1+C\sqrt{t})$. Obviously,
$\phi(r) = r + o(r)$ and
$\phi^{-1}(s) = s + o(s)$ as $s\to 0$.

By \eqref{1aug eq:1} and \eqref{1aug eq:2}
\begin{equation}\label{2aug eq:1}\begin{split}
\overline B_{\mathbb H}(0,  \phi^{-1}(r)) &\subset \Psi(\overline B_r) = B^\Psi (0,r)
\subset \overline B_{\mathbb H}(0, \phi(r)).
\end{split}\end{equation}
We recall now that for $\rho>0$
$$
c_0\rho^{2n+2} =\mc L^{2n+1} (\overline B_{\mathbb H}(0,\rho)) = \int_{\overline B_{\mathbb H}} dv_{\theta_0},
$$
and that
\begin{equation*}\begin{split}
v_\theta (\overline B_r) & = \int_{\overline B_r} \theta\wedge(d\theta)^n = \int_{\Psi(\overline B_r)} (\Psi^{-1})^*(\theta\wedge(d\theta)^n)
\\&
= \int_{\Psi(\overline B_r)} (\Psi^{-1})^*\theta\wedge(d(\Psi^{-1})^*(\theta)^n)
= \int_{\Psi(\overline B_r)} \theta_0\wedge(d\theta_0)^n
\\&
= \int_{ B^\Psi (0,r)} dv_{\theta_0} = v_{\theta_0}( B^\Psi (0,r)),
\end{split}\end{equation*}
so that
\begin{equation*}\begin{split}
c_0 (\phi^{-1}(r))^{2n+2} \le v_\theta (\overline B_r) \le c_0  \phi(r)^{2n+2}.
\end{split}\end{equation*}
Then i) follows straightforwardly.

Let us prove ii). If $r>0$ By \cite{FSSC_step2}, Proposition 2.4,  there exist $z_r,\zeta_r\in \overline B_{\mathbb H}(0, \phi^{-1}(r))$
such that $d_{\mathbb H}(z_r,\zeta_r)= 2\phi^{-1}(r)$. Arguing as above, if $\gamma:[0,1]\to\he n$ is a $d_c^\Psi$-geodesic connecting
$z_r$ and $\zeta_r$, then
$$
d_{\mathbb H}(z_r,\zeta_r) \le   d_c^\Psi(z_r,\zeta_r)+\int_0^1 O(|\gamma(t)|^{3/2})\, dt.
$$
On the other hand, $\gamma(t)\in \overline
B_{\mathbb H}(0, 3 \phi^{-1}(r))$, and hence, if $r>0$ is sufficiently small,
$$
O(|\gamma(t)|^{3/2}) \le C_1 |\gamma(t)|^{3/2} \le C_2 d_{\mathbb H}(0,\gamma(t))^{3/2}
\le C(\phi^{-1}(r))^{3/2}= Cr^{3/2}(1+o(1)) ,
$$
so that
$$
2\phi^{-1}(r) =d_{\mathbb H}(z_r,\zeta_r) \le   d_c^\Psi(z_r,\zeta_r) + Cr^{3/2}(1+o(1)).
$$
Therefore
$$
d_c^\Psi(z_r,\zeta_r) \ge 2r (1+o(1)).
$$
By \eqref{2aug eq:1}, $z_r,\zeta_r\in B^\Psi_r$, so that
$$
\Psi(z_r), \Psi(\zeta_r) \in \overline B_r.
$$
Hence
\begin{equation*}\begin{split}
1  \ge \dfrac{\diam (\overline B_r) }{2r} \ge \dfrac{d_c(\Psi(z_r), \Phi(\zeta_r))}{2r}
= \dfrac{d_c^\Phi(z_r,\zeta_r)}{2r}
 \ge 1 + o(1),
\end{split}\end{equation*}
and ii) follows.
\end{proof}

Lemma \ref{august 1}  immediately yields the following equivalent definition of densities in contact manifolds:

\begin{corollary}\label{august 2} Let $M$ be $(2n+1)$-dimensional contact manifold endowed with a contact form $\theta$ and a Riemannian
metric $g$ on the fibers of $\theta$ as introduced in Propositions \ref{contact 1} and \ref{contact 3}. We denote by $d_c$
the associated Carnot-Carath\'eodory distance. Let $\mu$ be an outer measure on $M$.
Then
\begin{equation*}
\Theta^{*\,m}(\mu,x):=\limsup_{r\to 0}\frac{\mathcal \mu(\overline B(x,r))}{\alpha_m\, r^m }\,
\end{equation*}
and
\begin{equation*}
\Theta^m_*(\mu,x):=\liminf_{r\to 0}\frac{\mathcal \mu(\overline B(x,r))}{\alpha_m\, r^m}\,.
\end{equation*}

\end{corollary}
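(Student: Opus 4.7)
The plan is to observe that Corollary \ref{august 2} is a direct algebraic consequence of Lemma \ref{august 1}, part \emph{ii)}, together with the relation $\beta_m = 2^{-m}\alpha_m$ from \eqref{beta1}. So there is no geometric content left to prove; only a ratio manipulation and a limit comparison.

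First I would write, for any $x\in M$ and any $r>0$ small enough,
$$
\beta_m\,(\diam \overline B(x,r))^m \;=\; \alpha_m r^m \cdot \left(\frac{\diam \overline B(x,r)}{2r}\right)^m,
$$
so that
$$
\frac{\mu(\overline B(x,r))}{\beta_m\,(\diam \overline B(x,r))^m} \;=\; \frac{\mu(\overline B(x,r))}{\alpha_m r^m}\cdot \left(\frac{2r}{\diam \overline B(x,r)}\right)^m.
$$
By Lemma \ref{august 1} \emph{ii)}, $\diam \overline B(x,r)/(2r)\to 1$ as $r\to 0$, so the second factor on the right-hand side tends to $1$.

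Next I would take limsup and liminf on both sides as $r\to 0^+$. Since the second factor has a (positive, finite) limit equal to $1$, it factors out of both $\limsup$ and $\liminf$, giving
$$
\limsup_{r\to 0}\frac{\mu(\overline B(x,r))}{\beta_m\,(\diam \overline B(x,r))^m} \;=\; \limsup_{r\to 0}\frac{\mu(\overline B(x,r))}{\alpha_m r^m},
$$
and similarly for $\liminf$. This yields the claimed identities for $\Theta^{*\,m}(\mu,x)$ and $\Theta^m_*(\mu,x)$.

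There is no essential obstacle, since the only nontrivial ingredient -- comparing the diameter of a small Carnot-Carath\'eodory ball to twice its radius -- has already been established in Lemma \ref{august 1} \emph{ii)} via the Darboux-type contact diffeomorphism $\Psi$ of Theorem \ref{isometry} and the metric comparison of Lemma \ref{one year later}. I would only take care to note that the argument is pointwise in $x$ (so it applies uniformly in the definition), and that $\mu$ being a mere outer measure is sufficient because all operations involved are monotone and the limit factor is deterministic.
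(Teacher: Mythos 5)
Your proposal is correct and matches the paper's argument exactly: the paper simply states that Lemma \ref{august 1} (part \emph{ii)}) ``immediately yields'' Corollary \ref{august 2}, and your ratio manipulation using $\gb_m = 2^{-m}\ga_m$ together with $\diam\overline B(x,r)/(2r)\to 1$ is precisely the intended justification. The observation that a positive factor with limit $1$ passes through both $\limsup$ and $\liminf$ closes the argument cleanly.
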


\begin{remark}\label{august 3}
In Corollary \ref{august 2} we can replace closed balls $\overline B(x,r)$ by open balls $B(x,r)$ (see \cite{Ambrosio-Tilli}, Remark 2.4.2).
\end{remark}

Keeping in mind Corollary \ref{august 2} and Remark \ref{august 3}, the following result can be proved by the same arguments
used in the proof of Theorem 3.1 in \cite{FSSC_NA}.

\begin{proposition}\label{borel continuity}
Let $M$ be $(2n+1)$-dimensional contact manifold endowed with a contact form $\theta$ and a Riemannian
metric $g$ on the fibers of $\theta$ as introduced in Propositions \ref{contact 1} and \ref{contact 3}. We denote by $d_c$
the associated Carnot-Carath\'eodory distance. Let $\mu$ be a $\sigma$-finite regular Borel measure on $M$.
Then the map
  \[
  \Theta^{*\,m}(\mu,\cdot): X\to [0,+\infty]
  \]
is Borel measurable.
\end{proposition}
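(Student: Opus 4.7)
The plan is to express $\Theta^{*,m}(\mu,\cdot)$ as a countable pointwise limit of Borel measurable functions. First I would apply Corollary \ref{august 2} and Remark \ref{august 3} to rewrite
\[
\Theta^{*,m}(\mu,x) = \lim_{k\to\infty}\, G_k(x), \qquad G_k(x) := \sup_{0 < r < 1/k} F_r(x),
\]
where $F_r(x) := \mu(B(x,r))/(\alpha_m r^m)$ and $B(x,r)$ denotes the open $d_c$-ball. Since the sequence $\{G_k\}$ is non-increasing in $k$, it suffices to show that each $G_k$ is Borel measurable.

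Next I would verify that for each fixed $r>0$ the map $x\mapsto \mu(B(x,r))$ is lower semicontinuous on $M$. If $x_n\to x$ in $(M,d_c)$ and $y\in B(x,r)$, then by continuity of $d_c(y,\cdot)$ we have $d_c(y,x_n)<r$ for all large $n$, whence $y\in B(x_n,r)$. Consequently $\chi_{B(x,r)}(y) \le \liminf_n \chi_{B(x_n,r)}(y)$ pointwise, and Fatou's lemma applied to these nonnegative integrands gives $\mu(B(x,r))\le \liminf_n \mu(B(x_n,r))$. Therefore $F_r$ is Borel measurable in $x$ for every fixed $r>0$.

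Finally, the a priori uncountable supremum in the definition of $G_k$ must be reduced to a countable one. For each fixed $x$, the map $r\mapsto \mu(B(x,r))$ is left continuous by the continuity of measures from below applied to the increasing union $B(x,r)=\bigcup_{s<r} B(x,s)$; combined with continuity of $r\mapsto r^m$ this yields left continuity of $r\mapsto F_r(x)$, so
\[
G_k(x) = \sup_{r\in \mathbb{Q}\cap (0,1/k)} F_r(x)
\]
is a countable supremum of Borel functions, hence Borel. Taking the decreasing limit in $k$ completes the proof. The subtle point I would expect, and thus the main technical step to verify carefully, is precisely this reduction to rationals via left continuity; if one instead tried to work directly with the closed balls of Definition \ref{densitydef}, one would obtain only right continuity of $r\mapsto \mu(\overline B(x,r))$, and the $\sigma$-finiteness and regularity of $\mu$ would then be needed to pass to the limit in a neighborhood of each point, which is why the switch to open balls afforded by Remark \ref{august 3} is essential here.
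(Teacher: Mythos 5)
Your argument is correct and self-contained: it expresses $\Theta^{*,m}(\mu,\cdot)$ as the monotone limit $\lim_{k}\sup_{r\in\mathbb{Q}\cap(0,1/k)}\mu(B(\cdot,r))/(\alpha_m r^m)$, using lower semicontinuity of $x\mapsto\mu(B(x,r))$ for fixed $r$ (via Fatou) and left continuity in $r$ (via continuity of $\mu$ from below) to reduce the inner supremum to a countable one. The paper itself only refers the reader to the proof of Theorem~3.1 in Franchi--Serapioni--Serra Cassano, so you have supplied the missing direct proof; your use of Corollary~\ref{august 2} and Remark~\ref{august 3} to pass to open balls centered at $x$, divided by $r^m$, is exactly what makes the semicontinuity argument go through cleanly. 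One small caveat on your closing remark: for closed balls the difficulty is the failure of continuity from above (and upper semicontinuity in $x$), which requires \emph{local finiteness} of $\mu$ near the point, not just $\sigma$-finiteness and regularity; but this is peripheral and does not affect your proof, which correctly avoids the issue.
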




%

We give now the following:
\begin{definition}\label{Hausmeasdef} Let $A\subset X$, $m \in [0,\infty)$, $\delta\in (0,\infty)$, and let $\gb_m$ be the constant  \eqref{beta1}.

{\bf (i)} The  \emph{$m$-dimensional Hausdorff measure} $\mathcal H^m$ is defined as
\[\mathcal H^m(A):=\lim_{\delta\to 0}\mathcal H_{\delta}^m(A)\] where
\begin{equation*}
\mathcal H_{\delta}^m(A)=\inf \left\{\sum_i \gb_m \diam(E_i)^m:\;A\subset \bigcup_i E_i,\quad   \diam(E_i)\leq \delta \right\}.
\end{equation*}

{\bf(ii)} The \emph{$m$-dimensional spherical Hausdorff measure} $\mathcal S^m$ is defined as
 \[\mathcal S^m(A):=\lim_{\delta\to 0}\mathcal S_{\delta}^m(A)\]where
\begin{equation*}
 \begin{aligned}
\mathcal S_{\delta}^m(A)=\inf \Big\{\sum_i \gb_m &\diam(B(x_i,r_i))^m:\,  A\subset \bigcup_i B(x_i,r_i),\\
&\diam(B(x_i,r_i))\leq \delta \Big\}
\end{aligned}
\end{equation*}

{\bf(iii)}    The \emph{$m$-dimensional centered Hausdorff measure} $\mathcal C^m$ is defined as
\begin{equation*}
\mathcal C^m(A):=\,\sup_{E\subseteq A}\mathcal C_0^m(E)\,.
\end{equation*}
where $\mathcal C_0^m(E):=\lim_{\delta\to 0^+}\mathcal C_{\delta}^m(E)$, and, in turn, $\mathcal C_{\delta}^m(E)=\,0\text{ if } E=\,\emptyset$ and for
$E\neq \emptyset$,
\begin{equation*}
 \begin{aligned}
 \mathcal C_{\delta}^m(E)=\inf \Big\{\sum_i \gb_m &\diam(B(x_i,r_i))^m:\,  E\subset \bigcup_i B(x_i,r_i),\\
&\, x_i\in E,\quad \diam(B(x_i,r_i))\leq \delta \Big\}.
\end{aligned}
\end{equation*}

\end{definition}
Notice that  the set function $\mathcal C_0^m$ is not necessarily monotone (see \cite[Sect. 4]{SRT}) while $\mathcal C^m$ is monotone.

 For reader's convenience we collect a few results about the measures  $\mathcal C^m$. Most of these results are taken from \cite {E}
 and \cite{FSSC_NA}.\\
 Let
$$
\mrm{dist}(E,F):=\,\inf  \left\{d(x,y):\,x\in E,\,y\in F\right\}
$$
denote the \emph{distance} between $E$ and $F$.
Recall that an outer measure $\mu$ on $X$ is said to be \emph{metric} if
\begin{equation*}
\mu(A\cup B)=\,\mu(A)+\,\mu(B)\qquad\text{ whenever }\mathrm {dist}(A,B)>\,0\,.
\end{equation*}
Being obtained  by Carath\"eodory's construction,  $\mathcal H^m$ and $\mathcal S^m$ are metric (outer) measures (see \cite[2.10.1]{federer}
 or \cite[Theorem 4.2]{M}). Also the measures  $\mathcal C^m$ are metric measures in any metric space, but this fact is not as immediate as for $\mathcal H^m$ and $\mathcal S^m$.
\begin{lemma}[\cite{E}, Proposition 4.1]\label{Cmom} $\mathcal C^m$ is a Borel regular outer measure.
\end{lemma}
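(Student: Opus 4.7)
The plan is to verify in sequence that $\mathcal{C}^m$ is an outer measure, that it satisfies the metric additivity condition (so that Carath\'eodory's criterion yields Borel measurability), and finally that it is Borel regular. The main subtlety throughout is the sup-inf structure: $\mathcal{C}_0^m$ is \emph{not} monotone, which is precisely why the outer supremum over subsets was introduced in the definition of $\mathcal{C}^m$, and the two layers must be kept carefully separated at each step.

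First I would check the outer measure axioms. That $\mathcal{C}^m(\emptyset)=0$ is immediate, and monotonicity is built into the definition. For countable subadditivity, the key intermediate fact is that $\mathcal{C}_0^m$ is subadditive on \emph{pairwise disjoint} countable families $E=\bigsqcup_i E_i$: any admissible cover of $E$ by balls centered in $E$ splits uniquely into admissible covers of each $E_i$ (each center lies in exactly one $E_i$), and in fact any union of admissible covers of the $E_i$ furnishes an admissible cover of $E$, so $\mathcal{C}_\delta^m(E)\le\sum_i \mathcal{C}_\delta^m(E_i)$ for every $\delta$, and the same inequality passes to $\mathcal{C}_0^m$. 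Combining this with the monotonicity of $\mathcal{C}^m$ --- and applying the standard disjointification $A_i\setminus\bigcup_{j<i}A_j$ --- promotes disjoint subadditivity of $\mathcal{C}_0^m$ to countable subadditivity of $\mathcal{C}^m$ on arbitrary unions $A=\bigcup_i A_i$, giving the outer measure property.

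Next, for the metric additivity property, fix $A,B$ with $\dist(A,B)=d>0$ and pick any $E\subseteq A\cup B$; write $E_A:=E\cap A$ and $E_B:=E\cap B$. For $\delta<d$, in any admissible covering of $E$ by balls $B(x_i,r_i)$ with $x_i\in E$ and $\diam B(x_i,r_i)\le\delta$, each center $x_i$ lies in exactly one of $E_A,E_B$, and the ball has radius $<d$, so it cannot meet the other set. Hence every admissible $\delta$-cover of $E$ partitions into admissible $\delta$-covers of $E_A$ and $E_B$ respectively, giving $\mathcal{C}_\delta^m(E)\ge \mathcal{C}_\delta^m(E_A)+\mathcal{C}_\delta^m(E_B)$. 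Letting $\delta\to 0$ and then taking the supremum over $E\subseteq A\cup B$ yields $\mathcal{C}^m(A\cup B)\ge \mathcal{C}^m(A)+\mathcal{C}^m(B)$; the reverse inequality is subadditivity. Carath\'eodory's criterion then guarantees that all Borel sets are $\mathcal{C}^m$-measurable.

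For Borel regularity, for each $A$ with $\mathcal{C}^m(A)<\infty$ and each $k\in\mathbb{N}$ I would choose a cover of $A$ by closed balls with centers in $A$ and diameters $\le 1/k$ whose $\gb_m\diam^m$-sum approximates $\mathcal{C}_{1/k}^m(A)$ within $1/k$, let $B_k$ be the union of those balls, and take $B:=\bigcap_k B_k$, a Borel set containing $A$. By construction every admissible cover we selected for $A$ also covers $B_k\supseteq B$, so passing to the limit gives $\mathcal{C}^m(B)\le \mathcal{C}^m(A)$; monotonicity gives the reverse. The main obstacle here, and indeed throughout, is compatibility with the \emph{centered} restriction --- that covering balls must be centered in the set being measured --- which forces one to argue that near-optimal covers of $A$ remain near-optimal when used to cover a Borel hull. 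This is precisely the delicate point treated in Edgar's construction \cite{E}, which my sketch follows.
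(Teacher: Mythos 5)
The paper does not prove this lemma itself---it is quoted from Edgar \cite{E}, Proposition 4.1---so what follows is an assessment of the correctness of your sketch rather than a comparison with a proof in the paper.

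Your first two parts are sound. The outer-measure axioms check out (though the detour through disjoint families is unnecessary: $\mathcal{C}_\delta^m$ is countably subadditive on arbitrary unions, since admissible covers of $E_i$ by balls centered in $E_i\subseteq\bigcup_j E_j$ combine into an admissible cover of the union, and from there $\mathcal{C}_0^m(E)\le\sum_i\mathcal{C}_0^m(E\cap A_i)\le\sum_i\mathcal{C}^m(A_i)$ for any $E\subseteq\bigcup A_i$). The metric-additivity argument is correct once ``radius $<d$'' is replaced by ``diameter $\le\delta<d$'': if $\diam B(x_i,r_i)\le\delta<\dist(A,B)$ and $x_i\in A$, no point of the ball can lie in $B$, so the admissible cover of $E$ splits, and the supremum-of-sums manipulation you perform to pass from $\mathcal{C}_\delta^m$ to $\mathcal{C}_0^m$ to $\mathcal{C}^m$ goes through.

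The Borel-regularity step, however, has a genuine gap, and it is precisely the sup--inf interaction you flag as the main subtlety. Your construction of $B=\bigcap_k B_k$ with $B_k$ a near-optimal $\le 1/k$-cover of $A$ controls $\mathcal{C}_{1/k}^m(B)\le\mathcal{C}_{1/k}^m(A)+1/k$ (the balls are centered in $A\subseteq B$, so are admissible for $B$), hence $\mathcal{C}_0^m(B)\le\mathcal{C}_0^m(A)$. But $\mathcal{C}^m(B)=\sup_{F\subseteq B}\mathcal{C}_0^m(F)$, and because $\mathcal{C}_0^m$ is not monotone---the very phenomenon that forced the outer supremum into the definition---a bound on $\mathcal{C}_0^m(B)$ gives no bound on $\mathcal{C}_0^m(F)$ for arbitrary $F\subseteq B$. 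Concretely, for $F\subseteq B$ the centers of your covering balls lie in $A$, not in $F$, so those covers are not admissible for $\mathcal{C}_\delta^m(F)$; the cheap fix of re-centering each ball at a point of $F$ and doubling its radius only yields $\mathcal{C}^m(B)\le 2^m\mathcal{C}^m(A)$, which is not the equality required. Also, even the bound you do obtain is on $\mathcal{C}_0^m(A)$, which is only one term in the supremum defining $\mathcal{C}^m(A)$. So the step ``passing to the limit gives $\mathcal{C}^m(B)\le\mathcal{C}^m(A)$'' is not justified by what precedes it; Edgar's actual argument for this half has to work harder to tame the outer supremum, and your sketch as written does not recover it.
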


\begin{remark}\label{trivial comparison} The measures $\mathcal H^m$, $\mathcal S^m$ and $\mathcal C^m$ are all equivalent measures. Indeed, it is well known that
 (see, for instance, \cite[2.10.2]{federer})
$$
\mathcal H^m\le\,\mathcal S^m\le\,2^m\,\mathcal H^m\,
$$
and, by definition,
\begin{equation*}\label{trivialcomparhausmeas}
\mathcal H^m\le\,\mathcal S^m\le\,\mathcal C^m\,.
\end{equation*}
The opposite inequality between $\mathcal H^m$ (or  $\mathcal S^m$) and  $\mathcal C^m$ is less immediate: it was proved in  \cite[Lemma 3.3]{SRT} for the case $X=\R^n$. See also \cite{Sch},  but for a differently defined centered Hausdorff-type measure.  The comparison in a general metric space is contained in \cite{E}.
\begin{lemma}[\cite{E}, Proposition 4.2]\label{equivHC}
$
\mathcal H^m\le\,\mathcal C^m\le\,2^m\,\mathcal H^m
\,.
$
\end{lemma}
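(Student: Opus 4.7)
The plan is to prove the two inequalities separately, both directly from the definitions, with the only nontrivial ingredient being a simple observation about enclosing sets of small diameter into centered balls.

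For the lower bound $\mathcal{H}^m \le \mathcal{C}^m$, I would argue as follows. Fix $\delta>0$ and any $E\subset X$. Every admissible family for $\mathcal C_\delta^m(E)$ (that is, a countable collection of balls $B(x_i,r_i)$ with centers $x_i\in E$ and $\diam B(x_i,r_i)\le \delta$ covering $E$) is in particular an admissible family of sets of diameter $\le \delta$ for the Carath\'eodory construction of $\mathcal H^m_\delta(E)$. Hence $\mathcal H^m_\delta(E)\le \mathcal C_\delta^m(E)$. Letting $\delta\to 0$ gives $\mathcal H^m(E)\le \mathcal C_0^m(E)$. Choosing $F=E$ in the definition of $\mathcal C^m$ yields $\mathcal H^m(E)\le \mathcal C^m(E)$.

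For the upper bound $\mathcal C^m\le 2^m\mathcal H^m$, the key observation is that any nonempty set of diameter $d$ that meets $E$ is contained in a ball centered in $E$ of radius $d$, hence of diameter at most $2d$. Concretely, fix $F\subseteq E$ and $\delta>0$. Given any cover $\{A_i\}$ of $F$ with $\diam A_i\le \delta$, discard the $A_i$'s that do not meet $F$ (this does not destroy the covering property of the remaining sets with respect to $F$), and for each remaining $i$ pick $x_i\in A_i\cap F$. Then $A_i\subset B(x_i,\diam A_i)$, so $\{B(x_i,\diam A_i)\}$ is an admissible family for $\mathcal C^m_{2\delta}(F)$, and
\begin{equation*}
\mathcal C^m_{2\delta}(F)\le \sum_i \beta_m\bigl(\diam B(x_i,\diam A_i)\bigr)^m\le 2^m\sum_i \beta_m (\diam A_i)^m.
\end{equation*}
Taking the infimum over all such covers $\{A_i\}$ gives $\mathcal C^m_{2\delta}(F)\le 2^m\,\mathcal H^m_\delta(F)$. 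Passing to the limit $\delta\to 0$ and using monotonicity of $\mathcal H^m$ yields $\mathcal C^m_0(F)\le 2^m\mathcal H^m(F)\le 2^m\mathcal H^m(E)$. Finally, taking the supremum over $F\subseteq E$ gives $\mathcal C^m(E)\le 2^m\mathcal H^m(E)$.

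The proof is essentially elementary and no step should be a genuine obstacle; the only thing one needs to be careful about is the two-step definition of $\mathcal C^m$ (first $\mathcal C_0^m$ by Carath\'eodory's construction with centered balls, then the supremum over subsets $F\subseteq E$ to restore monotonicity). This is why the bound $\mathcal C^m_0(F)\le 2^m\mathcal H^m(E)$ must be established for every $F\subseteq E$ before taking the supremum, rather than applied directly to $E$. No property specific to the ambient metric space is needed, so the statement holds in full generality.
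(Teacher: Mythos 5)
The paper itself does not prove this lemma; it simply cites \cite{E}, Proposition 4.2, and Remark \ref{trivial comparison} explicitly flags that the nontrivial direction $\mathcal C^m\le 2^m\mathcal H^m$ is ``less immediate'' and delegates to the literature. Your proof is the standard direct argument and is essentially correct. The first inequality is indeed immediate: a centered-ball cover is a fortiori an admissible cover for $\mathcal H^m_\delta$ with the same sum, and choosing $F=E$ in the definition of $\mathcal C^m$ closes it. For the second inequality, enclosing each covering set in a ball centered in $F$ and using $\diam B(x,r)\le 2r$ is exactly the right idea, and you correctly identify the one genuine subtlety, namely that because $\mathcal C_0^m$ need not be monotone one must first bound $\mathcal C^m_0(F)$ for every $F\subseteq E$ and only then take the supremum.

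One detail to tighten: with the paper's convention that $B(x,r)$ is the \emph{open} ball, the containment $A_i\subset B(x_i,\diam A_i)$ can fail. For instance if $A_i=\{x_i,y\}$ with $d(x_i,y)=\diam A_i$, the open ball $B(x_i,\diam A_i)$ does not contain $y$; and if $A_i=\{x_i\}$ the open ball of radius $0$ is empty. The routine fix is to enlarge: put $r_i:=\diam A_i+\eps_i$ with $\eps_i>0$ (say $\eps_i=\eps\,2^{-i}$), so that $A_i\subset B(x_i,r_i)$ and $\diam B(x_i,r_i)\le 2\diam A_i+2\eps_i\le 2\delta+2\eps$, giving
\begin{equation*}
\mathcal C^m_{2\delta+2\eps}(F)\le\sum_i\gb_m\,(2\diam A_i+2\eps_i)^m,
\end{equation*}
and then send $\eps\to 0$ (one may assume $\sum_i(\diam A_i)^m<\infty$, else there is nothing to prove) before letting $\delta\to 0$. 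With closed balls in the definition the containment is exact and no enlargement is needed; either way the argument goes through, so this is a presentational gap rather than a conceptual one.
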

By Lemma \ref{equivHC}, it follows in particular that the metric dimensions induced by $\mathcal H ^m$ or $\mathcal S^m$ or $\mathcal C^m$ are the same.
\end{remark}

The  estimates needed to relate the  $m$-dimensional density  $\Theta^{*\,m}(\mu,\cdot)$ with the centered Hausdorff measure $\mathcal C^m$ are the following ones.

\begin{theorem}[\cite{E}, Theorem 4.15] \label{estimedgar} Let $(X,d)$ be a separable metric space, let $\mu$ be a finite Borel outer measure in $X$ and let $B\subset X$ be a Borel set.  Then
\begin{itemize}
\item[(i)]
$$
\mu(B)\le\,\sup_{x\in B}\Theta^{*\,m}(\mu,x)\,\mathcal C^m(B),
$$
except when the product is $\infty\cdot 0$;
\item[(ii)]
$$
\inf_{x\in B}\Theta^{*\,m}(\mu,x)\,\mathcal C^m(B)\le\,\mu(B)\,.
$$
\end{itemize}
\end{theorem}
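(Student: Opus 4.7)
My plan is to prove the two inequalities separately: (i) by a straightforward stratification-plus-covering argument, and (ii) by a Vitali-type argument, which is the technical heart of the statement.

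For part (i), I would first dispense with the trivial and excluded cases (when $S := \sup_{x \in B} \Theta^{*,m}(\mu,x)$ or $\mathcal{C}^m(B)$ is $0$ or $\infty$). Assuming $0 < S < \infty$, I fix any $t > S$. Since $\Theta^{*,m}(\mu,x) \leq S < t$ for every $x \in B$, and $\Theta^{*,m}$ is a $\limsup$, there exists $\delta_x > 0$ such that $\mu(\overline{B}(x,r)) \leq t\, \beta_m \operatorname{diam}(\overline{B}(x,r))^m$ for every $0 < r < \delta_x$. I then stratify $B = \bigcup_{k\in\mathbb{N}} B_k$, where $B_k := \{x \in B : \delta_x > 1/k\}$ is nondecreasing. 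For each fixed $k$ and every $0 < \delta < 1/k$, any cover of $B_k$ admissible for $\mathcal{C}^m_\delta(B_k)$, i.e.\ by balls $B(x_i, r_i)$ with $x_i \in B_k$ and $\operatorname{diam}(B(x_i,r_i)) \leq \delta$, automatically has $r_i < \delta_{x_i}$, so the density bound applies. Summing gives $\mu(B_k) \leq \sum_i \mu(\overline{B}(x_i,r_i)) \leq t \sum_i \beta_m \operatorname{diam}(B(x_i,r_i))^m$; taking the infimum over such covers, then $\delta \downarrow 0$, yields $\mu(B_k) \leq t\,\mathcal{C}^m_0(B_k) \leq t\,\mathcal{C}^m(B_k) \leq t\,\mathcal{C}^m(B)$. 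Since $\mu$ is a measure, $\mu(B) = \lim_k \mu(B_k) \leq t\,\mathcal{C}^m(B)$, and letting $t \downarrow S$ closes the argument.

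For part (ii), fix $s < I := \inf_{x \in B} \Theta^{*,m}(\mu,x)$ (if $I = 0$ there is nothing to prove). For every $x \in B$, $\Theta^{*,m}(\mu,x) > s$, so by the $\limsup$ characterization there are arbitrarily small $r > 0$ with $\mu(\overline{B}(x,r)) \geq s\,\beta_m \operatorname{diam}(\overline{B}(x,r))^m$. For each $\delta > 0$, the family $\mathcal{V}_\delta$ of all such closed balls with $r < \delta/2$ forms a fine (Vitali) cover of $B$. Applying the Vitali covering theorem for Radon measures on a separable metric space, I extract a countable disjoint subfamily $\{\overline{B}(x_i, r_i)\}_{i\in\mathbb{N}} \subset \mathcal{V}_\delta$ with centers $x_i \in B$ and $\mu\bigl(B \setminus \bigcup_i \overline{B}(x_i, r_i)\bigr) = 0$. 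Disjointness and the density bound yield the fundamental estimate
\begin{equation*}
s \sum_i \beta_m \operatorname{diam}(\overline{B}(x_i, r_i))^m \leq \sum_i \mu(\overline{B}(x_i, r_i)) \leq \mu(B).
\end{equation*}

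The main obstacle, and the delicate point of the proof, is converting this disjoint-family bound into an upper bound for $\mathcal{C}^m(B) = \sup_{E \subseteq B} \mathcal{C}^m_0(E)$, since a disjoint family does not a priori yield an \emph{admissible} cover in the sense of the centered measure. I would handle this by fixing an arbitrary $E \subseteq B$ and repeating the Vitali extraction with centers constrained to lie in $E$, so that the open balls $\{B(x_i, r_i)\}$ cover $E \setminus N$ for some $\mu$-null set $N \subseteq E$ (note that $x_i \in E$ is compatible with the definition of $\mathcal{C}^m_\delta(E)$). The exceptional set $N$ is handled by a second-generation Vitali extraction applied to $N$: since $\mu(N) = 0$, the resulting disjoint family contributes an arbitrarily small total $\mu$-mass, hence (by the density bound) an arbitrarily small contribution to the sum of $\beta_m \operatorname{diam}^m$. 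Adding these two cover pieces produces a $\mathcal{C}^m_\delta(E)$-admissible cover with total weight $\leq \mu(B)/s + \varepsilon$. Letting $\delta \downarrow 0$ gives $\mathcal{C}^m_0(E) \leq \mu(B)/s$, taking the supremum over $E \subseteq B$ gives $\mathcal{C}^m(B) \leq \mu(B)/s$, and finally $s \uparrow I$ yields (ii).

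The hard part is the argument in (ii): one must combine the Vitali extraction (which governs disjoint subfamilies well-adapted to $\mu$) with the centered Hausdorff structure (which demands covers with centers in the set), and carefully absorb the $\mu$-null exceptional set into the $\mathcal{C}^m_\delta$ estimate. Once this is handled, both inequalities reduce to summing the density inequality over the appropriate family and exploiting the two-stage definition $\mathcal{C}^m(A) = \sup_{E\subseteq A} \mathcal{C}^m_0(E)$.
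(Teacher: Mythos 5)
The paper does not actually prove this statement---it is quoted verbatim from Edgar's paper [E, Theorem 4.15]---so your proposal has to stand on its own. Part (i) is the standard stratification-and-covering argument and is essentially correct, modulo routine technicalities: the step $\mu(B)=\lim_k\mu(B_k)$ uses the increasing-sets lemma for arbitrary (possibly non-Borel) sets $B_k$, which requires Borel regularity of $\mu$; and in a general metric space an admissible ball $B(x_i,r_i)$ with $\diam B(x_i,r_i)\le\delta$ need not have $r_i\le\delta$, so one must replace it by a ball of radius $\le\delta$ about the same center before invoking the density bound. These are fixable.

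Part (ii) has a genuine gap at its central step. The ``Vitali covering theorem for Radon measures''---extraction, from a fine cover by closed balls, of a countable \emph{disjoint} subfamily covering $\mu$-almost all of the set---is a consequence of the Besicovitch covering property and is false in a general separable metric space; in particular it fails for the Carnot--Carath\'eodory distance on the Heisenberg group, which is exactly the setting in which the present paper needs Theorem \ref{estimedgar} (this is the point of Remark \ref{centered} and the reason the centered measure $\mathcal C^m$, rather than $\mathcal S^m$, appears throughout the appendix). The covering lemmas that do survive in arbitrary metric spaces either enlarge the balls by a factor $5$ (ruining the sharp constant $\inf_B\Theta^{*\,m}$) or require $\mu$ to be doubling, which is not assumed. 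There is also a second, independent problem in your absorption of the exceptional set $N$: a ``second-generation'' extraction on $N$ again only covers $N$ up to a $\mu$-null set, whereas $\mathcal C^m_\delta(E)$ demands an honest cover of all of $E$, so the scheme never terminates; and since the balls of that family are centered in $N$ but extend outside it, $\mu(N)=0$ gives no control on $\sum_i\mu(\overline B(y_i,\rho_i))$ without disjointness---i.e.\ bounding the weight of a non-disjoint centered cover is precisely the problem you are trying to solve, so this step is circular. The actual proof replaces the measure-theoretic Vitali theorem by a covering argument tailored to \emph{fine centered covers}, which is where the constraint $x_i\in E$ in the definition of $\mathcal C^m_\delta$ does real work; that covering lemma is the nontrivial content of the theorem and is missing from your argument.
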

By easy modifications of the proof of Theorem \ref{estimedgar}, one gets the following density estimates involving $\Theta^{*\,m}(\mu,x)$ and $\mathcal C^m$. These estimates are analogous to Federer's ones involving $\Theta_F^{*\,m}(\mu,x)$ and $\mathcal S^m$ (see \cite{federer}).

\begin{theorem}\label{main} Let $(X,d)$ be a separable metric space, let $\mu$ be an outer measure in $X$ and $t>\,0$.
\begin{itemize}
\item[(i)] If $\mu$ is Borel regular and
\[
\Theta^{*\,m}(\mu\res A,x)<\,t,\qquad\forall x\in A\subset X\,
\]
then
$$
\mu(A)\le\, t\;\mathcal C^m(A)\,.
$$
\item[(ii)] If $V\subset X$ is  an open set and
\begin{equation*}
\Theta^{*\,m}(\mu,x)>\,t,\qquad\forall x\in B\subset V
\end{equation*}
then
$$
\mu(V)\ge\, t\;\mathcal C^m(B)\,.
$$
\end{itemize}
\end{theorem}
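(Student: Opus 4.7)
The strategy is to adapt the proof of Theorem 4.15 in \cite{E} (Theorem \ref{estimedgar} above), replacing the nonstrict density bounds by the strict ones in our hypotheses so as to dispense with any Borel assumption on $A$ or $B$. The recurring feature of $\mathcal C^m$ that makes the argument go through is that the infimum defining $\mathcal C^m_\delta$ ranges over covers by closed balls \emph{centered in the set}; this is what allows us to apply the pointwise density hypothesis to every ball appearing in a near-optimal cover.

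For \emph{(i)}, fix $s>t$. By Corollary \ref{august 2} and the hypothesis $\Theta^{*,m}(\mu\res A,x)<t<s$, for each $x\in A$ there is $r_x>0$ with
\[
\mu\bigl(\overline B(x,r)\cap A\bigr)<s\,\beta_m\,\diam\bigl(\overline B(x,r)\bigr)^m\qquad\text{for every }0<r\le r_x.
\]
Set $A_k:=\{x\in A:r_x\ge 1/k\}$, so that $A=\bigcup_k A_k$. For fixed $k$, $\delta\in(0,1/k)$ and $\eta>0$, pick a cover $\{\overline B(x_i,r_i)\}_i$ of $A_k$ with $x_i\in A_k$, $\diam(\overline B(x_i,r_i))\le\delta$, and
\[
\sum_i\beta_m\diam\bigl(\overline B(x_i,r_i)\bigr)^m\le\mathcal C^m_\delta(A_k)+\eta.
\]
Since $x_i\in A_k$ and $r_i\le\delta<1/k\le r_{x_i}$, the density bound applies to every ball; summing yields $\mu(A_k)\le s(\mathcal C^m_\delta(A_k)+\eta)$. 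Sending $\eta,\delta\to 0$, then $k\to\infty$ (using Borel regularity of $\mu$ and Proposition \ref{borel continuity} to conclude $\mu(A_k)\nearrow\mu(A)$, after enveloping each $A_k$ in a Borel set of equal $\mu$-measure), and finally $s\searrow t$, gives $\mu(A)\le t\,\mathcal C^m(A)$.

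For \emph{(ii)}, fix $s\in(0,t)$. For each $x\in B\subset V$, openness of $V$ together with $\Theta^{*,m}(\mu,x)>s$ provides arbitrarily small radii $r$ with $\overline B(x,r)\subset V$ and $\mu(\overline B(x,r))>s\,\beta_m\diam(\overline B(x,r))^m$. For each $\delta>0$, the balls of this form with $r\le\delta/2$ constitute a fine Vitali-type cover of $B$ by subsets of $V$. I then extract a countable \emph{pairwise disjoint} subfamily $\{\overline B(x_i,r_i)\}$ satisfying $\mathcal C^m_\delta(B)\le\sum_i\beta_m\diam(\overline B(x_i,r_i))^m+\eta$, via the centered Vitali selection built into the proof of Theorem 4.15 in \cite{E}. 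Disjointness, together with $\bigcup_i\overline B(x_i,r_i)\subset V$, gives
\[
\mu(V)\ge\sum_i\mu\bigl(\overline B(x_i,r_i)\bigr)>s\sum_i\beta_m\diam\bigl(\overline B(x_i,r_i)\bigr)^m\ge s\bigl(\mathcal C^m_\delta(B)-\eta\bigr),
\]
and letting $\eta,\delta\to 0$, then $s\nearrow t$, yields $\mu(V)\ge t\,\mathcal C^m(B)$.

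The main obstacle is the Vitali-type extraction in \emph{(ii)}: one must refine a fine cover of $B$ by balls centered in $B$ into a pairwise disjoint subfamily whose diameters sum controls $\mathcal C^m_\delta(B)$ from above, \emph{without} the factor $2^m$ that appears for the spherical measure $\mathcal S^m$. This centered selection is the technical heart of \cite[Theorem 4.15]{E}, and we invoke it directly. By contrast, \emph{(i)} is essentially routine once one exploits the centered-ball structure of $\mathcal C^m$ to apply the pointwise density estimate to every ball of a near-optimal cover.
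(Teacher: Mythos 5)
Your reconstruction follows the route the paper implicitly prescribes --- ``easy modifications'' of the proof of Theorem \ref{estimedgar} from \cite{E} --- and the scheme is the right one: for (i), exhaust $A$ by sets $A_k$ on which the centered-density bound holds uniformly at all scales below $1/k$ and apply it to the balls of a near-optimal centered cover; for (ii), run a centered Vitali selection among small closed balls contained in $V$. Two of your citations are misplaced, however. Corollary \ref{august 2} concerns contact manifolds with the Carnot--Carath\'eodory distance, not arbitrary separable metric spaces, and in any case it is not needed here since Definition \ref{densitydef} already uses the $\gb_m(\diam\overline B)^m$ normalization your estimate actually works with. Similarly, Proposition \ref{borel continuity} (Borel measurability of $\Theta^{*\,m}(\mu,\cdot)$) is not what justifies $\mu(A_k)\nearrow\mu(A)$; the correct appeal is to Borel regularity of $\mu$ together with continuity from below on the Borel $\sigma$-algebra, which is exactly what your parenthetical ``enveloping each $A_k$ in a Borel set of equal $\mu$-measure'' implements.

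There is also a small but genuine gap at the end of (ii). Your chain of inequalities yields $\mu(V)\ge s\,\mathcal C^m_\delta(B)$, hence $\mu(V)\ge t\,\mathcal C_0^m(B)$ after $\delta\to 0$ and $s\nearrow t$; but by Definition \ref{Hausmeasdef}(iii), $\mathcal C^m(B)=\sup_{E\subseteq B}\mathcal C_0^m(E)$, and $\mathcal C_0^m$ need not be monotone, so $\mathcal C_0^m(B)$ can be strictly smaller than $\mathcal C^m(B)$. The repair is immediate --- the hypotheses of (ii) pass to every $E\subseteq B$, so the same Vitali argument gives $\mu(V)\ge t\,\mathcal C_0^m(E)$ for each such $E$, and one then takes the supremum over $E$ --- but as written the final inference to $\mathcal C^m(B)$ is not justified. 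As for the centered Vitali selection you invoke as a black box, it does deliver what you want, and the reason no factor $5^m$ survives is worth recording: the greedily chosen disjoint subfamily $\{\overline B(x_i,r_i)\}$, taken at radii $\le\delta/10$, has the property that for each $n$ the residue $E\setminus\bigcup_{i\le n}\overline B(x_i,r_i)$ is covered by the dilated balls $\overline B(x_j,5r_j)$, $j>n$, which remain centered in $E$ and of diameter $\le\delta$; if $\sum_i\gb_m\diam(\overline B(x_i,r_i))^m<\infty$, sending $n\to\infty$ removes the $5^m$ factor from the tail, while the divergent case is trivial since disjointness gives $\mu(V)\ge\sum_i\mu(\overline B(x_i,r_i))=\infty$.
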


\begin{remark} If $\mu$ is supposed to be a Radon measure, approximating from above by open sets, we can strengthen the conclusion  in Theorem \ref{main} (ii)  getting the inequality $\mu(B)\ge\,t\;\mathcal C^m(B)$.
\end{remark}

  Using Lemma \ref{august 1} (i.e. relying on the equivalence of the two notions of density)
  and Proposition \ref{borel continuity},  the following result can be proved following  step by step the proof of
  Theorem 3.1 in \cite{FSSC_NA}.

  \begin{theorem}\label{areacentredhaus} Let $M$ be $(2n+1)$-dimensional contact manifold endowed with a contact form $\theta$ and a Riemannian
metric $g$ on the fibers of $\theta$ as introduced in Propositions \ref{contact 1} and \ref{contact 3}. We denote by $d_c$
the associated Carnot-Carath\'eodory distance. Let $\mu$ be a $\sigma$-finite regular Borel measure on $M$, and let $A\subset X$ be a Borel set.
If $\mathcal C^m(A)<\,\infty$ and $\mu\res A$ is absolutely continuous with respect to $\mathcal C^m\res A$, then
 for each Borel set $B\subset A$,
\begin{equation*}\label{areaformcentred}
\mu(B)=\,\int_B \Theta^{*\,m}(\mu,x)\,d\mathcal C^m(x).
\end{equation*}
\end{theorem}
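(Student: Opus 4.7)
The plan is to adapt the classical proof of a centered density area formula (Theorem 3.1 in \cite{FSSC_NA}) to our contact manifold setting, using as crucial inputs Corollary \ref{august 2} (centered densities can equivalently be computed with normalizing factor $\alpha_m r^m$), Proposition \ref{borel continuity} (Borel measurability of the density), and Theorem \ref{main} (comparison between $\mu$ and $\mathcal{C}^m$ via $\Theta^{*,m}$).

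First, by $\sigma$-finiteness and an exhaustion argument, we can reduce to the case where $\mu$ is finite on $A$ (and hence the integral in the conclusion makes sense, as $\Theta^{*,m}(\mu,\cdot)$ is Borel measurable by Proposition \ref{borel continuity}). Next, fix $B\subset A$ Borel, fix $t>1$, and stratify $B$ into the level sets
\begin{equation*}
B_k := \{x\in B: t^k \le \Theta^{*,m}(\mu,x) < t^{k+1}\}, \qquad k\in \mathbb Z,
\end{equation*}
together with $B_0^\infty := \{x\in B: \Theta^{*,m}(\mu,x)=0\}$ and $B_\infty:=\{x\in B: \Theta^{*,m}(\mu,x)=+\infty\}$. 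These sets are Borel by Proposition \ref{borel continuity}. Two preliminary claims must be dealt with: $\mu(B_0^\infty)=0$ and $\mathcal{C}^m(B_\infty)=0$. The first follows by applying Theorem \ref{main}(i) on smaller and smaller level sets $\{\Theta^{*,m}<\eta\}$ and letting $\eta\to 0$, obtaining $\mu(B_0^\infty)\le \eta \,\mathcal{C}^m(A)\to 0$ since $\mathcal{C}^m(A)<\infty$. The second follows from Theorem \ref{main}(ii): if $\mathcal{C}^m(B_\infty)>0$, then $\mu(V)\ge t\, \mathcal{C}^m(B_\infty)$ for every $t$ and every open $V\supset B_\infty$, so by Borel regularity $\mu(B_\infty)=+\infty$, contradicting finiteness; then $\mu(B_\infty)=0$ by absolute continuity $\mu\res A\ll \mathcal{C}^m\res A$.

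On each $B_k$, apply Theorem \ref{main}(i) to the Borel regular restriction $\mu\res B_k$: since $\Theta^{*,m}(\mu\res B_k,x)\le \Theta^{*,m}(\mu,x)<t^{k+1}$ on $B_k$, we obtain $\mu(B_k)\le t^{k+1}\mathcal{C}^m(B_k)$. Conversely, by the strengthened form of Theorem \ref{main}(ii) available for Radon measures (see the Remark following it), $\mu(B_k)\ge t^{k}\mathcal{C}^m(B_k)$. Summing over $k\in \mathbb Z$ and using that on $B_k$ we have $t^k\le \Theta^{*,m}(\mu,x)\le t^{k+1}$, we find
\begin{equation*}
\frac{1}{t}\int_B \Theta^{*,m}(\mu,x)\, d\mathcal{C}^m(x) \le \mu(B) \le t \int_B \Theta^{*,m}(\mu,x)\, d\mathcal{C}^m(x).
\end{equation*}
Letting $t\downarrow 1$ yields the equality claimed.

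The main obstacle is the lower bound step on each level set $B_k$: Theorem \ref{main}(ii) is originally stated for open sets $V$, so converting it into the desired inequality $\mu(B_k)\ge t^k\mathcal{C}^m(B_k)$ on a general Borel set requires Borel regularity of $\mu$ and a careful approximation of $B_k$ by open sets from above (together with inner regularity of $\mathcal{C}^m$ on Borel sets of finite measure, coming from Lemma \ref{Cmom}). Once this Radon-type refinement is granted, as pointed out in the Remark following Theorem \ref{main}, the rest of the argument is the standard geometric-measure-theoretic stratification described above, with the contact-manifold content condensed in Corollary \ref{august 2} and Proposition \ref{borel continuity}.
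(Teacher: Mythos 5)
Your argument is correct and is essentially the one the paper is pointing to: the paper does not reproduce a proof of this theorem, but simply states that it follows the proof of Theorem 3.1 in \cite{FSSC_NA} once Lemma \ref{august 1}/Corollary \ref{august 2} (to control the diameter of balls and hence identify the centered density) and Proposition \ref{borel continuity} (measurability of $\Theta^{*\,m}(\mu,\cdot)$) are in place, and the dyadic stratification $B_k=\{t^k\le \Theta^{*\,m}(\mu,\cdot)<t^{k+1}\}$ together with the two-sided comparison from Theorem \ref{main} and its Radon-measure refinement is exactly that argument. One small clarification worth making in the write-up: the ``exhaustion to $\mu$ finite on $A$'' should be understood as exhausting $A$ by Borel sets $A_n$ with $\mu(A_n)<\infty$ while keeping the \emph{global} density $\Theta^{*\,m}(\mu,\cdot)$ (rather than restricting $\mu$ itself, which would alter the density), and in fact the reduction can be bypassed altogether by observing that on $B_\infty$ either $\mathcal C^m(B_\infty)=0$ (so both sides vanish by absolute continuity) or $\mathcal C^m(B_\infty)>0$ (so Theorem \ref{main}(ii) plus outer regularity force $\mu(B_\infty)=+\infty$, and both sides of the area formula are $+\infty$), after which the rest of your argument goes through unchanged.
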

\begin{remark} Since $\mathcal C^m$ and $\mathcal S^m$ are equivalent, then  $\mathcal C^m(A)<\,\infty$  if and only if  $\mathcal S^m(A)<\,\infty$ and $\mu\res A$ is absolutely continuous with respect to $\mathcal C^m$ if and only if $\mu\res A$ is absolutely continuous with respect to $\mathcal S^m$.
\end{remark}


Now we can give the proof of  Theorem \ref{august 5}.

\begin{proof}[Proof of Theorem \ref{august 5}] Since $|{\bf W}^{0}\chi_E|$ is supported on $\partial^*E$, without loss of generality
we may assume that \eqref{density 1} holds for all $x\in  \partial E$.

Suppose first \begin{equation}\label{abs cont}
\mu\res \partial E\ll\mc H^{2n+1}\res \partial E,
\end{equation}
 and denote by $A\subset\partial E$
the set of points where \eqref{density 1} holds, so that $\mc H^{2n+1}(\partial E\setminus A)=0$. We remind also
that $ |{\bf W}^{0}\chi_E|\ll\mc H^{2n+1}\res \partial E$, by \cite{ambrosio_advances}, Lemma 5.2.  Thus,
if $B\subset \partial E$ is a Borel set, we can apply
Theorem \ref{areacentredhaus} to get
\begin{equation*}\begin{split}
\mu\res \partial E (B) &= \mu (\partial E\cap B) = \int_{\partial E\cap B}\Theta^{* ,2n+1} (\mu,x) d\mc C^{2n+1}(x)
\\&
\ge \int_{\partial E\cap B}\Theta^{* ,2n+1} (|{\bf W}^{0}\chi_E|,x) d\mc C^{2n+1}(x) = |{\bf W }^{0}\chi_E|(\partial E\cap B)
\\&
\vphantom{\int_{A\cap B}}
= |{\bf W}^{0}\chi_E| (B).
\end{split}\end{equation*}
Let us drop now the assumption \eqref{abs cont}. We can write
\begin{equation*}\label{lebesgue}
\mu\res\partial E = \mu_{ac} + \mu_s
 \end{equation*} with
 $$
  \mbox{$\;\mu_{ac} \ll\mc H^{2n+1}\res \partial E$
\; and \; $ \mu_s \perp \mc H^{2n+1}\res \partial E$}
$$
(see \cite{rudin} Theorem 6.10), i.e. there exists $K \subset M$ such that
$$
 \mu_s =  \mu_s\res K\quad\mbox{and}\quad (\mc H^{2n+1}\res \partial E)(K)=0.
$$
Set now
$$
S_0:= \{x\in M\; ; \; \Theta^{*,2n+1}( \mu_s , x) = 0\}.
$$
Notice that $S_0$ is a Borel set, since $\Theta^{* 2n+1} (\mu_s,\cdot)$ is a Borel function.

If $x\in S_0$, then
\begin{equation*}\begin{split}
\Theta^{* ,2n+1} & (|{\bf W}^{0}\chi_E|,x)\le
\Theta^{* ,2n+1} (\mu,x)
\\&
\le
\Theta^{* ,2n+1} (\mu_s,x) + \Theta^{* ,2n+1} ( \mu_{ac},x)
\\&
= \Theta^{* ,2n+1} ( \mu_{ac},x).
\end{split}\end{equation*}
Thus, as above, we can  apply
Theorem \ref{areacentredhaus} to get for any Borel set $B$
$$
|{\bf W}^{0}\chi_E|(B\cap S_0)\le \mu_{ac} (B\cap S_0)\le  \mu (B\cap S_0)\le \mu (B).
$$
To complete the proof of \eqref{density 2}, we shall prove that
\begin{equation}\label{density 4}
(\mc H^{2n+1}\res \partial E)(S_0^c) =0,
\end{equation}
that yields
\begin{equation*}\label{density 3}
|{\bf W}^{0}\chi_E|(S_0^c) = 0,
\end{equation*}
by \cite{ambrosio_advances}, Lemma 5.2 (here $S_0^c$ denotes the complement of $S_0$).

In order to prove \eqref{density 4}, we can write
$$
S_0^c = \cup_{n=1}^\infty \{x\in M\; ; \; \Theta^{* ,2n+1} (\mu_s,x) > \tfrac 1 n\}:=  \cup_{n=1}^\infty T_n.
$$
Then
\begin{equation}\label{density 5}\begin{split}
(\mc H^{2n+1}\res \partial E)(S_0^c) & = (\mc H^{2n+1}\res \partial E)(S_0^c\cap K) + (\mc H^{2n+1}\res \partial E)(S_0^c\cap K^c)
\\&
= (\mc H^{2n+1}\res \partial E)(S_0^c\cap K^c),
\end{split}\end{equation}
since
$$
(\mc H^{2n+1}\res \partial E)(S_0^c\cap K) \le (\mc H^{2n+1}\res \partial E)( K) = 0.
$$

On the other hand
\begin{equation}\label{density 6}\begin{split}
(\mc H^{2n+1}\res \partial E)(S_0^c\cap K^c)
= \lim_{n\to\infty} (\mc H^{2n+1}\res \partial E)(S_0^c\cap K^c\cap T_n).
\end{split}\end{equation}
The set $\partial E\cap S_0^c\cap K^c\cap T_n$ is a Borel set, so that,
by Federer's differentiation theorem (see, e.g., \cite{Ambrosio-Tilli} Theorem 2.4.3)
\begin{equation}\label{density 7}\begin{split}
(\mc H^{2n+1}\res \partial E) & (S_0^c\cap K^c\cap T_n) \le
n\,\mu_s(S_0^c\cap K^c\cap T_n)
\\&
= n\,(\mu_s\res K)(S_0^c\cap K^c\cap T_n) =0.
\end{split}\end{equation}
Combining \eqref{density 5}, \eqref{density 6} and \eqref{density 7} we obtain eventually \eqref{density 4}. This
completes the proof of the theorem.

\end{proof}


\end{document}